\newlist{enumth}{enumerate}{1}
\setlist[enumth]{label=\emph{(\arabic*)}, ref=(\arabic*)}
\tikzset{commutative diagrams/arrow style=math font}
\newcommand{\Good}{Gallant }
\newcommand{\good}{gallant }
\newcommand{\goodp}{gallant}
\newcommand{\lgood}{\good and light }
\newcommand{\lgoodp}{\good and light}
\DeclareMathOperator{\Aut}{Aut}
\DeclareMathOperator{\Out}{Out}
\newcommand{\Aff}{\mathrm{Aff}}
\DeclareMathSymbol{A}{\mathalpha}{operators}{`A}%
\DeclareMathSymbol{B}{\mathalpha}{operators}{`B}%
\DeclareMathSymbol{C}{\mathalpha}{operators}{`C}%
\DeclareMathSymbol{D}{\mathalpha}{operators}{`D}%
\DeclareMathSymbol{E}{\mathalpha}{operators}{`E}%
\DeclareMathSymbol{F}{\mathalpha}{operators}{`F}%
\DeclareMathSymbol{G}{\mathalpha}{operators}{`G}%
\DeclareMathSymbol{H}{\mathalpha}{operators}{`H}%
\DeclareMathSymbol{I}{\mathalpha}{operators}{`I}%
\DeclareMathSymbol{J}{\mathalpha}{operators}{`J}%
\DeclareMathSymbol{K}{\mathalpha}{operators}{`K}%
\DeclareMathSymbol{L}{\mathalpha}{operators}{`L}%
\DeclareMathSymbol{M}{\mathalpha}{operators}{`M}%
\DeclareMathSymbol{N}{\mathalpha}{operators}{`N}%
\DeclareMathSymbol{O}{\mathalpha}{operators}{`O}%
\DeclareMathSymbol{P}{\mathalpha}{operators}{`P}%
\DeclareMathSymbol{Q}{\mathalpha}{operators}{`Q}%
\DeclareMathSymbol{R}{\mathalpha}{operators}{`R}%
\DeclareMathSymbol{S}{\mathalpha}{operators}{`S}%
\DeclareMathSymbol{T}{\mathalpha}{operators}{`T}%
\DeclareMathSymbol{U}{\mathalpha}{operators}{`U}%
\DeclareMathSymbol{V}{\mathalpha}{operators}{`V}%
\DeclareMathSymbol{W}{\mathalpha}{operators}{`W}%
\DeclareMathSymbol{X}{\mathalpha}{operators}{`X}%
\DeclareMathSymbol{Y}{\mathalpha}{operators}{`Y}%
\DeclareMathSymbol{Z}{\mathalpha}{operators}{`Z}%
\renewcommand{\leq}{\leqslant}
\renewcommand{\geq}{\geqslant}
\numberwithin{equation}{section}
\newcommand{\uple}[1]{\text{\boldmath${#1}$}}
\def\stacksum#1#2{{\stackrel{{\scriptstyle #1}}
{{\scriptstyle #2}}}}
\def\setminus{\mathchoice
    {\mathbin{\vrule height .72ex width 1.61ex depth -.38ex}}
    {\mathbin{\vrule height .72ex width 1.61ex depth -.38ex}}
    {\mathbin{\vrule height .50ex width 0.85ex depth -.28ex}}
    {\mathbin{\vrule height .20ex width 0.570ex depth -.24ex}}
}
\newcommand{\Id}{\mathrm{Id}}
\newcommand{\bfalpha}{\uple{\alpha}}
\newcommand{\bfgamma}{\uple{\gamma}}
\newcommand{\bfchi}{\uple{\chi}}
\newcommand{\bfrho}{\uple{\rho}}
\newcommand{\bfxi}{\uple{\xi}}
\newcommand{\bfzeta}{\uple{\zeta}}
\newcommand{\bfbeta}{\uple{\beta}}
\newcommand{\bfv}{\uple{v}}
\newcommand{\bfr}{\uple{r}}
\newcommand{\bfs}{\uple{s}}
\newcommand{\bfu}{\uple{u}}
\newcommand{\bfx}{\uple{x}}
\newcommand{\bfy}{\uple{y}}
\newcommand{\bfK}{\mathbf{K}}
\newcommand{\Cc}{\mathbf{C}}
\newcommand{\Aa}{\mathbf{A}}
\newcommand{\Zz}{\mathbf{Z}}
\newcommand{\Pp}{\mathbf{P}}
\newcommand{\Rr}{\mathbf{R}}
\newcommand{\Gg}{\mathbf{G}}
\newcommand{\Gm}{{\mathbf{G}_{m}}}
\newcommand{\Qq}{\mathbf{Q}}
\newcommand{\Fq}{\Ff_q}
\newcommand{\Fqt}{\Ff^\times_q}
\newcommand{\kt}{{k^\times}}
\newcommand{\Ff}{\mathbf{F}}
\newcommand{\bFq}{\overline{\Ff}_q}
\newcommand{\bQl}{\overline{\Qq}_{\ell}}
\newcommand{\mmu}{\boldsymbol{\mu}}
\newcommand{\mcV}{\mathscr{V}}
\newcommand{\mcW}{\mathscr{W}}
\newcommand{\mcH}{\mathscr{H}}
\def\loccit{loc.\kern3pt cit.{}\xspace}
\def\cf{see\kern.3em}
\def\Cf{See\kern.3em}
\def\eg{e.g.\kern.3em}
\def\resp{\text{resp.}\kern.3em}
\newcommand{\mods}[1]{\,(\mathrm{mod}\,{#1})}
\newcommand{\what}{\widehat}
\newcommand{\hautb}{\mathbf{B}}
\newcommand{\lra}{\longrightarrow}
\newcommand{\fleche}[1]{\stackrel{#1}{\lra}}
\DeclareMathOperator{\rank}{rank}
\DeclareMathOperator{\Kl}{\mathrm{Kl}}
\DeclareMathOperator{\Gal}{Gal}
\DeclareMathOperator{\Tr}{Tr}
\DeclareMathOperator{\End}{End}
\newcommand{\eps}{\varepsilon}
\renewcommand{\rho}{\varrho}
\newcommand{\SL}{\mathbf{SL}}
\newcommand{\GL}{\mathbf{GL}}
\newcommand{\PGL}{\mathbf{PGL}}
\newcommand{\Sp}{\mathbf{Sp}}
\newcommand{\SO}{\mathbf{SO}}
\newcommand{\Ort}{\mathbf{O}}
\DeclareMathOperator{\Hyp}{Hyp}
\DeclareMathOperator{\HYP}{\mathscr{H}}
\DeclareMathOperator{\KL}{\mathscr{K}\!\ell}
\DeclareMathOperator{\Lie}{Lie}
\newcommand{\demi}{{\textstyle{\frac{1}{2}}}}
\newcommand{\sheaf}[1]{\mathscr{{#1}}}
\DeclareMathSymbol{\gena}{\mathord}{letters}{"3C}
\DeclareMathSymbol{\genb}{\mathord}{letters}{"3E}
\theoremstyle{plain}
\newtheorem{theorem}{Theorem}[section]
\newtheorem*{theorem*}{Theorem}
\newtheorem{lemma}[theorem]{Lemma}
\newtheorem{corollary}[theorem]{Corollary}
\newtheorem{proposition}[theorem]{Proposition}
\theoremstyle{remark}
\theoremstyle{definition}
\newtheorem{definition}[theorem]{Definition}
\newtheorem{remark}{Remark}[section]
\newtheorem{example}[remark]{Example}
\newcommand{\abs}[1]{\left\lvert#1\right\rvert}
\newcommand{\mcL}{\mathscr{L}}
\newcommand{\mcF}{\mathscr{F}}
\newcommand{\mcK}{\mathscr{K}}
\newcommand{\mcG}{\mathscr{G}}
\newcommand{\vphi}{\varphi}
\renewcommand{\geq}{\geqslant}
\renewcommand{\leq}{\leqslant}
\newcommand{\ov}[1]{\overline{#1}}
\newcommand\sumsum{\mathop{\sum\sum}\limits}
\newcommand\sumdsum{\mathop{\sum\cdots\sum}\limits}
\newcommand\sumsumsum{\mathop{\sum\sum\sum}\limits}
\newcommand{\Der}{{\mathrm{D}_c^{\mathrm{b}}}}
\begin{document}

\title{Bilinear forms with trace functions}
 
\author{\'Etienne Fouvry}
\address{Universit\'e Paris--Saclay,   CNRS \\
Laboratoire de Math\'ematiques d'Orsay\\
  91405 Orsay  \\France}
\email{etienne.fouvry@universite-paris-saclay.fr}

\author{Emmanuel Kowalski}
\address{ETH Z\"urich -- D-MATH\\
  R\"amistrasse 101\\
  CH-8092 Z\"urich\\
  Switzerland} \email{kowalski@math.ethz.ch}

\author{Philippe Michel}
\address{EPFL/SB/TAN, Station 8, CH-1015 Lausanne, Switzerland }
\email{philippe.michel@epfl.ch}

\author{Will Sawin}
\address{Princeton University, Department of Mathematics, Fine Hall, Washington Road, Princeton, NJ 08540, USA }
\email{wsawin@math.princeton.edu}

\subjclass[2010]{}

\begin{abstract}
  We obtain non-trivial bounds for bilinear sums of trace functions
  below the P\'olya-Vinogradov range assuming only that the geometric
  monodromy group of the underlying $\ell$-adic sheaf satisfies certain
  simple structural properties, in contrast to previous works which
  handled only special cases of  Kloosterman
  and hypergeometric sheaves.

  Our approach builds on a general ``soft'' stratification theorem for
  sums of products of trace functions, based on an idea of Junyan Xu, combined
  with a new robust version of the Goursat--Kolchin--Ribet criterion.
\end{abstract}

\date{\today}

\maketitle 

\begin{flushright}
  \textit{Dedicated to Nick Katz\\
    ``Too old to break and too young to tame''}
\end{flushright}
\setcounter{tocdepth}{1}
\tableofcontents

%

\section{Introduction}

\subsection{Bilinear sums of trace functions}

A central part of analytic number theory deals with the issue of
estimating the ``correlation''
\[
  \sum_{m}f(m)\overline{g(m)}
\]
between arithmetic sequences of various types. In many important
instances, this difficult problem can be transformed into a slightly
more tractable question of bounding non-trivially some \emph{bilinear
  forms} of the type
\[
  \sum_{m,n}\alpha_m\beta_nk(m,n),
\]
where the ``kernel''~$k$ is supposed to be relatively well-understood,
whereas not much is assumed to be known of the sequences $(\alpha_m)$
and~$(\beta_n)$, except for their average size for instance.  We
recall that the point of the bilinear form is that, using only such
basic knowledge of the sequences $(\alpha_m)$ and~$(\beta_n)$, one can
obtain highly non-trivial results due to the oscillations of $k(m,n)$.

The prototypical example of this transformation is found in the
decompositions of the von Mangoldt function in terms of bilinear (or
multilinear) expressions, first discovered by Vinogradov. In this
case, as in a number of others, the kernel is of the form
$k(m,n)=K(mn)$ for a suitable function~$K$. Moreover, for non-zero
integers~$b$ and~$c$, the ``monomial'' kernels
$k_{b,c}(m,n)=K(m^bn^c)$ are also of particular interest, and in fact
appear naturally in our motivating application.

In these, as explained in Section~\ref{sec-general}, we consider the
further restriction that the function~$K$ is a \emph{trace function}
modulo a prime number~$q$, and we seek estimates when $m$, $n$ run
over intervals
\[
 1 \leq M\leq m<2M\leq q,\quad\quad
  1\leq N\leq n<2N\leq q,
\]
with the key goal (motivated by our applications, and many others) of
obtaining non-trivial bounds for~$M$ and~$N$ as small as possible
compared to~$q$.

This is the same context as in various previous papers (see~\cite{FI} by
Friedlander and Iwaniec, \cite{FMAnn} by Fouvry and Michel and
\cite{KMSAnn}, \cite{Pisa} by Kowalski, Michel and Sawin). The key
achievement of the present work is to obtain \emph{estimates as good as
  those of these papers, but in much greater generality}. More
precisely, whereas it was previously always assumed that~$K$ was of a
specific type (essentially some exponential sum of specific type, such
as hyper-Kloosterman sums), we are now able to succeed with
\emph{qualitative} conditions on~$K$, and these conditions are, on the
one hand, known to hold for a  wide variety of trace functions and,
on the other hand, extremely robust under a variety of natural transformations.

In order to state a simplified version of the main result, we recall
that a trace function~$K$ modulo~$q$ is associated to a certain
algebraic object. We assume that~$K$ is the trace function of a
constructible~$\ell$-adic sheaf~$\mcF$ on the affine line over the
finite field~$\Ff_q$, where $\ell$ is a prime $\not=q$, which is a
middle-extension sheaf pure of weight~$0$. We moreover identify $\bQl$
with~$\Cc$, so we can view~$K$ as a complex-valued function. There are
intrinsic ``symmetry groups'' attached to~$\mcF$, called the geometric
and arithmetic monodromy groups, which have been explicitly computed by
Katz in many instances; we denote by~$G$ the geometric monodromy group,
which can be viewed as a closed subgroup of~$\GL_r(\Cc)$ for some
non-negative integer~$r$. Moreover, there is a numerical complexity
invariant~$c(\mcF)$ of~$\mcF$, which is a non-negative integer. A
simplified version of our main result is:

\begin{theorem}\label{thmType2simple}
  Assume that $G$ acts irreducibly on~$\Cc^r$ and that one of the
  following conditions holds:
  \begin{enumth}
  \item the connected component of the identity of~$G$ is a simple
    algebraic group;
  \item or the group~$G$ is finite and quasisimple.\footnote{\ Recall
      that this means that~$G$ is equal to its commutator subgroup and
      the quotient of~$G$ by its center is a simple group, which is then
      necessarily non-abelian; examples include the alternating
      group~$A_n$ for~$n\geq 5$ and~$\SL_n(\Ff_p)$ for~$n\geq 3$
      and~$p\geq 3$ prime.}
  \end{enumth}
  
  Let~$b$, $c$ be non-zero integers. Let~$\delta>0$ be a real
  number and let~$1\leq M,N\leq q/2$ be integers such that
  \[
    q^{\delta}\leq M,\quad\quad MN\geq q^{3/4+\delta}.
  \]

  There exists~$\eta>0$, depending only on~$\delta$, such for that for
  any sequences of complex numbers\footnote{\ We recall that the
    notation $n\sim N$ means that $N\leq n<2N$, and similarly for
    $m\sim M$, etc.}
  \[
    (\alpha_m)_{m\sim M},\quad (\beta_n)_{n\sim N}
  \]
  we have
  \[
    \sum_{m\sim M}\sum_{n\sim N}\alpha_m\beta_n
    K(m^bn^c)
    \ll \Bigl(\sum_{m}|\alpha_m|^2\Bigr)^{1/2}
    \Bigl(\sum_{n}|\beta_n|^2\Bigr)^{1/2}
    (MN)^{1/2-\eta},
  \]
  where $\eta$ and the implicit constant depend only on~$b$, $c$,
  $\delta$, and on the complexity of $\mcF$.
\end{theorem}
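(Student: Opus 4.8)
The plan is to follow the classical Cauchy--Schwarz-and-complete route used in \cite{FMAnn}, \cite{KMSAnn}, \cite{Pisa}, but to replace all the ad hoc sheaf-theoretic input (which in those papers relied on explicit knowledge of Kloosterman or hypergeometric monodromy) by the two general tools advertised in the abstract: the ``soft'' stratification theorem for sums of products of trace functions, and the robust Goursat--Kolchin--Ribet criterion. First I would reduce to the case $b=c=1$ after a change of variables --- the monomial substitution $m\mapsto m^{b}$, $n\mapsto n^{c}$ only changes $K(m^{b}n^{c})$ into a trace function of a pullback sheaf along the map $(m,n)\mapsto m^{b}n^{c}$, whose complexity is controlled in terms of $b$, $c$ and $c(\mcF)$, and whose geometric monodromy group is unchanged on the relevant identity-component/quasisimple level (one checks that pulling back along $x\mapsto x^{b}$ and $x\mapsto x^{c}$ does not enlarge $G$ beyond what the hypotheses allow, possibly after passing to a finite cover). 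So from now on I treat $\sum_{m\sim M}\sum_{n\sim N}\alpha_m\beta_n K(mn)$.

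Next, apply Cauchy--Schwarz in the variable $m$ to eliminate the unknown coefficients $\alpha_m$, bounding the square of the sum by $\bigl(\sum_m|\alpha_m|^2\bigr)$ times $\sum_{m\sim M}\bigl|\sum_{n\sim N}\beta_n K(mn)\bigr|^2$. Expanding the square and completing the sum over $m$ using additive-character orthogonality (the Pólya--Vinogradov completion, valid since $M\le q/2$), the inner quantity becomes, up to a main diagonal term $O(N\|\beta\|_2^{2})$ and admissible error terms, a sum over $n_1,n_2\sim N$ and a frequency $h$ (mod $q$) of a correlation sum of the shape $C(n_1,n_2;h)=\sum_{x}K(n_1 x)\overline{K(n_2 x)}e_q(hx)$, weighted by a Fourier kernel with $\ell^1$-norm $O(\log q)$. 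The heart of the matter is then to show that, for $(n_1,n_2,h)$ outside a subvariety of small codimension, one has square-root cancellation $C(n_1,n_2;h)\ll q^{1/2}$ with an implied constant depending only on the complexity of $\mcF$; on the bad locus one uses the trivial bound $|C|\le q\cdot\cond(\mcF)$ and the smallness of the locus to show the total contribution is still acceptable. Combining, one gets an upper bound of the form $\|\alpha\|_2^{2}\|\beta\|_2^{2}\bigl(N + N^{2}q^{-1/2+o(1)} + (\text{small-locus terms})\bigr)$, and the condition $MN\ge q^{3/4+\delta}$ together with $M\ge q^{\delta}$ is exactly what is needed to make this beat the trivial bound $(MN)^{1/2}\|\alpha\|_2\|\beta\|_2$ with a power saving $\eta$ depending only on $\delta$.

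Thus the main obstacle --- and where the new ideas enter --- is the sheaf-theoretic estimation of the correlation sums $C(n_1,n_2;h)$ uniformly in the parameters. The correlation sum is the Frobenius trace of the compactly supported cohomology of the parametrized sheaf $[\times n_1]^{*}\mcF \otimes [\times n_2]^{*}\mcF^{\vee}\otimes \mcL_{e_q(hx)}$ on $\mathbf{A}^{1}_{\mathbf{F}_q}$, viewed as living in a family over the parameter space of $(n_1,n_2,h)$. The ``soft'' stratification theorem (following Junyan Xu's idea) provides, with completely uniform control on the number of strata and their degrees in terms of $c(\mcF)$, a stratification of the parameter space such that on the open dense stratum the $H^{0}_c$ and $H^{2}_c$ of the fibre vanish and $\dim H^{1}_c$ is bounded, yielding $C\ll q^{1/2}$ there; the deeper strata have codimension $\ge 1$ and contribute $O(q\cdot q^{-1/2})$ per point times a count of $O(N^{?})$ points. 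To identify the open stratum correctly --- i.e.\ to rule out the possibility that the two twisted pullbacks $[\times n_1]^{*}\mcF$ and $[\times n_2]^{*}\mcF$ are geometrically isomorphic after a translation, which would force a large cohomology and destroy the cancellation --- one invokes the robust Goursat--Kolchin--Ribet criterion: under hypothesis (1) or (2), irreducibility of $G$ on $\mathbf{C}^{r}$ plus the simplicity (resp.\ quasisimplicity) of the relevant monodromy forces the monodromy of the pair $([\times n_1]^{*}\mcF, [\times n_2]^{*}\mcF)$ to be ``as large as possible'' --- its image surjects onto $G\times G$ modulo the obvious diagonal --- unless $n_1$ and $n_2$ are linked by one of finitely many explicit algebraic relations, which is precisely what cuts out the exceptional locus of bounded degree. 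Feeding this dichotomy into the stratification bound, summing over the good and bad loci, and optimizing, completes the argument.
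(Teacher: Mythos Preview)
Your proposal has a genuine and fatal gap at the analytic level, not at the sheaf-theoretic level. The route you describe --- Cauchy--Schwarz in $m$, then P\'olya--Vinogradov completion of the resulting $m$-sum, reducing to correlation sums $C(n_1,n_2;h)=\sum_{x}K(n_1x)\overline{K(n_2x)}e_q(hx)$ --- is exactly the \emph{P\'olya--Vinogradov method}. With square-root cancellation on $C(n_1,n_2;h)$ for generic parameters, the best this yields is
\[
  |S|^2 \ll \|\alpha\|_2^2\|\beta\|_2^2\bigl(M + Nq^{1/2+o(1)}\bigr),
\]
which beats the trivial bound $(MN)^{1/2}\|\alpha\|_2\|\beta\|_2$ only when $M>q^{1/2}$ (or symmetrically $N>q^{1/2}$). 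But the theorem asserts a power saving under the weaker hypotheses $M\ge q^{\delta}$ and $MN\ge q^{3/4+\delta}$, which permits for instance $M=N=q^{3/8+\delta}$; your method gives nothing there. The abstract says explicitly that the results are ``below the P\'olya--Vinogradov range'', and this is the whole point.

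What the paper actually does is the ``$+uv$ shift'' of Friedlander--Iwaniec and Fouvry--Michel: one writes $n\mapsto n+uv$ with $u\sim U$, $v\sim V$, $UV\asymp N$, averages over $(u,v)$, and then applies H\"older with an integer exponent $l\ge 2$ (not merely Cauchy--Schwarz). This reduces the problem to averages of complete sums
\[
  \Sigma_{II}^{(d)}(\bfv)=\sum_{r}\sum_{\substack{s_1,s_2\\ s_1^d\neq s_2^d}}\prod_{i=1}^{l}K(s_1(r+v_i)^c)\overline{K(s_1(r+v_{i+l})^c)}\,\overline{K(s_2(r+v_i)^c)}K(s_2(r+v_{i+l})^c)
\]
over $\bfv\in[V,2V]^{2l}$. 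The robust Goursat--Kolchin--Ribet criterion and Xu's moment-to-stratification idea are then applied to \emph{these} sums --- a far more intricate object, involving $2l$ twisted copies of $\mcF$ simultaneously, living over a $2l$-dimensional parameter space --- and it is precisely the freedom to take $l$ large that pushes the range down to $MN\ge q^{3/4+\delta}$. Your identification of the key sheaf-theoretic tools is correct, but they enter at a much deeper point of a fundamentally different analytic skeleton; the reduction you sketch to $C(n_1,n_2;h)$ is too crude to reach the claimed range.
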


\begin{example}
  The theorem applies for instance if the connected component of the
  geometric monodromy group of~$\mcF$ is either the special linear group
  $\SL_r$ or the symplectic group $\Sp_r$ for some integer~$r\geq 2$. In
  particular, this means that Theorem~\ref{thmType2simple} recovers the
  main results of~\cite{Pisa} (to see this, combine Theorem~1.2,
  Definition~2.1 and Theorem~6.2 of loc. cit., together with the fact
  that the complexity of the relevant sheaves are bounded in terms
  of~$r$ only).
\end{example}


We refer to Section~\ref{sec-general} below for the detailed statements
of our most general results, and to Section~\ref{sec-gallant} for a wide
variety of concrete examples of trace functions where the conditions of
this result (or of the more general variant in
Definition~\ref{defgoodsheafintro}) are satisfied. In
Section~\ref{sec-toric}, we present our motivating application.

Although the strategy we follow remains similar to that of previous
works, there are three key new ingredients which are essential to
obtain a bound in the generality in which we work:
\begin{itemize}
\item the general Quantitative Sheaf Theory of Sawin (as expounded by
  Forey, Fresán, Kowalski and Sawin~\cite{qst}) allows us to perform
  complicated algebraic transformations on the sheaves we work with while keeping
   control of their complexity, whereas one previously had to do
  this in an ad-hoc way, which was essentially only possible given a
  ``formula'' for the trace function (see,
  e.g.,~\cite{KMSAnn}*{Prop.\,4.24} for an instance of this);

\item we establish new flexible and robust variants of the
  Goursat--Kolchin--Ribet framework of Katz~\cite[\S\,1.8]{ESDE}, which
  allow us in practice to obtain cancellation in ``sums of products''
  situation (in the sense of~\cite{sumproducts}) under qualitative
  conditions on the mono\-dromy (such as those in
  Theorem~\ref{thmType2simple}), and not simply for specific groups (see
  Section~\ref{sec:goursat}, and in particular Corollary~\ref{cor-sop},
  which is an estimate in the spirit of~\cite{sumproducts} with
  independent interest). One particular interesting point is that these
  new results allow us to treat instances of sheaves with \emph{finite}
  mo\-nodromy groups (see Section~\ref{sec-finite} for examples);

\item we make essential use of a beautiful idea of Xu~\cite{Xu}, which
  roughly speaking states that bounds for the moments of a trace
  function imply stratification results, from which one can deduce
  further analytic properties which turn out to be significantly
  stronger than those which could be obtained directly from the moment
  bounds (see Theorem~\ref{XuStep}). Note that our version
  of this idea relies also in an essential way on Quantitative Sheaf
  Theory.
\end{itemize}

\begin{remark}
  Our method is in one respect less efficient than that of~\cite{Pisa}:
  whereas the latter provides estimates where the dependency on the
  complexity (when it applies) is \emph{polynomial}, this is not the
  case in this paper, due to the use of qualitative results in
  Quantitative Sheaf Theory.
\end{remark}

\begin{remark}
  The possibility of applying the basic analytic idea (the $+uv$ shift)
  to monomial kernels $K(m^an^b)$ first appeared in the work of
  Nunes~\cite{nunesarx}, who studied the distribution of squarefree
  numbers in large arithmetic progressions (here the relevant monomial
  was $mn^2$); we refer also to~\cite{nunes} for a stronger result of
  Nunes using a result of Pierce.
\end{remark}

\subsection{General statements}
\label{sec-general}

We will state precisely our most general results in this section.
This requires more terminology and background material in algebraic
geometry; Section~\ref{secladicgeneral} gives precise definitions and
references.

We first define formally the class of sheaves for which our results
apply. We recall again that a finite group~$G$ is called
\emph{quasisimple} if it is perfect and if the quotient of~$G$ by its
center~$Z(G)$ is simple (in which case it is elementary that~$G/Z(G)$
is non-abelian; see, e.g.,~\cite[Lemma\,9.1]{isaacs}).

\begin{definition}[\Good groups and sheaves]\label{defgoodsheafintro}
  (1) Let~$E$ be an algebraically closed field of characteristic~$0$,
  let~$r\geq 0$ be an integer and let~$G\subset \GL_{r,E}$ be a linear
  algebraic subgroup of~$\GL_r$ over~$E$. The group~$G$ is said to be
  \emph{\goodp} if the action of~$G$ on~$E^r$ is irreducible and if
  moreover one of the following conditions is satisfied:
  \begin{enumerate}
  \item\label{goodinfinite} the identity component~$G^0$ of~$G$ is a
    simple algebraic group (in particular, the integer~$r$ is at
    least~$2$);
  \item\label{goodfinite} or the group~$G$ is finite and contains a
    quasisimple normal subgroup~$N$ acting irreducibly on~$E^r$.
  \end{enumerate}

  In the first case, we define~$N=G^0$, and in all cases we say that~$N$
  is the \emph{core subgroup} of~$G$.
  
  (2) Let $k$ be a finite field and~$\ell$ a prime number invertible
  in~$k$. Let~$\mcF$ be an $\ell$-adic sheaf on~$\Aa^1_{k}$ with generic
  rank~$r\geq 0$. We say that~$\mcF$ is \emph{\goodp} if the geometric
  monodromy group of~$\mcF$ is \good as a subgroup of~$\GL_r(\bQl)$.

  We say that~$\mcF$ is \emph{\lgoodp} if~$\mcF$ is \good and moreover
  is mixed of \emph{integral} weights~$\leq 0$ and its restriction to
  some open dense subset is pure of weight~$0$.
\end{definition}



We note that a \good sheaf is geometrically irreducible by
definition. Moreover, again by definition, the sheaves appearing in
Theorem~\ref{thmType2simple} are \goodp.

In addition to general bilinear forms (classically called ``type II''
sums), we will consider special bilinear forms (called ``type I'') of
the form
\[
  \sum_{m}\sum_n\alpha_m K(m^bn^c),
\]
for which one can usually prove slightly stronger results with
slightly easier proofs.

Our main estimates are stated in the following theorem:

\begin{theorem}[Type I and II estimates]\label{thmType12intro}
  Let~$b$, $c$ be non-zero integers.  Let $q$ be a prime and
  let~$\mcF$ be a \lgood $\ell$-adic sheaf over~$\Ff_q$ for some
  prime~$\ell\not=q$. 

  Let~$K$ be the trace function of~$\mcF$.  Let $l\geq 2$ be an integer
  and let $M$, $N\geq 1$ be integers.

  \begin{enumth}
  \item Suppose that
    \[
      M\leq q,\quad\quad 10q^{1/l}\leq N\leq q^{1/2+1/(2l)}.
    \]
    
  For any $\eps>0$ and for any family $\bfalpha=(\alpha_m)_{m\sim M}$
  of complex numbers, the estimate
  \begin{equation}
    \label{BtypeI}
    \sum_{m}\sum_n\alpha_mK(m^bn^c)
    \ll q^{\eps}\|\bfalpha\|_2M^{1/2}N
    \Bigl(\frac{q^{1+3/(2l)}}{MN^2}\Bigr)^{1/(2l)}
  \end{equation}
  holds, where the implicit constant depends at most on $\eps$, $b$,
  $c$, $l$ and on the complexity of $\mcF$.

\item Suppose that 
  \begin{equation}
    \label{assumptypeIIbasic}M\leq q,\quad\quad
    10q^{3/(2l)}\leq N\leq q^{1/2+3/(4l)}.
  \end{equation}

  For any $\eps>0$ and for any families
  $\bfalpha=(\alpha_m)_{m\sim M}$ and $\bfbeta=(\beta_n)_{n\sim N}$ of
  complex numbers, the estimate
  \begin{equation}
    \label{BtypeII}
    \sum_{m}\sum_n\alpha_m\beta_nK(m^bn^c)
    \ll q^{\eps}\|\bfalpha\|_2\|\bfbeta\|_2(MN)^{1/2}
    \Bigl(\frac{1}{M}
    +\Bigl(\frac{q^{\tfrac{3}{4}+\tfrac{7}{4l}}}{MN}\Bigr)^{1/l}\Bigr)^{1/2}
  \end{equation}
  holds, where the implicit constant depends at most on $\eps$, $b$,
  $c$, $l$ and on the complexity of $\mcF$.
\end{enumth}
\end{theorem}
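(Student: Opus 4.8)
The plan is to run the general Cauchy--Schwarz-and-completion strategy for bilinear forms with trace functions (as in~\cite{FMAnn}, \cite{KMSAnn}, \cite{Pisa}), the new feature being that the short ranges of~$N$ (and of~$M$) are reached via the ``soft'' stratification of Theorem~\ref{XuStep}, which is fed by the sum-of-products moment bound of Corollary~\ref{cor-sop}; Quantitative Sheaf Theory keeps every auxiliary sheaf under complexity control throughout. I will describe the plan for the Type~II bound~\eqref{BtypeII}; the Type~I bound~\eqref{BtypeI} is handled the same way, except that after completing the~$n$-sum one applies Cauchy--Schwarz and completion once more in the~$m$-variable (there being no~$\bfbeta$ to remove), ending with three-parameter complete sums~$\sum_{m\in\Ff_q}K(mn_1)\overline{K(mn_2)}\psi(tm/q)$ whose diagonal~$n_1=n_2$, $t=0$ supplies the main term.

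First I would reduce to~$b=c=1$: since~$M,N\le q$ the terms with~$q\mid mn$ are negligible, and grouping~$\alpha_m$, $\beta_n$ according to the residues of~$m^b$, $n^c$ in~$\Ff_q^\times$ costs only a factor~$O_{b,c}(1)$ in the~$\ell^2$-norms (because~$x\mapsto x^b$ is at most~$b$-to-one on an interval of length~$<q$), leaving a sum~$\sum_{m,n}\tilde\alpha_m\tilde\beta_n K(mn)$ with~$\tilde\alpha$, $\tilde\beta$ supported on at most~$M$, resp.\ $N$ residues and with comparable~$\ell^2$-norms; from now on the implied constants depend on~$b,c$. Next, Cauchy--Schwarz in~$n$ removes~$\tilde\beta$ and, after completing the~$n$-sum modulo~$q$ (P\'olya--Vinogradov), reduces matters to bounding
\[
  \sum_{m_1,m_2}\tilde\alpha_{m_1}\overline{\tilde\alpha_{m_2}}\,\frac1q\sum_h V(h)\,C(m_1,m_2,h),
  \qquad
  C(m_1,m_2,h)=\sum_{n\in\Ff_q}K(m_1n)\overline{K(m_2n)}\psi(hn/q),
\]
where~$|V(h)|\le N$ and~$\sum_h|V(h)|\ll q\log q$. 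By the light hypothesis,~$|K|$ is bounded, so~$|C|\ll q$ trivially; the diagonal~$m_1=m_2$ with~$h$ in a short range contributes~$\ll q^\varepsilon\|\bfalpha\|_2^2\|\bfbeta\|_2^2 N$, which after dividing by~$MN$ accounts for the~$1/M$ term in~\eqref{BtypeII}. Everything then rests on the off-diagonal.

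For the off-diagonal I would first recognize~$(m_1,m_2,h)\mapsto C(m_1,m_2,h)$ as the trace function of an explicit constructible sheaf on~$\Gm\times\Gm\times\Aa^1$ built from~$\mcF$ by the dilations~$[\times m_i]^*$, a tensor with an Artin--Schreier sheaf, and a pushforward; Quantitative Sheaf Theory bounds its complexity in terms of~$c(\mcF)$. The key estimate is a bound on the~$2l$-th moment~$\mathfrak{M}_{2l}=\sum_{m_1,m_2,h\in\Ff_q}|C(m_1,m_2,h)|^{2l}$: on expanding~$|C|^{2l}$, the~$h$-sum imposes one linear relation on the~$2l$ copies of~$n$, and each of the remaining~$m_1$- and~$m_2$-sums is a complete sum of a product of~$2l$ dilated copies of~$\mcF$, hence~$\ll\sqrt q$ \emph{unless} that product sheaf acquires geometric (co)invariants or an Artin--Schreier factor. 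It is exactly here that the \goodp hypothesis on~$\mcF$ (irreducible monodromy with simple or quasisimple core) is indispensable: it makes the robust Goursat--Kolchin--Ribet criterion applicable, so one can enumerate the lower-dimensional loci of ``collisions'' among the~$n$-parameters that cause degeneration and bound their contribution, obtaining~$\mathfrak{M}_{2l}\ll q^{2l+1+o(1)}$, the specialization of Corollary~\ref{cor-sop}. Feeding this into Theorem~\ref{XuStep} yields the soft stratification of the loci where~$|C|$ is large; splitting the off-diagonal into dyadic ranges~$|C|\asymp Q\ge q^{1/2}$, estimating~$\sum_{|C(m_1,m_2,h)|\asymp Q}|\tilde\alpha_{m_1}||\tilde\alpha_{m_2}|$ by Cauchy--Schwarz against the size of the stratum and, alternatively, by the support size~$M$, reinserting the weights~$V(h)$, and summing the geometric series in~$Q$ and over~$h$ produces the second term of~\eqref{BtypeII}; the exponent~$\tfrac34+\tfrac{7}{4l}$ and the admissible range~\eqref{assumptypeIIbasic} fall out of balancing this summation against the completion losses (a short-interval shift of~$+uv$ type, as used for monomial kernels by Nunes, is invoked at this stage to optimize the balance).

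The hard part will be the moment bound~$\mathfrak{M}_{2l}\ll q^{2l+1+o(1)}$ with implied constants depending only on~$c(\mcF)$. This demands, on the one hand, the robust Goursat--Kolchin--Ribet analysis: a uniform classification of when a tensor product of dilated copies of~$\mcF$ and~$\overline{\mcF}$ develops extra invariants or an Artin--Schreier component --- the step that genuinely uses the simple/quasisimple core and the irreducibility, and that must also cover \emph{finite} monodromy groups --- and, on the other hand, a thoroughly quantitative control (conductors, dimensions of exceptional loci, number of strata) of all the sheaves involved via Quantitative Sheaf Theory, since, in contrast with previous works, no explicit formula for~$K$ is available to perform these manipulations by hand. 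A subsidiary difficulty is the interaction of the multiplicative dilations~$[\times m_i]$ with the monodromy group inside the Goursat--Kolchin--Ribet step, together with the bookkeeping of the monomial reduction of the first step.
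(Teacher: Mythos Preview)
Your plan lists the right ingredients but assembles them in an order that cannot reach the claimed range. The essential content of the theorem is that it is nontrivial for $N$ well below $q^{1/2}$ (down to $N\approx q^{3/(2l)}$ for Type~II), and the device that makes this possible is the $+uv$ shift applied \emph{before} any Cauchy--Schwarz or completion. You relegate the shift to a final ``balance'' step, but by then it is too late: once you have Cauchy--Schwarz'd away $\bfbeta$ and completed in $n$ to the correlation sums $C(m_1,m_2,h)=\sum_{n\in\Fq}K(m_1n)\overline{K(m_2n)}\psi(hn/q)$, the best pointwise bound outside a locus of dimension $\leq 1$ is $|C|\ll q^{1/2}$, and the generic off-diagonal contribution is already $\gg \|\bfalpha\|_2^2\|\bfbeta\|_2^2\, Mq^{1/2}$. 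This beats the trivial bound $\|\bfalpha\|_2^2\|\bfbeta\|_2^2\, MN$ only for $N\gg q^{1/2}$, i.e.\ exactly the P\'olya--Vinogradov range the theorem is designed to go below. Your moment bound $\mathfrak{M}_{2l}\ll q^{2l+1}$ is correct for $l\geq 2$, but feeding it into Xu's theorem simply recovers this same stratification; it cannot improve the generic bound, and it is the generic term that is already too large. (Note also that Corollary~\ref{cor-sop} is not a moment bound on $C(m_1,m_2,h)$; it is a pointwise bound on a different sum.)

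In the paper the order is the opposite: shift $n\mapsto n+uv$ \emph{first}, with $u\sim U$, $v\sim V$, $UV\asymp N$ (Proposition~\ref{propreduct}); Fourier analysis and H\"older then convert the problem to complete sums $\Sigma_I(\bfv)$, $\Sigma_{II}^{(d)}(\bfv)$ over $(r,s)\in\Fq\times\Fqt$ (resp.\ $(r,s_1,s_2)$), with $\bfv\in[V,2V]^{2l}$ now a genuine short external parameter. Xu's stratification (Theorem~\ref{XuStep}) is applied in the $\bfv$-variable, not in $(m_1,m_2,h)$; the required moment input $\sum_{\bfv\in k^{2l}}|\Sigma(\bfv;k)|^{2m}$ is obtained by an exchange-of-summation trick (Lemma~\ref{lm-exch}) reducing to one-variable sums of products, where the robust Goursat--Kolchin--Ribet analysis (Propositions~\ref{pr-diagonal1}--\ref{pr-diagonal3}) enters. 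Your reduction to $b=c=1$ is also incompatible with the shift: after regrouping $\beta_n$ by the residue of $n^c$, the new summation variable no longer runs over an interval and cannot be translated. The paper keeps $c$ throughout, shifts the original $n$, and this is why the pulled-back sheaves $\mcF_{r,s,c}=[v\mapsto s(v+r)^c]^*\mcF$ pervade the Goursat analysis of Section~\ref{sec:goursat}.
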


\begin{remark}
  (1) The ``trivial'' bound for (general) type I sums is of the form
  \[
    \Bigl|\sum_{m}\sum_n\alpha_m k(m,n)\Bigr|
    \leq \|\bfalpha\|_2\, M^{1/2}N\,
    \|k\|_{\infty}.
  \]
  
  Taking $l$ large enough, we see that the bound \eqref{BtypeI} is
  non-trivial, in the sense of providing an estimate of the form
  \[
    \sum_{m}\sum_n\alpha_mK(m^bn^c)
    \ll\|\bfalpha\|_2(M^{1/2}N)^{1-\eta}
  \]
  for some $\eta>0$, as long as
  there exists $\delta>0$ such that $M^2N\geq q^{1+\delta}$, in which
  case $\eta$ depends on~$\delta$. In
  particular, we obtain a non-trivial estimate in the case
  \[
    M=N=q^{\frac{1}{3}(1+\delta)}
  \]
  for any~$\delta>0$.

  (2) For type II sums, the trivial bound is of the form
  \[
    \Bigl|\sum_{m}\sum_n\alpha_m \beta_n k(m,n)\Bigr| \leq
    \|\bfalpha\|_2\|\bfbeta\|_2\, (MN)^{1/2}\, \|k\|_{\infty}.
  \]

  Here, taking $l$ large enough, the bound \eqref{BtypeII} gives an
  estimate of the form
  \[
    \sum_{m}\sum_n\alpha_m\beta_nK(m^bn^c)\ll
    \|\alpha\|_2\|\beta\|_2(MN)^{1/2-\eta}
  \]
  for some~$\eta>0$ as soon as
  \[
    q^{\delta} \leq M \leq q,\ MN \geq q^{3/4+\delta}
  \]
  for some $\delta>0$, with~$\eta$ depending on~$\delta$. It follows
  that Theorem~\ref{thmType12intro} implies
  Theorem~\ref{thmType2simple}, and also that we have a non-trivial
  estimate in the special case
  \[
    M=N=q^{\frac{3}{8}(1+\delta)}
  \]
  for~$\delta>0$.

  (3) Our main target in this paper is to have non-trivial
  bounds in the largest possible range of uniformity of~$M$ and~$N$
  relative to $q$. There is no doubt that a more sophisticated
  treatment could be used to improve the actual saving compared to the
  trivial bound, in \emph{some} ranges of~$M$ and~$N$.
\end{remark}

Again with some motivations in mind, we will furthermore prove a
``trilinear'' bound.


\begin{theorem}\label{thmtriplesum}
  Let~$a$, $b$, $c$ be non-zero integers.  Let $q$ be a prime and
  let~$\mcF$ be a \lgood $\ell$-adic sheaf over~$\Ff_q$ for some
  prime~$\ell\not=q$. Let~$K$ be the trace function of~$\mcF$.

  Let~$l\geq 2$ be an integer and let~$J$, $M$, $N\geq 1$ be integers.
  Suppose that
  \[
    J\leq 4q,\quad\quad MN\leq 4q.
  \]

  For any~$\eps>0$ and for any families 
  \[
    \bfalpha=(\alpha_j)_{j\sim J},\quad
    \bfbeta=(\beta_m)_{m\sim M},\quad
    \bfgamma=(\gamma_n)_{n\sim N}
  \]
  of complex numbers of modulus~$\leq 1$, the estimate
  \begin{equation}\label{trilinearbound}
    \sumsumsum_{j\sim J,m\sim M,n\sim N}\alpha_j\beta_m\gamma_n
    K(j^am^bn^c)
    \ll
    q^{\eps}JMN\Bigl(\frac{q^{1/2}}{MN}+\frac{q}{J^lMN}\Bigr)^{1/(2l)}	
  \end{equation}
  holds, where the implicit constant depends on $\eps$, $a$, $b$, $c$,
  $l$ and on the complexity of~$\mcF$.
\end{theorem}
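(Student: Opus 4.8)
The plan is to reduce, by one Cauchy--Schwarz in the pair $(m,n)$ followed by a Hölder interpolation implementing Xu's ``moments $\Rightarrow$ stratification'' mechanism, to a \emph{complete} $2l$-th moment, which is then controlled by the sum-of-products estimates available for gallant sheaves.

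Set $A(t)=\sum_{j\sim J}\alpha_j K(j^at)$ for $t\in\Ff_q$, so that the left side of \eqref{trilinearbound} is $\sum_{m\sim M}\sum_{n\sim N}\beta_m\gamma_n A(m^bn^c)$. By Cauchy--Schwarz and $|\beta_m|,|\gamma_n|\le 1$,
\[
  \Bigl|\sum_{m,n}\beta_m\gamma_n A(m^bn^c)\Bigr|\le (MN)^{1/2}\Bigl(\sum_{t\in\Ff_q}r(t)\,|A(t)|^2\Bigr)^{1/2},
\]
where $r(t)=\#\{(m,n):m\sim M,\ n\sim N,\ m^bn^c\equiv t\bmod q\}$ satisfies $\sum_t r(t)=MN$ (the term $t=0$, possible only when $q\mid mn$, i.e.\ when $M$ or $N\ge q$, is negligible and discarded). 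Applying Hölder with exponents $(l/(l-2),l,l)$ to $r(t)|A(t)|^2=r(t)^{(l-2)/l}\cdot r(t)^{2/l}\cdot|A(t)|^2$ gives
\[
  \sum_{t}r(t)\,|A(t)|^2\le (MN)^{(l-2)/l}\,R_2^{1/l}\,\mathcal M^{1/l},\qquad R_2:=\sum_{t}r(t)^2,\quad \mathcal M:=\sum_{t\in\Ff_q}|A(t)|^{2l}
\]
(for $l=2$ this is plain Cauchy--Schwarz), so that the left side of \eqref{trilinearbound} is $\ll (MN)^{1-1/l}R_2^{1/(2l)}\mathcal M^{1/(2l)}$. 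It remains to estimate $R_2$ and $\mathcal M$.

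The quantity $R_2=\#\{(m,n,m',n'):m^bn^c\equiv (m')^b(n')^c\bmod q\}$ (all four variables in their dyadic ranges) is handled by an elementary count --- completion in one of the variables together with Weil's bound for $\sum_n e(an^c/q)$, or a divisor-type argument --- giving $R_2\ll_{b,c}q^{\varepsilon}\bigl(MN+(MN)^2/q\bigr)$, which under the hypothesis $MN\le 4q$ is $\ll_{b,c}q^{\varepsilon}MN$; hence $R_2^{1/(2l)}\ll q^{\varepsilon}(MN)^{1/(2l)}$. For $\mathcal M$, expanding the $2l$-th power yields $\mathcal M=\sum_{\mathbf j,\mathbf j'\in[J,2J)^l}\alpha_{j_1}\cdots\overline{\alpha_{j'_l}}\,\sigma(\mathbf j,\mathbf j')$, where $\sigma(\mathbf j,\mathbf j')=\sum_{t\in\Ff_q}\prod_{i=1}^l K(j_i^at)\overline{K((j'_i)^at)}$ is the complete sum over $t$ of the trace function of the sheaf $\mcG_{\mathbf j,\mathbf j'}$ on $\Gm$ obtained by tensoring, over $i=1,\dots,l$, the multiplicative translate of $\mcF$ by $j_i^a$ with the multiplicative translate of $\mcF^{\vee}$ by $(j'_i)^a$. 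By Quantitative Sheaf Theory the complexity of $\mcG_{\mathbf j,\mathbf j'}$ is bounded in terms of $l$, $a$ and $c(\mcF)$ alone, so the Grothendieck--Lefschetz formula together with Deligne's weight bound and the ``light'' hypothesis (to control the weights of the cohomology) give $\sigma(\mathbf j,\mathbf j')\ll q^{1/2}$ unless $\mcG_{\mathbf j,\mathbf j'}$ has a trivial geometric subsheaf, in which case $\sigma(\mathbf j,\mathbf j')\ll q$; this is Corollary~\ref{cor-sop}. Since $\mcF$ is gallant, the robust Goursat--Kolchin--Ribet criterion of Section~\ref{sec:goursat} forces a trivial geometric subsheaf to occur \emph{only} when the scalars $j_i^a$ and $(j'_i)^a$ pair off modulo the \emph{finite} set of ``quasi-periods'' of $\mcF$ (allowing also pairings through a geometric translation-self-duality of $\mcF$, should one exist). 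As $J\le 4q$, any residue class modulo $q$ meets $[J,2J)$ in $O(1)$ points, so there are $\ll_{l,a,c(\mcF)}J^l$ such ``diagonal'' tuples; bounding the remaining $\le(2J)^{2l}$ tuples by $\sigma\ll q^{1/2}$ and using $|\alpha_j|\le 1$ gives $\mathcal M\ll_{l,a,c(\mcF)}J^lq+J^{2l}q^{1/2}$, hence $\mathcal M^{1/(2l)}\ll J^{1/2}q^{1/(2l)}+Jq^{1/(4l)}$.

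Putting the pieces together, the left side of \eqref{trilinearbound} is
\[
  \ll q^{\varepsilon}(MN)^{1-1/(2l)}\bigl(J^{1/2}q^{1/(2l)}+Jq^{1/(4l)}\bigr),
\]
and rearranging the two summands as $JMN\,(q/(J^lMN))^{1/(2l)}$ and $JMN\,(q^{1/2}/(MN))^{1/(2l)}$ yields \eqref{trilinearbound} (up to the harmless passage between $(X+Y)^{1/(2l)}$ and $X^{1/(2l)}+Y^{1/(2l)}$). The main obstacle is the estimate for $\mathcal M$: it rests entirely on knowing that for a gallant sheaf the tensor products $\mcG_{\mathbf j,\mathbf j'}$ acquire a trivial geometric subsheaf only in the diagonal configurations and that the quasi-period set of $\mcF$ is finite --- this is exactly the robust Goursat--Kolchin--Ribet / sum-of-products input of Section~\ref{sec:goursat}, and it is the step that would break down for a sheaf with non-gallant monodromy. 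The elementary count of $R_2$ is routine but is the place where $MN\le 4q$ is genuinely used.
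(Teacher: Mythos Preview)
Your argument is essentially the paper's own proof, reorganized. The paper writes $T=\sum_{u,v}\xi_u\zeta_vK(uv)$ with $\xi_u=\sum_{j^a\equiv u}\alpha_j$ and $\zeta_v=\sum_{m^bn^c\equiv v}\beta_m\gamma_n$, applies H\"older to $|\zeta_v|=|\zeta_v|^{(l-1)/l}|\zeta_v|^{1/l}$ and then Cauchy--Schwarz, arriving at $\|\zeta\|_1^{1-1/l}\|\zeta\|_2^{1/l}\mathcal M^{1/(2l)}$; you apply Cauchy--Schwarz first and then H\"older with exponents $(l/(l-2),l,l)$, arriving at $(MN)^{1-1/l}R_2^{1/(2l)}\mathcal M^{1/(2l)}$. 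Since $\|\zeta\|_1\le MN$ and $\|\zeta\|_2^2\le R_2$, the two inequalities are equivalent. The moment $\mathcal M$ is then handled identically: expand, invoke Corollary~\ref{cor-sop} to get $\sigma\ll q^{1/2}$ off a variety $\mcV$ of dimension $\le l$, and count the bad $\bfj$'s by Schwartz--Zippel (the paper's Lemma~\ref{lm-sz} applied to the pullback of $\mcV$ under $(x_i)\mapsto(x_i^a)$); your ``pairing'' description of the exceptional set is the heuristic behind the dimension bound.

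Two points. First, your sketch of the $R_2$ bound is off: the congruence $m^bn^c\equiv(m')^b(n')^c$ is multiplicative, so completion via additive characters and Weil for $\sum_n e(an^c/q)$ does not apply, and a divisor argument is awkward for general $b,c$ since $m^bn^c$ can be much larger than $q$. The paper instead expands over multiplicative characters and uses the Ayyad--Cochrane--Zheng fourth moment $\frac{1}{q-1}\sum_{\chi\ne 1}|\sum_{m\sim M}\chi(m)|^4\ll M^2(\log q)^2$ together with Cauchy--Schwarz to obtain $R_2\ll(MN)^2/q+MN(\log q)^2$. Second, the reference in your plan to ``Xu's moments $\Rightarrow$ stratification mechanism'' is misleading: Theorem~\ref{XuStep} plays no role in this proof; only the direct sum-of-products input (Corollary~\ref{cor-sop}) is needed here.
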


\begin{remark}
  The bound \eqref{trilinearbound} is non-trivial as long as
  \[
    J\geq q^{\delta},\quad MN\geq q^{1/2+\delta},
  \]
  for some~$\delta>0$, taking $l$ sufficiently large so that
  $J^lMN\geq q^{1+\delta}$.
\end{remark}

Our methods enable us to handle other kinds of multilinear sums of
\lgood trace functions with summation variables varying over ranges that
may be quite short. For instance, following Xi and Zheng~\cite{XZ}, one
can show that, as long as
\[
  q^{1/6+\delta} \leq M \leq q^{1/2-\delta},\quad MN^2\geq
  q^{1/2+\delta}
\]
for some $\delta>0$, the ``quadrilinear'' sum
\[
  \sumsum_{m_1,m_2\sim M}\alpha_{m_1}\alpha_{m_2}\sumsum_{n_1,n_2\sim
    N}K(\ov m_1n_1-\ov m_2n_2),
\]
is bounded by $\ll \|\bfalpha\|^2MN^2(MN)^{-\eta}$ for some $\eta>0$,
depending on~$\delta$. This bound is non-trivial for
\[
  M=N=q^{1/6+\delta}
\]
and $\delta\in \mathopen]0,1/3\mathclose[$.

The proof follows \cite{XZ} together with the following useful special
case of Proposition~\ref{prop-Sigmagallant}.

\begin{proposition}\label{multicorrelation}
  Let~$q$ be a prime number and let~$\mcF$ be a \lgood sheaf
  on~$\Aa^1_{\Ff_q}$.
  
  For~$l\geq 2$ an even integer, $r,s\in \Fq$ and
  $\uple{v}\in \Ff_q^{2l}$, let
  \[
    \bfK(r,s,\bfv)=\prod_{i=1}^lK(s(r+v_i)) \ov{K(s(r+v_{i+l}))}.
  \]
  and 
  \[
    \Sigma_I(\uple{v})=\sum_{(r,s)\in\Fq\times \Fqt}\bfK(r,s,\bfv).
  \]

  There exists an integer~$C\geq 1$, depending only on~$l$ and on the
  complexity of~$\mcF$ such that, if $q\geq C$, there exist algebraic
  varieties $\mcV_1$ and $\mcV_1^{\Delta}$ over $\Ff_q$ such that
  \begin{gather*}
    \Aa^{2l}_{\Fq}\supset \mcV_1\supset \mcV_1^\Delta
    \\
    \dim (\mcV_1) \leq \frac{3}{2}l,\quad \dim
    (\mcV^\Delta_1) \leq l,
    \
    \max(\deg(\mcV_1),\deg(\mcV_1^{\Delta}))\leq C
  \end{gather*}
  and 
  \[
    |\Sigma_{I}(\bfv)|\leq
    \begin{cases}
      Cq&\text{ for }\bfv\not\in \mcV_1(\Fq),
      \\
      Cq^{3/2}&\text{ for }\bfv\not\in \mcV^\Delta_1(\Fq),
      \\
      Cq^2&\text{ for } \bfv\in \mcV_1^{\Delta}(\Ff_q).
    \end{cases}
    \]
 \end{proposition}
   
 Combining this with the counting Lemma~\ref{lm-sz}, we deduce that for
 any $V\in [0,q/2\mathclose[$, the estimate
 \[
   \frac{1}{V^{2l}}\sum_{\bfv\in [V,2V]^{2l}}\Bigl
   |\sum_{(r,s)\in\Fq\times \Fqt}\bfK(r,s,\bfv)\Bigr |\ll
   q\Bigl(1+\frac{q}{V^l}\Bigr),
 \]
 holds, where the implicit constant depends only on $l$ and on the
 complexity of $\mcF$ (this is a special case of Proposition
 \ref{pr-strat}).  This proves that, for any $V\geq q^{1/l}$, the sum
 $\Sigma_I(\bfv)$ achieves squareroot cancellation on average over
 $\bfv\in [V,2V]^{2l}$.

As another example, one can check, using Proposition \ref{multicorrelation} or suitable variants, that the results of \cite{Xi} extend to trace functions of \lgood sheaves.

\subsection{An exceptional case}\label{sec-oxo}

A sheaf~$\mcF$ with geometric monodromy group isomorphic to
either~$\Ort_4$ or~$\SO_4$ will not be \good because the algebraic
group~$\SO_4$ is not simple (it is isomorphic to the quotient
of~$\SL_2\times \SL_2$ by the diagonally embedded subgroup
$\{-1,1\}$). We will say that $\mcF$ is \emph{sulfatic} if the geometric
monodromy group of~$\mcF$ is isomorphic to $\SO_4$ and \emph{oxozonic}
if it is isomorphic to $\Ort_4$ and has the property that the
subgroup~$\SO_4$ acts irreducibly on the underlying representation.


It turns out that a slight extension of our method allows us to deal
with the oxozonic case, which is relevant to the application described
in the next section

\begin{theorem}\label{thmO4}
  Let~$b$, $c$ be non-zero integers.  Let $q$ be a prime and let~$\mcF$
  be an $\ell$-adic sheaf over~$\Ff_q$ for some prime~$\ell\not=q$ which
  is mixed of weights~$\leq 0$. We assume that~$\mcF$ is oxozonic.

  \begin{enumth}
  \item Theorem~\textup{\ref{thmType12intro}} holds for~$\mcF$ if~$c$ is
    odd, and in particular Theorem~\textup{\ref{thmType2simple}} also
    holds for~$\mcF$ in this case.
  \item Theorem~\textup{\ref{thmtriplesum}} holds for~$\mcF$.
  \end{enumth}
\end{theorem}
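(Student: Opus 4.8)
The plan is to run the proofs of Theorems~\ref{thmType12intro} and~\ref{thmtriplesum} essentially verbatim, and to replace the one place where the \goodp\ hypothesis is used by an argument tailored to~$\Ort_4$. Recall that in those proofs gallantness of~$\mcF$ enters only through the sum-of-products estimates built on the robust Goursat--Kolchin--Ribet criterion of Section~\ref{sec:goursat}: concretely, through Proposition~\ref{prop-Sigmagallant} and Corollary~\ref{cor-sop}, which bound the dimensions of the invariant spaces of tensor products $\bigotimes_i\sigma_i$, each~$\sigma_i$ being the standard representation or its dual, twisted by the monomial substitutions and translations produced by the ``$+uv$'' shift. Everything else --- the combinatorial reduction, the shift, the Xu stratification (Theorem~\ref{XuStep}, Proposition~\ref{pr-strat}) and the complexity bookkeeping via Quantitative Sheaf Theory --- is insensitive to the monodromy group and carries over unchanged, with the weight hypothesis in Theorem~\ref{thmO4} playing the role of lightness.

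So the crux is to prove the oxozonic analogue of Proposition~\ref{prop-Sigmagallant} (and of Corollary~\ref{cor-sop}). The key structural input is the classical isogeny $\SL_2\times\SL_2\to\SO_4$ with kernel the diagonal~$\mu_2$, under which the standard $4$-dimensional representation of~$\SO_4$ becomes the external tensor product $\mathrm{Std}_1\boxtimes\mathrm{Std}_2$ of the two standard $2$-dimensional representations. Consequently, any tensor product of copies of the standard representation of~$\SO_4$ and of its dual, restricted to $\SL_2\times\SL_2$, is the external tensor product of the corresponding tensor products for the two factors, and its invariant space is the tensor product of the two $\SL_2$-invariant spaces --- each of which is already controlled by the \goodp\ case with $r=2$. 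Geometrically, one passes to the connected \'etale double cover $V\to U$ of the open set of lissit\'e that trivializes the character $\Ort_4\to\Ort_4/\SO_4=\{\pm1\}$; over~$V$ the sheaf~$\mcF$ has monodromy in~$\SO_4$ and becomes geometrically a tensor product $\mcG_1\otimes\mcG_2$ of two middle-extension sheaves of rank~$2$ whose geometric monodromy contains~$\SL_2$ and which are therefore \lgoodp. The outer element of~$\Ort_4$ swaps $\mcG_1$ and~$\mcG_2$, so one must also treat the Goursat configurations that ``separate'' the two factors; this is where the oxozonic hypothesis (that~$\SO_4$, and not merely~$\Ort_4$, already acts irreducibly) is essential, since it forces $\mcG_1$ and~$\mcG_2$ to be geometrically non-isomorphic up to twist and so rules out the exceptional invariants that would otherwise appear.

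It remains to check that the monomial substitutions keep the relevant sheaves within the scope of this oxozonic analysis. For part~(1), the substitution $n\mapsto n^c$ pulls the double cover $V\to U$ back to a double cover that stays connected precisely when~$c$ is odd --- equivalently, the quadratic character $\det\mcF$ pulls back along $[c]\colon\Gm\to\Gm$ to a geometrically nontrivial character exactly for~$c$ odd --- so $[c]^{*}\mcF$ is again oxozonic and the argument above applies; when~$c$ is even this structure degenerates (the monodromy of $[c]^{*}\mcF$ drops into~$\SO_4$ or smaller) and is not covered by the present extension. For part~(2), the trilinear bound has the much larger threshold $MN\geq q^{1/2+\delta}$ and therefore only requires the weaker monodromy input that the oxozonic (and, where needed, sulfatic) analysis supplies for all nonzero $a,b,c$, so no parity condition is needed.

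The step I expect to be the main obstacle is exactly the Goursat--Kolchin--Ribet bookkeeping for~$\Ort_4$. One has to keep simultaneous track of (i)~the diagonal~$\mu_2$ in $\SL_2\times\SL_2\to\SO_4$, which imposes parity constraints on which tensor monomials can carry invariants; (ii)~the outer swap in~$\Ort_4$, which roughly doubles the combinatorics of the relevant Goursat diagrams; and (iii)~the Tate twists and the possible geometric isomorphisms among the pulled-back rank-$2$ factors, which must be excluded using oxozonic irreducibility. Once this analogue of Proposition~\ref{prop-Sigmagallant} and Corollary~\ref{cor-sop} is in hand, the remainder is a line-by-line repetition of the \goodp\ case, with only the complexity constants affected.
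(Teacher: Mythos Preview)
Your overall strategy---run the proofs of Theorems~\ref{thmType12intro} and~\ref{thmtriplesum} unchanged, replacing only the \good\ input---is exactly what the paper does. But the technical route you sketch for that replacement is different, and your explanation of part~(2) contains a genuine misunderstanding.

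\textbf{The paper's route.} The paper does \emph{not} pass to the double cover and factor $\mcF$ as $\mcG_1\otimes\mcG_2$. Instead it proves an oxozonic analogue of Proposition~\ref{pr-good-gkrlemma} directly (this is Proposition~\ref{good-gkrlemmaO4}): one takes the adjoint-type sheaf $\mcH$ attached to $\Lie(\SO_4)=\Lie(\SL_2)\oplus\Lie(\SL_2)$, defines $T$ as the affine stabilizer of its geometric isomorphism class, and then uses the elementary classification of normal subgroups of $\Ort_4$ (Lemma~\ref{lm-ort}: any proper normal subgroup not containing $\SO_4$ lies in the center, and $\Out(\Ort_n/Z_n)$ is inner) to show that failure of the pair monodromy to contain $\SO_4\times\SO_4$ forces the Goursat datum to have $H_i\subset Z_n$, whence an affine coincidence in $T$. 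Once Proposition~\ref{good-gkrlemmaO4} is in hand, Remark~\ref{rm-O4} records that Propositions~\ref{pr-diagonal1}, \ref{pr-diagonal2} and~\ref{pr-diagonal3} follow verbatim with $N=\SO_4$ playing the role of the core subgroup, and the rest of the argument is untouched. This is considerably shorter than your proposed factorization, since it avoids the double cover, the swap bookkeeping, and any discussion of when $\mcG_1\simeq\mcG_2$; the parity condition $c$ odd enters only to ensure that the geometric monodromy of $[v\mapsto v^c]^*\mcF$ is still $\Ort_4$ rather than dropping to $\SO_4$.

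\textbf{Your error in part~(2).} The absence of a parity condition on $c$ in the trilinear case has nothing to do with the threshold $MN\geq q^{1/2+\delta}$ or with needing ``weaker monodromy input''. The point is structural: in the proof of Theorem~\ref{thmtriplesum} one rewrites the sum as $\sum_{u,v}\xi_u\zeta_v K(uv)$ (absorbing all the monomial exponents into the weights $\xi_u,\zeta_v$), and the only sheaf-theoretic input needed is Corollary~\ref{cor-sop} applied to $\mcF$ itself. That corollary invokes Proposition~\ref{pr-diagonal3} with $c=1$, and $1$ is odd, so Proposition~\ref{good-gkrlemmaO4} applies without restriction on the original $a,b,c$. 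This is what the paper means by ``only the sheaf~$\mcF$ appears in this argument''.
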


\subsection{Cubic toroidal moments}\label{sec-toric}

The original motivation which led to Theorem~\ref{thmType2simple} (and
required its general form) was the study of what we call \emph{cubic
  toroidal moments}\footnote{\ Also called \emph{cubic mixed moments}.}
of special values of Dirichlet $L$-functions at the central point,
generalizing the quadratic case considered by Fouvry, Kowalski and
Michel in~\cite{FKMAA}; we refer to the introduction of this paper for a
general discussion.

In the present context, we consider a prime number~$q$ and non-zero
integers~$a$, $b$, $c$, and we define
\[
  M_{a,b,c}(q)=
  \frac{1}{q-1}\sum_{\chi\mods
    q}L(1/2,\chi^a)L(1/2,\chi^b)L(1/2,\chi^c).
\]

We wish to evaluate this moment asymptotically as
$q\rightarrow +\infty$.
After permuting $(a,b,c)$ and applying complex conjugation with
$L(1/2,\overline{\chi})=\overline{L(1/2,\chi)}$, we see that is enough
to consider the moments $M_{a,b,c}(q)$ and $M_{a,b,-c}(q)$ when $a$, $b$
and~$c$ are \emph{positive} integers, which we will always assume from
now on in this section.


Define exponential sums modulo~$q$ by
\[
  K_{a,b,c}(u;q)=\frac{1}{q}\sum_{\substack{x,y,z\in\Ff_q\\
      x^ay^bz^c=u}}e\Bigl(\frac{x+y+z}{q}\Bigr),\quad\quad
  u\in\Ff_q^{\times}.
\]

One can show that there exists a natural lisse $\ell$-adic sheaf
$\mcK_{a,b,c}$ on the multiplicative group with trace
function~$u\mapsto K_{a,b,c}(u)$, whose complexity is bounded in terms
of $(a,b,c)$ only. This sheaf is pure of weight~$0$.
In the companion paper~\cite{FKMSmoment}, we will
prove the following result.

\begin{theorem}\label{mainthm}
  Let $a$, $b$, $c\geq 1$ be setwise coprime integers.
  Let~$\eps\in\{-1,1\}$. Assume that $\mcK_{a,b,\eps c}$ is either \good
  or oxozonic in the sense of
  \textup{Definition~\ref{defgoodsheafintro}} and of
  \textup{Section~\ref{sec-oxo}}.

  There exists $\eta=\eta(a,b,c)>0$ and $D_{a,b,\eps c}\geq 1$ 
  such that the estimates
  \begin{align*}
    M_{a,b,\eps c}(q)&\geq D_{a,b,\eps c}+O(q^{-\eta}),
    \\
    M_{a,a,\eps c}(q)&=D_{a,a,\eps c}+O(q^{-\eta}),
  \end{align*}
  hold for all primes~$q$, where the implied constant depends only
  on~$(a,b,c)$.
\end{theorem}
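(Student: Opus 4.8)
The plan, carried out in the companion papers~\cite{FKMSmoment,FKMSmomentinduced} and generalising the quadratic case of~\cite{FKMAA}, is to reduce $M^{+1}_{a,b,\eps c}(q)$ and $M_{a,a,\eps c}(q)$ to bilinear and trilinear sums of the trace function $K_{a,b,\eps c}$ and then to invoke Theorems~\ref{thmType12intro}, \ref{thmtriplesum} and~\ref{thmO4} as the main new inputs. Since $a,b,c$ are setwise coprime, one first discards the $O_{a,b,c}(1)$ characters for which $\chi^a$, $\chi^b$ or $\chi^{\eps c}$ is trivial (their total contribution being $\ll q^{-1+o(1)}$) and assumes all three twists primitive modulo~$q$. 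Next I would insert an approximate functional equation for each of the three central values: as each twist has conductor~$q$, every factor $L(1/2,\chi^{?})$ is replaced by a smoothly truncated Dirichlet polynomial of length $\ll q^{1/2+o(1)}$ plus a dual sum of the same length carrying the root number $\epsilon(\chi^{?})=\tau(\chi^{?})/\sqrt q$. Expanding the triple product produces eight pieces according to which $L$-values are taken in principal or dual form. The pieces with few dual sums reduce, after orthogonality of the characters modulo~$q$, to counting small solutions of a monomial congruence $m^{a'}n^{b'}\ell^{c'}\equiv\pm1\pmod q$ (possibly twisted by an elementary Gauss-sum level set) and are treated directly; in the all-dual piece, the product of root numbers equals $q^{-1/2}\sum_u\chi(u)K_{a,b,\eps c}(u;q)$ by the very definition of $K_{a,b,\eps c}$, so after the character average this piece becomes a bilinear/trilinear sum of the monomial trace function $K_{a,b,\eps c}$ in the dual variables.

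The main term comes from the diagonal, namely the range where the monomial congruences reflect exact multiplicative identities among small variables; summing the smooth weights $\asymp(mn\ell)^{-1/2}$ over this set yields a polynomial in $\log q$. In the $+c$ case the diagonal is rigid and contributes only the constant $P_{a,b,c}=1$; in the $-c$ case the near-coincidence $\chi^a\overline{\chi^c}$, which degenerates to $|\chi^a|^2$ exactly when $a=c$, supplies one extra logarithm, so that $\deg P_{a,b,-c}=\delta_{a,c}$, and the positivity of the diagonal weights gives $P_{a,b,-c}$ a positive leading coefficient. When $a=b$ the diagonal count is two-sided and all remaining terms can be shown to be $O(q^{-\eta})$, giving the equality $M_{a,a,\eps c}(q)=P_{a,a,\eps c}(\log q)+O(q^{-\eta})$; for general $a\neq b$ only the even part can be accessed, and from one side --- certain secondary contributions being provably non-negative though not provably negligible --- which yields the lower bound $M^{+1}_{a,b,\eps c}(q)\geq P_{a,b,\eps c}(\log q)+O(q^{-\eta})$.

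The crux is estimating the off-diagonal. For the pieces whose kernel is a specialisation of $K_{a,b,\eps c}$, I would perform a Cauchy--Schwarz and the $+uv$ shift of Friedlander--Iwaniec type, in the monomial form used by Nunes~\cite{nunesarx}, to bring them into the shape of the type~II and trilinear sums $\sum_{m}\sum_{n}\alpha_m\beta_n K(m^{b}n^{c})$ and $\sum_{j,m,n}\alpha_j\beta_m\gamma_n K(j^am^bn^c)$ of Theorems~\ref{thmType12intro} and~\ref{thmtriplesum}, with $K$ the trace function of $\mcK_{a,b,\eps c}$ (or of a translate of it, which has the same geometric monodromy group and complexity). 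Invoking the hypothesis that $\mcK_{a,b,\eps c}$ is \good (or oxozonic) and has complexity $O_{a,b,c}(1)$, one applies Theorem~\ref{thmType12intro} and Theorem~\ref{thmtriplesum} --- or Theorem~\ref{thmO4} in the oxozonic case, where the trilinear bound is unconditional while the type~I/II bound requires the exponent of the relevant dual variable to be odd, a parity one arranges by the choice of decomposition --- with $l$ taken large relative to the available margins ($M^2N\geq q^{1+\delta}$ for type~I, $MN\geq q^{3/4+\delta}$ for type~II, $MN\geq q^{1/2+\delta}$ for the trilinear sum), which gives a power saving $q^{-\eta}$ on each dyadic block. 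The remaining pieces, whose kernels are lower-rank exponential sums --- for instance the single-Gauss-sum term, induced from a rank-one sheaf and hence not \goodp --- are dispatched instead by completion and Weil-type bounds for the resulting complete one-variable sums.

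The main obstacle on the analytic side is to organise the dyadic decomposition so that \emph{every} block arising from the off-diagonal lies in the admissible region of one of these three sheaf-theoretic estimates; the delicate blocks are those with a dual variable as short as $q^{\delta}$, or with all dual variables close to $q^{1/2}$, and it is exactly for these that the full uniformity of Theorems~\ref{thmType12intro} and~\ref{thmtriplesum} (rather than any single-variable bound), together with the freedom to group variables into the type~II shape $MN\geq q^{3/4+\delta}$, is indispensable. A second, separate difficulty --- needed for the theorem to be non-vacuous --- is the arithmetic-geometric one of computing the geometric monodromy group of $\mcK_{a,b,\eps c}$ and determining for which coprime triples $(a,b,c)$ it is \goodp, for which it is oxozonic (the case of $\SO_4$ or $\Ort_4$ acting irreducibly, which is precisely what forces the oxozonic branch of Theorem~\ref{thmO4} into the picture and explains the dichotomy in the hypothesis of the theorem), and for which the present method does not yet apply.
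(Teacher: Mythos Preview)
The paper does not actually prove this theorem: the sentence immediately preceding the statement says explicitly ``In the companion papers~\cite{FKMSmoment,FKMSmomentinduced}, we will prove the following result,'' and no argument for it appears anywhere in the present paper. So there is no ``paper's own proof'' to compare against; the theorem is quoted here only as the motivating application for Theorems~\ref{thmType12intro}, \ref{thmtriplesum} and~\ref{thmO4}.

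That said, your sketch is a plausible outline of the intended strategy and is consistent with everything the paper says about the application: the reduction via approximate functional equations and character orthogonality to monomial bilinear/trilinear forms in the trace function $K_{a,b,\eps c}$, followed by the invocation of the sheaf-theoretic estimates of this paper, is exactly the shape of argument one expects by analogy with the quadratic case of~\cite{FKMAA}. Your identification of the product of three root numbers with $q^{-1/2}\sum_u\chi(u)K_{a,b,\eps c}(u;q)$ (up to normalization) is correct and is indeed the mechanism by which the sheaf $\mcK_{a,b,\eps c}$ enters. Your reading of the structure of the main term (constant for $+c$, degree $\delta_{a,c}$ for $-c$, equality only when $a=b$) also matches the theorem as stated. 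Whether the dyadic blocks can all be pushed into the admissible ranges, and how exactly the secondary non-negative terms are handled to give only a one-sided bound for $a\neq b$, are precisely the analytic details that the companion papers are meant to supply; you have correctly flagged these as the nontrivial points rather than glossed over them.
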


In order for this result to be applicable, we will also prove that
almost all sheaves $\mcK_{a,b,c}$ are either \good or oxozonic, using
the works of Katz and Katz--Tiep.

\begin{theorem}
  Let $a$, $b$, $c\geq 1$ be globally coprime integers.

  \begin{enumth}
  \item The sheaf $\mcK_{a,b,c}$ is \emph{\goodp} unless $a+b+c=4$, and is
    oxozonic otherwise.
  \item The sheaf $\mcK_{a,b,-c}$ is \emph{\goodp} unless one of the
    following holds:
  \begin{itemize}
  \item $a=c$ or $b=c$, so that $(a,b,-c)=(c,b,-c)$ or $(a,c,-c)$;
  \item $a+b=c$ and~$c\in \{2,3,4\}$;
  \item Otherwise, $a+b+c\equiv 0\mods 2$ and
    \[
      \max(a+b,c)-(a,c)-(b,c)+1=4.
    \]
    
    In that last case, up to switching $a$ and~$b$, the triple $(a,b,c)$
    is one of the four triples $(1,2,5)$, $(1,4,3)$, $(1,6,3)$ or
    $(2,3,1)$, and the sheaf $\mcK_{a,b,-c}$ is then oxozonic with the
    exception of $\mcK_{1,2,-5}$, which is sulfatic.
  \end{itemize}

\end{enumth}
\end{theorem}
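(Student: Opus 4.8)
The plan is to identify each of the sheaves $\mcK_{a,b,\pm c}$, up to geometric isomorphism, with an explicit hypergeometric sheaf in the sense of Katz~\cite{ESDE}, and then to read off its geometric monodromy group from Katz's classification of monodromy groups of hypergeometric sheaves. First I would recall that $\mcK_{a,b,c}$ is the middle multiplicative convolution on~$\Gm$ of the three sheaves $[a]_*\mcL_\psi$, $[b]_*\mcL_\psi$, $[c]_*\mcL_\psi$, where $[n]\colon\Gm\to\Gm$ is the $n$-th power map and $\mcL_\psi$ an Artin--Schreier sheaf, and that for $\mcK_{a,b,-c}$ one replaces the last factor by its pullback under $z\mapsto z^{-1}$. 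A direct Mellin transform computation, equivalently the Hasse--Davenport product relation for Gauss sums, shows that $[n]_*\mcL_\psi$ is geometrically a hypergeometric sheaf whose numerator is the set of all multiplicative characters of order dividing~$n$ and whose denominator is empty. Since multiplicative convolution multiplies Mellin transforms and inversion exchanges numerator and denominator, one obtains: $\mcK_{a,b,c}$ is geometrically the hypergeometric sheaf with numerator the multiset union over $n\in\{a,b,c\}$ of the characters of order dividing~$n$ and empty denominator, of rank $a+b+c$; and $\mcK_{a,b,-c}$ is geometrically the hypergeometric sheaf with numerator $\{\rho:\rho^a=1\}\sqcup\{\rho:\rho^b=1\}$ and denominator $\{\rho:\rho^c=1\}$, cancelled so as to remove common characters, which after using $\gcd(a,b,c)=1$ has rank $\max(a+b,c)-(a,c)-(b,c)+1$ (here $(x,y)=\gcd(x,y)$). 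In particular the rank-$4$ condition in the statement is precisely the assertion that this last quantity equals~$4$.

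Next I would determine the autoduality. Each numerator and denominator multiset above is stable under $\chi\mapsto\bar\chi$, so in all cases $\mcK_{a,b,\pm c}$ is geometrically self-dual; the sign of the autoduality (symplectic versus orthogonal) is governed by Katz's criterion, which depends only on the parity of the rank and on the multiplicities with which the self-dual characters (the trivial character and, if present, the quadratic character) occur in the two lists. This works out to orthogonal autoduality exactly when $a+b+c$, respectively $a+b-c$, is even, and symplectic otherwise, with the familiar modification at $p=2$. I would then isolate the degenerate triples, all visible directly from the explicit character lists: (i) the denominator list is entirely absorbed into the numerator, which for a mixed sign happens precisely when $a=c$ or $b=c$, leaving a Kummer-induced sheaf of the form $[\cdot]_*\mcL_\psi$, whose monodromy is imprimitive and hence has non-simple identity component; (ii) the rank drops to at most~$3$, which for globally coprime $(a,b,c)$ with a mixed sign happens exactly when $a+b=c$ with $c\in\{2,3,4\}$, and a case-by-case inspection of these low-rank rigid local systems shows the monodromy is a torus, a finite non-quasisimple group, or imprimitive; (iii) the rank equals~$4$ with orthogonal autoduality, so that the monodromy lies in $\Ort_4$, whose identity component $\SO_4$ is not simple, and an elementary computation shows the triples of this last kind, up to swapping $a$ and~$b$, are exactly $(1,2,5)$, $(1,4,3)$, $(1,6,3)$, $(2,3,1)$.

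For all remaining triples the sheaf $\mcK_{a,b,\pm c}$ is an irreducible, primitive (non-Kummer-induced) hypergeometric sheaf of some rank~$r$, either not self-dual or self-dual of the type found above, and I would appeal to Katz's classification~\cite{ESDE} of the geometric monodromy groups of such sheaves: $G$ is $\SL_r$, $\Sp_r$, $\SO_r$ or $\Ort_r$, or one of finitely many exceptional groups ($\Gd$ in rank~$7$, finite complex reflection groups, central extensions of alternating groups, and a few more) occurring only for special rank and special character data. One then checks that for our specific numerator and denominator data none of the \emph{non}-quasisimple exceptional groups can occur, and that whenever $G$ is an orthogonal group one has $r=3$ or $r\geq 5$, so that $\SO_r$ is simple. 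Since $\SL_r$, $\Sp_r$ and $\SO_r$ (for $r=3$ or $r\geq 5$) are simple algebraic groups, $\Ort_r$ has simple identity component, and finite quasisimple groups meet the second alternative of Definition~\ref{defgoodsheafintro}, this shows $\mcK_{a,b,\pm c}$ is \goodp. Finally, for the rank-$4$ orthogonal triples of (iii) I would compute the geometric determinant of $\mcK_{a,b,\pm c}$: it is geometrically trivial for $(1,2,-5)$, where one then gets $G=\SO_4$ (sulfatic), and geometrically nontrivial (detecting the quadratic character) otherwise, where the same classification gives $G=\Ort_4$ with $\SO_4$ acting irreducibly via the standard $\mathrm{Std}_2\boxtimes\mathrm{Std}_2$ representation, i.e.\ the sheaf is oxozonic. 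For part~(1) one runs the same argument with empty denominator: the only value of $a+b+c$ giving rank~$4$ is~$4$ itself, where the autoduality is orthogonal and the determinant detects the quadratic character, forcing $G=\Ort_4$ and oxozonicity, while for every other coprime triple the rank is $3$ or $\geq 5$ and $G$ is $\SL_r$, $\Sp_r$, $\SO_r$ or $\Ort_r$.

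The routine parts are the convolution bookkeeping of the first paragraph and the duality sign. The hard part will be the last two paragraphs: ruling out the sporadic non-quasisimple monodromy groups that can a priori appear for hypergeometric sheaves of small rank (so that one really lands in a classical group or a finite quasisimple group), pinning down the finite list of genuinely exceptional triples, and --- in the rank-$4$ case --- computing the determinant precisely enough to separate the sulfatic sheaf $\mcK_{1,2,-5}$ from the oxozonic ones and to verify the irreducibility clause in the definition of oxozonic.
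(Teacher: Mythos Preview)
The paper does not prove this theorem; it is announced in Section~\ref{sec-toric} with the proof deferred to the companion papers~\cite{FKMSmoment,FKMSmomentinduced}, the only indication being ``using the work of Katz.'' Your plan---identify each $\mcK_{a,b,\pm c}$ with a hypergeometric sheaf via multiplicative convolution and the Hasse--Davenport relation (which is exactly what the paper sets up in Section~9.3, equations~(\ref{eq-ta})--(\ref{eq-kla})), compute the character data and rank, and then read off the monodromy from Katz's classification as summarised in Section~\ref{sec-hypergeometric}---is precisely the intended route, and your rank formula $\max(a+b,c)-(a,c)-(b,c)+1$ is correct.

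One concrete slip to fix: your item~(ii) mischaracterises the second bullet. ``Rank at most~$3$'' is \emph{not} equivalent to $a+b=c$ with $c\in\{2,3,4\}$; for instance $(a,b,c)=(1,1,3)$ has rank~$2$ but $a+b\neq c$, and the theorem asserts gallantness there (which indeed follows from Theorem~\ref{gallanthypergeneric} after swapping numerator and denominator to reach the form $r>t$). What actually singles out the second bullet is that these are the low-rank cases with $r'=t'$ after cancellation; the remaining low-rank cases all have $r'\neq t'$ and are covered by the generic theorem. So your case split needs reorganising, though the ingredients are all present. Two smaller points: your autoduality paragraph is loose (for odd rank the sheaf need not be self-dual at all, e.g.\ $\Kl_3$ has monodromy~$\SL_3$, so ``symplectic otherwise'' is misleading; what you really need is only Katz's criterion for when $G^0\subset\Ort_4$), and you should make the primitivity verification explicit---for $\mcK_{a,b,c}$ it is immediate since the trivial character has multiplicity~$3$ in~$\bfchi$ while every nontrivial one has multiplicity at most~$2$ by global coprimality, but for $\mcK_{a,b,-c}$ the check after cancellation is a little more delicate.
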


As a corollary we obtain a non-vanishing result:

\begin{corollary}\label{Ldonotvanish} Let $a$, $b$, $c$ be three
  non-zero setwise coprime integers. For~$q\geq q_0$ large enough, the lower
  bound
  \[
    |\{\chi\mods q\,\mid\,
    L(1/2,\chi^{a})L(1/2,\chi^b)L(1/2,\chi^c)\not=0\}|\gg
    q/\log^{12}q
  \]
  holds, with an implied constant depending only on $a$, $b$, $c$
  and~$q_0$.
\end{corollary}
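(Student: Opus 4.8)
\textbf{Proof proposal for Corollary~\ref{Ldonotvanish}.} The plan is to combine the asymptotic formulas for the cubic toroidal moments (Theorem~\ref{mainthm}) with a standard Cauchy--Schwarz argument bounding the number of non-vanishing twisted central values from below by the ratio of the square of a first moment to a second moment. First I would reduce to the case where $a$, $b$, $c$ are positive by permuting and applying $L(1/2,\overline\chi)=\overline{L(1/2,\chi)}$, and I would further restrict to characters of a fixed parity $\eps\in\{+1,-1\}$, say the even ones, so that one may apply the lower bound for $M^{+1}_{a,b,\eps' c}(q)$ from Theorem~\ref{mainthm} (with $\eps'$ chosen so that $\mcK_{a,b,\eps' c}$ is \good or oxozonic, which, by the classification theorem just stated, can always be arranged for at least one choice of sign unless we are in one of the finitely many exceptional triples, which we then handle separately).

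The heart of the matter is the Cauchy--Schwarz inequality
\[
  \Bigl|\{\chi\ \text{even}\ \mods q\ :\ L(1/2,\chi^a)L(1/2,\chi^b)L(1/2,\chi^c)\neq 0\}\Bigr|
  \geq
  \frac{\bigl|\sum_{\chi}L(1/2,\chi^a)L(1/2,\chi^b)L(1/2,\chi^c)\bigr|^2}
       {\sum_{\chi}\bigl|L(1/2,\chi^a)L(1/2,\chi^b)L(1/2,\chi^c)\bigr|^2},
\]
where both sums run over even characters modulo $q$. The numerator is $(q-1)^2 M^{+1}_{a,b,\eps c}(q)^2$, which by Theorem~\ref{mainthm} is $\gg q^2$ if the relevant polynomial has a positive value (this uses $P_{a,b,\eps c}$ having positive leading term, so for $q$ large the main term dominates), and more precisely it is $\gg q^2/\log^{?}q$ in the borderline cases where the polynomial is the constant $1$. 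For the denominator I would use the standard moment bounds for central values of Dirichlet $L$-functions: by Hölder's inequality it suffices to bound the $6$th moment $\sum_{\chi\mods q}|L(1/2,\chi)|^6 \ll q\log^9 q$, which is a classical consequence of the large sieve / approximate functional equation (going back to Heath-Brown and to work on the fourth and sixth moments of Dirichlet $L$-functions); splitting $|L(1/2,\chi^a)L(1/2,\chi^b)L(1/2,\chi^c)|^2$ via Hölder with exponents $3,3,3$ and using that $\chi\mapsto\chi^a$ is a bounded-to-one map on the character group gives $\sum_{\chi}|L(1/2,\chi^a)L(1/2,\chi^b)L(1/2,\chi^c)|^2 \ll q\log^9 q$. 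Combining the two bounds yields the stated count $\gg q/\log^{12} q$, the extra $\log^3 q$ loss coming from the Hölder splitting of the $L^2$-norm into three $L^6$-norms each contributing $\log^{3}$.

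The main obstacle, and the point requiring care, is the \emph{positivity} of the leading term of the main-term polynomial: Theorem~\ref{mainthm} only asserts $M^{+1}_{a,b,\eps c}(q) \geq P_{a,b,\eps c}(\log q) + O(q^{-\eta})$ with $P_{a,b,c}=1$ and $P_{a,b,-c}$ of degree $\delta_{a,c}\in\{0,1\}$ with positive leading coefficient, so in every case $P_{a,b,\eps c}(\log q)\geq 1>0$ for $q$ large, giving the clean lower bound $M^{+1}_{a,b,\eps c}(q)\gg 1$. One then has to check that for each setwise-coprime triple there is \emph{some} admissible choice of sign $\eps$ (and possibly a permutation of $a,b,c$) for which $\mcK_{a,b,\eps c}$ is \good or oxozonic; this is where the classification theorem is used, and the finitely many leftover cases (the sulfatic sheaf $\mcK_{1,2,-5}$ and the triples with $a+b+c=4$, $a=c$, etc.) must either be covered by choosing the other sign, or, if both signs fail, handled by a direct ad hoc argument or subsumed because the corollary is only asked for $q\geq q_0$ with $q_0$ allowed to depend on $a,b,c$. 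Finally one must make sure the sum over \emph{even} characters alone — half of all characters — still produces $\gg q^2$ in the numerator, which is automatic since the moment $M^{+1}$ is itself normalized by $q-1$ rather than by the number of even characters, so no constant is lost.
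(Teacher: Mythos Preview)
The paper does not include a proof of this corollary (it is stated in the introduction as a consequence of Theorem~\ref{mainthm}, with details left to the companion papers), so I can only assess your argument on its merits.

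Your overall architecture is correct: a first-moment lower bound from Theorem~\ref{mainthm} combined via H\"older with an upper bound on a higher moment. However there is a genuine gap in the execution. You reduce, via Cauchy--Schwarz, to the sixth moment
\[
\sum_{\chi\mods q}|L(1/2,\chi)|^{6}\ll q\log^{9}q,
\]
and call this ``classical''. It is not: for a \emph{fixed} prime modulus this bound is a well-known open problem. The result of Conrey--Iwaniec--Soundararajan establishes the sixth-moment asymptotic only after an additional average over the modulus~$q$; the large sieve alone loses a factor $q^{1/2}$. (Incidentally, even if the sixth moment were available, your Cauchy--Schwarz would give $q/\log^{9}q$, which is \emph{stronger} than the stated $q/\log^{12}q$; the ``extra $\log^{3}q$'' you invoke does not exist.)

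The fix is to use H\"older with exponent $p=4/3$ rather than $p=2$. Writing $N$ for the number of $\chi$ with $P(\chi)=L(1/2,\chi^{a})L(1/2,\chi^{b})L(1/2,\chi^{c})\neq 0$, one has
\[
\Bigl|\sum_{\chi}P(\chi)\Bigr|\leq N^{1/4}\Bigl(\sum_{\chi}|P(\chi)|^{4/3}\Bigr)^{3/4},
\]
and then a second application of H\"older with exponents $(3,3,3)$ gives
\[
\sum_{\chi}|L(1/2,\chi^{a})|^{4/3}|L(1/2,\chi^{b})|^{4/3}|L(1/2,\chi^{c})|^{4/3}
\leq\prod_{e\in\{a,b,c\}}\Bigl(\sum_{\chi}|L(1/2,\chi^{e})|^{4}\Bigr)^{1/3}.
\]
Each factor is $\ll q\log^{4}q$ by Heath-Brown's fourth moment (using, as you note, that $\chi\mapsto\chi^{e}$ is at most $|e|$-to-one). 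Hence $(\sum|P|^{4/3})^{3/4}\ll q^{3/4}\log^{3}q$, and since $|\sum_{\chi}P(\chi)|\gg q$ by Theorem~\ref{mainthm}, we get $N\gg q/\log^{12}q$. This is presumably the intended argument, and it explains the exponent~$12=3\cdot 4$ exactly. Your remarks on positivity of the main-term polynomial and on handling the finitely many exceptional triples via a suitable choice of sign are fine.
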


In a similar vein, Berta and zur Verth~\cite{BV}, using the results of
this paper, obtained non-vanishing results for products of two
Dirichlet $L$-values under angular constraints on the associated Gauss
sums (compare with~\cite{MAMS}*{Thm.\,1.8}):

\begin{theorem}[Berta--zur Verth]
  Let $a,b\in \Zz\setminus \{0\}$ and let $I\subset \Cc^{(1)}$ be an
  interval with positive measure in the unit circle. There exists a
  real number $c=c(a,b,I)>0$ such that for all large enough prime
  numbers~$q$, the bound
  \[
    |\{\chi\mods q,\ \chi\not=1,\ L(1/2,\chi^a)L(1/2,\chi^b)\not=0,\
    \eps(\chi)\in I\}|\geq c(q-2),
  \]
  holds, where $\eps(\chi)$ denotes the unitarily normalized Gauss sum
  of~$\chi$.
\end{theorem}

\subsection{Outline of the paper}

\begin{itemize}
\item In Section~\ref{secladicgeneral}, we discuss the relevant facts
  from algebraic geometry. Of particular importance is our general
  form of Xu's idea (see Theorem~\ref{XuStep}).

\item In Section~\ref{sec:goursat}, we present our generalized form of
  Goursat's Lemma and the Goursat--Kolchin--Ribet criterion, as well as
  applications.
  
\item In Section~\ref{bilinearstart}, we present in a general abstract
  form the basic implication of the ``shift by $+uv$'' method to study
  bilinear forms, reducing statements like Theorem~\ref{thmType12intro}
  to estimates for certain ``complete'' sums of the kernel.
  This part does not require $K$ to be a trace function and we express
  it in the form of Proposition~\ref{propreduct}. We also
  explain at the end of this section how this proposition will be
  applied for trace functions.
  
\item Sections~\ref{secstrat} and~\ref{secproofmainthm} provide the core
  of the proof: combining Xu's idea with other tools, we show how to
  obtain good estimates for the auxiliary complete sums arising from
  Proposition~\ref{propreduct}.


\item In Section~\ref{sec-trilinear}, we give the proof of
  Theorem~\ref{thmtriplesum}.

\item In Section~\ref{sec-oxozonic}, we handle the case of oxozonic
  sheaves.

\item Finally, Section~\ref{sec-gallant} provides a series of examples
  of \good sheaves, relying especially on the work of Katz
  (see~\cite{GKM} and~\cite{ESDE} in particular). We include examples
  with finite monodromy group, especially those arising from
  hypergeometric sheaves, from the tables of Beukers and
  Heckman~\cite{BH}. This section can be read independently of the proof of
  the main results, and is meant to show that variety of situations
  where the results apply.
  
\end{itemize}

\subsection*{Acknowledgements}

This research was partially supported by the SNF grants $197045$,
$219220$, the SNF-ANR ``Etiene'' grant $10003145$, the NSF grant DMS-2502029, and a Sloan Research Fellowship. 

We are grateful to Filippo Berta, Peter Sarnak, Svenja zur Verth and
Ping Xi for  encouragements and useful comments. 

EF thanks EPFL and ETHZ for their hospitality and for providing an environment conductive to a fruitful collaboration.

\subsection*{Notation}

For $z\in\Cc$, we denote $e(z)=e^{2i\pi z}$.

We use the notation $f\ll g$ and $f=O(g)$, for $f$ and $g$ defined on a
set~$X$, synonymously: either notation means that there exists a
constant~$C\geq 0$ such that $|f(x)|\leq Cg(x)$ for all $x\in X$.

We will often have statements that a certain function $f$ is ``bounded
polynomially'' in terms of certain quantities, say $\alpha$, $\beta$;
this means that there exists real numbers~$c\geq 0$ and~$A\geq 0$ such
that for all $x$ in the domain of definition of~$f$ and~$g$, the
inequality
\[
  |f(x)|\leq c(\alpha\beta)^A
\]
holds. This will apply in particular to certain implicit constants in
bounds $f\ll g$; we then mean that
\[
  |f(x)|\leq c(\alpha\beta)^Ag(x)
\]
for suitable~$c$ and~$A$ as above.

\section{Algebro-geometric preliminaries}\label{secladicgeneral}

We recall here some facts from algebraic geometry and establish our
notation and conventions.

Let~$k$ be a field. An algebraic variety~$X$ over~$k$ is a reduced and
separated scheme of finite type over the spectrum of~$k$.

By the \emph{degree} of a subvariety $X$ of projective space, we mean
the sum of the degrees of its irreducible components (which may have
different dimensions).

For a prime number $\ell$ different from the characteristic of~$k$, we
denote by $\Der(X,\bQl)$ the bounded derived category of complexes of
$\ell$-adic constructible sheaves on~$X$. An object of this category
will be called simply an $\ell$-adic \emph{complex}; by $\ell$-adic
\emph{sheaf} on~$X$, we will mean a constructible $\ell$-adic sheaf. We
will sometimes just speak of \emph{complex} or \emph{sheaf}, if no
confusion concerning $\ell$ can arise.

Whenever we consider a field~$k$ and prime $\ell$ invertible in~$k$, we
also \emph{fix} an isomorphism $\iota\colon \bQl\to\Cc$, and \emph{we
  use it implicitly to identify $\ell$-adic numbers with complex
  numbers}. In particular, if $k$ is a finite field, we can speak of the
value in~$\Cc$ of the \emph{trace function} of a complex~$M$ on~$X$ at a
point $x\in X(k)$. We sometimes denote this value by $t_M(x;k)$, and
correspondingly write $t_M(x;k_n)$ if $k_n/k$ is a finite extension
of~$k$ of degree~$n$ and~$x\in X(k_n)$. When we speak of mixed or pure
complexes of $\ell$-adic sheaves, we also mean $\iota$-mixed or
$\iota$-pure.

If~$X$ is a smooth and geometrically connected curve
over~$k$,\footnote{\ Which, in this paper, will most often be the
  affine line.} a \emph{middle-extension sheaf} is an $\ell$-adic
sheaf~$\mcF$ on~$X$ such that for some (equivalently, for any) open
dense subset~$U\subset X$ with open immersion $j\colon U\to X$, the
natural adjunction morphism $\mcF\to j_*j^*\mcF$ is an isomorphism
(see~\cite[(7.3.1)]{ESDE}).

Under the same assumption on~$X$, let~$\mcF$ be an $\ell$-adic sheaf. By
definition of constructible sheaves, there exists a dense open subset
of~$X$, say~$U$, such that the restriction of~$\mcF$ to~$U$ is
lisse. Thus $\mcF|U$ corresponds to a continuous $\ell$-adic
representation of the étale fundamental group of~$U$, well-defined up to
isomorphism. The arithmetic monodromy group of $\mcF|U$ is classically
defined to be the Zariski-closure of the image of this representation,
viewed as a $\bQl$-linear algebraic group. Up to isomorphism, this group
is independent of the choice of~$U$, and we call it the \emph{arithmetic
  monodromy group} of~$\mcF$. Similarly, let~$\bar{k}$ be an algebraic
closure of~$k$; the restriction of the representation to the fundamental
group of $U_{\bar{k}}$ defines the geometric monodromy group of
$\mcF|U_{\bar{k}}$. It is independent of~$U$, up to isomorphism, and
called the \emph{geometric monodromy group} of~$\mcF$.

The étale fundamental group of~$U$ is a quotient of the Galois
group~$\Gal(\overline{k(X)}/k(X))$ of the function field of~$X$, and we
will sometimes identify the restriction of~$\mcF$ to~$U$ with a
finite-dimensional $\ell$-adic representation of this Galois group.

We will also often use the fact that the arithmetic (resp. geometric)
monodromy group of~$\mcF$ coincides with that of the
sheaf~$j_!j^*\mcF$. This allows us to assume in many proofs
that~$\mcF$ is the extension by zero of a lisse sheaf on some dense
open subset of~$X$.

The \emph{dual} of~$\mcF$ is defined to be the sheaf
$\mcF^{\vee}=\underline{\mathrm{Hom}}(\mcF,\bQl)$; it is lisse on~$U$
and coincides there with the lisse sheaf associated to the
contragredient of the representation corresponding to~$\mcF$. In
particular, if~$\mcF$ is pure of weight~$0$ on~$U$, then the trace
function of~$\mcF^{\vee}$ on~$U$ is (under~$\iota$) the complex
conjugate of the trace function of~$\mcF$. This property is also true
on all of~$X$ if ~$\mcF$ is a middle-extension sheaf (by a
theorem of Gabber~\cite[proof of Duality
Lemma 1.8.1(1)]{MMP}).


As in previous works, we will use the following standard lemma (see
\cite[Lemma~1.7]{Xu}).

\begin{lemma}\label{lm-sz}
  Let $l\geq 1$ be an integer and let $A>0$. Let $q$ be a prime number
  and let $X\subset \Aa^l_{\Fq}$ be an algebraic variety of dimension
  $d\geq 0$ given by the vanishing of $\leq A$ polynomials of
  degree~$\leq A$. Let~$V$ be an integer with $0\leq V<q/2$. Then
  $$
  |\{x=(x_1,\ldots, x_l)\in \Zz^l\,\mid\, V\leq x_i\leq 2V\text{ for }
  1\leq i\leq l \text{ and }x\in X(\Ff_q) \}|\ll V^d,
  $$
  where the implicit constant depends only on $l$ and $A$.
\end{lemma}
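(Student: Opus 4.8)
The plan is to prove Lemma~\ref{lm-sz} by induction on the ambient dimension~$l$, slicing~$X$ by the fibers of the last-coordinate projection; this is essentially the argument of \cite[Lemma~1.7]{Xu}. It is convenient first to restate the lemma for (possibly non-reduced) closed subschemes $X\subset\Aa^l_{\Fq}$ cut out by $\le A$ polynomials of degree $\le A$, with $d$ the Krull dimension: the number of $\Fq$-rational points depends only on the underlying reduced subscheme, so this is equivalent to the stated version while keeping the induction self-contained (taking the radical could destroy the degree bounds on the defining equations). We may also assume $B\ge 1$, the case $B=0$ being trivial. For the base case $l=1$: either all defining polynomials vanish, so $X=\Aa^1$, $d=1$, and the count is $\le B+1\le 2B$; or some defining polynomial is nonzero of degree $\le A$, so $X$ is finite with $\le A$ points and $d=0$.

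For $l\ge 2$, let $\pi\colon\Aa^l\to\Aa^1$ be the projection onto~$x_l$, and for $t\in\Aa^1$ let $X_t=\pi^{-1}(t)\subset\Aa^{l-1}$, the subscheme obtained by substituting $x_l=t$ in the defining equations of~$X$; it is again cut out by $\le A$ polynomials of degree $\le A$, and (identifying an integer $t\in\{0,\dots,q-1\}$ with its image in~$\Fq$) one has $X_t(\Fq)=\{(x_1,\dots,x_{l-1})\in\Fq^{l-1}\mid(x_1,\dots,x_{l-1},t)\in X(\Fq)\}$. Since $X_t$ is a closed subspace of~$X$, one always has $\dim X_t\le d$. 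The key point is that, by effective elimination theory (an effective form of Chevalley's theorem on the dimension of fibers, via resultants or degree bounds for projections of constructible sets), there is a constant $C_1=C_1(l,A)$ such that the \emph{jump locus} $S=\{t\in\bFq\mid\dim X_t\ge d\}$ has at most $C_1$ elements, while $\dim X_t\le d-1$ (or $X_t=\emptyset$) for $t\notin S$. Because $0\le B\le t\le 2B<q$, distinct integers $t$ in $[B,2B]$ reduce to distinct elements of~$\Fq$, so at most $C_1$ of them reduce into~$S$. Writing
\[
  N=\sum_{t=B}^{2B}\bigl|\{x'\in\Zz^{l-1}\mid B\le x'_i\le 2B\ (1\le i\le l-1),\ x'\bmod q\in X_t(\Fq)\}\bigr|
\]
and applying the induction hypothesis to each $X_t\subset\Aa^{l-1}$ (with the same~$A$), the inner count is $\ll B^{\dim X_t}$ with implied constant depending only on $l$ and~$A$; the at most $C_1$ terms with $t$ in the jump locus contribute $\ll B^d$ each, and the remaining $\le B+1\le 2B$ terms contribute $\ll B^{d-1}$ each, for a total of $\ll B^d$, as required. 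Note that $d=l$ needs no separate treatment, since the fibers then automatically have dimension $\le l-1=d-1$.

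The only non-routine input — and the step I expect to be the main obstacle — is the uniform effective bound on the size of the jump locus~$S$: one must know that the exceptional set in Chevalley's fiber-dimension theorem, for a scheme defined by boundedly many equations of bounded degree in $l$ variables, has cardinality bounded purely in terms of $l$ and~$A$. This is standard effective algebraic geometry (Bézout-type degree bounds for images of constructible sets under linear projections), and if preferred such bounds can be extracted from the quantitative estimates recalled in~\cite{qst}; everything else in the argument is bookkeeping with the constants $C(l,A)=(C_1(l,A)+2)\,C(l-1,A)$.
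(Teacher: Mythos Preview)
The paper does not supply its own proof of this lemma; it is stated as a standard result with a reference to \cite[Lemma~1.7]{Xu}. Your argument by induction on~$l$ via slicing along the last coordinate is correct and is the standard approach (and presumably the one in~\cite{Xu}). The one step you flag as non-routine, the uniform bound on the jump locus~$S$, can be made more transparent than an appeal to effective Chevalley: over~$\bFq$, a fiber~$X_t$ can have dimension~$\ge d=\dim X$ only if some geometric irreducible component of~$X$ of dimension~$d$ has constant image~$\{t\}$ under~$\pi$ (since a proper closed subset of an irreducible variety has strictly smaller dimension), so~$|S|$ is bounded by the number of geometric irreducible components of~$X$, which in turn is bounded in terms of~$l$ and~$A$ by standard B\'ezout-type degree bounds.
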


We will use Sawin's quantitative sheaf theory~\cite{qst}, and we
recall the framework.  Let $k$ be a field and $X/k$ a quasi-projective
variety defined over $k$, given with a locally-closed embedding
$u\colon X\to \Pp^N$ for some integer~$N\geq 1$. Let $\ell$ be a prime
number invertible in~$k$. Sawin defined the \emph{complexity} $c_u(M)$
of objects of $\Der(X,\bQl)$ (see~\cite[Def.\,6.3]{qst}). This is a
non-negative integer which controls quantitatively most invariants
of~$M$.

If $X$ is an open subvariety of~$\Aa^n$ for some integer~$n\geq 1$, we
will always use the natural embedding $u\colon X\to \Pp^n$ to compute
the complexity of objects of $\Der(X,\bQl)$, and will use the notation
$c(M)$ instead of $c_u(M)$. If~$n=1$, then for a
\emph{middle-extension sheaf}~$\mcF$ on~$\Aa^1_{k}$, viewed as a
complex located in some degree, and with~$u$ the obvious embedding
of~$\Aa^1$ in~$\Pp^1$, the complexity $c_u(\mcF)$ is comparable with
the complexity~$c_{\mathrm{FKM}}(\mcF)$ defined by Fouvry, Kowalski
and Michel~\cite[Def.\,1.13]{FKM1}; more precisely, we have
\[
  c_{\mathrm{FKM}}(\mcF)\leq c_u(\mcF)\leq 3c_{\mathrm{FKM}}(\mcF)^2
\]
by~\cite[Cor.\,7.4]{qst}.

Among the properties of the complexity that we will use, we only quote
here a version of Deligne's Riemann Hypothesis.

\begin{theorem}\label{th-rh}
  Let~$k$ be a finite field. Let~$\mcF$ be an $\ell$-adic sheaf
  on~$\Aa^1_{k}$ which is mixed of weights~$\leq 0$. Assume that
  \[
    H^2_c(\Aa^1_{\bar{k}},\mcF)=0
  \]
  or equivalently that the Galois representation associated to~$\mcF$
  has no coinvariants with respect to the geometric monodromy group. We
  then have
  \[
    \sum_{x\in k} t_{\mcF}(x;k)\ll c_u(\mcF)|k|^{1/2}
  \]
  where the implied constant is absolute and the embedding~$u$ is the
  obvious one.
\end{theorem}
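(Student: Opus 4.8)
The plan is to derive the bound from the Grothendieck--Lefschetz trace formula, Deligne's theory of weights, and the control of Betti numbers by the complexity furnished by quantitative sheaf theory. First I would apply the trace formula to write
\[
  \sum_{x\in k}t_{\mcF}(x;k)=\sum_{i\geq 0}(-1)^i\Tr\bigl(\Frob_k\mid H^i_c(\Aa^1_{\bar{k}},\mcF)\bigr).
\]
Since $\Aa^1$ is a curve, only $i\in\{0,1,2\}$ contribute, and $H^2_c(\Aa^1_{\bar{k}},\mcF)=0$ by hypothesis. (To record the equivalence asserted in the statement, one replaces $\mcF$ by $j_!j^*\mcF$, for $j$ the inclusion of a dense open set on which $\mcF$ is lisse — this does not change $H^2_c$, the quotient sheaf being punctual — and then applies Poincaré duality to identify $H^2_c(\Aa^1_{\bar{k}},\mcF)$, up to a Tate twist, with the coinvariants of the generic stalk of $\mcF$ under the geometric monodromy group.) Hence
\[
  \sum_{x\in k}t_{\mcF}(x;k)=\Tr\bigl(\Frob_k\mid H^0_c(\Aa^1_{\bar{k}},\mcF)\bigr)-\Tr\bigl(\Frob_k\mid H^1_c(\Aa^1_{\bar{k}},\mcF)\bigr).
\]

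Next I would invoke Deligne's theorem on weights (Weil~II): since $\mcF$ is mixed of weights $\leq 0$, the group $H^i_c(\Aa^1_{\bar{k}},\mcF)$ is mixed of weights $\leq i$, so every eigenvalue of $\Frob_k$ on $H^0_c$ has absolute value $\leq 1$ and every eigenvalue on $H^1_c$ has absolute value $\leq|k|^{1/2}$. Bounding each trace by the sum of the moduli of its eigenvalues and using $|k|\geq 1$ then gives
\[
  \Bigl|\sum_{x\in k}t_{\mcF}(x;k)\Bigr|\leq \dim H^0_c(\Aa^1_{\bar{k}},\mcF)+|k|^{1/2}\dim H^1_c(\Aa^1_{\bar{k}},\mcF)\leq |k|^{1/2}\sum_{i}\dim H^i_c(\Aa^1_{\bar{k}},\mcF).
\]

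Finally, the total dimension $\sum_i\dim H^i_c(\Aa^1_{\bar{k}},\mcF)$ must be bounded by $O(c_u(\mcF))$ with an \emph{absolute} implied constant; this is precisely the sort of estimate that the complexity of \cite{qst} is built to supply, the Betti numbers of the compactly supported cohomology of a complex on a locally closed subvariety of $\Pp^N$ being bounded in terms of $N$ and the complexity, with $N=1$ fixed here. Combining the three steps yields the stated bound. I do not expect a genuine obstacle: the only points requiring care are that this cohomological-dimension bound be uniform in $k$ and $\mcF$ — which it is, since the ambient projective space is the fixed $\Pp^1$ — and that the possibly non-vanishing term $H^0_c$, which carries weight only $\leq 0$ rather than $<0$, is harmless, since it contributes at most $\dim H^0_c\ll c_u(\mcF)\leq c_u(\mcF)|k|^{1/2}$.
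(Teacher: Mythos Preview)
Your argument is correct and follows essentially the same route as the paper: both reduce to Deligne's weight bounds on $H^0_c$ and $H^1_c$ together with the control of Betti numbers by the complexity from~\cite{qst}, the paper citing the packaged version~\cite[Th.\,7.13]{qst} and organizing the bound via a Jordan--H\"older decomposition of $\mcF|U$ rather than bounding $\sum_i\dim H^i_c$ directly as you do. The difference is cosmetic.
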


\begin{proof}
  This is essentially a special case of~\cite[Th.\,7.13]{qst}, taking
  into account that the complexity of the embedding~$u$ is absolutely
  bounded. More precisely, let~$U$ be an open dense subset such
  that~$\mcF|U$ is lisse. It is elementary that the contribution of
  the points in $k\setminus U(k)$ is $O(1)$ (see the proof of
  loc. cit.). We consider the Jordan--Hölder components of the
  representation corresponding to~$\mcF|U$, and write the sum over
  $x\in U(k)$ as the sum over the components; the assumption implies
  that none of these is geometrically trivial, and thus for each
  component (say~$\mcF_i$) we get
  \[
    \sum_{x\in U(k)} t_{\mcF_i}(x;k)\ll c_u(\mcF_i)k|^{1/2}
  \]
  by loc. cit, and the conclusion then follows after summing over~$i$
  from~\cite[Th.\,6.15]{qst}.
\end{proof}


For a finite field~$k$, an $\ell$-adic additive character~$\psi$ of $k$
or an $\ell$-adic multiplicative character~$\chi$, we use the usual
notation $\mcL_{\psi}$ or $\mcL_{\chi}$ for the associated
Artin--Schreier or Kummer sheaf, with trace functions $x\mapsto\psi(x)$
or $x\mapsto \chi(x)$.

\begin{proposition}\label{pr-fkm2}
  Let $k$ be a finite field of characteristic~$q$ and $\bar{k}$ an
  algebraic closure of~$k$. Let $\sheaf{F}$ be a constructible
  $\ell$-adic sheaf on~$\Aa^1$ over~$k$.  Assume that the restriction
  of~$\mcF$ to a dense open subset is a lisse geometrically irreducible
  sheaf.

  Let
  \[
    \mathbf{B}_{\mcF}=\Bigl\{\gamma=\begin{pmatrix} a& b\\0&d
    \end{pmatrix}
    \in\PGL_2(\bar{k})\,\mid\, \gamma^*(\mcF)\simeq \mcF\Bigr\},
  \]
  where $\simeq$ denotes geometric isomorphism. This is an algebraic
  subgroup of~$\PGL_2(\bar{k})$, and at least one of the following
  properties holds:
  \par
  \begin{enumth}
  \item The sheaf~$\mcF$ is of generic rank~$1$.
  \item The group $\hautb_{\sheaf{F}}$ is a finite cyclic group of size
    bounded polynomially in terms of the complexity of~$\mcF$.
  \item The complexity of~$\mcF$ is at least $q^{\delta}$ for some
    absolute constant~$\delta>0$.
  \end{enumth}
\end{proposition}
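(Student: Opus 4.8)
The plan is to exploit the structure of the upper-triangular Borel $B\subset\PGL_2$, which acts on $\Aa^1$ by affine maps $x\mapsto ax+b$ and on which, by construction, $\mathbf{B}_{\sheaf F}$ is a subgroup. That $\mathbf{B}_{\sheaf F}$ is algebraic is routine: since $\sheaf F$ is geometrically irreducible on its lisse locus, $\gamma^{*}\sheaf F\simeq\sheaf F$ if and only if $\Hom(\gamma^{*}\sheaf F,\sheaf F)\ne 0$, and by constructibility of cohomology in families the set of such $\gamma$ is constructible; a constructible subgroup of an algebraic group is closed. Now $B\cong\Gm\ltimes\Ga$, so I split according to $V:=\mathbf{B}_{\sheaf F}\cap\Ga$ — a finite $\Ff_q$-subspace of $\overline k$ (the characteristic being $q$), or all of $\Ga$ — and according to whether the image of $\mathbf{B}_{\sheaf F}$ in $B/\Ga\cong\Gm$ is finite. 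I will use throughout that $c(\sheaf F)$ dominates the rank, the number of singularities, and the Swan conductors of $\sheaf F$, via the comparison with $c_{\mathrm{FKM}}$ recalled above, and that Swan conductors are non-negative integers.

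Suppose first $V\ne\{0\}$ — this subsumes the case that $\mathbf{B}_{\sheaf F}$ contains $\Ga$. Fix $v_{0}\in V\setminus\{0\}$; then $\sheaf F$ is invariant under translation by the $q$ elements of $\Ff_q v_{0}\subset V$, so its singular locus $S$ is stable under these. If $S\ne\emptyset$ then $\#S\geq q$ and alternative $(3)$ holds. Otherwise $\sheaf F$ is lisse on $\Aa^1$ and geometrically irreducible; if it has rank $1$ we are in case $(1)$. If its rank $r$ is $\geq 2$, then since $\Ff_q v_{0}\cong\Zz/q$ is cyclic we have $H^{2}(\Zz/q,\overline k^{\times})=0$, so the invariance can be promoted to an equivariant structure and $\sheaf F$ descends along the separable additive cover $f(x)=\prod_{c\in\Ff_q}(x-cv_{0})$ (degree $q$, étale, totally wildly ramified over $\infty$ with a single Herbrand break equal to $1$, hence with Herbrand function of slope $q$ above $1$) to a sheaf $\sheaf G$ on $\Aa^1$ that is again lisse, geometrically irreducible, of rank $r\geq 2$. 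Such a $\sheaf G$ necessarily has a break $\mu>1$ at $\infty$: otherwise Grothendieck--Ogg--Shafarevich ($\mathrm{Sw}_{\infty}(\sheaf G)\geq r$) forces all breaks equal to $1$, but then the $\ell$-adic Fourier transform of $\sheaf G$ is punctual, so $\sheaf G$ is an Artin--Schreier sheaf, contradicting $r\geq 2$. Hence $\mu\geq 1+1/r$, and transporting through the Herbrand function of $f$ gives $\mathrm{Sw}_{\infty}(\sheaf F)\geq 1+q(\mu-1)\geq q/r$; combined with $c(\sheaf F)\geq r$ this yields $c(\sheaf F)^{2}\geq q$, i.e.\ case $(3)$ with $\delta=1/2$.

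Suppose now $V=\{0\}$, so that $\mathbf{B}_{\sheaf F}$ injects into $\Gm$ and is either finite cyclic of order $n$ prime to $q$, or positive-dimensional, in which case after conjugating by a translation (harmless for all three conclusions) it contains the full torus $\Gm$; in both cases, after this conjugation, $\mathbf{B}_{\sheaf F}$ consists of dilations $x\mapsto\zeta x$. If $\sheaf F$ has a singular point $\ne 0$, its orbit under $\mu_{n}$ (or under $\Gm$, giving infinitely many) has $n$ elements, so $c(\sheaf F)\geq n$. If $\sheaf F$ is lisse on $\Gm$ and tame at $0$ and $\infty$, then $\sheaf F|_{\Gm}$ is a representation of the procyclic group $\pi_{1}^{\mathrm{tame}}$ of $\Gm$ over $\overline k$, hence of rank $1$: case $(1)$. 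Otherwise $\sheaf F$ is lisse on $\Gm$ and wild at $0$ or at $\infty$; since $H^{2}(\mu_{n},\overline k^{\times})=0$ (as $\overline{\Ff}_q^{\times}$ is divisible) it descends along $[n]\colon x\mapsto x^{n}$ to a sheaf $\sheaf G$ on $\Gm$, and since $[n]$ is tamely totally ramified of degree $n$ at $0$ and $\infty$ one has $\mathrm{Sw}_{\bullet}(\sheaf F)=n\,\mathrm{Sw}_{\bullet}(\sheaf G)\geq n$ at the place of wildness, whence $c(\sheaf F)\geq n$. Thus in the finite case $\mathbf{B}_{\sheaf F}$ is finite cyclic of order $\leq c(\sheaf F)$ (case $(2)$), while in the torus case the inequality $n\leq c(\sheaf F)$ would have to hold for every $n$ prime to $q$, forcing $r=1$ (case $(1)$).

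The delicate points — and where I expect the real work to be — are twofold. First, the descent step: $\sheaf F$ is only invariant up to geometric isomorphism, and promoting this to an honest equivariant structure requires the vanishing of an $H^{2}$ with coefficients in $\overline k^{\times}$; this is precisely why I descend only along a cyclic subgroup $\Zz/q\subset V$ (rather than all of $V$, whose $H^{2}$ with these coefficients need not vanish) and along the cyclic group $\mu_{n}$ — for cyclic groups this $H^{2}$ vanishes because $\overline{\Ff}_q^{\times}$ is divisible. Second, the two Swan-conductor transformation laws must be pinned down precisely: that $[n]$ multiplies Swan conductors by $n$ (tame, totally ramified of degree $n$), and that the additive cover $f$ has a single Herbrand break equal to $1$ at $\infty$, so that it dilates breaks $>1$ by a factor $q$; together with the structural input that an $\Aa^1$-lisse geometrically irreducible sheaf of rank $\geq 2$ has a break $>1$ at infinity (Grothendieck--Ogg--Shafarevich together with the $\ell$-adic Fourier transform), these furnish the quantitative bounds. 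Everything else is bookkeeping with the complexity.
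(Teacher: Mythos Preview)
Your proof is correct; the only slip is that the descent obstruction lives in $H^2$ with coefficients in $\Aut(\mcF)\cong\bQl^\times$ (Schur's lemma for the geometrically irreducible $\mcF$), not in $\bar{k}^\times$---but $\bQl^\times$ is equally divisible, so nothing changes. The two delicate ingredients you flag are both sound: that a geometrically irreducible lisse sheaf $\mcG$ on $\Aa^1$ of rank $\geq 2$ must have an $\infty$-break $>1$ follows exactly as you indicate (all breaks $\leq 1$ forces $H^i_c(\Aa^1,\mcG\otimes\mcL_{\psi(ax)})=0$ for every $a$ and every $i$, whence $\mathrm{FT}(\mcG)=0$, a contradiction), and the Herbrand function of the Artin--Schreier cover at $\infty$ does satisfy $\psi_{L/K}(\mu)=1+q(\mu-1)$ for $\mu>1$, so that the largest break $\mu\geq 1+1/r$ of $\mcG$ transports to a break $\geq 1+q/r$ of $\mcF$.

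The paper takes a quite different and much shorter route: after passing to the middle extension $\mcF^*$ (so that $\mathbf{B}_{\mcF}\subset\mathbf{B}_{\mcF^*}$), it observes that unless $\mcF^*$ is an Artin--Schreier sheaf (whence generic rank $1$), one may form its naive inverse Fourier transform $\mcG$, for which $\mathbf{B}_{\mcG}=\mathbf{B}_{\mcF^*}$ by Fourier inversion, and then invokes~\cite[Th.\,6.3,\,Prop.\,6.4]{FKM2} as a black box to obtain the trichotomy for $\mathbf{B}_{\mcG}$. Your argument is self-contained---the only use of Fourier transform is the local one in the break-$>1$ step---and incidentally yields sharper constants (a linear bound $|\mathbf{B}_{\mcF}|\leq c(\mcF)$ in alternative~(2), and $\delta=1/2$ in alternative~(3)), at the cost of carrying out directly the ramification-theoretic bookkeeping that the cited theorem encapsulates.
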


\begin{proof}
  Let~$j\colon U\to \Aa^1$ be the open immersion of an open dense subset
  where $\mcF$ is lisse. Define $\mcF^*=j_*j^*\mcF$; this is a
  geometrically simple middle-extension sheaf with complexity bounded
  polynomially in terms of the complexity of~$\mcF$ (this is clear from
  the definition of $c_{\mathrm{FKM}}$).  We have
  $\mathbf{B}_{\mcF}\subset \mathbf{B}_{\mcF^*}$: if
  $\gamma\in\mathbf{B}_{\mcF}$, then the existence of a geometric
  isomorphism $\gamma^*(\mcF)\simeq \mcF$ implies that the
  middle-extension sheaves $\mcF^*$ and $\gamma^*\mcF^*$ are
  geometrically isomorphic on some dense open set of~$\Aa^1$, hence are
  geometrically isomorphic.

  If~$\mcF^*$ is an Artin--Schreier sheaf, then the first property of
  our claim holds. Otherwise, the sheaf~$\mcF^*$ is a Fourier sheaf in
  the sense of~\cite[Def.\,7.3.5]{ESDE} and we can
  apply~\cite[Th.\,6.3,\,Prop.\,6.4]{FKM2} to the ``naïve'' (inverse)
  Fourier transform~$\mcG$ of~$\mcF$ (in the sense
  of~\cite[7.3.3]{ESDE}); the group denoted $\mathbf{B}_{\mcG}$
  in~\cite{FKM2} coincides with $\mathbf{B}_{\mcF^*}$ by the Fourier
  inversion formula~\cite[7.3.8]{ESDE}.

  The first alternative in~\cite[Th.\,6.3]{FKM2} is impossible since it
  would imply that~$\mcF^*$ is an Artin--Schreier, which we assumed is
  not the case.  The second would imply that $\mcG$ is geometrically
  isomorphic to $\mcL_{\psi}\otimes \mcL_{\chi}$ for some additive
  character~$\psi$ (resp. some multiplicative character~$\chi$,
  necessarily non-trivial since otherwise we would be in the case of an
  Artin--Schreier sheaf). It is straightforward that $\mcF^*$ would then
  be of generic rank~$1$ (this amounts to the computation
  \[
    \sum_{x\in k}\chi(x)\psi(x)\psi(xy)=\tau\overline{\chi}(1+y),
  \]
  of trace functions, where $\tau$ is a Gauss sum), thus $\mcF$ would be
  of generic rank~$1$.

  The third possibility is that $\mathbf{B}_{\mcF^*}$ is finite of size
  bounded by $10 c_{\mathrm{FKM}}(\mcG)^2$; since the complexity
  of~$\mcG$ is bounded polynomially in terms of that of~$\mcF$, this
  gives the second possibility in our statement.

  Finally, the fourth possibility is that the complexity
  $c_{\mathrm{FKM}}(\mcG)$ is $\geq (q/10)^{1/2}$; as before this
  implies the third possibility.

  To conclude, we simply note that in the second case, the fact that the
  group~$\hautb_{\mcF^*}$ is cyclic is not stated in~\cite{FKM2}, but
  follows from the proof (see~\cite[p.\,1729]{FKM2}); its
  subgroup~$\hautb_{\mcF}$ is then also cyclic.
\end{proof}

We now state our version of the result of Xu which we already
mentioned. Roughly speaking, it states that bounds for the moments of a
trace function imply stratification results, which then imply further
analytic results about the trace function which are \emph{significantly
  stronger} than those that can be obtained directly from the moment
bounds.  A concrete form of this idea is the following theorem.

\begin{theorem}\label{XuStep}
  Let~$k$ be a finite field and $\ell$ a prime number different from the
  characteristic of~$k$. Let $X$ be a quasiprojective variety with a
  locally closed embedding in $\Pp^d_{k}$ for some integer~$d\geq
  0$. Let $M$ be an object of $\Der(X,\bQl)$ which is mixed of
  \emph{integral} weights.

  Assume that there exists a positive integer $m$, a real number~$A$ and
  a real number $B\geq 1$ such that
  \[
    \sum_{x \in X(k_n)} \abs{ t_M ( x; k_n)}^{2m} \leq B \abs{k_n}^A
  \]
  for all integers~$n\geq 1$, where $k_n$ denotes the extension of
  degree~$n$ of~$k$ in some algebraic closure of~$k$.
  
  There exists a stratification of $X$ into closed subschemes $X^{(w)}$,
  defined for integers $w$, such that
  \begin{enumth}
  \item We have $X^{(w)} \subset X^{(w-1)}$ for all
    $w$.
  \item We have $\dim X^{(w)} \leq \lfloor A \rfloor - mw$ for all~$w$.
  \item Each $X^{(w)}$ is a union of subvarieties of total degree
    bounded only in terms of $n$ and $c(M)$.
  \item For all $n\geq 1$ and $x \in (X\setminus X^{(w+1)})(k_n)$, we
    have
    \[
      t_{M}(x ; k_n) \ll \abs{k_n}^{\frac{w}{2}},
    \]
    where the implicit constant depends only on $d$ and $c(M)$.
  \end{enumth}
\end{theorem}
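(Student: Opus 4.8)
The plan is to read off the filtration $(X^{(w)})_w$ from the weight structure of the cohomology sheaves of $M$, deducing the pointwise bound~(4) from the weight grading and the dimension bound~(2) from the moment hypothesis via Xu's idea.

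To begin, I would fix a stratification $X=\bigsqcup_{s}S$ into smooth locally closed subvarieties, \emph{adapted to} $M$ in the sense that on every stratum $S$ the cohomology sheaves $\mathcal{H}^i(M)|_S$ are lisse and their weight filtrations $W_\bullet$ have lisse pure graded pieces $\mathrm{gr}^W_v\mathcal{H}^i(M)|_S$; such a stratification exists by Deligne's theory of weights, and by quantitative sheaf theory~\cite{qst} it can be taken with the number of strata, their degrees, and all the ranks and weights occurring bounded in terms of $d$ and $c(M)$. On each stratum I set the \emph{effective top weight} $v(S)$ to be the largest integer $v$ for which the virtual sheaf $\sum_i(-1)^i[\mathrm{gr}^W_v\mathcal{H}^i(M)|_S]$ is nonzero in the Grothendieck group of lisse sheaves-with-Frobenius on $S$ (and $v(S)=-\infty$ if this vanishes for all $v$). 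Since the $\mathrm{gr}^W_v$ are pure, hence geometrically semisimple, this virtual sheaf, after cancelling common constituents, has the form $[\mathcal{A}_S]-[\mathcal{B}_S]$ with $\mathcal{A}_S,\mathcal{B}_S$ lisse, semisimple, pure of weight $v$, and sharing no constituent. I then define
\[
  X^{(w)}:=\overline{\,\bigcup_{S:\,v(S)\geq w} S\,},
\]
which is closed, defined over $k$ (the weight filtration being canonical and Frobenius-equivariant), visibly decreasing in $w$, and — by the quantitative stratification — a union of subvarieties of total degree bounded in terms of $d$ and $c(M)$. This settles (1) and (3).

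For (4): if $x\in(X\setminus X^{(w+1)})(k_n)$, then $x$ lies in a stratum $S$ with $v(S)\leq w$. Writing
\[
  t_M(x;k_n)=\sum_{v}\ \sum_i(-1)^i\Tr\bigl(\Frob_x^{\,n}\mid \mathrm{gr}^W_v\mathcal{H}^i(M)_x\bigr),
\]
for $v>v(S)$ the inner virtual sheaf is zero, and being a difference of semisimple objects its two halves are then isomorphic, so that term vanishes identically; the remaining terms have $v\leq v(S)\leq w$, and each is bounded by the total rank of $\mathcal{H}^\bullet(M)$ times $|k_n|^{v/2}\leq|k_n|^{w/2}$. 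As the total rank, the number of weights and the range of weights of $M$ are controlled by $c(M)$, this gives $t_M(x;k_n)\ll|k_n|^{w/2}$ for every $n\geq1$, with implied constant depending only on $d$ and $c(M)$.

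The main obstacle is the dimension bound (2), where the moment hypothesis and Xu's idea enter; it suffices to prove $\dim S\leq\lfloor A\rfloor-m\,v(S)$ for each stratum $S$ with $v(S)$ finite, since then $\dim X^{(w)}=\max_{v(S)\geq w}\dim S\leq\lfloor A\rfloor-mw$ (closures do not change dimension). The key point is that on $S$ the trace function $|t_M|$ \emph{genuinely} has size $|k_n|^{v(S)/2}$ on a positive proportion of points: writing $v=v(S)$, the weight-$v$ part of $t_M|_S$ is $t_{\mathcal{A}_S}-t_{\mathcal{B}_S}$ up to the factor $|k_n|^{v/2}$, and by Deligne's Riemann Hypothesis together with the orthogonality relations for trace functions of geometrically irreducible sheaves (the quantitative form being a case of~\cite{qst}) the average $\tfrac1{|S(k_n)|}\sum_{x\in S(k_n)}|k_n|^{-v}\,|t_{\mathcal{A}_S}(x;k_n)-t_{\mathcal{B}_S}(x;k_n)|^2$ tends, as $n\to\infty$, to the number of Jordan–Hölder constituents of $\mathcal{A}_S\oplus\mathcal{B}_S$, which is $\geq1$ because they share no constituent; a second-moment (Paley–Zygmund) argument, together with the fact that the lower-weight terms of $t_M$ are $o(|k_n|^{v/2})$, then forces $|t_M(x;k_n)|\gg|k_n|^{v/2}$ for $\gg|k_n|^{\dim S}$ points $x\in S(k_n)$ once $n$ is large. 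Consequently
\[
  B|k_n|^{A}\ \geq\ \sum_{x\in S(k_n)}|t_M(x;k_n)|^{2m}\ \gg\ |k_n|^{\dim S}\cdot|k_n|^{mv},
\]
and letting $n\to\infty$ yields $\dim S+mv\leq A$, i.e. $\dim S\leq\lfloor A\rfloor-mv$ since $mv$ is an integer. The delicate part is to carry all this out \emph{quantitatively and uniformly} — uniform Lang–Weil for the strata, uniform bounds on the number of constituents and on the rate of convergence of the second moment, all in terms of $d$ and $c(M)$ — which is exactly where quantitative sheaf theory is indispensable; non-geometrically-connected strata are handled by descending to the finite extension over which their components are defined, the moment hypothesis being available over every $k_n$.
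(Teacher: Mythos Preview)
Your argument is correct, but it is organised dually to the paper's proof and is more self-contained. The paper takes the stratification of \cite[Lemma~6.26]{qst} (on whose strata $M$ is lisse) and \emph{defines} $X^{(w)}$ to be the union of the connected components of the strata having dimension $\leq A-mw$; then (2) holds by construction, and (4) is obtained by invoking Xu's dichotomy \cite[Theorem~3.5]{Xu} as a black box --- on any stratum of dimension $>A-m(w+1)$, the moment hypothesis rules out the ``large moment'' alternative, forcing the pointwise bound $|t_M|\leq D|k|^{w/2}$.

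You instead define $X^{(w)}$ via the effective top weight $v(S)$, so that (4) is immediate from the weight grading, and then deduce (2) from the moment hypothesis by your second-moment/positive-density argument on each stratum. This last step is essentially a re-derivation of (the relevant case of) Xu's theorem inside the proof, and your remark about passing to extensions for non-geometrically-connected strata is precisely the kind of care this requires. One small imprecision: your claim that the normalised second moment ``tends to the number of Jordan--H\"older constituents of $\mathcal{A}_S\oplus\mathcal{B}_S$'' is a little too strong in general (arithmetic and geometric constituents can interfere, e.g.\ when $\mathcal{B}_S$ is a character twist of $\mathcal{A}_S$), but all you actually need --- and what is true --- is that the $\limsup$ is a positive integer, which follows from a Dirichlet-box argument on the Frobenius eigenvalues. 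Your $X^{(w)}$ need not coincide with the paper's as a set (yours may be strictly smaller, since you include a stratum only when its weight genuinely forces it, whereas the paper includes every low-dimensional stratum regardless of weight), but both satisfy (1)--(4). What your route buys is self-containment --- no appeal to \cite{Xu} --- at the price of spelling out the orthogonality and Paley--Zygmund step that Xu's theorem packages.
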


\begin{proof}
  It follows from~\cite[Lemma\,6.26]{qst} applied to~$M$ that there
  exist subvarieties
  \[
    Y_{d+1}\subset Y_{d}\subset \cdots \subset Y_1\subset X
  \]
  such that
  \begin{itemize}
  \item Each $Y_{i}\setminus Y_{i-1}$ is smooth, and $M$ restricted to
    $Y_i\setminus Y_{i-1}$ is lisse;
  \item Each $Y_i$ is the union of $\leq C$ subvarieties of degree
    $\leq C$, for some integer $C$ depending only on $d$ and the
    complexity of~$M$.
  \end{itemize}

  We then define~$X^{(w)}$, for $w\in\Zz$, to be the union over~$i$ of
  the connected components of dimension $\leq A-mw$ of
  $Y_i\setminus Y_{i-1}$. Then $X^{(w)}$ is certainly closed as the
  closure of each stratum is a union of strata of equal and smaller
  dimension. Moreover, condition (3) follows from the second property
  above, while the construction implies that $X^{(w)}\subset X^{(w-1)}$,
  and $\dim(X^{(w)})\leq \lfloor A-mw\rfloor=\lfloor A\rfloor -mw$
  (since $mw$ is an integer).
  

  Thus it only remains to prove the last property~(4).
  Let $n\geq 1$, $w\in\Zz$ be integers and let
  $x\in (X\setminus X^{(w+1)})(k)$.  By definition, this means that $x$
  is contained in some smooth locally closed subvariety $Y$ of $X$ of
  dimension $> A - m(w+1)$ on which $M$ is lisse. This implies that all
  cohomology sheaves of~$M|Y$ are lisse, and hence are iterated
  extensions of irreducible lisse sheaves. Since $M$ is assumed to be
  mixed of integer weights, each of these irreducible lisse sheaves is
  pure of some integer weight. For $y\in Y(k)$, we have
  \[
    t_M(x;k)=\sum_{i\in\Zz} t_{\mcH^{2i}(M)}(x;k)-
    \sum_{i\in\Zz} t_{\mcH^{2i+1}(M)}(x;k),
  \]
  the difference of sums of trace functions of pure lisse
  sheaves. We can then apply a result of Xu~\cite[Theorem 3.5]{Xu}. This
  shows that either
  \begin{equation}\label{eq-xu-bad}
    \limsup_{n \to\infty} \frac{1}{|k|^{n(\dim(Y)+m(w+1))}} \sum_{y \in
      Y( k_n)} \abs{t_{M}(y; k_n)}^{2m}\geq 1,
  \end{equation}
  or \[
    |t_M(x;k)|\leq D|k|^{w/2},
  \]
  with
  \[
    D=\sum_{i\in\Zz}\rank(\mcH^i(M|Y)).
  \]

  This last estimate gives the desired conclusion since $D$ is bounded
  in terms of~$n$ and of the complexity of~$M$
  (see~\cite[Th.\,6.8(8)]{qst} and~\cite[Th.\,6.15]{qst}; note that to
  apply the latter, it is important that~$M$ is lisse on~$Y$). So we
  need only check that~(\ref{eq-xu-bad}) is not valid.
  However, using the assumption we find that
  \[
    \sum_{y \in Y( k_n)} \abs{t_{M}(y; k_n)}^{2m} \leq \sum_{x \in
      X(k_n)} \abs{ t_{M} ( x; k_n)}^{2m}\leq B |k|^{nA}
  \]
  for all~$n\geq 1$, hence
  \[
    \frac{1}{|k|^{n(\dim(Y)+m(w+1))}} \sum_{y \in Y( k_n)} \abs{t_{M}(y;
      k_n)}^{2m}
    \leq B |k|^{n(A-\dim(Y)-m(w+1)},
  \]
  which tends to~$0$ as $n\to +\infty$, so this alternative possibility
  does not occur.
\end{proof}

\section{Around Goursat's Lemma}
\label{sec:goursat}


\subsection{Goursat's Lemma}

We recall a version of the classical lemma of Goursat (see also the
version by Serre in \cite[Prop.\,1.6]{SerreBook}).

\begin{lemma}[Goursat's Lemma]\label{lm-goursat}
  Let $G_1$ and $G_2$ be groups and let $G\subset G_1\times G_2$ be a
  subgroup such that the restriction to~$G$ of the projections
  $p_1\colon G_1\times G_2\to G_1$ and $p_2\colon G_1\times G_2\to G_2$
  are surjective.

  There exist normal subgroups $H_i\subset G_i$ and an isomorphism
  $\varphi\colon G_1/H_1\to G_2/H_2$ such that
  \[
    H_1\times H_2\subset G= \{(x,y)\in G_1\times G_2\,\mid\,
    \varphi(xH_1)=yH_2\}.
  \]
\end{lemma}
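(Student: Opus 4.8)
The plan is the standard argument for Goursat's lemma: extract the ``kernels'' of the two coordinate projections restricted to~$G$, show they are normal, and use them to build the desired isomorphism of quotients. First I would set $H_1 = \{x \in G_1 \mid (x, e) \in G\}$ and $H_2 = \{y \in G_2 \mid (e, y) \in G\}$; equivalently, $H_i$ is the image under $p_i$ of the intersection of~$G$ with the corresponding ``axis'' in $G_1 \times G_2$. These are obviously subgroups, since $G$ is.

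The key point is that $H_1 \triangleleft G_1$ and $H_2 \triangleleft G_2$, and this is exactly where the surjectivity hypotheses enter. For instance, given $g \in G_1$, surjectivity of $p_1|_G$ provides $h \in G_2$ with $(g, h) \in G$; then for $x \in H_1$ one has $(g, h)(x, e)(g, h)^{-1} = (g x g^{-1}, e) \in G$, so $g x g^{-1} \in H_1$. The statement for $H_2$ is symmetric.

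Next I would define $\varphi \colon G_1/H_1 \to G_2/H_2$ by choosing, for each $x \in G_1$, some $y \in G_2$ with $(x, y) \in G$ (possible again by surjectivity of $p_1|_G$) and setting $\varphi(x H_1) = y H_2$. The verifications are all routine: $\varphi$ is well defined because two such choices $y, y'$ satisfy $(e, y^{-1} y') \in G$, hence $y^{-1} y' \in H_2$, while changing~$x$ within its coset only multiplies $(x, y)$ on the right by an element of $H_1 \times \{e\} \subset G$; it is a homomorphism because $(x x', y y') \in G$ whenever $(x, y), (x', y') \in G$; it is surjective by surjectivity of $p_2|_G$; and it is injective because $\varphi(x H_1) = H_2$ forces $(x, y) \in G$ with $y \in H_2$, so $(x, e) = (x, y)(e, y^{-1}) \in G$ and $x \in H_1$. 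Finally, $H_1 \times H_2 \subset G$ since $(h_1, h_2) = (h_1, e)(e, h_2)$ with both factors in~$G$, and $G$ sits inside $\{(x, y) \mid \varphi(x H_1) = y H_2\}$ directly from the definition of~$\varphi$.

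I do not expect any genuine obstacle: the lemma is elementary. The only thing demanding attention is bookkeeping --- distinguishing left and right cosets, keeping the order of multiplication straight in the conjugation and well-definedness computations inside the product group, and noting that each surjectivity hypothesis is used precisely where it is needed (normality of~$H_1$, normality of~$H_2$, and the existence of the lift~$y$ in the construction of~$\varphi$).
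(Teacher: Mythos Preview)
Your proof is correct and follows essentially the same approach as the paper's: the subgroups $H_1,H_2$ you define are exactly the kernels of $p_2|_G$ and $p_1|_G$ identified inside $G_1$ and $G_2$, and your explicit construction of $\varphi$ spells out what the paper summarizes as ``the image of the natural morphism $G\to G_1/H_1\times G_2/H_2$ is the graph of an isomorphism.'' The only difference is that you verify the details (normality, well-definedness, bijectivity) that the paper leaves to the reader.
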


We recall the proof for completeness.

\begin{proof}
  The kernel of the restriction of $p_1$ (\resp $p_2$) to~$G$ is
  identified with a subgroup $H_2$ of $G_2$ (\resp a subgroup $H_1$ of
  $G_1$). One then checks that the image of the natural morphism
  \[
    G\to G_1/H_1\times G_2/H_2
  \]
  is the graph of an isomorphism $\varphi$; the data of $H_1$, $H_2$ and
  $\varphi$ has the stated property (e.g. for the inclusion
  $H_1\times H_2\subset G$, note that if $(x,y)\in H_1\times H_2$, then
  by definition we have $(x,1)\in G$ and $(1,y)\in G$, so that
  $(x,y)\in G$).
\end{proof}

\begin{definition}
  Let $G_1$ and $G_2$ be groups and let $G\subset G_1\times G_2$ be a
  subgroup such that the restriction to~$G$ of the projections
  $G_1\times G_2\to G_1$ and $G_1\times G_2\to G_2$ are surjective.  The
  data $(H_1,H_2,\varphi)$ from Goursat's Lemma is called a
  \emph{Goursat datum} for~$G$.
\end{definition}

\subsection{A criterion for vanishing of coinvariants}

In applications, the intended conclusion is often that
$G=G_1\times G_2$, which one hopes to achieve using extra assumptions.
Generalizations of this criterion exist to handle subgroups of a product
of finitely many groups, and were used for instance by
Ribet~\cite{ribet} to study $\ell$-adic representations associated to
classical modular forms. We are interested in another variant: the
groups $G_i$ are given with some linear representation, and we wish to
understand when the space of (co)invariants of~$G$ in the tensor product
of the representation spaces may be non-zero.



\begin{proposition}\label{pr-abstract-gkr}
  Let~$k$ be a field.  Let~$G$ be a group, and let $m\geq 1$ be an
  integer.

  For $1\leq i\leq m$, let $\rho_i\colon G\to \GL(V_i)$ be a linear
  representation of~$G$ on a finite-dimensional $k$-vector
  space~$V_i$. Denote by $G_i$ the Zariski-closure of the image
  of~$\rho_i$, and let $N_i$ be a normal subgroup of $G_i$. Assume that
  $N_i$ is perfect and that the space of $N_i$-coinvariants of~$V_i$ is
  zero.

  One of the following properties holds:
  \begin{enumth}
  \item\label{zerocoinv} The space of $G$-coinvariants of the representation
    $\rho=\rho_1\otimes\cdots \otimes\rho_m$ is zero.
  \item\label{goursatexists} For each $i$, there exists an integer
    $j\not=i$ with Goursat datum $(H_{i,j},H_{j,i},\varphi_{ij})$ such that $N_i$ is \emph{not} contained in
    $H_{i,j}$
  \end{enumth}
\end{proposition}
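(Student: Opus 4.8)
The plan is to prove the equivalent assertion that if \refs{goursatexists} fails, then \refs{zerocoinv} holds. So suppose there is an index $i$ with $N_i\subset H_{i,j}$ for every $j\neq i$; the goal is to show that $\rho=\rho_1\otimes\cdots\otimes\rho_m$ has no nonzero $G$-coinvariants. Since the coinvariants of a representation depend only on the Zariski-closure of the image of the acting group (the span of the vectors $hv-v$ is a linear, hence Zariski-closed, subspace of $V_\rho$, so it is unchanged on passing to the closure), we may replace $G$ by the Zariski-closure $H$ of the image of $(\rho_1,\dots,\rho_m)\colon G\to\prod_kG_k$. Thus we assume $G=H\subset\prod_kG_k$ is a closed subgroup whose coordinate projections $\pi_k\colon H\to G_k$ are all surjective, acting on $\bigotimes_kV_k$ by $h\mapsto\pi_1(h)\otimes\cdots\otimes\pi_m(h)$. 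Unwinding the construction in Lemma~\ref{lm-goursat} applied to $p_{ij}(H)\subset G_i\times G_j$, the only relevant part of the Goursat datum of the pair $(i,j)$ is the subgroup $H_{i,j}$, and it equals $\pi_i(\ker\pi_j)$ (a closed subgroup, being the image of an algebraic subgroup under a homomorphism of algebraic groups).

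The crux is a purely group-theoretic fact; it is here that the perfectness of $N_i$ is used, the point being that $N_i\subset\pi_i(\ker\pi_j)$ for all $j\neq i$ does \emph{not} in general force $N_i\subset\pi_i\bigl(\bigcap_{j\neq i}\ker\pi_j\bigr)$, and perfectness is exactly what bridges this gap.

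\begin{claim}
  Let $P$ be a group, $L\trianglelefteq P$ with $P/L$ perfect, and $M_1,\dots,M_r\trianglelefteq P$ with $M_\alpha L=P$ for each $\alpha$. Then $\bigl(\bigcap_{\alpha=1}^rM_\alpha\bigr)L=P$.
\end{claim}

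I would prove the claim by induction on $r$; the case $r=1$ is the hypothesis, and since $M'=\bigcap_{\alpha<r}M_\alpha$ is again normal in $P$ with $M'L=P$ (inductive hypothesis), the inductive step reduces to the case $r=2$. For $r=2$, pass to $\bar P=P/(M_1\cap M_2)$ and write $\bar X$ for the image of a subgroup $X$ in $\bar P$. Then $\bar M_1,\bar M_2$ are normal subgroups of $\bar P$ with trivial intersection, so they commute elementwise, $[\bar M_1,\bar M_2]=1$. Since $M_\alpha L=P$, both $\bar M_1$ and $\bar M_2$ surject onto $T:=\bar P/\bar L=P/L(M_1\cap M_2)$; applying the quotient map $\bar P\to T$ to $[\bar M_1,\bar M_2]=1$ gives $[T,T]=1$, so $T$ is abelian. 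But $T$ is a quotient of the perfect group $P/L$, hence perfect, and a perfect abelian group is trivial, so $T=1$, i.e.\ $L(M_1\cap M_2)=P$.

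Now I apply the claim with $P=\widehat N_i:=\pi_i^{-1}(N_i)$, $L=\ker\pi_i$ (which lies in $\widehat N_i$, with $\widehat N_i/L\cong N_i$ perfect), and $M_j=\widehat N_i\cap\ker\pi_j$ for $j\neq i$ (all normal in $\widehat N_i$). The hypothesis $N_i\subset\pi_i(\ker\pi_j)$ gives $\pi_i(M_j)=N_i$ — indeed, given $x\in N_i$ pick $h\in\ker\pi_j$ with $\pi_i(h)=x$; then $h\in\pi_i^{-1}(N_i)=\widehat N_i$, so $h\in M_j$ — hence $M_jL=\widehat N_i$, and the claim yields $\pi_i\bigl(\bigcap_{j\neq i}M_j\bigr)=N_i$; a fortiori $N_i\subset\pi_i(K)$ for $K:=\bigcap_{j\neq i}\ker\pi_j$. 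Since every element of $K$ acts trivially on each $V_j$ with $j\neq i$ and through $\pi_i$ on $V_i$, we get
\[
  \Bigl(\bigotimes_kV_k\Bigr)_K=(V_i)_{\pi_i(K)}\otimes\bigotimes_{j\neq i}V_j,
\]
and $(V_i)_{\pi_i(K)}$ is a quotient of $(V_i)_{N_i}=0$ because $N_i\subset\pi_i(K)$. Therefore the $K$-coinvariants of $\bigotimes_kV_k$ vanish, and since $K\subset H$ the $H$-coinvariants — that is, the $G$-coinvariants of $\rho$ — vanish as well, which is \refs{zerocoinv}. The only genuinely nontrivial ingredient is the Claim (and isolating it correctly): a direct attempt to place $N_i$ inside $\pi_i\bigl(\bigcap_{j\neq i}\ker\pi_j\bigr)$ fails, and one really must exploit that $N_i$ has no nontrivial abelian quotient; everything else (the passage to the Zariski-closure, the identification $H_{i,j}=\pi_i(\ker\pi_j)$, and the coinvariant bookkeeping) is formal.
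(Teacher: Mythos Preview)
Your proof is correct and follows essentially the same approach as the paper's: both reduce to showing $N_i\subset\pi_i\bigl(\bigcap_{j\neq i}\ker\pi_j\bigr)$ and both exploit the perfectness of $N_i$ via a commutator argument to pass from the individual inclusions $N_i\subset\pi_i(\ker\pi_j)$ to the inclusion in the intersection. The paper carries out this step by an explicit induction (at each stage lifting a commutator $[x,y]$ using two different partial kernels, following Serre--Ribet), whereas you isolate the step as a standalone group-theoretic Claim and prove the $r=2$ case by a quotient argument showing the obstruction is both abelian and perfect; this is a slightly cleaner packaging of the same idea.
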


\begin{proof}
  Replacing $G$ by its Zariski-closure in $G_1\times\cdots\times G_m$,
  we may assume that $G\subset G_1\times\cdots\times G_m$ is an
  algebraic subgroup and that the maps $\rho_i$ are algebraic. In
  particular, $\rho_i$ maps $G$ surjectively to~$G_i$.

  We assume that \ref{goursatexists} is not satisfied and
  prove~\ref{zerocoinv}. We may assume, up to renumbering the
  representations, that the condition in this statement fails for
  $i=1$. For any $j\geq 2$, let $(H_{1,j},H_{j,1},\varphi_{1j})$ be the
  Goursat datum for the image $(\rho_1\times \rho_j)(G)$ of~$G$ in
  $G_1\times G_j$ (Lemma~\ref{lm-goursat}). Our assumption
  that~\ref{goursatexists} is not satisfied is then that
  $N_1\subset H_{1,j}$. In particular, we have
  $N_1\times \{1\}\subset (\rho_1\times \rho_j)(G)$ since
  $\vphi_{1j}(N_1H_{1,j})=1_{G_j/H_{j,1}}$.

  For $2\leq l\leq m$, let
  \[
    K_l=\bigcap_{2\leq j\leq l}\ker(\rho_j)\subset G.
  \]

  We claim that $N_1\subset \rho_1(K_m)$.  To see this, we argue by
  induction on $2\leq l\leq m$ that $N_1\subset \rho_1(K_l)$.
  
  
  The case $l=2$ follows from the previous discussion (for $x\in N_1$,
  the pair $(x,1)$ is in $(\rho_1\times\rho_2)(G)$, hence there exists
  $x'\in G$ with $\rho_1(x')=x$ and $\rho_2(x')=1$).  Suppose now that
  $\rho_1(K_{l-1})\supset N_1$ and let us prove the same for
  $K_l=K_{l-1}\cap \ker(\rho_l)$.  Since $N_1$ is perfect, hence
  generated by commutators $[x,y]$ with $(x,y)\in N_1^2$, it suffices to
  show that every such commutator is in
  $\rho_1(K_{l-1}\cap \ker(\rho_l))$.
  
  By the induction hypothesis (\emph{mutatis mutandis}), we can find
  $x'\in G$ and $y'\in G$ such that
  \begin{gather*}
    \rho_1(x')=x,\quad \rho_j(x')=1\text{ for } 2\leq j\leq l-1,\\
    \rho_1(y')=y,\quad \rho_j(y')=1\text{ for } 3\leq j\leq l.
  \end{gather*}
  
  We obtain
  \begin{gather*}
    \rho_1([x',y'])=[\rho_1(x'),\rho_1(y')]=[x,y],
    \\
    \rho_j([x',y'])=[\rho_j(x'),\rho_j(y')]=1\quad \text{ for } 2\leq
    j\leq l,
  \end{gather*}
  (since either $\rho_j(x')$ or $\rho_j(y')$ equals $1$);
  this completes the induction (compare with the proof
  of~\cite[Lemma\,3.3]{ribet}, due to Serre).
  
  The restriction~$\bar{\rho}$ of
  $\rho=\rho_1\otimes\cdots\otimes \rho_m$ to the subgroup $K_m$ is
  isomorphic to the restriction of
  $\rho_1\otimes 1\otimes\cdots\otimes 1$ to~$K_m$; the space of
  $G$-coinvariants of $\rho$ is contained in the space of
  $K_m$-coinvariants of~$\bar{\rho}$, and the latter is contained in the
  space of $N_1$-coinvariants of~$\rho_1$ (since $\rho_1(K_m)$
  contains~$N_1$), which vanishes by assumption. This shows that
  statement \ref{zerocoinv} holds and concludes the proof.
\end{proof}

\begin{remark}
  (1) In our applications, the desired goal will be \ref{zerocoinv}, and
  achieving this will be reduced to excluding~\ref{goursatexists}. In
  practice, we will instead show that this second possibility is
  restricted to ``diagonal'' situations.  Outside of these special
  cases, the Riemann Hypothesis (Theorem~\ref{th-rh}) will imply
  square-root cancellation when the sheaves corresponding to the
  representations $\rho_i$ have weights~$\leq 0$.

  (2) In contrast to the ``classical'' theory of the
  Goursat--Kolchin--Ribet criterion, as in~\cite{sumproducts}, note
  that we \emph{do not} obtain here a clear formula for the main term
  in the special diagonal cases. Thus the result is mostly useful when
  one does not wish to extract a precise main term from the analysis
  of sums of trace functions.
\end{remark}

\subsection{The orthogonal case}

In the ``classical'' version of the Goursat--Kolchin--Ribet criterion,
one has essentially to deal with the situation of
Proposition~\ref{pr-abstract-gkr} where each~$G_i$ is a simple
algebraic group of certain specific types
(see~\cite[Lemma\,1.8.5]{ESDE} for instance). The special case where
$G$ (or some $G_i$) is an orthogonal group $\Ort_n$ in an \emph{even}
number of variables is \emph{not} covered by these earlier statements
(and this explains some restrictions in the type of hyper-Kloosterman
sums for which the previous paper~\cite{Pisa} obtained variants of
Theorem~\ref{thmType12intro}, see~\cite[Def.\,2.1]{Pisa}).

In this section (which may be omitted in a first reading since it is not
directly relevant to our main results), we show how
Proposition~\ref{pr-abstract-gkr} provides a good control of this
orthogonal situation. This may be useful, e.g., in the study of families
of elliptic curves, where orthogonal monodromy groups occur frequently.

\begin{proposition}\label{orthogonalcase}
  Let~$k$ be an algebraically closed field of characteristic different
  from~$2$.  Let~$n\geq 3$ and $m\geq 1$ be integers. Let~$\Ort_n$
  denote the orthogonal group of the split quadratic form in $n$
  variables over~$k$. Let~$G$ be a group.

  For $1\leq i\leq m$, let $\rho_i\colon G\to \GL(V_i)$ be an irreducible
  finite-dimensional representation of~$G$ on 
  $k$-vector space~$V_i$ such that the Zariski closure of the image of $\rho_i$ is $\Ort_n$, and, viewed as a representation of $\Ort_n$, each $V_i$ is isomorphic to the same fixed irreducible faithful representation of $\Ort_n$, for example the standard representation. 

Then one of the
  following properties holds:
  \begin{enumth}
  \item The space of $G$-coinvariants of the representation
    $\rho=\rho_1\otimes\cdots \otimes\rho_m$ is zero.
  \item For each $i$ with $1\leq i\leq m$, there exists an integer
    $j\not=i$ with $1\leq j\leq m$ and a one-dimensional
    representation~$\chi_{ij}$ of~$G$ of order at most~$2$ such that $\rho_j$
    is isomorphic to $\rho_i\otimes \chi_{ij}$.
  \end{enumth}
\end{proposition}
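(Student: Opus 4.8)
The plan is to deduce the statement from Proposition~\ref{pr-abstract-gkr}. Write $W$ for the fixed irreducible faithful representation of $\Ort_n$, so that every $V_i\cong W$ as $\Ort_n$-modules. I would apply Proposition~\ref{pr-abstract-gkr} with $\rho_i$ as given — so $G_i=\Ort_n$ — and with $N_i=\SO_n$ for every $i$. Its hypotheses hold: $\SO_n$ is normal in $\Ort_n$ and, being a connected semisimple group for $n\geq 3$, is perfect; and the space of $\SO_n$-coinvariants of $W$ is an $\Ort_n$-equivariant quotient of the irreducible module $W$, hence is $0$ or $W$, the latter forcing $\SO_n$ to act trivially on $W$, against faithfulness — so it is $0$. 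Therefore either the $G$-coinvariants of $\rho_1\otimes\cdots\otimes\rho_m$ vanish, which is conclusion~(1), or, for each $i$, there exist $j\neq i$ and a Goursat datum $(H_{i,j},H_{j,i},\varphi_{ij})$ (Lemma~\ref{lm-goursat}) for $\Gamma:=\overline{(\rho_i\times\rho_j)(G)}\subseteq\Ort_n\times\Ort_n$ with $\SO_n\not\subseteq H_{i,j}$. We assume the latter and fix such a pair $(i,j)$.

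Next I would exploit the structure of the algebraic group $\Ort_n$. For $n\geq 3$ its only normal algebraic subgroups are $1$, $Z:=Z(\Ort_n)=\{\pm\id\}$, $\SO_n$ and $\Ort_n$ — for $n$ odd because $\Ort_n=\SO_n\times\{\pm\id\}$ with $\SO_n$ simple, and for $n$ even by a short direct argument, using that $\SO_n/Z$ is simple for $n\neq 4$ and that for $n=4$ the two $\SL_2$-factors of $\SO_4$ are interchanged by conjugation by a reflection, hence are not normal in $\Ort_4$. Since $\SO_n\not\subseteq H_{i,j}$ we get $H_{i,j}\in\{1,Z\}$; then $H_{j,i}\in\{1,Z\}$ as well, since $\Ort_n/H_{j,i}\cong\Ort_n/H_{i,j}$ is positive-dimensional. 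For $H\in\{1,Z\}$ one has $Z(\Ort_n/H)=Z/H$ — trivial for $H=Z$ because $\mathrm{PGO}_n:=\Ort_n/Z$ has trivial centre for $n\geq 3$, and clear for $H=1$. As the centre is a characteristic subgroup, $\varphi_{ij}$ maps $Z/H_{i,j}$ onto $Z/H_{j,i}$; hence every $(x,y)\in\Gamma$ with $x\in Z$ has $y\in Z$, and conversely. Consequently the image $\overline{\Gamma}$ of $\Gamma$ in $\mathrm{PGO}_n\times\mathrm{PGO}_n$ surjects onto both factors while meeting each of them trivially, so it is the graph of an isomorphism of algebraic groups $\overline{\psi}\colon\mathrm{PGO}_n\to\mathrm{PGO}_n$; equivalently $\overline{\rho_j}=\overline{\psi}\circ\overline{\rho_i}$, where $\overline{\rho_i},\overline{\rho_j}\colon G\to\mathrm{PGO}_n$ are the induced projective representations. (That $\overline{\psi}$ is algebraic uses $\operatorname{char}k=0$, the case relevant here; this is the only place that is needed.)

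Then I would invoke the fact that $\Aut(\mathrm{PGO}_n)=\mathrm{Inn}(\mathrm{PGO}_n)$ for all $n\geq 3$: for $n$ odd, $\mathrm{PGO}_n\cong\SO_n$ is adjoint of type $B$, which has no nontrivial diagram automorphism; for $n$ even, any automorphism preserves the identity component $\mathrm{PSO}_n$ and is trivial if trivial there, while the outer automorphisms of $\mathrm{PSO}_n$ that extend to $\mathrm{PGO}_n$ — the graph automorphism, and for $n=8$ only the involution in $S_3$ normalising it, triality not extending — are realised by conjugation by improper transformations, hence lie in $\mathrm{Inn}(\mathrm{PGO}_n)$. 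So $\overline{\psi}=\mathrm{inn}_h$ for some $h\in\mathrm{PGO}_n$. Since $W$ is faithful and irreducible, $\mathrm{PGO}_n$ embeds in $\PGL(W)$ with preimage $\Ort_n\cdot k^\times$ in $\GL(W)$; lifting $h$ to $A=\lambda B$ with $B\in\Ort_n$, conjugation by $A$ equals conjugation by $B$ and preserves $\Ort_n$, so $\overline{\rho_j}=\mathrm{inn}_h\circ\overline{\rho_i}$ yields $\rho_j(g)=\chi_{ij}(g)\,B\rho_i(g)B^{-1}$ for a homomorphism $\chi_{ij}\colon G\to k^\times$. As $\rho_j(g)$ and $B\rho_i(g)B^{-1}$ are orthogonal and the only scalars in $\Ort_n$ are $\pm\id$, the character $\chi_{ij}$ has order at most $2$, and $B$ realises an isomorphism $\rho_i\otimes\chi_{ij}\xrightarrow{\ \sim\ }\rho_j$. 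Taking the $j$ attached to each $i$ in this way gives conclusion~(2).

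The main obstacle is the group theory of the middle two paragraphs: the classification of the normal algebraic subgroups of $\Ort_n$, and of the automorphisms of $\mathrm{PGO}_n$, in every case $n\geq 3$ — in particular the low-rank exceptions, namely the non-simplicity of $\SO_4$ (handled by noting that its two $\SL_2$-factors are interchanged by improper transformations, so fail to be normal in $\Ort_4$) and $D_4$-triality (which, crucially, does not extend from $\mathrm{PSO}_8$ to $\mathrm{PGO}_8$). The invocation of Proposition~\ref{pr-abstract-gkr} and the concluding descent to a quadratic character are routine.
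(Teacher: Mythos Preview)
Your proof is correct and follows essentially the same route as the paper's: apply Proposition~\ref{pr-abstract-gkr} with $N_i=\SO_n$, use the classification of normal subgroups of $\Ort_n$ (the paper's Lemma~\ref{lm-ort}(1)) to force $H_{i,j},H_{j,i}\subseteq Z$, pass to $\Ort_n/Z=\mathrm{PGO}_n$, use that all its automorphisms are inner (the paper's Lemma~\ref{lm-ort}(2)), and lift to obtain the quadratic twist. The paper packages the two group-theoretic facts as a separate lemma, while you inline them; otherwise the arguments coincide.

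One small correction: your restriction to characteristic~$0$ is unnecessary. After replacing $G$ by its Zariski closure (as in the proof of Proposition~\ref{pr-abstract-gkr}), the subgroup $\Gamma\subseteq\Ort_n\times\Ort_n$ is algebraic, and the Goursat isomorphism $\overline{\psi}$ is automatically a morphism of algebraic groups because its graph is the image $\overline{\Gamma}$ of $\Gamma$ under the algebraic projection to $\mathrm{PGO}_n\times\mathrm{PGO}_n$. Hence the argument works in every characteristic $\neq 2$, as the proposition is stated.
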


We will use the following properties of orthogonal groups, which are
probably well-known but for which we do not know of a convenient
reference.

\begin{lemma}\label{lm-ort}
  Let~$k$ be an algebraically closed field of characteristic different
  from~$2$.  Let~$n\geq 3$ be an integer.
  \begin{enumth}
  \item Any normal subgroup of~$\Ort_n$ which does not
    contain~$\SO_n$ is contained in the
    center~$Z_n=\{-\mathrm{Id},\mathrm{Id}\}$ of~$\Ort_n$.
  \item Any automorphism of the quotient group $\Ort_n/Z_n$ is inner.
  \end{enumth}
\end{lemma}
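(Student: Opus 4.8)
The plan is to prove the two assertions of Lemma~\ref{lm-ort} by reducing to the simplicity of $\SO_n/(\SO_n\cap Z_n)$, which is classical, and then tracking the small subtlety coming from the fact that $\Ort_n$ is disconnected with component group $\{\pm1\}$ (via the determinant).

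\textbf{Part (1).} Let $H\trianglelefteq\Ort_n$ be a normal subgroup with $H\not\supset\SO_n$. The plan is to consider $H\cap\SO_n$, which is a normal subgroup of $\SO_n$ since $\SO_n\trianglelefteq\Ort_n$, and in fact is normal in $\Ort_n$. For $n\geq 3$ the group $\PSO_n=\SO_n/(\SO_n\cap Z_n)$ is simple (this is the classical fact; for $n=3$, $\SO_3\cong\PGL_2$ is simple, for $n=4$ one must be slightly careful but $\SO_4$ modulo its center is $\PGL_2\times\PGL_2$ — so actually for $n=4$ one needs the hypothesis that $H$ is normal in all of $\Ort_n$, since the outer automorphism in $\Ort_4$ swaps the two $\PGL_2$ factors, and a subgroup normal in $\Ort_4$ whose image in $\PSO_4$ is a proper nonzero normal subgroup would have to be swap-invariant, hence one of the two diagonal-type subgroups, which one checks is not normalized correctly — this is the case to watch). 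So, generically, $H\cap\SO_n$ maps to either the trivial subgroup or all of $\PSO_n$ in $\PSO_n$; the latter forces $H\cap\SO_n\cdot Z_n=\SO_n$, and since $Z_n\subset\SO_n$ when $n$ is even and $-\Id\notin\SO_n$ when $n$ is odd, a short case check gives $H\supset\SO_n$, contrary to hypothesis. Hence $H\cap\SO_n\subset Z_n$. Finally, since $\Ort_n/\SO_n$ has order $2$, $H/(H\cap\SO_n)$ embeds into $\Ort_n/\SO_n$, so $H$ has order at most $2|Z_n\cap\SO_n|\leq 4$; one then checks directly (using that an element of determinant $-1$ that is central in $\Ort_n$ would have to be $\pm\Id$, impossible since $\det(\pm\Id)=(\pm1)^n=\det$-of-a-scalar) that in fact $H\subset Z_n$. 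I expect the $n=4$ bookkeeping to be the main obstacle here, and it is exactly why the hypothesis ``$n\geq 3$'' together with normality in the full group $\Ort_n$ is imposed rather than normality in $\SO_n$.

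\textbf{Part (2).} Let $\alpha$ be an automorphism of $\Ort_n/Z_n$. The plan is to first show $\alpha$ preserves the normal subgroup $\SO_n/Z_n'$ (where $Z_n'=Z_n\cap\SO_n$): this is the unique minimal-index normal subgroup isomorphic to $\PSO_n$ coming from a connected group — concretely, $\SO_n/Z_n'$ is characterized inside $\Ort_n/Z_n$ as the subgroup generated by all elements lying in one-parameter subgroups (or: as the identity component, once we note $\Ort_n/Z_n$ is an algebraic group and $\alpha$, being an abstract automorphism, need not a priori be algebraic — so one should instead use that $\SO_n/Z_n'$ is the unique subgroup of index $2$ that is simple nonabelian, valid for $n\neq 4$, while for $n=4$ it is the unique index-$2$ subgroup that is a direct product of two simple groups; either way it is characteristic). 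So $\alpha$ restricts to an automorphism of $\PSO_n$. For $n\neq 4$ and $n\geq 3$ (and $n\neq 8$, where triality enters for $\SO_8$ but not for $\Ort_8/Z$ — worth a remark, since conjugation by the disconnecting element already kills the graph automorphism), $\mathrm{Out}(\PSO_n)$ is realized by conjugation in $\Ort_n/Z_n$: the point is that $\Ort_n$ contains, for each reflection, an element of determinant $-1$ inducing the ``diagram/outer'' automorphism of $\SO_n$ not already inner in $\SO_n$. Thus, composing $\alpha$ with a suitable inner automorphism of $\Ort_n/Z_n$, we may assume $\alpha$ is the identity on $\PSO_n$. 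Then for any $g\in\Ort_n/Z_n$, $\alpha(g)g^{-1}$ centralizes $\PSO_n$; since $\PSO_n$ is its own centralizer in $\Ort_n/Z_n$ (the centralizer of $\SO_n$ in $\Ort_n$ is $Z_n$, using $n\geq 3$ so that the standard representation of $\SO_n$ is irreducible over $k$), we get $\alpha(g)=g$. Hence $\alpha$ is inner. The main obstacle in Part (2) is again the exceptional behavior at $n=4$: one must replace ``$\PSO_n$ simple'' by ``$\PSO_4\cong\PGL_2\times\PGL_2$ with $\Ort_4/Z_4$ realizing the swap'', and check that $\mathrm{Out}(\PGL_2\times\PGL_2\rtimes\Zz/2)$ is trivial, which holds because $\mathrm{Out}(\PGL_2)=1$ and the extension is by the swap. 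I would handle $n=4$ as a separate short paragraph and treat $n\geq 3$, $n\neq 4$ uniformly.
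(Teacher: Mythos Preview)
Your overall strategy matches the paper's: for~(1), intersect with $\SO_n$ and use (near-)simplicity of $\PSO_n$, treating $n=4$ via the swap of the two $\PGL_2$-factors; for~(2), show the identity component is characteristic, reduce to $\Out$ of that component, and check it is realized inside $\Ort_n/Z_n$. Two steps in your outline are, however, incorrect as written.

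First, in Part~(1), once you know $H\cap\SO_n\subset Z_n$ and hence $|H|\leq 4$, you still need to exclude an element $g\in H$ of determinant~$-1$. Your parenthetical already \emph{assumes} such a $g$ is central, which is what must be shown. The paper supplies the missing step at the outset: a finite subgroup normalized by the connected group $\SO_n$ is centralized by it (the conjugation map $\SO_n\to H$, $y\mapsto ygy^{-1}$, is constant), so $H$ commutes with $\SO_n$; since $\SO_n$ acts irreducibly for $n\geq 3$, Schur's Lemma forces $H\subset k^\times\cdot\Id$, whence $H\subset Z_n$. Separately, your aside about ``diagonal-type subgroups'' of $\PSO_4$ is off: the swap-invariant \emph{normal} subgroups of $\PGL_2\times\PGL_2$ are only $\{1\}$ and the whole group (the diagonal copy is swap-invariant but not normal), so the $n=4$ case is actually cleaner than you suggest; the paper phrases it via the two projections $\SO_4\to\SO_3$ and shows both images are simultaneously trivial or simultaneously all of $\SO_3$.

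Second, in Part~(2), your handling of $n=8$ is the genuine gap. The claim that ``conjugation by the disconnecting element already kills the graph automorphism'' is false: an element of $\Ort_8\setminus\SO_8$ realizes \emph{one} transposition in $\Out(\Lie(\SO_8))\cong S_3$, not all of~$S_3$. You must argue that the class~$\tau\in S_3$ induced by $\alpha$ on the identity component lies in that particular~$S_2$. The paper does this as follows: since $\alpha$ is an automorphism of the full group $\Ort_8/Z_8$, it permutes the two connected components; the non-identity component acts on the Lie algebra via the fixed transposition~$s\in S_3$, so $\tau$ must normalize $\langle s\rangle$; but the normalizer of any transposition in~$S_3$ is the subgroup it generates, hence $\tau\in\langle s\rangle$, and then the rest of your argument goes through.
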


\begin{proof}
  We recall that a finite group~$H$ normalized by a connected algebraic
  group~$G$ is in fact centralized by this group: indeed, for any
  $x\in H$, the map $y\mapsto yxy^{-1}$ is a morphism from~$H$ to~$G$,
  hence must be constant, equal to the value~$x$ taken at~$y=1$.
  
  (1) Let~$H$ be a normal subgroup of~$\Ort_n$. If $n\not=4$, then the
  subgroup $\SO_n$ is a simple algebraic group, hence $H\cap \SO_n$ is
  either equal to $\SO_n$ (so that $H$ contains $\SO_n$) or contained in
  the center of~$\SO_n$; in the second case, it follows that~$H$ is
  finite, and from the fact above, it commutes with~$\SO_n$, hence is
  contained in the center by Schur's Lemma, and then in the center
  of~$\Ort_n$.
  
  Suppose that $n=4$. There exists an isomorphism
  \[
    \varphi\colon (\SL_2\times\SL_2)/D\to \SO_4
  \]
  where $D$ is the image of the diagonal embedding
  $\mmu_2\to\SL_2\times \SL_2$.  Let $\pi_1$ and $\pi_2$ be the two
  projections $\SO_4 \to \SL_2/ \mmu_2 = \SO_3$. Since $\SO_3$ is
  simple, the images $\pi_i(H \cap \SO_4)$ are either $\SO_3$ or
  trivial.  The conjugation action of an element
  in~$\Ort_4\setminus \SO_4$ on~$\SO_4$ exchanges the projections
  $\pi_1$ and $\pi_2$, so $\pi_1(H\cap \SO_4)$ and $\pi_2(H\cap \SO_4)$
  are either both trivial or both equal to~$\SO_3$.

  In the first case, $H \cap \SO_4$ is contained in the kernel of
  $\pi_1 \times \pi_2$, which is the center of~$\SO_4$, and it follows
  again that $H$ is finite, thus contained in the centralizer of $\SO_4$
  (by the preliminary remark), which is the center of~$\Ort_4$.

  In the second case, we claim that $H$ contains $\SO_4$. To prove this,
  it suffices to prove that $H $ contains
  $\varphi ( \SL_2 \times \{1\})$ (since by symmetry $H $ will also
  contain $\varphi( \{1\} \times \SL_2)$, and these two subgroups
  generate $\SO_4$). Since $\SL_2$ is generated by commutators, it
  suffices to prove that~$H$ contains $\varphi ( ([x,y],1)D)$ for all
  $x$ and $y$ in~$\SL_2$. Since $x \in \pi_1( H \cap \SO_4)$, we must
  have $\varphi( (x,z)D) \in H $ for some $z \in \SL_2$. We have
  \[ \varphi ( ([x,y],1)D) = [ \varphi((x,z)D), \varphi((y,1)D) ] \] and
  since $H$ is normal and $\varphi((x,z)D)\in H$, the commutator
  $ [ \varphi((x,z)D), \varphi((y,1)D)]$ belongs to~$H$ as well.
 
  

  (2) Let~$\sigma$ be an automorphism of~$\Ort_n/Z_n$. Let
  $\beta\colon \Ort_n/Z_n\to \Out(\Lie(\Ort_n))$ be the morphism induced
  by the conjugation action, and let $\tau=\beta(\sigma)$.

  We claim that $\sigma$ is an inner automorphism if $\tau$ belongs to
  the image by~$\beta$ of either~$\SO_n$ or $\Ort_n\setminus \SO_n$, say
  $\tau=\beta(\gamma)$. Indeed, if this is the case, then $\sigma$
  restricted to the connected component of~$\Ort_n/Z_n$ coincides with
  an inner automorphism, say~$\eta$, and thus $\tau\eta^{-1}$ is an
  automorphism of~$\Ort_n/Z_n$ which fixes its identity component. The
  automorphism $\tau \eta^{-1}$ then acts on the non-identity component
  by left-multiplication by an element which centralizes the identity
  component, and therefore trivial, so that $\tau=\eta$ is an inner
  automorphism.


  We now claim that this argument applies in all cases.  If~$n\geq 3$ is
  odd, this is because $\Out(\Lie(\Ort_n))$ is then trivial; or
  if~$n\geq 4$ is even and $n\not=8$, this is because
  $\Out(\Lie(\Ort_n))$ is then isomorphic to $\Zz/2\Zz$, with the
  non-trivial element arising from conjugation by an element of
  $\Ort_n\setminus \SO_n$.

  In the remaining case $n=8$, the group $\Out(\Lie(\Ort_8))$ is
  isomorphic to the symmetric group~$S_3$. It contains a natural
  subgroup~$C$ isomorphic to~$S_2$ corresponding to the conjugation
  action of an element of~$\Ort_8\setminus\SO_8$. The automorphism
  $\sigma$ permutes the connected components, and therefore~$\tau$
  normalizes~$C$. But the normalizer of any transposition in~$S_3$ is
  the subgroup it generates, and therefore $\tau$ belongs to the
  subgroup~$C$, which is the desired property.
\end{proof}
 
\begin{proof}[Proof of Proposition~\ref{orthogonalcase}] We will apply Proposition~\ref{pr-abstract-gkr} with $N_i = \SO_n$ for all $i$, which is perfect since $n>2$. Since $V_i$ is an irreducible faithful representation of $\Ort_n$ , we indeed have the space of $N_i$-coinvariants of $V_i$ zero. 


  Suppose that the coinvariant space of~$\rho$ is non-zero. We are then
  in the second case of Proposition~\ref{pr-abstract-gkr}. For each~$i$,
  there exists  an integer~$j\not=i$ with Goursat datum
  $(H_{i,j},H_{j,i},\varphi_{ij})$, where  $\varphi_{ij}$ is an isomorphism
  \begin{equation}
    \label{ijisom}
    \varphi_{ij} \colon G_i/H_{i,j}\to G_j/H_{j,i}
  \end{equation}
  and $N_i=\SO_n$ is not contained in~$H_{i,j}$.

  By Lemma \ref{lm-ort} (1), we have $H_{i,j}\subset Z_n$, so that
  $H_{j,i}\subset Z_n$ also (otherwise $N_j=\SO_n$, so there could be no
  isomorphism \eqref{ijisom}).
  
  Since the group $\Ort_n/Z_n$ is not isomorphic to $\Ort_n$
  (e.g. because one has trivial center, and the other not), the
  isomorphism \eqref{ijisom} is induced either by an automorphism
  $\Ort_n /Z_n \to \Ort_n /Z_n$ or by an automorphism
  $\Ort_n \to \Ort_n$. In either case, $\varphi_{i,j}$ induces an
  automorphism $\bar{\varphi}_{i,j}\colon \Ort_n/Z_n\to \Ort_n/Z_n$, and
  the diagram
  \[\begin{tikzpicture}[>=stealth, node distance=3cm and 4cm]
      
      \node (G1) at (1,2) {$G_i$};
      \node (G2) at (1,0) {$G_j$};
      \node (G)  at (0,1) {$G$};
      
      \node (O1) at (3,2) {$\Ort_n/Z_n$};
      \node (O2) at (3,0) {$\Ort_n/Z_n$};
      
      \draw[->] (G) -- (G1) node[midway, above left] {$\rho_i$};
      \draw[->] (G) -- (G2) node[midway, below left] {$\rho_j$};
      
      \draw[->>] (G1) -- (O1);
      \draw[->>] (G2) -- (O2);
      
      \draw[->] (O1) -- (O2) node[midway, right] {$\ov\varphi_{ij}$};
    \end{tikzpicture}
  \]
  commutes.

  We denote by $[x]$ the image of some element~$x\in \Ort_n$ by the
  projection $\Ort_n\to \Ort_n/Z_n$.  Since all automorphisms
  of~$\Ort_n/Z_n$ are inner (Lemma~\ref{lm-ort}, (2)),
  we deduce that the composed maps $G \to G_i \to \Ort_n/Z_n$ and
  $G \to G_j \to \Ort_n /Z_n$ are conjugate, i.e., that there exists
  $x_0 \in \Ort_n / Z_n$ such that
  $[\rho_i(g)] = x_0 [\rho_j(g)] x_0^{-1}$ for all $g \in G$. We
  lift~$x_0$ arbitrarily to an element of $\Ort_n$, without changing
  notation, and see that $\rho_i(g) = x_0 \rho_j(g) x_0^{-1} \chi(g)$
  for some unique element~$\chi(g) \in Z_n$.  The uniqueness implies
  that $\chi\colon G\to Z_n$ is a group morphism (this follows from
  \begin{multline*}
    x_0 \rho_j(g) x_0^{-1} \chi(g_1g_2) = \rho_i(g_1g_2) =
    \rho_i(g_1)\rho_i(g_2) = x_0 \rho_j(g_1) x_0^{-1} \chi(g_1)x_0 \rho_j(g_2)
    x_0^{-1} \chi(g_2)\\= x_0 \rho_j(g_1) x_0^{-1} x_0 \rho_j(g_2) x_0^{-1}
    \chi(g_1)\chi(g_2) = x_0 \rho_j(g_1g_2) x_0^{-1} \chi(g_1) \chi(g_2) 
  \end{multline*}
  for $(g_1,g_2)\in G^2$). Then conjugation by~$x_0$ is an isomorphism
  between $\rho_i$ and~$\rho_j\otimes\chi$.
\end{proof}


\subsection{A property of \good sheaves}

The following proposition will be crucial in the proof of our main
result. It will be used to prove that the first statement of
Proposition~\ref{pr-abstract-gkr} holds ``in most cases'' in certain
families of sheaves.


We consider in this section a finite field~$k$ with an algebraic
closure~$\bar{k}$, a prime~$\ell$ invertible in~$k$, and a non-zero
integer~$c$.

Given $(r,s)\in \bar{k}\times\bar{k}^{\times}$, we define morphisms
$\gamma_{r,s}$ and~$\gamma_{r,s,c}$ from~$\Aa^1_{\bar{k}}$
to~$\Aa^1_{\bar{k}}$ by
\[
  \gamma_{r,s}(v)=s(v+r),\quad\quad \gamma_{r,s,c}(v)=s(v+r)^c.
\]

Note that $\gamma_{r,s}$ is an element of the group $\Aff(\bar{k})$ of
affine transformations of~$\Aa^1_{\bar{k}}$.

For any $\ell$-adic sheaf~$\mcF$, we denote
\begin{equation*}
  \mcF_{r,s}=\gamma_{r,s}^*\mcF,\quad\quad
  \mcF_{r,s,c}=\gamma_{r,s,c}^*\mcF,
\end{equation*}
which are again $\ell$-adic sheaves on~$\Aa^1_{\bar{k}}$. These satisfy
obvious properties which we will use below, such as
\[
  (\mcF_1\oplus\mcF_2)_{r,s,c}= (\mcF_1)_{r,s,c}\oplus (\mcF_2)_{r,s,c},
\]
and similarly for tensor products.

Let $(r,s)\in \bar{k}\times\bar{k}^{\times}$. Let~$\sigma\in\bar{k}$ be
any element such that $\sigma^c=s$. Then we have
\begin{equation}\label{eq-rsc}
  \mcF_{r,s,c}=[v\mapsto s(v+r)^c]^*\mcF= [u\mapsto
  \sigma(v+r)]^*[v\mapsto v^c]^*\mcF=\gamma_{r,\sigma}^*\mcF_{0,1,c}.
\end{equation}

In particular, the arithmetic (resp. geometric) monodromy group
of~$\mcF_{r,s,c}$ is (up to isomorphism) independent of~$(r,s)$, and
coincides with the arithmetic (resp. geometric) monodromy group
of~$\mcF_{0,1,c}$.  The geometric monodromy group~$G$ of $\mcF_{0,1,c}$
can be identified with a normal subgroup of the geometric monodromy
group group of~$\mcF$, such that the corresponding quotient is cyclic of
order dividing~$c$. In particular, $G$ is finite if and only if the
geometric monodromy group of~$\mcF$ is finite, and if~$G$ is \goodp,
then~$G$ contains (in all cases) the core subgroup $N$, since the latter
is a perfect normal subgroup of the geometric monodromy group of~$\mcF$.

\begin{proposition}\label{pr-good-gkrlemma}
  Let~$k$ be a finite field. Let~$c$ be a non-zero integer invertible
  in~$k$. Let~$\mcF$ be a \good $\ell$-adic sheaf over~$k$. Denote
  by~$N$ the core subgroup of the geometric monodromy group of~$\mcF$.


  If the characteristic~$q$ of~$k$ is large enough, in terms of~$c$ and
  the complexity of~$\mcF$, then there exists a finite cyclic
  subgroup~$T$ of~$\Aff(\bar{k})$ such that the following properties
  hold:
  \begin{enumth}
  \item the order of $T$ is bounded in terms of~$c$ and the complexity
    of~$\mcF$, with the bound polynomial if the monodromy group
    of~$\mcF$ is infinite;
  \item for all~$(r_1,s_1)$ and~$(r_2,s_2)$
    in~$\bar{k}\times\bar{k}^{\times}$, the geometric monodromy group of
    the sheaf
    \[
      \mcF_{r_1,s_1,c} \oplus \mcF_{r_2,s_2,c}
    \]
    contains $N \times N$ except possibly if there exist $c$-th
    roots~$\sigma_1$ and~$\sigma_2$ of~$s_1$ and~$s_2$ respectively
    such that
    \[
      \gamma_{r_2,\sigma_2}\in \gamma_{r_1,\sigma_1}T.
    \]
  \end{enumth}    
\end{proposition}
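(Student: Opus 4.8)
\emph{Plan of proof.}

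The plan is to reduce to a single sheaf, run a Goursat-type analysis in the spirit of Proposition~\ref{pr-abstract-gkr}, and exploit the ``core subgroup'' hypothesis to classify the exceptions, exactly as Lemma~\ref{lm-ort} was used for the orthogonal case in Proposition~\ref{orthogonalcase}. First I would set $\mcG:=\mcF_{0,1,c}$: by~\eqref{eq-rsc} each $\mcF_{r_i,s_i,c}$ equals $\gamma_{r_i,\sigma_i}^{*}\mcG$ for a $c$-th root $\sigma_i$ of $s_i$, so writing $\gamma_i=\gamma_{r_i,\sigma_i}$ it suffices to produce a finite cyclic $T\subset\Aff(\bar k)$, of the stated size, such that the geometric monodromy group $H$ of $\gamma_1^{*}\mcG\oplus\gamma_2^{*}\mcG$ contains $N\times N$ unless $\gamma_2\in\gamma_1 T$. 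Passing to $j_!j^{*}$, I may assume $\mcG$ is lisse on a dense open with monodromy representation $\rho$ and monodromy group $G$; as recalled before the statement, $G$ contains $N$, which is perfect and has no nonzero coinvariants on $\bQl^r$ (irreducibility plus nontriviality in the finite case; $G$-stability of the invariants together with irreducibility of $\mcF$ in the infinite case).

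Next I would define $T=\{\delta\in\Aff(\bar k):\ \delta^{*}\mcG\cong_{\geo}\mcG\otimes\mcL\text{ for some lisse rank-one }\mcL\}$, which the pullback formula shows to be a subgroup. If $\delta\in T$, then $\delta^{*}(\mcG\otimes\mcG^{\vee})\cong_{\geo}\mcG\otimes\mcG^{\vee}$, so $\delta$ belongs to $\mathbf{B}_{\mcG\otimes\mcG^{\vee}}$ (an algebraic subgroup of $\Aff(\bar k)$) and permutes the at most $r^{2}$ geometric isomorphism classes of irreducible constituents of $\mcG\otimes\mcG^{\vee}$. Because $N$ is non-abelian it acts nontrivially by conjugation on $\End(\bQl^r)$, so (as $N$ is perfect) not all those constituents can have rank one; I would pick one, $\mcA$, of rank $\geq2$, observe that its complexity is polynomially controlled by that of $\mcF$ and by $c$ via quantitative sheaf theory, and apply Proposition~\ref{pr-fkm2}---its alternative~(3) being excluded once $q$ is large enough---to conclude that $\mathbf{B}_{\mcA}$ is finite cyclic of polynomially bounded order. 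Since the $\mathbf{B}_{\mcG\otimes\mcG^{\vee}}$-stabilizer of $\mcA$ is contained in $\mathbf{B}_{\mcA}$ with index at most the number of constituents, $T\subseteq\mathbf{B}_{\mcG\otimes\mcG^{\vee}}$ is finite of order bounded in terms of $c$ and the complexity of $\mcF$; taking $\mcA$ to be the canonically attached adjoint constituent $\Lie N\subset\End(\bQl^r)$ in the infinite-monodromy case, which is fixed by every element of $\mathbf{B}_{\mcG\otimes\mcG^{\vee}}$, makes the bound polynomial. Finally, enlarging $q$ so that $q\nmid|T|$, the subgroup $T\cap\Ga$ is trivial (elements of $(\bar k,+)$ have order dividing $q$), so $T$ embeds into $\bar k^{\times}$ and is cyclic.

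For the main step, suppose $N\times N\not\subseteq H$. Both representations $\rho_i:=\rho\circ(\gamma_i)_{*}$ have monodromy group $G$, since an affine substitution induces an automorphism of $\pi_1$; thus $H\subseteq G\times G$ surjects onto each factor and Lemma~\ref{lm-goursat} gives a Goursat datum $(H_1,H_2,\varphi)$, and from $H_1\times H_2\subseteq H$ we have, say, $N\not\subseteq H_1$. The core-subgroup hypothesis now enters as the analogue of Lemma~\ref{lm-ort}(1): a normal subgroup of $G$ not containing $N$ is contained in the centralizer $Z_G(N)$---it meets $N$ centrally by simplicity of $G^{0}$, resp.\ of $N/Z(N)$, and the commutator map against it is then a homomorphism from the perfect group $N$ to $Z(N)$, hence trivial. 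Therefore $H_1\subseteq Z_G(N)$, and a symmetric argument with $\varphi\colon G/H_1\to G/H_2$ (using that $NH_1/H_1$ is non-abelian quasisimple) should give $H_2\subseteq Z_G(N)$ as well, the only subtlety being to exclude the lopsided configuration in which $N$ sits inside exactly one of $H_1,H_2$, which I would dispose of by a dimension count. Then $\varphi$ descends to an isomorphism of $G$ modulo a central subgroup; by the appropriate analogue of Lemma~\ref{lm-ort}(2) it is realized by conjugation by a single element of $\GL_r(\bQl)$, and lifting while comparing the projections $G\to G/H_i$, exactly as at the end of the proof of Proposition~\ref{orthogonalcase}, yields $\rho_1\cong\rho_2\otimes\chi$ for a one-dimensional $\chi$ of bounded order. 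Translating back, $\gamma_1^{*}\mcG\cong_{\geo}\gamma_2^{*}\mcG\otimes\mcL$ for a rank-one $\mcL$, i.e.\ $\gamma_1^{-1}\gamma_2\in T$, which is the desired conclusion.

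The hard part will be this last step: converting the combinatorial Goursat datum into a genuine rank-one twist of sheaves. One must understand which automorphisms of $G$ (or of $G$ modulo its center) are compatible with the representations $\rho_i$---the analogue of Lemma~\ref{lm-ort}(2) in the general core-subgroup setting, where, unlike for $\Ort_n/Z_n$, genuine outer automorphisms occur---and one must handle the asymmetry inherent in a Goursat datum. Keeping track of orders throughout is what produces the polynomial dependence on the complexity when the monodromy group of $\mcF$ is infinite.
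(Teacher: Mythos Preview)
There is a genuine gap: your subgroup $T=\{\delta:\delta^{*}\mcG\cong_{\geo}\mcG\otimes\mcL\}$ is too small to capture all exceptional configurations, and your closing argument for property~(2) breaks exactly when the Goursat isomorphism $\varphi$ is outer. Concretely, take $G^{0}=N=\SL_{3}$ in its standard representation and suppose $\gamma_{1}^{*}\mcG$ is geometrically isomorphic to $(\gamma_{2}^{*}\mcG)^{\vee}$. The monodromy of the direct sum is then the graph of $g\mapsto (g^{T})^{-1}$, which certainly does not contain $N\times N$; yet $\delta=\gamma_{1}^{-1}\gamma_{2}$ satisfies $\delta^{*}\mcG\cong\mcG^{\vee}$, not $\delta^{*}\mcG\cong\mcG\otimes\mcL$, so $\delta\notin T$. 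Your last paragraph names the obstacle (``genuine outer automorphisms occur'') but the plan offers no mechanism to absorb them: the assertion that $\varphi$ is ``realized by conjugation by a single element of $\GL_{r}(\bQl)$'' is precisely the analogue of Lemma~\ref{lm-ort}(2) that fails here, and no enlargement of $T$ within your framework is proposed. (The side remark that $\Lie N$ is ``fixed by every element of $\mathbf{B}_{\mcG\otimes\mcG^{\vee}}$'' is also unjustified as stated; you only get that the stabilizer has bounded index.)

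The paper avoids this by choosing $T$ from an invariant that is insensitive to whether $\varphi$ is inner or outer. In the infinite case it sets $T=\mathbf{B}_{\mcH}$ for the adjoint sheaf $\mcH$ on $\Lie(G^{0})$: since $\Lie(H_{i})$ is an ideal of the simple Lie algebra $\Lie(G)$, the Goursat isomorphism $\varphi$ automatically induces an automorphism $\Lie(G)\to\Lie(G)$, hence a geometric isomorphism $\gamma_{1}^{*}\mcH\cong\gamma_{2}^{*}\mcH$, with no need to lift $\varphi$ to $\GL(V)$. You already introduce $\Lie N$ for the polynomial bound; the fix is simply to \emph{define} $T$ as its stabilizer rather than as the twist-stabilizer of $\mcG$. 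In the finite case the paper replaces the adjoint sheaf by the conjugacy class of the composite $\Gamma^{g}\to G\to\Aut(S)$, $S=N/Z(N)$, and shows (via a Jordan--H\"older comparison, which replaces your ``dimension count''---meaningless when $G$ is finite) that $\varphi$ induces an automorphism of $S$. In both cases the exceptional condition $\gamma_{1}^{-1}\gamma_{2}\in T$ follows without ever deciding whether $\varphi$ is inner.
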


In the proof, we will denote by~$\mcG$ the sheaf
$\mcF_{0,1,c}=[u\mapsto u^c]^*\mcF$. Its complexity is bounded
polynomially in terms of~$c$ and the complexity of~$\mcF$
by~\cite[Th.\,6.8]{qst} (or by elementary properties of the
Fouvry--Michel--Kowalski complexity, in this case). We denote by~$G$ the
geometric monodromy group of~$\mcG$; recall that it contains~$N$.

We will distinguish the cases where~$G$ is finite or infinite.  The
former is significantly more involved, and readers interested primarily
in the infinite case (which is the one relevant for most current
applications) can skip it.

\begin{proof}[Proof when~$G$ is infinite]
  In this case, recall that $G^0=N$.  The arithmetic monodromy
  group~$G^a$ of $\mcG$ normalizes $G$, and hence it normalizes its
  connected component $G^0$, which is a characteristic subgroup of~$G$.
  It follows that the Lie algebra $\Lie(G^0)$ is a subrepresentation of
  the adjoint representation of~$G^a$.  This representation of~$G^a$
  corresponds to an $\ell$-adic sheaf~$\mcH$ on~$\Aa^1_k$, and it is
  geometrically irreducible because~$G^0$ is simple by assumption.

  We define~$T$ to be the group of affine linear transformations
  $\gamma\in\Aff(\bar{k})$ such that $\gamma^* \mcH$ is geometrically
  isomorphic to~$\mcH$. We will check that this group has all the
  required properties whenever~$q$ is large enough in terms of~$c$ and
  the complexity of~$\mcF$.

  Since the adjoint representation of a closed subgroup of $\GL(V)$ is
  isomorphic to the tensor product of its ``tautological''
  representation with its contragredient, the sheaf~$\mcH$ is a
  summand of the sheaf~$\mcG \otimes \mcG^\vee$. Its complexity is
  therefore bounded in terms of~$c$ and (polynomially) of the
  complexity of $\mcF$ (see~\cite[Th.\,6.8,\,Prop.\,6.14]{qst}).
 
  With the notation of Proposition~\ref{pr-fkm2}, applied to $\mcH$, we
  have $T=\mathbf{B}_{\mcH}$. Since $\mcF$ is assumed to be \goodp, we
  note that $\mcH$ is geometrically irreducible (because $N=G^0$ acts
  irreducibly, by definition), so that the proposition applies.

  The first possible conclusion of Proposition~\ref{pr-fkm2} does not
  hold, because the generic rank of~$\mcH$ is the dimension of the Lie
  algebra of~$G^0$, which is $\geq 2$ (again since $G^0=N$ is a simple
  algebraic group by definition).  The third cannot be valid if $q$ is
  sufficiently large in terms of the complexity of~$\mcH$. Hence the
  second holds, and it implies that~$T$ is a finite cyclic group of
  order bounded polynomially in terms of the complexity of~$\mcH$, hence
  also in terms of~$c$ and the complexity of~$\mcF$.
  
  Consider now~$(r_1,s_1)$ and~$(r_2,s_2)$
  in~$\bar{k}\times\bar{k}^{\times}$, and assume that the geometric
  monodromy group~$K$ of the sheaf
  $\mcF_{r_1,s_1,c} \oplus \mcF_{r_2,s_2,c} $ does not contain
  $N \times N$ (note that the connected component of the geometric
  monodromy group of each summand coincides with that of~$\mcG$, which
  is~$G$, so this potential inclusion makes sense).
  
  Let~$(H_1,H_2,\varphi)$ be a Goursat datum for the subgroup
  $K\subset G\times G$. At least one of~$H_1$ or~$H_2$ does not
  contain~$N$, since otherwise we would have $N\times N\subset K$.
  
  Since $H_i$ is a normal subgroup of~$G$, its Lie algebra $\Lie(H_i)$
  is an ideal in $\Lie(G)=\Lie(G^0)$. Since~$G^0$ is a simple linear
  algebraic group, its Lie algebra is simple, so that $\Lie(H_i)$ is
  either~$0$ or $\Lie(G)$, and similarly for $\Lie(G/H_i)$.  The
  isomorphism $\varphi\colon G/H_1\to G/H_2$ implies that the Lie
  algebras of $G/H_1$ and $G/H_2$ are isomorphic, so either both are
  zero, or both are equal to $\Lie(G)$. In the first case, it would
  follow that $G/H_i$ is finite, so that $H_i\supset G^0=N$ for $i=1$,
  $2$, which we saw is not the case. Thus we have $\Lie(G/H_i)=\Lie(G)$
  for~$i=1$, $2$.  The isomorphism~$\varphi\colon G/H_1\to G/H_2$ then
  induces an automorphism
  \[
    \widetilde{\varphi}\colon \Lie(G)\to \Lie(G).
  \]

  Since~$\widetilde{\varphi}$ is compatible with the $G$-action, this
  linear map is a (geometric) isomorphism of the corresponding
  $\ell$-adic representations of the Galois group of the function
  field. These correspond to the sheaves
  \[
    \mcF_{r_1,s_1,c} \otimes \mcF_{r_1,s_1,c}^\vee\quad\text{ and }\quad
    \mcF_{r_2,s_2,c} \otimes \mcF_{r_2,s_2,c}^\vee,
  \]
  respectively. However, since
  $\mcF_{r_i,s_i,c}=\gamma_{r_i,\sigma_i}^*\mcG$
  whenever~$\sigma_i^c=s_i$ (see~(\ref{eq-rsc})), and similarly for the
  contragredient, we conclude that that the isomorphism above is a 
  geometric isomorphism
  \[
    \gamma_{r_1,\sigma_1}^*\mcG\to \gamma_{r_2,\sigma_2}^*\mcG,
  \]
  and hence~$\gamma_{r_1,\sigma_1}\gamma_{r_2,\sigma_2}^{-1}\in T$. This
  conclusion is what we desired.
\end{proof} 

Before we treat the case when~$G$ is finite, we prove two
group-theoretical lemmas, one of which is classical and the second
likely well-known.

\begin{lemma}\label{lm-commute}
  Let~$G$ be a finite group. Let~$M$ and~$N$ be normal subgroups
  of~$G$. If~$N$ is quasisimple and~$M$ does not contain~$N$, then~$M$
  commutes with~$N$.
\end{lemma}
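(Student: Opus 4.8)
The plan is to combine the very restricted structure of normal subgroups of a quasisimple group with the elementary fact that the mutual commutator of two normal subgroups lies in their intersection. First I would record the two easy observations: since $M$ and $N$ are both normal in $G$, the subgroup $M\cap N$ is normal in $N$, and $[M,N]\subseteq M\cap N$ — indeed, for $m\in M$ and $n\in N$ one has $[m,n]=m^{-1}(n^{-1}mn)\in M$ by normality of $M$ and $[m,n]=(m^{-1}n^{-1}m)n\in N$ by normality of $N$.

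Next I would invoke the classification of normal subgroups of a quasisimple group: any $K\trianglelefteq N$ satisfies either $K\subseteq Z(N)$ or $K=N$. This is because $KZ(N)/Z(N)$ is a normal subgroup of the simple group $N/Z(N)$, hence is trivial or everything; in the first case $K\subseteq Z(N)$, and in the second $N=KZ(N)$ forces $N/K$ to be abelian, hence trivial since $N$ (and thus $N/K$) is perfect. Applying this to $K=M\cap N$ and using the hypothesis that $M$ does not contain $N$, so that $M\cap N\neq N$, we conclude $M\cap N\subseteq Z(N)$, and therefore $[M,N]\subseteq Z(N)$.

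To finish, fix $m\in M$ and consider the map $N\to Z(N)$ given by $n\mapsto[m,n]$. Using the identity $[m,n_1n_2]=[m,n_2]\,[m,n_1]^{n_2}$ together with the fact that $[m,n_1]\in Z(N)$ is fixed under conjugation by $n_2\in N$, this map is a homomorphism into the abelian group $Z(N)$; since $N$ is perfect, it must be trivial, so $[m,n]=1$ for all $n\in N$. As $m\in M$ was arbitrary, $[M,N]=1$, i.e. $M$ commutes with $N$. I expect the only mildly delicate point is the normal-subgroup classification for quasisimple groups (where one must remember to use perfectness, not just simplicity modulo the center) and the check that $n\mapsto[m,n]$ is genuinely a homomorphism; everything else is formal.
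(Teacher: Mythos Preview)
Your proof is correct and follows essentially the same route as the paper: both show $[M,N]\subseteq M\cap N$, then use the structure of normal subgroups of a quasisimple group to get $M\cap N\subseteq Z(N)$, and finally deduce $[M,N]=1$ from $[M,N]\subseteq Z(N)$ and perfectness of~$N$. The only cosmetic difference is in this last step: the paper invokes the three subgroups lemma (from $[[M,N],N]=1$ and $[[N,M],N]=1$ conclude $[[N,N],M]=1$, hence $[N,M]=1$), whereas you verify directly that $n\mapsto[m,n]$ is a homomorphism into the abelian group $Z(N)$ and so vanishes on the perfect group~$N$. Your argument is slightly more self-contained; the paper's is slightly terser but relies on an external reference.
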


\begin{proof}
  Since~$M$ and~$N$ are both normal subgroups, the commutator subgroup
  $[M,N]$ (generated by commutators $[m,n]=mnm^{-1}n^{-1}$ with
  $(m,n)\in M\times N$) is contained in~$M\cap N$ (since
  $[m,n]=m(nm^{-1}n^{-1})=(mnm^{-1})n$). If~$M$ does not contain~$N$,
  then $M\cap N$ is a proper normal subgroup of~$N$, hence is contained
  in the center of~$N$. Thus $[[N,M],N]=[[M,N],N]=\{1\}$ and then a
  classical lemma (see, e.g.,~\cite[Lemma\,4.9]{isaacs}) implies that
  $[[N,N],M]=\{1\}$. Since~$N$ is perfect, this means that~$[N,M]=1$,
  which is the desired conclusion.
\end{proof}

\begin{lemma}\label{lm-characteristic}
  Let~$E$ be an algebraically closed field, $r\geq 0$ an integer and let
  $G\subset \GL_r(E)$ be a finite \good group.  The core subgroup~$N$
  in~$G$ is a characteristic subgroup of~$G$, i.e., it is invariant by
  all group automorphisms of~$G$.
\end{lemma}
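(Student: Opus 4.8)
The plan is to prove the slightly stronger statement that $N$ is the \emph{unique} quasisimple normal subgroup of~$G$; once this is known the lemma is immediate, since any automorphism of~$G$ carries a quasisimple normal subgroup to a quasisimple normal subgroup, and so must fix~$N$.

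So let~$N'$ be a quasisimple normal subgroup of~$G$ and suppose, for contradiction, that $N'\neq N$. First I would dispose of the possibility that $N\subseteq N'$. In that case~$N$ is a normal subgroup of the quasisimple group~$N'$, and a quasisimple group has no proper normal quasisimple subgroup: indeed, if $K\triangleleft H$ with both~$K$ and~$H$ quasisimple, then the image $KZ(H)/Z(H)$ of~$K$ in the non-abelian simple group $H/Z(H)$ is a normal subgroup, hence is either trivial --- forcing $K\subseteq Z(H)$, impossible as~$K$ is non-abelian --- or all of $H/Z(H)$, in which case $H=KZ(H)$ and therefore $H=[H,H]=[K,K]=K$ using that~$K$ and~$H$ are perfect. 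Thus $N=N'$, contrary to assumption, and we conclude that $N\not\subseteq N'$.

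Now I would invoke Lemma~\ref{lm-commute} with the ambient normal subgroup taken to be~$N'$ and the quasisimple normal subgroup taken to be~$N$ (which is normal in~$G$ and quasisimple, being the core subgroup in the finite case of Definition~\ref{defgoodsheafintro}): since~$N'$ does not contain~$N$, the lemma gives $[N',N]=\{1\}$, \ie $N'\subseteq C_G(N)$. This is the point at which the \goodp hypothesis enters: in the finite case, the core subgroup~$N$ acts irreducibly on~$E^r$, so Schur's Lemma over the algebraically closed field~$E$ shows that the centralizer of~$N$ in~$\GL_r(E)$ consists of scalars, $C_{\GL_r(E)}(N)=E^{\times}\cdot\Id$, which is abelian. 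Hence $N'\subseteq C_G(N)$ is abelian, contradicting that~$N'$ is quasisimple (thus perfect and non-abelian). This proves the uniqueness of~$N$, and therefore that~$N$ is characteristic.

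I do not expect a genuine obstacle here: the two inputs --- Lemma~\ref{lm-commute} and Schur's Lemma --- are precisely what the situation calls for, and the irreducibility of~$N$ on~$E^r$ needed for the Schur argument is exactly what the definition of a \goodp group builds in in the finite case. The only point demanding a little attention is the containment case $N\subseteq N'$, handled above by the elementary remark that a quasisimple group has no proper normal quasisimple subgroup; note also that nothing in the argument depends on the characteristic of~$E$.
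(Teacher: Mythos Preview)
Your proof is correct and follows essentially the same line as the paper's: both use Lemma~\ref{lm-commute} to reduce to the case where the candidate subgroup commutes with~$N$, then invoke Schur's Lemma (via the irreducibility of~$N$ on~$E^r$) to derive a contradiction with non-abelianness.

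The only difference worth noting is in scope and in the handling of the containment case. The paper works directly with $M=\varphi(N)$ for a given automorphism~$\varphi$, so when the containment $N\subseteq M$ arises it immediately concludes $N=M$ from the equality of orders $|M|=|\varphi(N)|=|N|$. You instead prove the stronger statement that~$N$ is the \emph{unique} quasisimple normal subgroup of~$G$; for this the cardinality shortcut is unavailable, and your argument that a quasisimple group has no proper normal quasisimple subgroup is exactly what is needed. Your route gives a slightly stronger conclusion at the cost of a short extra paragraph; the paper's route is quicker for the lemma as stated.
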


\begin{proof}
  Let $\varphi\in \Aut(G)$ be an automorphism of~$G$ and let
  $M=\vphi(N)$. By Lemma~\ref{lm-commute}, either $N\subset M$, in which
  case either~$M=N$, since both groups have the same order, or~$M$ would
  commute with~$N$. But since~$N$ acts irreducibly on~$E^r$, it would
  then follow from Schur's Lemma that~$M$ is abelian, which is a
  impossible.
\end{proof}  

\begin{proof}[Proof of Proposition~\textup{\ref{pr-good-gkrlemma}}
  when~$G$ is finite]
  By definition of a \good group, the finite group~$G$ contains the
  quasisimple normal subgroup~$N$, which acts irreducibly. We denote
  by~$A$ the center of~$N$ and by~$S$ the quotient~$S=N/A=N/Z(N)$, which
  is a non-abelian simple group. We view~$G$ as a subgroup of~$\GL(V)$,
  where~$V$ is the space on which the corresponding representation
  $\rho\colon \Gamma\to \GL(V)$ of the Galois group $\Gamma$ of the
  field $k(X)$ acts.


  Since~$N$ acts irreducibly on~$V$, it follows from Schur's Lemma
  that~$A$ is a group of scalar matrices; each of these must have
  order dividing~$|A|$, so this group is in fact the cyclic group $\mmu_{|A|}\Id$
  of scalar matrices of order dividing~$|A|$.
 
  Since~$A$ is central, the conjugation action of $G$ on~$N$ induces a
  homomorphism
  \[
    \alpha\colon G\to \Aut(N/A)=\Aut(S).
  \]
  
  Let $(r,s)\in\bar{k}\times\bar{k}^{\times}$. The sheaf $\mcF_{r,s,c}$
  determines a group homomorphism of the geometric Galois group
  $\Gamma^g$ of $\bar{k}(X)$ to~$G$, and thus by composition
  with~$\alpha$ a group homomorphism
  \[
    \rho_{r,s,c}\colon \Gamma^g\to \Aut(S).
  \]

  Note that $\Aff(\bar{k})$ acts naturally on~$\Gamma^g$, hence on the
  set~$X_S$ of homomorphisms $\Gamma^g\to \Aut(S)$
  modulo~$\Aut(S)$-conjugacy.  Recalling that we
  defined~$\mcG=\mcF_{0,1,c}$, we denote by~$T$ the stabilizer (for this
  action of~$\Aff(\bar{k})$) of the class in~$X_S$ of the
  homomorphism~$\rho_{0,1,c}$ associated to~$\mcG$. In other words,~$T$
  is the set of affine transformations $\gamma$ such that the
  homomorphisms associated to $\mcG$ and $\gamma^* \mcG$ are conjugate
  by an element of $\Aut(S)$.

  To conclude the proof, we will now prove that the subgroup~$T$ has the
  desired properties.

  \par
  \medskip
  \par
  \textbf{Step 1.} For~$q$ large enough, the group $T$ is a finite
  cyclic group of size depending only on $c$ and the complexity
  of $\mcF$.
  \par
  \smallskip
 
  We note first that the definition of $T$ makes it clear that $T$ is a
  subgroup of $\Aff(\bar{k})$. Any subgroup of $\Aff(\bar{k})$ of order
  coprime to the characteristic~$q$ of~$k$ is cyclic, since its
  intersection with the kernel of the projection
  $\Aff(\bar{k})\to \bar{k}^\times$ is trivial and any finite subgroup
  of $\bar{k}^\times$ is cyclic. So it suffices to check that $T$ is
  finite of size depending only on $c$ and the complexity of $\mcF$.
 

  By Lemma~\ref{lm-characteristic}, the subgroup~$N$ is a characteristic
  subgroup of~$G$. In particular, the action by conjugation of the
  arithmetic monodromy group~$G^a$ of~$\mcG$ on its normal subgroup~$G$
  gives an action of~$G^a$ on~$N$; this action is trivial on~$Z(N)$,
  since the latter consists of scalar matrices, hence we obtain an
  action of~$G^a$ on~$S$, or equivalently a morphism
  \[
    \beta\colon G^a\to\Aut(S).
  \]

  Since~$N$ acts irreducibly on~$V$, Schur's Lemma implies that the
  kernel of this action is contained in the group of scalar matrices. It
  follows that the linear action of $G^a$ on~$\End(V)$, which is also by
  conjugation, factors through the image of $\beta$.

  Let~$W=\bQl[\Aut(S) ] $ be the regular representation of~$\Aut(S)$.
  Let~$\mcH$ be the sheaf on~$\Aa^1_k$ associated to~$W$. Since the
  monodromy group of~$\mcH$ is finite, this sheaf is geometrically
  semisimple. After replacing $k$ by a finite extension, if necessary,
  we may assume that~$\mcH$ splits as a direct sum of geometrically
  irreducible sheaves. Let then $I$ be the finite set of geometrically
  irreducible components which arise in this decomposition.

  Any $\gamma\in T$ gives a geometric isomorphism
  $\gamma^*\mcH\simeq \mcH$ arising from the action on $W$ of the
  element of $\Aut(S)$ that conjugates the homomorphism associated to
  $\gamma^* \mcG$ to the homomorphism associated to $\mcG$, and
  therefore permutes the set~$I$.  Since the size of~$I$ is at
  most~$|S|$, we will obtain the conclusion of Step~1 once we show that
  the stabilizer of \emph{some} irreducible component $\mcH'\in I$ is
  finite of size bounded in terms of~$c$ and of the complexity
  of~$\mcF$.

  But the stabilizer in $\Aff(\bar{k})$ of the isomorphism class of any
  component~$\mcH'$ coincides by definition with the group
  $\hautb_{\mcH'}$.  Thus, we may argue using Proposition~\ref{pr-fkm2}
  again. Note that since $\mcH'$ is a summand of the $k$th tensor power
  of $\mcG$ for $k$ bounded in terms of $S$ and thus bounded in terms of
  $\operatorname{rank}(\mcG)$, its complexity is bounded in terms of~$c$
  and the complexity of~$\mcF$. We obtain the desired conclusion for~$q$
  large enough in terms of the complexity of~$\mcF$ provided~$\mcH'$
  does not have generic rank~$1$. Such a component~$\mcH'$ exists
  because~$W$ is a non-trivial representation of~$S$.
  
 This concludes the proof of Step~1.

 \par
 \medskip
 \par
 \textbf{Step 2.} Let $(r_1,s_1)$ and
 $(r_2,s_2) \in \bar{k}\times\bar{k}^{\times}$ be such that the
 geometric monodromy group of the sheaf
 $\mcF_{r_1,s_1,c} \oplus \mcF_{r_2,s_2,c} $ does not contain
 $N \times N$ as a subgroup; we need to check that we then have
 \[
   \gamma_{\sigma_2,r_2}\in \gamma_{\sigma_1,r_1}T,
 \]
 where $\sigma_i$ is an arbitrary element of~$\bar{k}$ with
 $\sigma_i^c=s_i$, and this will conclude the proof of
 Proposition~\ref{pr-good-gkrlemma} when~$G$ is finite.


 Recall that the geometric monodromy groups of~$\mcF_{r_i,s_i,c}$ are
 both identified with~$G$. Let~$K$ be the geometric monodromy group of
 the direct sum.  Let $(H_1,H_2,\varphi)$ be a Goursat datum for
 $K\subset G\times G$ (Lemma~\ref{lm-goursat}). Since we assume that $K$
 does not contain $N\times N$, we know that either $H_1$ or~$H_2$ does
 not contain~$N$.

 \smallskip \textbf{Step 2.1.} We claim that $H_1$ and $H_2$ are both
 contained in~$Z(G)$.

 Indeed, if~$H_1$ does not contain~$N$, then by Lemma~\ref{lm-commute}
 (applied to $H_1$ and~$N$), we see that~$H_1$ is contained in the
 centralizer of~$N$ in~$G$, and in particular is abelian by Schur's
 Lemma. Thus, if we consider a composition series for $G$ built out of
 composition series for
 \[
   G/H_1,\quad H_1,
 \]
 we see that the only non-abelian factors are those in $G/H_1$. By
 uniqueness of Jordan--Hölder factors, a composition series built from
 \[
   G/H_2,\quad H_2
 \]
 already provides all the non-abelian factors from $G/H_2$, since
 $\varphi\colon G/H_1\to G/H_2$ is an isomorphism. This implies
 that~$H_2$ has an abelian composition series, i.e., that it is
 solvable. In particular, it is also true that $H_2$ does not
 contain~$N$, and so is also contained in the centralizer of~$N$.

 Similary, exchanging the roles of $H_1$ and~$H_2$, we conclude that
 both are always contained in the centralizer of~$N$. We then recall
 that Schur's Lemma shows that the latter is contained in $Z(G)$.
 

 \smallskip
 \textbf{Step 2.2.} We have $\varphi(NH_1/H_1)=NH_2/H_2$.

 Indeed, it suffices to prove that $\varphi(NH_1/H_1)\supset NH_2/H_2$,
 and then to exchange the role of~$H_1$ and~$H_2$ to get the
 converse inclusion.

 Let $p\colon G\to G/H_2$ be the projection. The subgroup
 $N^*=p^{-1}(\varphi(NH_1/H_1))$ is a normal subgroup of~$G$, so by
 Lemma~\ref{lm-commute}, we have $N\subset N^*$ (otherwise $N^*$ would
 commute with~$N$, hence would be abelian by Schur's Lemma, which is not
 the case).  Thus $N\subset N^*$, which means that
 $NH_2/H_2\subset \varphi(NH_1/H_1)$; this proves the claim.

 \smallskip \textbf{Step 2.3.} Conclusion.
 
 Let~$M_i=NH_i/H_i$; since~$M_i$ is a non-trivial image of a
 quasi-simple group, it is quasi-simple and the quotient isomorphism
 $N/Z(N)\to S$ induces an isomorphism $M_i/Z(M_i)\to S$. It follows that
 the isomorphism $\varphi\colon NH_1/H_1\to NH_2/H_2$ induces an
 automorphism $\widetilde{\varphi}\colon S\to S$.  By construction,
 $\widetilde{\varphi}$ conjugates the actions of~$G^a$ on~$S$ associated
 to $\mcF_{r_1,s_1,c}$ and $\mcF_{r_2,s_2,c}$, which means that
 $\gamma_{r_1,s_1}\gamma_{r_2,s_2}^{-1}\in T$.
\end{proof}

\subsection{The case of oxozonic sheaves}

This section will only be used in the proof of Theorem~\ref{thmO4}. It
provides a version of Proposition \ref{pr-good-gkrlemma} adapted to
oxozonic sheaves. 

We use the same notation as in the previous section for the
sheaves~$\mcF_{r,s,c}$, etc.

\begin{proposition}\label{good-gkrlemmaO4}
  Let~$k$ be a finite field. Let~$c$ be an \emph{odd} integer
  invertible in~$k$. Let~$\mcF$ be an oxozonic $\ell$-adic sheaf
  over~$k$. 


  If the characteristic~$q$ of~$k$ is large enough, in terms of~$c$ and
  the complexity of~$\mcF$, then there exists a finite cyclic
  subgroup~$T$ of~$\Aff(\bar{k})$ such that the following properties
  hold:
  \begin{enumth}
  \item the order of $T$ is bounded polynomially in terms of~$c$ and the
    complexity of $\mcF$ only;
  \item for all~$(r_1,s_1)$ and~$(r_2,s_2)$
    in~$\bar{k}\times\bar{k}^{\times}$, the geometric monodromy group
    of the sheaf
    \[
      \mcF_{r_1,s_1,c} \oplus \mcF_{r_2,s_2,c}
    \]
    contains $\SO_4 \times \SO_4$ except possibly if there exist $c$-th
    roots~$\sigma_1$ and~$\sigma_2$ of~$s_1$ and~$s_2$ respectively
    such that
    \[
      \gamma_{r_2,\sigma_2}\in \gamma_{r_1,\sigma_1}T.
    \]
  \end{enumth}    
\end{proposition}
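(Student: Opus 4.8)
The plan is to run the argument of Proposition~\ref{pr-good-gkrlemma} in the shape used for infinite monodromy, but substituting the exceptional isogeny structure of $\Ort_4$ (via Lemma~\ref{lm-ort} and the Goursat analysis from the proof of Proposition~\ref{orthogonalcase}) for the use of simplicity of the core subgroup, and working throughout with the exterior square sheaf $\bigwedge^2\mcF$ (equivalently the adjoint sheaf) in place of $\mcF\otimes\mcF^\vee$. First I would set $\mcG=\mcF_{0,1,c}=[u\mapsto u^c]^*\mcF$, whose complexity is bounded polynomially in $c$ and the complexity of $\mcF$ by~\cite[Th.\,6.8]{qst}. Its geometric monodromy group $G$ is a normal subgroup of $\Ort_4$ with \emph{cyclic} quotient of order dividing $c$. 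Since $c$ is odd and the only normal subgroup of $\Ort_4$ with cyclic quotient of odd order is $\Ort_4$ itself --- by Lemma~\ref{lm-ort}~(1) a proper normal subgroup either equals $\SO_4$, with quotient of even order $2$, or lies in $Z_4=\{\pm\Id\}$, with non-cyclic quotient --- we conclude $G=\Ort_4$; hence $\mcG$, and likewise every $\mcF_{r,s,c}=\gamma_{r,\sigma}^*\mcG$ (see~\eqref{eq-rsc}), is again oxozonic with monodromy the same copy $\Ort_4\subset\GL_4$. This is the step where oddness of $c$ is essential.

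Next I would introduce $\mcH=\bigwedge^2\mcG$, which is a \emph{canonical} direct summand of $\mcG\otimes\mcG$ (via $\mcG\otimes\mcG=\symk^2\mcG\oplus\bigwedge^2\mcG$, valid for $q\neq 2$), so that its complexity is bounded polynomially in $c$ and the complexity of $\mcF$ by~\cite[Th.\,6.8,\,Prop.\,6.14]{qst}. As a representation of $\Ort_4$, the exterior square of the $4$-dimensional representation is $\mathfrak{so}_4=\mathfrak{sl}_2\oplus\mathfrak{sl}_2$, and the nontrivial coset of $\Ort_4/\SO_4$ permutes the two $\mathfrak{sl}_2$-summands; hence $\mcH$ is geometrically irreducible of rank $6$. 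Applying Proposition~\ref{pr-fkm2} to $\mcH$ (after the standard reduction to a geometrically simple middle-extension sheaf of comparable complexity), its first alternative is impossible since $\rk\mcH=6\neq 1$, and its third is excluded for $q$ large in terms of the complexity of $\mcH$; thus $T:=\mathbf{B}_{\mcH}$ is a finite cyclic subgroup of $\Aff(\bar k)$ whose order is bounded polynomially in $c$ and the complexity of $\mcF$, giving property~(1).

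For property~(2), suppose the geometric monodromy group $K$ of $\mcF_{r_1,s_1,c}\oplus\mcF_{r_2,s_2,c}$ does not contain $\SO_4\times\SO_4$. Each summand has geometric monodromy the same $\Ort_4\subset\GL_4$, so $K\subset\Ort_4\times\Ort_4$ with both projections surjective, and we take a Goursat datum $(H_1,H_2,\varphi)$. From $H_1\times H_2\subset K$ some $H_i$, say $H_1$, fails to contain $\SO_4$, hence $H_1\subset Z_4$ by Lemma~\ref{lm-ort}~(1); since then $\Ort_4/H_1$ has $\SO_4$ or $\SO_4/Z_4$ as a quotient whereas $\Ort_4/H_2$ would be abelian if $H_2\supset\SO_4$, the isomorphism $\varphi\colon\Ort_4/H_1\to\Ort_4/H_2$ forces $H_2\not\supset\SO_4$ and thus $H_2\subset Z_4$; and as $\SO_4$ and $\SO_4/Z_4$ have different centers, $\varphi$ restricted to the perfect cores forces $H_1=H_2$. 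Then $\varphi$ induces an automorphism of $\Ort_4/Z_4$, which is inner by Lemma~\ref{lm-ort}~(2); arguing verbatim as in the proof of Proposition~\ref{orthogonalcase}, this produces $x_0\in\Ort_4$ and a character $\chi$ of order at most $2$ of the geometric fundamental group with $\rho_2\cong\rho_1\otimes\chi$, where $\rho_i$ is the representation attached to $\mcF_{r_i,s_i,c}$. Since $\chi^{\otimes 2}$ is trivial, $\bigwedge^2\rho_2\cong(\bigwedge^2\rho_1)\otimes\chi^{\otimes 2}\cong\bigwedge^2\rho_1$, i.e.\ $\bigwedge^2\mcF_{r_2,s_2,c}\simeq\bigwedge^2\mcF_{r_1,s_1,c}$ geometrically; using $\mcF_{r_i,s_i,c}=\gamma_{r_i,\sigma_i}^*\mcG$ and $\bigwedge^2\gamma^*=\gamma^*\bigwedge^2$ this reads $\gamma_{r_2,\sigma_2}^*\mcH\simeq\gamma_{r_1,\sigma_1}^*\mcH$, hence $(\gamma_{r_2,\sigma_2}\gamma_{r_1,\sigma_1}^{-1})^*\mcH\simeq\mcH$, so $\gamma_{r_2,\sigma_2}\gamma_{r_1,\sigma_1}^{-1}\in\mathbf{B}_{\mcH}=T$, which is the desired relation.

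The main point requiring care is the identification $G=\Ort_4$, together with the Goursat bookkeeping over the \emph{non}-quasisimple group $\SO_4$: one cannot invoke Proposition~\ref{pr-abstract-gkr} or Proposition~\ref{orthogonalcase} as a black box, because $\SO_4$ is neither simple as an algebraic group nor quasisimple as an abstract group, so one must recycle their proofs with Lemma~\ref{lm-ort} in place of simplicity. This is also exactly where oxozonicity rather than the sulfatic case is used, since for a sulfatic sheaf $\bigwedge^2\mcG$ would fail to be geometrically irreducible. The complexity estimates are routine given Quantitative Sheaf Theory.
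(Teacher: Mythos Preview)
Your proof is correct and follows essentially the same route as the paper's. Both arguments pass to the adjoint sheaf --- the paper phrases it as the Lie algebra $\Lie(\SO_4)=\mathfrak{sl}_2\oplus\mathfrak{sl}_2$, you as $\bigwedge^2\mcG$, but these coincide for the standard $4$-dimensional orthogonal representation --- apply Proposition~\ref{pr-fkm2} to obtain $T$, and then run the Goursat analysis via Lemma~\ref{lm-ort} together with the inner-automorphism step from the proof of Proposition~\ref{orthogonalcase}. Your write-up is in fact more explicit on two points the paper leaves terse: the reason $c$ odd forces $G=\Ort_4$ (rather than dropping to $\SO_4$), and the mechanism by which the unwanted quadratic twist $\chi$ is killed upon passing to $\bigwedge^2$ (namely $\bigwedge^2(\rho\otimes\chi)\cong(\bigwedge^2\rho)\otimes\chi^2$).
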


\begin{proof}
  Since $c$ is odd, the geometric monodromy group~$G$ of the sheaf
  $[v\mapsto v^c]^*\mcF$ is still isomorphic to $\Ort_4$.

  We denote by~$\mcG$ the sheaf associated to the action of the
  arithmetic monodromy group on the Lie algebra
  $\Lie(\SO_4)=\Lie(\SL_2)\oplus \Lie(\SL_2)$, and define~$T$ to be the
  group of affine linear transformations~$\gamma$ such that
  $\gamma^*\mcG$ is geometrically isomorphic to~$\mcG$. As in the proof
  of the infinite case of Proposition~\ref{pr-good-gkrlemma}, we can
  then show that~$T$ is a finite cyclic group of order bounded
  polynomially in terms of~$c$ and the complexity of~$\mcF$.

  Let $(r_1,s_1)$ and $(r_2,s_2)$ be pairs such that geometric monodromy
  group of the sheaf $\mcF_{r_1,s_1,c} \oplus \mcF_{r_2,s_2,c} $ fails
  to contain $\SO_4\times \SO_4$ as a subgroup. By
  Lemma~\ref{lm-goursat}, we obtain a Goursat datum $(H_1,H_2,\varphi)$
  where $H_i$ is a normal subgroup of~$G$ and
  $\varphi\colon G/ H_1 \to G/H_2$ is an isomorphism, such that $\SO_4$
  is not contained in both $H_1$ and~$H_2$.

  If $\SO_4$ is not contained in~$H_1$, then $H_1=\{1\}$ or $H_1=Z_n$
  (Lemma~\ref{lm-ort}, (1)),
  and the isomorphism $\varphi$ implies that either $H_2=H_1=\{1\}$ or
  $H_2=H_1=Z_n$. Mutatis mutandis, the same is checked in the case where
  $\SO_4$ is not contained in~$H_2$.

  Now, arguing as in the end of the proof of
  Corollary~\ref{orthogonalcase} we conclude that $\mcF_{r_2,s_2,c}$ is
  geometrically isomorphic to $\mcF_{r_1,s_1,c}$, up to a possible
  quadratic twist, and this means that for $c$-th roots $\sigma_i$
  of~$s_i$, we we have
  $\gamma_{r_2,\sigma_2}\in \gamma_{r_1,\sigma_1}T$.
\end{proof}

\subsection{Diagonal estimates}

Combining Proposition~\ref{pr-good-gkrlemma} and
Proposition~\ref{pr-abstract-gkr}, we will ultimately seek to obtain
estimates for sums of products of pullbacks of a \good sheaf by the
transformations $\gamma_{r,s,c}$. We prove here, in different
situations, that ``most'' of the sheaves which arise in sums of products
of a certain type have trivial topmost cohomology group, a property
which translates in bounds for the corresponding sums after applying the
Riemann Hypothesis over finite fields.

We will use repeatedly the following simple combinatorial lemma. By
``graph'', we mean an undirected graph without multiple edges, but where
loops are allowed.

\begin{lemma}\label{lm-graph1}
  Let~$(V,E)$ be a finite graph, with maximal degree~$C\geq
  0$. Let~$m\geq 1$ be an integer. Denote
  \begin{multline*}
    \Delta_{m}(V,E)=\{ (v_1,\ldots,v_{2m})\in V^{2m}\,\mid\, \text{for
      each $i$, the vertex $v_i$}
    \\\text{ is connected to a vertex $v_{j}$
      with $j\not=i$} \}.
  \end{multline*}
  
  We have
  \[
    |\Delta_m(V,E)|\ll |V|^m,
  \]
  where the implied constant depends only on~$m$ and~$C$.
\end{lemma}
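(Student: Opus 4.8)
The plan is to reduce the estimate to a bound for a fixed ``partner function'' and then to an elementary componentwise count. Given a tuple $(v_1,\dots,v_{2m})\in\Delta_m(V,E)$, for each index $i$ there is an index $\pi(i)\ne i$ with $v_i$ connected to $v_{\pi(i)}$, so the tuple lies in the set $\Delta^\pi=\{(v_1,\dots,v_{2m})\in V^{2m}\mid v_i\text{ is connected to }v_{\pi(i)}\text{ for all }i\}$ for at least one fixed-point-free self-map $\pi$ of $\{1,\dots,2m\}$. Since there are at most $(2m-1)^{2m}$ such maps, it is enough to prove that $|\Delta^\pi|\ll|V|^m$ for each fixed such $\pi$, with implied constant depending only on $m$ and $C$ (the case $V=\varnothing$ being trivial, we assume $|V|\ge 1$).

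First I would attach to $\pi$ the graph $\Gamma_\pi$ on vertex set $\{1,\dots,2m\}$ whose edges are the pairs $\{i,\pi(i)\}$. Since $\pi$ has no fixed point, $\Gamma_\pi$ has no loops and each of its $2m$ vertices lies on at least one edge; hence every connected component of $\Gamma_\pi$ contains at least two vertices, so $\Gamma_\pi$ has at most $m$ connected components. I would then bound $|\Delta^\pi|$ one component at a time: inside a component $I$, fix one vertex and enumerate the remaining vertices of $I$ so that each is adjacent in $\Gamma_\pi$ to an earlier one (possible since $I$ is connected). The value at the first vertex is unconstrained, giving at most $|V|$ choices; and for each later vertex $j$, adjacency in $\Gamma_\pi$ to an already-processed vertex $j'$ forces $v_j$ to be connected in $(V,E)$ to the already-chosen vertex $v_{j'}$, so $v_j$ ranges over at most $C+1$ vertices. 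This yields at most $|V|\,(C+1)^{|I|-1}$ choices per component, hence $|\Delta^\pi|\le|V|^{s}(C+1)^{2m}$ where $s\le m$ is the number of components; since $s\le m$ and $|V|\ge1$ this is $\le(C+1)^{2m}|V|^m$.

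Summing over the at most $(2m-1)^{2m}$ fixed-point-free maps $\pi$ then gives $|\Delta_m(V,E)|\le(2m-1)^{2m}(C+1)^{2m}|V|^m$, the asserted bound. No step is genuinely hard; the one observation worth isolating is that a fixed-point-free partner function forces $\Gamma_\pi$ to have no isolated vertex, hence at most $m$ components on $2m$ indices — this is exactly what produces the exponent $m$ rather than the trivial $2m$. The only bookkeeping subtlety is that a vertex may count as ``connected to itself'' when $(V,E)$ has a loop, which is why each propagation step costs $C+1$ rather than $C$.
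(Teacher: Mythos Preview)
Your proof is correct and follows essentially the same approach as the paper: associate to each tuple an auxiliary graph on the index set $\{1,\dots,2m\}$ with no isolated vertex (the paper records the full adjacency pattern $\Theta_{\mathbf{v}}$ among the $v_i$, you record only a chosen partner function $\pi$), deduce that this graph has at most $m$ connected components, and then bound the count component by component using the degree bound. Your partner-function implementation is a bit more streamlined and gives explicit constants.
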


\begin{proof}
  We denote by $[2m]$ the set $\{1,\ldots,2m\}$.
  
  For any~$\uple{v}=(v_1,\ldots,v_{2m})\in\Delta_m(V,E)$, define a
  graph~$\Theta_{\uple{v}}$ with vertex set~$[2m]$ and edges
  joining~$i$ to~$j$ if and only if~$v_i$ is connected to~$v_j$ in
  $(V,E)$.

  Since the number of possibles graphs~$\Theta_{\uple{v}}$ is bounded
  in terms of~$m$ and~$C$ only, we obtain the estimate
  \[
    |\Delta_m(V,E)|\ll \max_{\Theta}|\{\uple{v}\in V^{2m}\,\mid\,
    \Theta_{\uple{v}}=\Theta\}|,
  \]
  where the implied constant depends only on~$l$ and~$C$, and the
  maximum ranges over graphs~$\Theta$ which arise
  as~$\Theta_{\uple{w}}$ for some~$\uple{w}\in \Delta_m(V,E)$.

  Thus we are reduced to estimating the number of~$\uple{v}$
  where~$\Theta_{\uple{v}}$ is a fixed graph~$\Theta$.  Let~$\pi_0$ be
  the number of connected components of~$\Theta$. Since $\Theta$ is of
  the form~$\Theta_{\uple{w}}$ for~$\uple{w}\in\Delta_m(V,E)$, the
  definition of~$\Delta_m(V,E)$ implies that any vertex~$i$ of~$\Theta$
  is connected to another vertex, and therefore we obtain the estimate
  $\pi_0\leq m$.
  
  Let~$I\subset [2m]$ be one connected component of~$\Theta$,
  and~$i_0\in I$. If~$\uple{v}\in V^{2m}$ satisfies
  $\Theta_{\uple{v}}=\Theta$, then all~$v_i$ for~$i\in I$ are contained
  in the ball~$B(v_{i_0}, |I|)$ of radius~$|I|$ in~$\Gamma$
  around~$v_{i_0}$. This ball has $\leq C^{|I|}$ elements, so it follows
  that there are at most
  \[
    \sum_{v\in V}|B(v;|I|)|\leq C^{|I|}|V|
  \]
  possible choices of~$(v_i)_{i\in I}$ for~$\uple{v}$
  with~$\Theta_{\uple{v}}=\Theta$. Applying this to all connected
  components, there are are most
  \[
    C^{2m}|V|^{\pi_0}\leq C^{2m}|V|^m
  \]
  possible choices of~$\uple{v}$ with~$\Theta_{\uple{v}}=\Theta$.

  This concludes the proof.
\end{proof}

We now assume given the data $(k,\mcF)$ of the previous section.  For
$m\geq 1$, and for
$(\uple{r},\uple{s})\in k^{2m}\times (k^{\times})^{2m}$, we denote
\begin{equation}\label{eq-sheaf-mcf-rsc}
  \mcF_{\uple{r},\uple{s},c}=\bigotimes_{1\leq j\leq m}
  \mcF_{r_j,s_j,c}\otimes \mcF_{r_{j+m},s_{j+m},c}^{\vee}.
\end{equation}

The first diagonal statement is the following, which is enough to deal
with Type I sums.

\begin{proposition}\label{pr-diagonal1}
  Let~$\mcF$ be a \good sheaf. Let $m\geq 1$ be an integer. Let
  \[
    \Delta_{m,c}(\mcF;k)=\{(\uple{r},\uple{s})\in k^{2m}\times
    (k^{\times})^{2m}\,\mid\, H^2_c(\Aa^1_{\bar{k}},
    \mcF_{\uple{r},\uple{s},c})\not=0 \}.
  \]

  Then we have
  \[
    |\Delta_{m,c}(\mcF;k)|\ll |k|^{2m}
  \]
  where the implied constant depends on~$m$ and on $c$ and the
  complexity of~$\mcF$.
\end{proposition}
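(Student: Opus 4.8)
The plan is to use Propositions~\ref{pr-abstract-gkr} and~\ref{pr-good-gkrlemma} to show that $(\uple{r},\uple{s})\in\Delta_{m,c}(\mcF;k)$ forces the tuple $((r_1,s_1),\dots,(r_{2m},s_{2m}))\in(k\times k^{\times})^{2m}$ to lie in a ``diagonal'' locus of exactly the type counted by Lemma~\ref{lm-graph1}. Throughout I assume that $q$ is large enough in terms of $c$ and the complexity of $\mcF$ (in particular $q\nmid c$) for Proposition~\ref{pr-good-gkrlemma} to apply, and I fix the resulting finite cyclic subgroup $T\subset\Aff(\bar{k})$, whose order is bounded in terms of $c$ and the complexity of~$\mcF$. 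First I would make the usual reduction: replacing $\mcF$ by $j_!j^*\mcF$ changes neither its monodromy groups nor, up to punctual sheaves invisible to $H^2_c$, the relevant cohomology, so we may assume each $\mcF_{r,s,c}$ is the extension by zero of a lisse sheaf on a dense open set; then $\mcF_{\uple{r},\uple{s},c}$ is the extension by zero of a lisse sheaf on the intersection of these open sets, and $H^2_c(\Aa^1_{\bar{k}},\mcF_{\uple{r},\uple{s},c})$ is (a Tate twist of) the space of coinvariants, under the geometric Galois group $\Gamma^g$ of $\bar{k}(X)$, of the tensor product representation $\rho=\rho_1\otimes\cdots\otimes\rho_{2m}$, where $\rho_j$ is the representation attached to $\mcF_{r_j,s_j,c}$ for $1\leq j\leq m$ and to $\mcF_{r_j,s_j,c}^{\vee}$ for $m<j\leq 2m$. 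Writing $\mcG=\mcF_{0,1,c}$ with geometric monodromy group $G$, recall from~\eqref{eq-rsc} that each $\mcF_{r,s,c}$ has the same geometric monodromy group $G$, and that $G$ contains the core subgroup $N$ of the monodromy group of $\mcF$; the group $N$ is perfect and acts irreducibly on the $r$-dimensional space $V$ (so $r\geq 2$), hence has no coinvariants on $V$ nor on $V^{\vee}$.

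Next I would run the Goursat step. Apply Proposition~\ref{pr-abstract-gkr} with the group $\Gamma^g$, the representations $\rho_j$ above, and, as normal subgroups of the respective Zariski closures $G_j$ of the images, the subgroup $N$ itself for $j\leq m$ and its image $N_j$ under the automorphism $g\mapsto{}^{t}g^{-1}$ of $\GL_r$ for $j>m$ (these are perfect and coinvariant-free by the previous paragraph). We conclude that either the coinvariant space of $\rho$ vanishes, i.e.\ $(\uple{r},\uple{s})\notin\Delta_{m,c}(\mcF;k)$, or else for every $i$ there is some $j\neq i$ whose Goursat datum $(H_{i,j},H_{j,i},\varphi_{ij})$ for the image of $\Gamma^g$ in $G_i\times G_j$ satisfies $N_i\not\subset H_{i,j}$. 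In the latter case I would argue as follows: if the image contained $N_i\times N_j$ it would contain $N_i\times\{1\}$ and hence $N_i\subset H_{i,j}$; moreover the automorphism $g\mapsto{}^{t}g^{-1}$, applied coordinatewise to the factors carrying a contragredient, identifies the geometric monodromy group of $\mcF_{r_i,s_i,c}\oplus\mcF_{r_j,s_j,c}$ with the image of $\Gamma^g$ in $G_i\times G_j$ and carries $N\times N$ to $N_i\times N_j$. Therefore $N_i\not\subset H_{i,j}$ implies that the geometric monodromy group of $\mcF_{r_i,s_i,c}\oplus\mcF_{r_j,s_j,c}$ does not contain $N\times N$, and Proposition~\ref{pr-good-gkrlemma} then forces $\gamma_{r_j,\sigma_j}\in\gamma_{r_i,\sigma_i}T$ for suitable $c$-th roots $\sigma_i$ of $s_i$ and $\sigma_j$ of $s_j$.

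It then remains to count. I would introduce the graph on the vertex set $V_0=k\times k^{\times}$ in which $(r,s)$ is joined to $(r',s')$ precisely when there exist $c$-th roots $\sigma$ of $s$ and $\sigma'$ of $s'$ with $\gamma_{r',\sigma'}\in\gamma_{r,\sigma}T$; since $T$ is a subgroup this relation is symmetric (and reflexive), and for fixed $(r,s)$ the choice of $\sigma$ ($\leq|c|$ possibilities) together with the choice of the element of $\gamma_{r,\sigma}T$ ($|T|$ possibilities) determines $(r',s')$ uniquely, so this graph has maximal degree $\leq|c|\,|T|$, a quantity bounded in terms of $c$ and the complexity of $\mcF$. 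By the previous step, every $(\uple{r},\uple{s})\in\Delta_{m,c}(\mcF;k)$, regarded as a point $((r_1,s_1),\dots,(r_{2m},s_{2m}))\in V_0^{2m}$, lies in the set $\Delta_m(V_0,E)$ of Lemma~\ref{lm-graph1}, since each vertex $(r_i,s_i)$ is joined to the vertex $(r_j,s_j)$ for some $j\neq i$. Hence $|\Delta_{m,c}(\mcF;k)|\leq|\Delta_m(V_0,E)|\ll|V_0|^m\leq|k|^{2m}$, with an implied constant depending only on $m$, $c$ and the complexity of $\mcF$, as required.

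The main obstacle is the middle, group-theoretic step: choosing the correct ambient group, representations and normal subgroups when invoking Proposition~\ref{pr-abstract-gkr} --- in particular bookkeeping the contragredient factors via the automorphism $g\mapsto{}^{t}g^{-1}$ --- and then faithfully translating ``$N_i\not\subset H_{i,j}$'' into the statement ``the monodromy of $\mcF_{r_i,s_i,c}\oplus\mcF_{r_j,s_j,c}$ does not contain $N\times N$'' that Proposition~\ref{pr-good-gkrlemma} is designed to exploit. The cohomological reduction and the graph-theoretic count are then routine; the only other thing to keep in mind is the standing assumption that $q$ is large enough for Proposition~\ref{pr-good-gkrlemma}.
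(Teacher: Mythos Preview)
Your proof is correct and follows essentially the same approach as the paper: reduce the non-vanishing of $H^2_c$ to the non-vanishing of coinvariants, apply Proposition~\ref{pr-abstract-gkr} to force the Goursat condition, feed this into Proposition~\ref{pr-good-gkrlemma} to obtain the $T$-coset constraint, and finish with Lemma~\ref{lm-graph1}. You are in fact slightly more careful than the paper in two places: you handle the contragredient factors explicitly via the automorphism $g\mapsto{}^{t}g^{-1}$ (the paper simply takes $N_i$ equal to the core subgroup for all~$i$ without comment), and you phrase the graph edge condition using $c$-th roots $\sigma_i$ of $s_i$ (matching what Proposition~\ref{pr-good-gkrlemma} actually delivers), giving degree $\leq |c|\,|T|$ rather than~$|T|$.
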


\begin{proof}
  Let~$G$ denote the connected component of the identity of the
  geometric monodromy group of~$\mcF$. Let~$T\subset \Aff(\bar{k})$
  denote the finite subgroup determined by
  Proposition~\ref{pr-good-gkrlemma}.
  
  Let $\rho_{r,s,c}$ denote the Galois representation associated
  to~$\mcF_{r,s,c}$, and let $G_{r,s,c}$ denote the geometric monodromy
  group of $\mcF_{r,s,c}$. Let $G_{\uple{r},\uple{s},c}$ be the
  geometric monodromy group of the sheaf
  \[
    \bigoplus_{1\leq j\leq 2m} \mcF_{r_j,s_j,c},
  \]
  which we view as a subgroup of
  \[
    \prod_{1\leq j\leq 2m} G_{r_j,s_j,c}.
  \]

  By the co-invariant formula for the top-degree cohomology of a sheaf,
  the condition
  \[
    H^2_c(\Aa^1_{\bar{k}}, \mcF_{\uple{r},\uple{s},c})\not=0
  \]
  is equivalent to the vanishing of the co-invariant space of
  $G_{\uple{r},\uple{s},c}$ acting by the representation
  \[
    \bigotimes _{1\leq j\leq m} \rho_{r_j,s_j,c}\otimes
    \rho_{r_{j+m},s_{j+m},c}^{\vee}.
  \]

  If this space is non-zero, then Proposition~\ref{pr-abstract-gkr}
  (applied with $N_i$ equal to the core subgroup of the geometric
  monodromy group of~$\mcF$ for all~$i$) proves, that for each
  $i\leq 2m$, there exists $j\not=i$ such that the image of~$G$ in
  $G_{r_i,s_i,c}\times G_{r_j,s_j,c}$ does not contain $N\times N$. By
  Proposition~\ref{pr-good-gkrlemma}, this implies that
  $\gamma_{r_i,s_i,c}\gamma_{r_j,s_j,c}^{-1}\in T$.

  Define a graph $\Gamma=(V,E)$ with vertices $k\times k^{\times}$ and
  edges joining $(r_1,s_1)$ to $(r_2,s_2)$ if
  $\gamma_{r_i,s_i}\gamma_{r_j,s_j}^{-1}\in T$ (in particular, note that
  each vertex has a loop). The previous results then show that, with the
  notation of Lemma~\ref{lm-graph1}, the inclusion
  \[
    \Delta_{m,c}(\mcF;k)\subset \Delta_m(\Gamma)
  \]
  holds, and since the degree of~$\Gamma$ is~$|T|$, which is bounded in
  terms of~$c$ and the complexity of~$\mcF$, Lemma~\ref{lm-graph1}
  concludes the proof.
\end{proof}

The second statement will occur in the study of Type II sums.

\begin{proposition}\label{pr-diagonal2}
  Let~$\mcF$ be a \good sheaf. Let $m\geq 1$ be an integer.
  Let~$d\geq 1$ be the order of the cyclic group~$T$ from
  Proposition~\ref{pr-good-gkrlemma} applied to~$\mcF$. Let
  \[
    Y_d =\{(s_1,s_2)\in\Gm\times \Gm\,\mid\, s^d_1\not=s^d_2\}.
  \]

  Let
  \[
    \Delta'_{m,c}(\mcF;k)=\{(\uple{r},\uple{s}_1,\uple{s}_2)\in
    k^{2m}\times Y_d(k)^{2m}\,\mid\, H^2_c(\Aa^1_{\bar{k}},
    \mcF_{\uple{r},\uple{s}_1,c} \otimes
    \mcF_{\uple{r},\uple{s}_2,c}^{\vee})\not=0 \}.
  \]

  Then we have
  \[
    |\Delta'_{m,c}(\mcF;k)|\ll |k|^{3m}
  \]
  where the implied constant depends on~$m$, $c$, $d$ and the complexity
  of~$\mcF$. 
\end{proposition}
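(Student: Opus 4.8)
The plan is to rerun the Goursat--Kolchin--Ribet reduction used in the proof of Proposition~\ref{pr-diagonal1}, exploiting the extra rigidity coming from the fact that each shift parameter $r_i$ now occurs twice --- once with scaling $(s_1)_i$ and once with $(s_2)_i$ --- together with the constraint defining $Y_d$, which forbids these two occurrences from being \emph{linked}. First I would fix the finite cyclic group $T\subset\Aff(\bar k)$ of order $d$ attached to $\mcF$ and $c$ by Proposition~\ref{pr-good-gkrlemma} (assuming, as in Proposition~\ref{pr-diagonal1}, that $q$ is large enough for that result to apply). Since $|T|$ is prime to $q$, the group $T$ fixes a point $v_0\in\bar k$ and is generated by $v\mapsto\zeta(v-v_0)+v_0$ for some primitive $d$-th root of unity $\zeta$; a short computation then shows that, for $(r,s),(r',s')\in V:=k\times k^{\times}$, one has $\gamma_{r',\sigma'}\in\gamma_{r,\sigma}T$ for suitable $c$-th roots $\sigma,\sigma'$ of $s,s'$ if and only if there exists $j\in\{0,\dots,d-1\}$ with $r+v_0=\zeta^{j}(r'+v_0)$ and $s=\zeta^{-jc}s'$; I shall call such a pair of points \emph{linked}. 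The point is that if $((s_1)_i,(s_2)_i)\in Y_d(k)$, that is $(s_1)_i^{d}\ne(s_2)_i^{d}$, then $(r_i,(s_1)_i)$ and $(r_i,(s_2)_i)$ are \emph{not} linked: the first relation would force $j=0$ or $r_i=-v_0$, and then the second relation would give $(s_1)_i^{d}=(s_2)_i^{d}$.

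Next I would carry out the reduction. Given $(\uple r,\uple s_1,\uple s_2)\in\Delta'_{m,c}(\mcF;k)$, set $P_{\epsilon,i}=(r_i,(s_\epsilon)_i)$ for $\epsilon\in\{1,2\}$ and $1\le i\le 2m$. The sheaf $\mcF_{\uple r,\uple s_1,c}\otimes\mcF_{\uple r,\uple s_2,c}^{\vee}$ is a tensor product of the $4m$ sheaves $\mcF_{r_i,(s_\epsilon)_i,c}$ and their duals, and $H^2_c\ne 0$ means that its space of monodromy coinvariants is nonzero. Applying Proposition~\ref{pr-abstract-gkr} with every normal subgroup equal to the core subgroup $N$ of the monodromy group of $\mcF$ (which is perfect and has no coinvariants on the relevant representation) then produces, for each index $(\epsilon,i)$, a partner index $(\epsilon',i')\ne(\epsilon,i)$ for which the geometric monodromy group of $\mcF_{r_i,(s_\epsilon)_i,c}\oplus\mcF_{r_{i'},(s_{\epsilon'})_{i'},c}$ --- a direct sum whose monodromy group is unaffected by replacing summands with their duals --- does not contain $N\times N$. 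By Proposition~\ref{pr-good-gkrlemma} this forces $P_{\epsilon,i}$ and $P_{\epsilon',i'}$ to be linked, and by the previous paragraph the partner cannot be the other index lying in column~$i$; hence $i'\ne i$, and we conclude that every one of the $4m$ indices is linked to one in a \emph{different} column.

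I would then count. There are $O_{m,d}(1)$ choices of a \emph{pattern}, namely an assignment to each of the $4m$ indices of a partner index in a different column, together with a value $j\in\{0,\dots,d-1\}$ for each unordered pair of columns arising; every tuple as above realises at least one pattern, so it suffices to bound the tuples realising a fixed one. For a fixed pattern the linking relations become a rigid system of equations that decouples into equations $r_i+v_0=\zeta^{j}(r_{i'}+v_0)$ in the $r$-coordinates alone and equations $(s_\epsilon)_i=\zeta^{-jc}(s_{\epsilon'})_{i'}$ in the $s$-coordinates alone. The first set defines a graph on the $2m$ columns with no isolated vertex (column~$i$ is joined to the column of the partner of the index $(1,i)$, which is distinct from $i$), hence having at most $m$ connected components; choosing one $r$-value per component and propagating shows the system has $\ll|k|^{m}$ solutions $\uple r\in k^{2m}$ (degenerate branches such as $r_{i'}=-v_0$, or incompatible parallel edges, only decrease the count). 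The second set defines a graph on the $4m$ ``$s$-vertices'' $(\epsilon,i)$ with no isolated vertex, hence having at most $2m$ components, so it has $\ll|k|^{2m}$ solutions $(\uple s_1,\uple s_2)\in(k^{\times})^{4m}$. Since the $r$- and $s$-coordinates are disjoint, each pattern contributes $\ll|k|^{3m}$, and summing over the $O_{m,d}(1)$ patterns yields $|\Delta'_{m,c}(\mcF;k)|\ll|k|^{3m}$, the implied constant depending only on $m$, $c$, $d$ and the complexity of $\mcF$.

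I expect the only substantive point --- and the step to get right --- to be the bound ``at most $m$ components'' for the column graph: a direct application of Lemma~\ref{lm-graph1} to the $4m$-tuple $(P_{\epsilon,i})$ would give only $\ll|k|^{4m}$, and the improvement to $|k|^{3m}$ hinges entirely on each index being linked to one in a \emph{different} column, which is exactly where the hypothesis $((s_1)_i,(s_2)_i)\in Y_d$ enters. Everything else --- the explicit description of the linking relation, the harmless bookkeeping with dualised tensor factors in the appeals to Propositions~\ref{pr-abstract-gkr} and~\ref{pr-good-gkrlemma}, and the elementary rigid-system counts --- is routine.
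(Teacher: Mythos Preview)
Your argument is correct and follows essentially the same route as the paper: both proofs invoke Proposition~\ref{pr-abstract-gkr} and Proposition~\ref{pr-good-gkrlemma} to obtain a linking relation among the $4m$ indices, use the $Y_d$ hypothesis to rule out same-column links, and then count via a graph argument. The paper organizes the count slightly differently---it first bounds the possible $(\uple s_1,\uple s_2)$ by $O(|k|^{2m})$ using Lemma~\ref{lm-graph1}, then for each fixed $(\uple s_1,\uple s_2)$ bounds the $\uple r$'s by $O(|k|^m)$ via a second application of Lemma~\ref{lm-graph1}---whereas you fix a pattern and count $\uple r$ and $\uple s$ jointly; but these are interchangeable bookkeeping choices. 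One minor imprecision: your ``pattern'' should carry a value $j$ for each of the $4m$ index-to-partner links rather than for each unordered pair of columns (different links between the same column pair can have different $j$'s, and the $s$-relations need the link-specific $j$); with this correction the number of patterns is still $O_{m,d}(1)$ and your count goes through unchanged.
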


\begin{proof}
  Since
 \[
   \mcF_{\uple{r},\uple{s}_1,c} \otimes
   \mcF_{\uple{r},\uple{s}_2,c}^{\vee}= \Bigl(\bigotimes_{1\leq j\leq
     m} \rho_{r_j,s_{1,j},c}\otimes
   \rho_{r_{j+m},s_{1,j+m},c}^{\vee}\Bigr) \otimes \Bigl(\bigotimes
   _{1\leq j\leq m} \rho_{r_j,s_{2,j},c}^{\vee}\otimes
   \rho_{r_{j+m},s_{2,j+m},c}\Bigr),
  \]
  we see as in the proof of the previous proposition (keeping its
  notation) that the non-vanishing condition is now equivalent with the
  non-vanishing of the coinvariant space of the representation
  $\displaystyle{\bigotimes_{1\leq j\leq 4m}\rho_{j}}$ where
  \[
    \rho_j=\begin{cases}
      \rho_{r_j,s_{1,j},c}&\text{ if } 1\leq j\leq m,\\
      \rho_{r_{j},s_{1,j},c}^{\vee}& \text{ if }
      m+1\leq j\leq 2m,\\
      \rho_{r_{j-2m},s_{2,j-2m},c}^{\vee}&\text{ if } 2m+1\leq j\leq 3m,\\
      \rho_{r_{j-2m},s_{2,j-2m},c}&\text{ if } 3m+1\leq j\leq 4m.
    \end{cases}
  \]

  We denote by $\delta_j$ the element~$\gamma_{r,s}\in\Aff(\bar{k})$
  corresponding to each~$\rho_j$ (e.g., $\delta_j=\gamma_{r_j,s_{2,j}}$
  if $m+1\leq j\leq 2m$).

  Suppose that the coinvariant space is non-zero. By
  Proposition~\ref{pr-good-gkrlemma}, there exists for each positive
  integer~$j\leq 4m$ an integer~$i\not=j$ with $1\leq i\leq 4m$ and
  $\delta_j\delta_i^{-1}\in T$. In particular, since~$T$ is of
  order~$d$, it follows that the coefficients $\sigma_j$, $\sigma_i$
  such that~$\delta_j=\gamma_{\tau_j,\sigma_j}$
  and~$\delta_i=\gamma_{\tau_i,\sigma_i}$ (for some~$\tau_i$, $\tau_j$)
  satisfy $\sigma_i^d=\sigma_j^d$.  A first application of
  Lemma~\ref{lm-graph1} establishes that the number of possible pairs
  $(\uple{s}_1,\uple{s}_2)\in (k^{\times})^{4m}$ with this property is
  $O(|k|^{2m})$.

  Let~$\uple{s}=(\uple{s}_1,\uple{s}_2)\in Y_d(k)^{2m}$. We will now
  bound (uniformly in terms of these tuples) the size of the set
  $\Delta_{\uple{s}_1,\uple{s}_2}$ of those~$\uple{r}\in k^{2m}$ such
  that $(\uple{r},\uple{s}_1,\uple{s}_2)\in \Delta'_{m,c}(\mcF;k)$.

  We define a graph $\Gamma=(V,E)$ with vertex set~$k$ and with an edge
  joining $r_1$ and $r_2$ if there exist $\sigma_1$ and $\sigma_2$
  from~$\uple{s}$ such that
  $\gamma_{r_1,\sigma_1}\gamma_{r_2,\sigma_2}^{-1}\in T$. We claim that
  \[
    \Delta_{\uple{s}_1,\uple{s}_2}\subset \Delta_m(V,E),
  \]
  in which case Lemma~\ref{lm-graph1} proves that
  $|\Delta_{\uple{s}_1,\uple{s}_2}|\ll |k|^m$, uniformly in terms of
  $(\uple{s}_1,\uple{s}_2)$, and therefore $|\Delta'_{m,c}(\mcF;k)|\ll
  |k|^{3m}$, as desired.
  
  Let
  $\uple{r}=(r_j)_{1\leq j\leq 2m}\in
  \Delta_{\uple{s}_1,\uple{s}_2}$. We prove that the vertex $r_1$
  of~$\Gamma$ is connected to a vertex $r_j$ for some~$j\not=1$. The
  same argument, up to notation, will apply to $r_2$, \ldots, $r_{2m}$,
  and then conclude the proof.

  We use the notation~$(\delta_j)_{1\leq j\leq 4m}$ introduced above; we
  have therefore $\delta_1=\gamma_{r_1,s_{1,1}}$. Since
  $(\uple{r},\uple{s}_1,\uple{s}_2)$ is in $\Delta'_{m,c}(\mcF;k)$, we
  have seen above that there exists an integer $i$ with $2\leq i\leq 4m$
  such that $\delta_1\delta_i^{-1}\in T$. If
  $\delta_i=\gamma_{r_j,\sigma}$ for some $j\geq 2$, then we are done.
  But the only other possibility would be that
  $\delta_i=\gamma_{r_1,s_{2,1}}$, and this is not the case since
  $s_{1,1}^d\not=s_{2,1}^d$, and \emph{a fortiori}
  $s_{1,1}\not=s_{2,1}$.
\end{proof}

Finally, in the case of Type II sums, we will also require the following
result, which is of a slightly different nature than the previous ones.

\begin{proposition}\label{pr-diagonal3}
  Let~$\mcF$ be a \good sheaf over~$k$. Let $l\geq 1$ be an integer and
  let~$c\geq 1$ be an integer invertible in~$k$.

  There exists an integer~$C\geq 0$ and an algebraic subvariety
  $\mcV\subset \Aa^{2l}_{k}$ of dimension $\leq l$ with the following
  property: for any finite extension~$k_n$ of~$k$ and for any
  $\uple{v}\in (\Aa^{2l}\setminus \mcV)(k_n)$, there are at most $C$
  values of~$r\in k_n$, all non-zero, such that
  \[
    H^2_c\Bigl(\Aa^1_{\bar{k}}, \bigotimes_{i=1}^l [s\mapsto
    (r+v_i)^cs]^*\mcF\otimes \bigotimes_{i=1}^l [s\mapsto
    (r+v_{i+l})^cs]^*\mcF^{\vee} \Bigr)\not=0.
  \]

  The constant~$C$ and the degree of~$\mcV$ depend only on~$c$, $l$ and
  the complexity of~$\mcF$.
\end{proposition}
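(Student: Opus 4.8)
The plan is to follow the pattern of Propositions~\ref{pr-diagonal1} and~\ref{pr-diagonal2}: reduce the non-vanishing of $H^2_c$ to a Goursat--Kolchin--Ribet condition via Proposition~\ref{pr-abstract-gkr}, then translate that condition into a statement about dimensions of fibres of an explicit linear morphism. For $a\in\bar{k}^{\times}$ write $[a]\colon\Aa^1_{\bar{k}}\to\Aa^1_{\bar{k}}$ for the scaling $s\mapsto as$, so that $[s\mapsto(r+v_i)^cs]^*\mcF=[(r+v_i)^c]^*\mcF$. Since $[a]$ is an automorphism, the geometric monodromy group of $[a]^*\mcF$ (resp.\ of $[a]^*\mcF^{\vee}$) is isomorphic to $G$ (resp.\ to its contragredient), with core subgroup isomorphic to $N$ (resp.\ to the image of $N$, again perfect and acting irreducibly, hence with no coinvariants). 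Transcribing the proof of Proposition~\ref{pr-good-gkrlemma} verbatim, but with the scalings $[a]$ in place of the affine maps $\gamma_{r,\sigma}$ and with $\mcF$ itself in place of $[v\mapsto v^c]^*\mcF$, one produces an auxiliary sheaf $\mcH$ -- the adjoint sheaf of $\mcF$ if $G$ is infinite, the $\Aut(S)$-sheaf attached to $S=N/Z(N)$ if $G$ is finite -- which is geometrically semisimple, of complexity bounded in terms of $c(\mcF)$, and \emph{self-dual} (the adjoint and regular representations are self-dual, and replacing $\mcF$ by $\mcF^{\vee}$ merely post-composes the associated $\Aut(S)$-homomorphism with a fixed automorphism of $S$, which commutes with the $\bar{k}^{\times}$-action on its class). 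Proposition~\ref{pr-fkm2}, applied to $\mcH$ -- or, in the finite case, to a suitable geometrically irreducible component of it, as in Step~1 of the proof of Proposition~\ref{pr-good-gkrlemma} -- shows that for $q$ large enough in terms of $c(\mcF)$ the group $T_0=\{a\in\bar{k}^{\times}\,\mid\,[a]^*\mcH\simeq\mcH\}$ is finite of order bounded, polynomially if $G$ is infinite, in terms of $c(\mcF)$; put $T=\{a\in\bar{k}^{\times}\,\mid\,a^c\in T_0\}$, a finite subgroup of $\bar{k}^{\times}$ of order $\leq c\,|T_0|$. The Lie-algebra (resp.\ $\Aut(S)$) argument of Proposition~\ref{pr-good-gkrlemma} then yields: if $a,b\in\bar{k}^{\times}$ and the geometric monodromy group of the direct sum of two sheaves, each of which is $[a^c]^*\mcF$ or $[a^c]^*\mcF^{\vee}$, resp.\ $[b^c]^*\mcF$ or $[b^c]^*\mcF^{\vee}$, fails to contain $N\times N$, then $a/b\in T$.

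For the pointwise step, fix a finite extension $k_n/k$ and $\uple{v}\in\Aa^{2l}(k_n)$, and suppose $r\in k_n$ is such that $r+v_i\neq 0$ for all $i$ and the cohomology group in the statement is non-zero. Each tensor factor then has geometric monodromy group with core subgroup isomorphic to $N$, so Proposition~\ref{pr-abstract-gkr} applies, and since the coinvariant space is non-zero it provides, for every $i\in\{1,\dots,2l\}$, an index $j\neq i$ whose Goursat datum with $i$ does not contain the core; by the previous paragraph, $(r+v_i)/(r+v_j)\in T$. Consequently the tuple $(r+v_1,\dots,r+v_{2l})$ lies in
\[
  Z_l=\{\uple{x}\in(\bar{k}^{\times})^{2l}\,\mid\,\text{for every $i$ there is $j\neq i$ with $x_i/x_j\in T$}\}.
\]
Since $T$ is a subgroup of $\bar{k}^{\times}$, the relation $x_i/x_j\in T$ is an equivalence relation, so $Z_l$ is the finite union -- indexed by partitions of $\{1,\dots,2l\}$ into parts of size $\geq 2$ together with a labelling of each part by elements of $T$ -- of linear subspaces of $\Aa^{2l}$ intersected with $(\bar{k}^{\times})^{2l}$; each of these subspaces is the closure of its intersection with $(\bar{k}^{\times})^{2l}$, so $\overline{Z_l}$ is a finite union of linear subspaces, of degree bounded in terms of $l$ and $|T|$ and dimension at most $l$ (a partition into parts of size $\geq 2$ has at most $l$ parts).

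Now introduce the linear ``shear'' $\phi\colon\Aa^1_k\times\Aa^{2l}_k\to\Aa^{2l}_k$, $(r,\uple{v})\mapsto(r+v_1,\dots,r+v_{2l})$. Since $\overline{Z_l}$ is a finite union of linear subspaces, so is $W=\phi^{-1}(\overline{Z_l})$, a closed subvariety of $\Aa^1_k\times\Aa^{2l}_k$ of dimension $\leq l+1$ and degree bounded in terms of $l$ and $|T|$; let $p$ be the projection to $\uple{v}$, and note that $p^{-1}(\uple{v})\cap W$ is, for each $\uple{v}$, either finite of cardinality $\leq\deg W$ or equal to the whole line $p^{-1}(\uple{v})$. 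By what precedes, every bad $r$ with $r+v_i\neq 0$ for all $i$ satisfies $(r,\uple{v})\in W$. Let $\mcV_1$ be the union of the images under $p$ of those linear components of $W$ containing the fibre direction of $p$; it is a closed subvariety of $\Aa^{2l}_k$ of dimension $\leq l$ and bounded degree, and for $\uple{v}\notin\mcV_1$ the fibre $p^{-1}(\uple{v})\cap W$ is finite of cardinality $\leq\deg W=:C_1$. It remains to handle bad $r$ with $r+v_i=0$ for some $i$: writing $S(r)=\{i\,\mid\,r+v_i=0\}$, the factors indexed by $S(r)$ are geometrically constant, so $H^2_c$ of the full product is non-zero only if $H^2_c$ of the product over the complement of $S(r)$ is; applying Proposition~\ref{pr-abstract-gkr} to this last product (whose factors are all non-constant) with $r=-v_{i_0}$, $i_0\in S(r)$, we find that $\uple{v}$ lies in the subvariety defined by the linear equations $v_i=v_{i_0}$ for $i\in S(r)$ together with the condition that no difference $v_j-v_{i_0}$ ($j\notin S(r)$) is alone in its class under $x\sim y\iff x-y\text{ times a unit of }T$; an easy count of parts shows this subvariety -- and likewise the ``small diagonal'' $v_1=\dots=v_{2l}$ occurring when $S(r)$ is all of $\{1,\dots,2l\}$ -- has dimension at most $l$, and each is again a finite union of linear subvarieties. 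Let $\mcV$ be the union of $\overline{Z_l}$, of $\mcV_1$, of the small diagonal, and of these finitely many subvarieties over all $i_0$ and $S$; it is a closed subvariety of $\Aa^{2l}_k$ of dimension $\leq l$ and of degree bounded in terms of $c$, $l$ and $c(\mcF)$. For $\uple{v}\notin\mcV$, every bad $r$ then satisfies $r+v_i\neq 0$ for all $i$ (else $\uple{v}\in\mcV$) and $r\neq 0$ (else $(v_1,\dots,v_{2l})\in Z_l\subset\mcV$), so all bad $r$ are non-zero and there are at most $C:=C_1$ of them, which is the assertion.

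I expect the main obstacle to be not the Goursat--Kolchin--Ribet input -- which is a scaling-variant of Proposition~\ref{pr-good-gkrlemma} whose proof, including the delicate finite-monodromy case, transcribes almost verbatim -- but the simultaneous control of $\dim\mcV\leq l$ \emph{and} of $\deg\mcV$ (hence of $C$): this is what forces the passage from the bare combinatorial condition to the explicit linear variety $Z_l$, the care needed in analysing the fibres of the shear $\phi$, and, most delicately, the verification that the degenerate strata arising from $r+v_i=0$ genuinely collapse into dimension $l$ rather than the naive dimension $2l-1$ of a coordinate diagonal.
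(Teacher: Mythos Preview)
Your argument is correct, and it follows the same overall architecture as the paper's proof (reduce non-vanishing to a Goursat--Kolchin--Ribet constraint on the ratios $(r+v_i)/(r+v_j)$, then exploit translation-invariance in~$r$). The execution, however, is considerably heavier than the paper's, and it is worth seeing where the paper economises. First, rather than rebuilding a scaling-only analogue of Proposition~\ref{pr-good-gkrlemma}, the paper applies that proposition as is with $r_1=r_2=0$: the constraint $\gamma_{0,\sigma_i}\gamma_{0,\sigma_j}^{-1}\in T$ is already a condition on the ratio $\sigma_i/\sigma_j$, so no new auxiliary sheaf~$\mcH$ is needed. Second, instead of enumerating the linear components of your set~$Z_l$, the paper bounds the dimension of its analogue~$\mcW$ by point-counting over all finite extensions via Lemma~\ref{lm-graph1} and then invoking Lang--Weil; this avoids the partition bookkeeping entirely. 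Third, and most strikingly, the paper replaces your whole shear/fibre-direction analysis by a single observation: once~$\mcV$ is defined as the closure of~$\mcW$ in~$\Aa^{2l}$, the set of bad~$r$ for a given~$\uple{v}$ is contained in $\{r\in\Aa^1:\ r+\uple{v}\in\mcV\}$, which is a closed subscheme of~$\Aa^1$ and hence either all of~$\Aa^1$ (forcing $\uple{v}\in\mcV$ by taking $r=0$) or finite of size bounded by~$\deg\mcV$; this also shows~$r=0$ is never bad for $\uple{v}\notin\mcV$, giving the ``all non-zero'' clause for free. On the other hand, your explicit treatment of the degenerate strata $r+v_i=0$ is more careful than the paper's, which dismisses this case with the terse parenthetical that $\mcG_0$ ``is zero''.
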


\begin{proof}
  Let~$G$ denote the connected component of the identity of the
  geometric monodromy group of~$\mcF$ and~$N$ its core
  subgroup. Let~$T\subset \Aff(\bar{k})$ denote the finite subgroup
  determined by Proposition~\ref{pr-good-gkrlemma}. It has order bounded
  in terms of~$c$ and the complexity of~$\mcF$.

  For $u\in \Aa^1$, we denote here $\mcG_{u}=[s\mapsto us]^*\mcF$ (note
  that we do not exclude the possibility that $u=0$, in which case
  $\mcG_{u}$ is zero), so that we are considering the cohomology group
  \[
    H^2_c\Bigl(\Aa^1_{\bar{k}},\bigotimes_{i=1}^l\mcG_{(r+v_i)^c}\otimes
    \mcG_{(r+v_{i+l})^c}^{\vee}\Bigr).
  \]

  We first consider the case $r=0$, and $\uple{v}\in (\kt)^{2l}$. Note
  that for $v\not=0$, we have $\mcG_{v^c}=\mcF_{0,v^c}$.  Arguing as in
  the beginning of the proof of Proposition~\ref{pr-diagonal1} and
  applying Proposition~\ref{pr-good-gkrlemma}, we see that if
  \[
    H^2_c\Bigl(\Aa^1_{\bar{k}},\bigotimes_{i=1}^l\mcG_{v_i^c}\otimes
    \mcG_{v_{i+l}^c}^{\vee}\Bigr)
  \]
  is non-zero, then for all $i$ with $1\leq i\leq 2l$, there
  exists~$j\not=i$ such that
  $\gamma_{0,v_i^c}\gamma_{0,v_j^c}^{-1}\in T$.

  For each fixed $\gamma\in T$ and $(i,j)$ with $1\leq i\not=j\leq m$,
  the set of $\uple{v}\in\Gg_m^{2l}$ such that
  $\gamma_{0,v_i^c}\gamma_{0,v_j^c}^{-1}=\gamma$ is visibly an algebraic
  subvariety of degree bounded in terms of~$c$. Thus, the set of
  $\uple{v}$ satisfying the above conditions is an algebraic
  subvariety~$\mcW$ of $\Gg_m^{2l}$ of degree bounded in terms of~$c$
  and the size of~$T$. We have in fact shown that for any finite
  extension $k_n$ of $k$, the set of $\uple{v}\in \Gg_m^{2l}(k_n)$ such
  that the cohomology group
  \[
    H^2_c\Bigl(\Aa^1_{\bar{k}}, \bigotimes_{i=1}^l [s\mapsto
    v_i^cs]^*\mcF\otimes \bigotimes_{i=1}^l [s\mapsto
    v_{i+l}^cs]^*\mcF^{\vee} \Bigr)
  \]
  is non-zero is contained in $\mcW(k_n)$ (in other words, changing the
  base field does not change the equations used to control the situation
  for $\uple{v}$ with coefficients in varying extensions).

  Let $k_n$ be a finite extension of $k$.  Let $\Gamma_n=(V_n,E_n)$ be the
  graph with vertex set $k_n^{\times}$ and with an edge from $v$ to~$w$
  if and only if $\gamma_{0,v^c}\gamma_{0,w^c}^{-1}\in T$. Applying
  Lemma~\ref{lm-graph1}, we deduce that
  \[
    |\mcW(k_n)|\leq \Delta_l(\Gamma_n)\ll |k_n|^l,
  \] 
  where the implied constant depends on~$l$ and the size of~$T$ only. If
  follows (by the Lang--Weil bound for instance) that
  $\dim(\mcW)\leq l$.

  We now define $\mcV$ to be the closure in~$\Aa^{2l}$ of~$\mcW$, and we
  claim that~$\mcV$ has the desired properties. First, by construction,
  it remains of dimension~$\leq l$ and with bounded degree.
  
  Let~$\uple{v}\in k_n^{2l}$. For $r\in k_n$, the condition
  \[
    H^2_c\Bigl(\Aa^1_{\bar{k}}, \bigotimes_{i=1}^l [s\mapsto
    (r+v_i)^cs]^*\mcF\otimes \bigotimes_{i=1}^l [s\mapsto
    (r+v_{i+l})^cs]^*\mcF^{\vee} \Bigr) \not=\{0\}
  \]
  implies that $r+\uple{v}\in \mcV(k_n)$. The set of~$r\in\Aa^1$ such
  that $r+\uple{v}\in \mcV$ is a closed subvariety of~$\Aa^1$. If it is
  a proper subvariety, then it is a finite set of size bounded in terms
  of the degree of~$\mcV$. Otherwise, taking $r=0$, we obtain
  $\uple{v}\in \mcV$.
\end{proof}

\begin{remark}
  A typical situation is when $\mcW$ is defined by a union of varieties
  defined by equations stating pairwise equalities like
  \[
    v_1=v_2,\ \ldots,\ v_{2l-1}=v_{2l},
  \]
  with the added condition that the coordinates are
  invertible. Then~$\mcV$ is defined by the same equations without
  invertibility conditions, and we see that the equations are invariant
  under translation by a common element~$r$.
\end{remark}

The following corollary illustrates in general the use of this type of
results.  It will be applied in the proof of Theorem~\ref{thmtriplesum},
but is also of independent interest.

\begin{corollary}\label{cor-sop}
  Let~$q$ be a prime number and let~$\mcF$ be a \lgood sheaf
  on~$\Aa^1_{\Ff_q}$. Let~$l\geq 1$ be an integer. Let~$K$ be the trace
  function of~$\mcF$.

  There exists an algebraic subvariety $\mcV$ of $\Aa^{2l}_{\Ff_q}$ of
  dimension $\leq l$ and degree bounded in terms of $l$ and the
  complexity of~$\mcF$ such that the estimate
  \begin{equation}
    \label{sumproductsbound}
    \sum_{v\in\Fqt}\prod_{i=1}^lK(u_iv)
    \ov{ K(u_{i+l}v)}\ll q^{1/2}
  \end{equation}
  holds for $\bfu\in \Ff_q^{2l}\setminus \mcV(\Ff_q)$, where the implied
  constant depends polynomially on~$l$ and the complexity of~$\mcF$.
\end{corollary}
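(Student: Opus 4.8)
The plan is to interpret the left-hand side of \eqref{sumproductsbound} cohomologically and apply Deligne's Riemann Hypothesis in the form of Theorem~\ref{th-rh}, the point being to show that, outside a variety $\mcV$ of dimension $\le l$, the relevant top cohomology group vanishes. For $\bfu\in\Ff_q^{2l}$ set
\[
  \mcM_{\bfu}=\bigotimes_{i=1}^{l}[s\mapsto u_i s]^*\mcF\otimes\bigotimes_{i=1}^{l}[s\mapsto u_{i+l} s]^*\mcF^{\vee}
\]
on $\Aa^1_{\Ff_q}$. Replacing $\mcF$ by $j_!j^*\mcF$, where $j$ is the inclusion of a dense open set on which $\mcF$ is lisse and pure of weight $0$ — which changes the sum by $O(1)$, leaves the geometric monodromy group unchanged, and makes $\mcM_{\bfu}$ mixed of weights $\le 0$ — the trace function of $\mcM_{\bfu}$ at $v$ agrees with $\prod_iK(u_iv)\overline{K(u_{i+l}v)}$ on a dense open set, so that the left-hand side of \eqref{sumproductsbound} equals $\sum_{x\in\Ff_q}t_{\mcM_{\bfu}}(x;\Ff_q)+O(1)$. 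Since pullback along a scaling automorphism does not affect the complexity and tensor products are controlled by \cite{qst}, the complexity of $\mcM_{\bfu}$ is controlled in terms of $l$ and $c(\mcF)$, uniformly in $\bfu$. By Theorem~\ref{th-rh} it therefore suffices to produce $\mcV\subset\Aa^{2l}$ of dimension $\le l$ and of degree controlled in terms of $l$ and $c(\mcF)$ such that $H^2_c(\Aa^1_{\ovFq},\mcM_{\bfu})=0$ whenever $\bfu\notin\mcV(\Ff_q)$ — the remaining ``bad'' $\bfu$, and all $\bfu$ with sufficiently many vanishing coordinates, will simply be included in $\mcV$.

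This last vanishing is obtained by the argument of Proposition~\ref{pr-diagonal1}, specialised to all shift parameters equal to $0$, to $c=1$, and to $m=l$. Assume first that all coordinates of $\bfu$ are nonzero. Each factor $[s\mapsto u_i s]^*\mcF$, resp.\ its dual, has geometric monodromy group isomorphic to that of $\mcF$, with the same core subgroup $N$; since $N$ is perfect and acts irreducibly on a space of dimension $\ge 2$ (recall that \goodp sheaves have generic rank at least $2$), it has no coinvariants, and likewise on the dual. Apply Proposition~\ref{pr-abstract-gkr} to these $2l$ representations, with every $N_i$ equal to $N$: either the space of $G$-coinvariants of $\mcM_{\bfu}$ is zero — so that $H^2_c(\Aa^1_{\ovFq},\mcM_{\bfu})=0$ by the coinvariant formula, which is what we want — or, for each index $i$, there is some $j\ne i$ whose Goursat datum $(H_{i,j},H_{j,i},\varphi_{ij})$ satisfies $N\not\subset H_{i,j}$, in which case Proposition~\ref{pr-good-gkrlemma} (with $c=1$) forces $\gamma_{0,u_i}\gamma_{0,u_j}^{-1}\in T$ for the finite cyclic subgroup $T\subset\Aff(\ovFq)$ it supplies, whose order is bounded in terms of $c(\mcF)$. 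Since $\gamma_{0,u}\gamma_{0,u'}^{-1}=\gamma_{0,u/u'}$, this means $u/u'$ lies in a fixed finite subgroup of $\ovFq^{\times}$. Forming, over each finite extension $\Ff_{q^n}$, the graph on vertex set $\Ff_{q^n}^{\times}$ with an edge $u\sim u'$ exactly when $\gamma_{0,u}\gamma_{0,u'}^{-1}\in T$ (degree $\le|T|$, with loops), the second alternative confines the bad tuples to $\Delta_l$ of that graph, so Lemma~\ref{lm-graph1} bounds their number by $\ll q^{nl}$, uniformly in $n$; by Lang--Weil their Zariski closure $\mcV_0\subset\Aa^{2l}$ has dimension $\le l$ and degree controlled in terms of $l$ and $c(\mcF)$.

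It remains to handle $\bfu$ with $Z:=\{i:u_i=0\}\ne\emptyset$. If $|Z|\ge l$, the coordinate subspace $\{u_i=0\ (i\in Z)\}$ has dimension $2l-|Z|\le l$ and we put it into $\mcV$. If $1\le|Z|<l$, then $\mcM_{\bfu}$ is the tensor product of a constant sheaf of rank $O(1)$ with an $m'$-fold tensor product $\mcN_{\bfu}$ of nonzero scalings of $\mcF$ and $\mcF^{\vee}$, with $m':=2l-|Z|\ge 2$; as $H^2_c(\Aa^1_{\ovFq},\mcM_{\bfu})$ vanishes exactly when $H^2_c(\Aa^1_{\ovFq},\mcN_{\bfu})$ does, rerunning the argument of the previous paragraph with $m'$ vertices shows that, inside this stratum, the bad locus has dimension $\le\lfloor m'/2\rfloor\le l-1$. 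Taking $\mcV$ to be the union of $\mcV_0$, of the coordinate subspaces attached to the $Z$ with $|Z|\ge l$, and of the closures of the finitely many bad strata loci just described, we obtain a variety with a number of components bounded in terms of $l$, each of dimension $\le l$, of degree controlled in terms of $l$ and $c(\mcF)$, and with $\mcV(\Ff_q)$ containing every bad $\bfu$; combined with the reduction of the first paragraph this proves \eqref{sumproductsbound}.

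The substantive inputs — Propositions~\ref{pr-abstract-gkr} and~\ref{pr-good-gkrlemma}, together with Theorem~\ref{th-rh} — are already available, so the work here is essentially bookkeeping. I expect the most delicate point to be the treatment of the $\bfu$ with some vanishing coordinates: the coordinate hyperplanes themselves have dimension $2l-1>l$, so one really has to observe that the bad locus within each such stratum is of strictly smaller dimension and thus still fits inside the dimension-$\le l$ budget. A secondary point is the weight and complexity bookkeeping for the dual factors, which is dealt with by passing to the extension by zero of the lisse pure-weight-$0$ part of $\mcF$, at the cost of an $O(1)$ error.
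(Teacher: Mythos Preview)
Your argument is essentially correct and follows the same underlying mechanism as the paper, but the paper takes a much shorter route: it simply invokes Proposition~\ref{pr-diagonal3} with $c=1$ and observes that the exceptional values of $r$ there are all nonzero, so $r=0$ is automatically not exceptional for $\bfu\notin\mcV$, which gives $H^2_c(\Aa^1_{\ovFq},\mcG_{\bfu})=0$ at once. What you do is effectively re-derive the special case of that proposition by hand, using Propositions~\ref{pr-abstract-gkr} and~\ref{pr-good-gkrlemma} plus Lemma~\ref{lm-graph1} directly. Your approach is more transparent about the group theory, but it forces you into the separate stratum-by-stratum analysis of the coordinates $u_i=0$, which the paper avoids entirely by working with the \emph{closure} $\mcV$ in $\Aa^{2l}$ from the start. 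Also, the paper passes to the middle extension $j_*j^*\mcF$ rather than $j_!j^*\mcF$; both work for the trace-function comparison, but the middle extension makes the identification of the dual's trace function with the complex conjugate hold at every point, not just on the lisse locus.

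There is one genuine (though easily repaired) gap: you claim that Lang--Weil gives a degree bound on the Zariski closure $\mcV_0$ of the bad tuples. It does not --- Lang--Weil yields the dimension bound from the uniform point count, but says nothing about degree. The fix is to define $\mcV_0$ \emph{explicitly} as the locus where, for every $i$, some equation $u_i=t\,u_j$ (with $j\ne i$, $t\in T$) holds; this is a finite union (indexed by matchings and elements of $T$) of linear subspaces, so its degree is visibly bounded in terms of $l$ and $|T|$, and the bad set is contained in it by your argument. This is exactly how the paper constructs the containing variety in the proof of Proposition~\ref{pr-diagonal3}.
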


\begin{proof}
  Let~$j\colon U\to \Aa^1$ be the open immersion of a dense open set
  where~$\mcF$ is lisse, and let~$\mcF^*$ be the middle-extension sheaf
  $j_*j^*\mcF$. It is a \lgood sheaf whose trace function~$K^*$
  coincides with that of~$K$ except for a set of values of size bounded
  in terms of the complexity of~$\mcF$. Hence, for any $\bfu\in
  \Fq^{2l}$, the estimate
  \[
    \sum_{v\in\Fqt}\prod_{i=1}^lK(u_iv) \ov{ K(u_{i+l}v)}=
    \sum_{v\in\Fqt}\prod_{i=1}^lK^*(u_iv) \ov{ K^*(u_{i+l}v)}
    +O(1)
  \]
  holds, where the implied constant depends on~$l$ and the complexity
  of~$\mcF$. In particular, the bound~(\ref{sumproductsbound}) is
  equivalent to its analogue for~$K^*$.

  For given $\bfu\in\Ff_q^{2l}$, the function
  \[
    v\mapsto \prod_{i=1}^lK^*(u_iv) \ov{ K^*(u_{i+l}v)}
  \]
  is the trace function of the sheaf
  \[
    \mcG_{\bfu}=\bigotimes_{i=1}^l [s\mapsto v_is]^*\mcF^*\otimes
    \bigotimes_{i=1}^l [s\mapsto v_{i+l}s]^*(\mcF^*)^{\vee}
  \]
  which is the sheaf appearing in Proposition~\ref{pr-diagonal3} for the
  sheaf~$\mcF^*$, with~$c=1$ and $r=0$. The result then follows with the
  algebraic variety~$\mcV$ given by the proposition (in view of the fact
  that the exceptional values~$r$ in loc. cit. are all non-zero), by the
  Riemann Hypothesis applied to~$\mcG_{\bfu}$ (see Theorem~\ref{th-rh}).
\end{proof}

\begin{remark}\label{rm-O4}
  Suppose that $\mcF$ is an oxozonic sheaf on~$\Aa^1_{\Ff_q}$. Using
  Proposition~\ref{good-gkrlemmaO4} instead of
  Proposition~\ref{pr-good-gkrlemma}, we see that
  Propositions~\ref{pr-diagonal1}, \ref{pr-diagonal2}
  and~\ref{pr-diagonal3} are also valid for~$\mcF$.
\end{remark}

\section{General estimates for bilinear forms}
\label{bilinearstart}

\subsection{Reduction to complete sums}\label{section8}

The main result of this section is Proposition~\ref{propreduct}, which
gives a general reduction of bounds for bilinear forms (type I or II)
with kernel defined modulo a prime modulus~$q$ to estimates for certain
``complete'' sums.  It is not needed here to assume that~$K$ is a trace
function.

We use throughout the following notation. We denote by $q$ a prime
number, and by $K\colon \Ff_q\to\Cc$ a function which we identify
with a $q$-periodic function on~$\Zz$.  We fix two non-zero integers $b$
and~$c$. 

We also fix two positive integers~$M$ and~$N$. We then denote
\begin{align*}
  B_{b,c}(\bfalpha,N;K)&=\sum_{m\sim M}\sum_{n\sim N}\alpha_m K(m^bn^c),
  \\
  B_{b,c}(\bfalpha,\bfbeta;K)&=\sum_{m\sim M}\sum_{n\sim N}\alpha_m
  \beta_nK(m^bn^c),
\end{align*}
for arbitrary families $\bfalpha=(\alpha_m)_{m\sim M}$ and
$\bfbeta=(\beta_n)_{n\sim N}$ of complex numbers.  In fact, since $K$,
$b$ and $c$ are fixed in this section, we will most often abbreviate and
write
\[
  B(\bfalpha,N)=B_{b,c}(\bfalpha,N;K),
  \quad
  B(\bfalpha,\bfbeta)=B_{b,c}(\bfalpha,\bfbeta;K)
\]

We denote
\[
\|\alpha\|_2=\Bigl(\sum_{m\sim M}|\alpha_m|^2\Bigr)^{1/2},
\]
and similarly for $\|\beta\|_2$.

Let $l\geq 2$ be integer. For $r$ and $s$ in~$\Fq$ and for
\[
  \bfv=(v_1,\cdots,v_{2l})\in\Ff_q^{2l},
\]
we define
\[
  \bfK_c(r,s,\bfv)=\prod_{i=1}^lK(s(r+v_i)^c) \ov{K(s(r+v_{i+l})^c)}.
\]


The reduction statement is the following.

\begin{proposition}\label{propreduct}
  We keep the notation above. Let $l\geq 1$ be an integer and let
  $M,N,V\geq 1$ be real numbers satisfying
  \begin{equation}
    \label{assumptypeI}
    V\leq \frac{N}{10},\quad M,N, \frac{N^2}{V}\leq q.
  \end{equation}

  \begin{enumth}
  \item For any $\eps>0$, we have
    \begin{equation}
      \label{BalphaNbound}
      B(\bfalpha,N)\ll q^{\eps}{\|\bfalpha\|_2M^{1/2}N}
      \Bigl(\frac{1}{MN^2V^{2l-1}}
      \sum_{\bfv\in [V,2V]^{2l}}|\Sigma_I(\bfv)|\Bigr)^{1/(2l)}
    \end{equation}
    where
    \[
      \Sigma_I(\bfv)=\sumsum_{(r,s)\in \Fq\times\Fqt}\bfK_c(r,s,\bfv)
    \]
    and the implied constant depends only on $\eps$ and $l$.

  \item Let~$d\geq 1$ be an integer. For any $\eps>0$, we have
    \begin{equation}
      \label{Balphabetabound}
      B(\bfalpha,\bfbeta)\ll q^{\eps}
      \|\bfalpha\|_2\|\bfbeta\|_2
      (MN)^{1/2}\Bigl(\frac{1}{M}
      +\Bigl(
      \frac{1}{M^2N^2V^{2l-1}}
      \sum_{\bfv\in[V,2V]^{2l}}|\Sigma_{II}^{(d)}(\bfv)|\Bigr)^{1/(2l)}
      \Bigr)^{1/2},
    \end{equation}
    where 
    \[
      \Sigma_{II}^{(d)}(\bfv)=\sum_{r\in\Fq}
      \sumsum_{\substack{s_1,s_2\in\Fqt\\
          s^d_1\not =s^d_2}} \bfK_c(r,s_1,\bfv)\ov{\bfK_c(r,s_2,\bfv)}
    \]
    and the implied constant depends only on $\eps$, $\|K\|_\infty$, $d$
    and $l$.
  \end{enumth}
\end{proposition}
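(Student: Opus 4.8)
The approach is the ``shift-by-$v$'' (completion-and-spreading) technique of~\cite{FI},~\cite{FMAnn},~\cite{KMSAnn},~\cite{Pisa}, adapted to the monomial kernel, which trades the oscillation of $K$ in the short variable for complete sums modulo $q$. Three elementary manoeuvres are combined: a \emph{spreading} step, which — using $V\le N/10$ to discard boundary terms of size $O(V\|K\|_\infty)$ — replaces a sum over $n\sim N$ by an average over integer displacements $v\in[V,2V)$ of the shifted sum; a \emph{H\"older inequality of exponent $2l$}, which removes the unknown coefficients $\bfalpha$ (and $\bfbeta$) and produces the $2l$-fold product that is precisely the definition of $\bfK_c$; and \emph{completion modulo $q$} of the remaining sums over intervals of length $\le q$, which costs a factor $q^\eps$ (the $\ell^1$-norm of the Fourier coefficients of an interval modulo $q$ is $O(q\log q)$) and reduces everything to complete sums of $\bfK_c$ over $\Ff_q\times\Fqt$.

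For part (1) the skeleton is: apply H\"older of exponent $2l$ in $m$ (using $\|\bfalpha\|_{2l/(2l-1)}\le M^{(l-1)/(2l)}\|\bfalpha\|_2$) to reduce to the moment $\sum_{m}|\sum_{n\sim N}K(m^bn^c)|^{2l}$; spread the $n$-sum, replacing it by $(V')^{-1}\sum_{n\sim N}\sum_{v\in[V,2V)}K(m^b(n+v)^c)$ up to an error $O(V\|K\|_\infty)$ per $m$, with $V'=\#([V,2V)\cap\Zz)$; expand the $2l$-th power and apply a further H\"older that collapses the $2l$ copies onto a single base variable $n$, while the $2l$ copies of the $v$-sum supply the displacement tuple $\bfv$, so that the kernel appearing is precisely $\bfK_c(n,m^b,\bfv)$; and complete the sums over $n$ and over $m$ — the latter entering only through $m^b$, which completion renders a variable $s$ over $\Fqt$ — identifying the resulting complete sums as the $\Sigma_I(\bfv)$. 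Tracking the numerical factors and checking that the error contributions are subordinate then gives~\eqref{BalphaNbound}.

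For part (2), after Cauchy--Schwarz in $n$ to discard $\bfbeta$, one reaches $\|\bfbeta\|_2(\sum_{n\sim N}|\sum_m\alpha_mK(m^bn^c)|^2)^{1/2}$; opening the square peels off the diagonal $m_1=m_2$, which equals $\sum_m|\alpha_m|^2\sum_n|K(m^bn^c)|^2\ll\|\bfalpha\|_2^2N\|K\|_\infty^2$ and, after the outer square root, is exactly the $1/M$ term (and the source of the dependence of the constant on $\|K\|_\infty$). The off-diagonal $\sum_{m_1\ne m_2}\alpha_{m_1}\overline{\alpha_{m_2}}C(m_1,m_2)$, with $C(m_1,m_2)=\sum_{n\sim N}K(m_1^bn^c)\overline{K(m_2^bn^c)}$, is then treated as in part (1): spread the $n$-sum, apply H\"older of exponent $2l$ in the pair $(m_1,m_2)$, collapse onto a single base variable $n$, and complete the sums over $m_1$, $m_2$, $n$, turning $m_1^b,m_2^b$ into $s_1,s_2\in\Fqt$ and the kernel into $\bfK_c(n,m_1^b,\bfv)\overline{\bfK_c(n,m_2^b,\bfv)}$. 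Discarding from the resulting complete sum the degenerate part with $s_1^d=s_2^d$ — a locus on which $s_2$ runs over $O(1)$ cosets of the $d$-th powers, which one bounds trivially and absorbs into the $1/M$ term — leaves exactly $\Sigma_{II}^{(d)}(\bfv)$, and~\eqref{Balphabetabound} follows after collecting factors.

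The conceptual content is light; the work, and the main obstacle, is the numerical bookkeeping. One must verify that the spreading errors $O(V\|K\|_\infty)$ (squared, for type II), the non-principal-frequency tails of the completions, the contribution of $s=0$, and — in type II — the degenerate locus $s_1^d=s_2^d$ together with the near-diagonal pairs ($m_1\ne m_2$ but $m_1^b=m_2^b$) are all dominated by the stated right-hand sides, and that the many exponents thrown up by the successive H\"older and completion steps recombine to the precise shapes in~\eqref{BalphaNbound} and~\eqref{Balphabetabound}; this is exactly what forces the hypotheses $V\le N/10$ and $M,N,N^2/V\le q$. The point peculiar to type II is that the $1/M$ summand is the honest diagonal rather than an error term, so it must be extracted cleanly in order that what remains be precisely the complete sum carrying the restriction $s_1^d\ne s_2^d$ defining $\Sigma_{II}^{(d)}$.
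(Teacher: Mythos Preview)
Your sketch has a real gap: it is missing the second auxiliary variable. The ``$+ab$'' method of~\cite{FI,KMSAnn,Pisa} shifts by a \emph{product} $uv$ with $u\sim U:=N/(10V)$ and $v\sim V$, not by $v$ alone. After the shift $n\mapsto n+uv$ one exploits the factorization
\[
  m^b(n+uv)^c=(u^cm^b)\,(\ov{u}n + v)^c\pmod q,
\]
so that the pair $(r,s)=(\ov{u}n,\,u^cm^b)$ already ranges over $\Fq\times\Fqt$, and H\"older is applied with only the $v$-sum kept inside the absolute value. The weight $\nu(r,s)=\sum_{\ov{u}n\equiv r,\,u^cm^b\equiv s}|\alpha_m|$ is well spread precisely because of the extra freedom in~$u$; the estimate $\sum_{r,s}\nu^2\ll q^\eps\|\bfalpha\|_2^2UN$ (Lemmas~\ref{VboundtypeI} and~\ref{VboundtypeII}) is exactly where the hypothesis $N^2/V\le q$ (i.e.\ $UN\le q$) enters. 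No completion of the $(r,s)$-sum is needed.

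Your plain $v$-shift with $(r,s)=(n,m^b)$, followed by the ``further H\"older that collapses the $2l$ copies onto a single~$n$'' (which can only mean $|\sum_{n,v}\cdots|^{2l}\le N^{2l-1}\sum_n|\sum_v\cdots|^{2l}$), yields instead
\[
  B(\bfalpha,N)\ll\|\bfalpha\|_2M^{1/2}N\Bigl(\frac{1}{MNV^{2l}}\sum_{\bfv}|\Sigma_I(\bfv)|\Bigr)^{1/(2l)},
\]
which is weaker than~\eqref{BalphaNbound} by a factor $(N/V)^{1/(2l)}$. Extending the $n$- and $m$-sums to $\Fq$ and $\Fqt$ (by positivity, or by Fourier completion, which would anyway insert an additive twist foreign to the definition of $\Sigma_I$) does not recover this loss. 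The loss is fatal for the intended application: with $V=q^{1/l}$ and $\sum_{\bfv}|\Sigma_I|\ll q^3$ your bound is nontrivial only when $MN\gg q$, whereas~\eqref{BtypeI} requires merely $MN^2\gg q$ and so handles $M=N=q^{1/3+\delta}$. A symptom of the problem is that the hypothesis $N^2/V\le q$ is never actually invoked in your argument, despite your remark that it is ``forced''. The same deficiency carries over to part~(2), where the relevant map is $(u,n,m_1,m_2)\mapsto(\ov{u}n,\,u^cm_1^b,\,u^cm_2^b)$.
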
 

\begin{remark}
  It is easy to check that the dependency on $\|K\|_\infty$ of the
  implied constant in \eqref{Balphabetabound} can be removed if one
  replaces the term $M^{-1}$ on the right-hand side with
  $\|K\|_\infty^2M^{-1}$, either by examining the proof or by an
  amplification argument.  However, the quantity $\|K\|_\infty$ will be
  uniformly bounded in our applications.
\end{remark}

\begin{remark}
  We recall how such bounds can lead to results like
  Theorem~\ref{thmType12intro}. We assume that $K$ is bounded by an
  absolute constant (in practice, the complexity of the underlying
  object of which~$K$ is the trace function). Since the factors
  \[
    \|\bfalpha\|_2M^{1/2}N,\quad\quad
    \|\bfalpha\|_2\|\bfbeta\|_2(MN)^{1/2}
  \]
  appearing in the estimates represent the ``trivial'' bounds for
  $B(\bfalpha,N)$ and $B(\bfalpha,\bfbeta)$, the subsequent factors are
  the potential savings. How much one does save depends on our success
  in bounding the sums
  \[
    \sum_{\bfv\in[V,2V]^{2l}}|\Sigma_{I}(\bfv)|,\quad
    \sum_{\bfv\in[V,2V]^{2l}}|\Sigma_{II}^{d)}(\bfv)|,
  \]
  depending on the unspecified parameters~$V$ and (for the second sum) $d$.
  
  Consider the first sum. Under our assumptions, we have
  $\Sigma_{I}(\bfv)\ll q^2$, but we hope for better, and in fact for
  trace functions of suitable sheaves, as we will see, we can
  show that square-root cancellation occurs for most values of $\bfv$,
  namely
  \[
    \Sigma_{I}(\bfv)\ll q
  \]
  unless $\bfv$ is in a ``diagonal'' subset of parameters. This set must
  necessarily contain the actual diagonal defined by
  \[
    v_i=v_{i+l},\quad\quad i=1,\cdots, l
  \]
  for which only the trivial bound holds. If there are not many more
  diagonal cases, we can therefore hope to obtain
  \begin{equation*}
    \sum_{\bfv\in[V,2V]^{2l}}|\Sigma_{I}(\bfv)|\ll V^lq^2+V^{2l}q,
  \end{equation*}
  which is $\ll q^3$ when choosing $V=q^{1/l}$. This bound turns out to
  imply the estimates for Type I sums.
  
  For the type II sums, interpolating similarly between the trivial
  bound $\Sigma_{II}^{(d)}(\bfv)\ll q^{3}$ along a diagonal set and a
  generic square-root cancellation bound
  $\Sigma_{II}^{(d)}(\bfv)\ll q^{3/2}$, we can hope to obtain
  \begin{equation*}
    \sum_{\bfv\in[V,2V]^{2l}}|\Sigma_{II}^{(d)}(\bfv)|\ll
    V^lq^3+V^{2l}q^{3/2}
  \end{equation*}
  which is $\ll q^{9/2}$ when choosing $V=q^{3/(2l)}$. This leads to the
  bounds for Type II sums.

  Although this intuitive explanation is not too far from the truth, we
  will see that, as in previous works, the actual details are more
  complicated and additional ``stratification'' will be used.
\end{remark}

\subsection{Proof of Proposition \ref{propreduct}}

We first prove the bound for the Type I  sums.

Let~$V\geq 1$ such that~(\ref{assumptypeI}) holds. We define
$U=N/(10V)$, so that $U\geq 1$.

Let $f_N$ denote the piecewise linear function on $\Rr$ which is equal
to~$1$ in the interval $[N,2N]$ and is zero in
$\mathopen]-\infty,N-1]\cup [2N+1,\infty\mathclose[$.  Let
$\what{f}_N$ be the Fourier transform of~$f_N$, defined by
\[
  \what{f}_N(t)=\int_{\Rr}f_N(s)e(-st)ds
\]
for $t\in\Rr$. The estimate
\[
  |\what{f}_N(t)|\ll \Xi_N(t)
\]
holds, where $\Xi_N(t)=\min(\log N,|t|^{-1},|t|^{-2})$.


By elementary changes of variable, we have
\begin{align*}
  B(\bfalpha,N)&=\frac{1}{UV}\sum_{(u,v)\sim U\times V}
  \sumsum_{m\sim M,n+uv\sim N}\alpha_m K(m^b(n+uv)^c)\\
  &=\frac{1}{UV}\sum_{(u,v)\sim U\times V}\sumsum_{m\sim M,n\in
    [N/2,2N]}\alpha_m f_N(n+uv)K(m^b(n+uv)^c).
\end{align*}

Applying the Fourier inversion formula,
\[
  f_N(s)=\int_{\Rr}\what{f}_N(t)e(ts)dt,
\]
we deduce that
\begin{align*}
  B(\bfalpha,N)
  &=\frac{1}{UV}\int_{\Rr}\sum_{(u,v)\sim U\times
    V}\sumsum_{m\sim M,n\in [N/2,2N]}\alpha_m e(
    t(n+uv))K(m^b(n+uv)^c)\what{f_N}(t)dt\\
  &\leq \frac{1}{UV}\int_{\Rr}\sumsum_{m\sim M,n\in [N/2,2N]}
    |\alpha_m| \sum_{u\sim
    U}
    \Bigl|\sum_{v\sim V} e(tv)K(n^cm^b(\ov un+v)^c)\Bigr|
    \frac{1}{u}\Bigl|\what{f_N}\Bigl(\frac{t}{u}\Bigr)\Bigr|dt\\
  &\ll  \frac{1}{UV}\int_{\Rr}\sumsum_{m\sim M,n\in [N/2,2N]}
    |\alpha_m| \sum_{u\sim
    U}\Bigl|\sum_{v\sim V} e(tv)K(n^cm^b(\ov un+v)^c)
    \Bigr|\frac{1}{U}\Xi_N\Bigl(\frac{t}U\Bigr)dt\\
  &\ll \frac{\log N}{UV}\sumsum_{m\sim M,n\in
    [N/2,2N]}|\alpha_m|\sum_{u\sim U}\Bigl|\sum_{v\sim V}
    \eps(v)K(n^cm^b(\ov un+v)^c)\Bigr|
\end{align*}
where $\eps(v)=e( t_0v)$ for some $t_0\in\Rr$. In the last step, we
have also used the fact that
\[
  \int_\Rr\frac{1}{U}\Xi_N\Bigl(\frac{t}U\Bigr)dt=O(\log N).
\]

Given $(r,s)\in\Fqt\times \Fqt$ we define
\[
  \nu(r,s)=\sumsumsum_\stacksum{u^cm^b\equiv s\mods q}{ \ov un\equiv
    r\mods q}|\alpha_m|,
\]
where the variables are integers restricted to satisfy the size
conditions
\begin{equation}
  \label{mnuvrange}
  m\sim M,\quad n\sim N,\quad  u\sim U,\quad  v\sim V.
\end{equation}

Applying Hölder's inequality, we deduce from the previous inequality
that the estimate
\[
  B(\bfalpha,N)\ll \frac{\log N}{UV} \Bigl(\sum_{r,s}|\nu(r,s)|
  \Bigr)^{1-\frac1l} \Bigl(\sum_{r,s}|\nu(r,s)|^2 \Bigr)^{\frac1{2l}}
  \Bigl(\sum_{r,s}\Bigl|\sum_{v\sim
    V}\eps(v)K(s(r+v)^c)\Bigr|^{2l}\Bigr)^\frac{1}{2l}
\]
holds.

\begin{lemma}\label{VboundtypeI}
  We have
  \begin{equation}\label{L1norm}
    \sum_{r,s}|\nu(r,s)|\ll UN\|\bfalpha\|_1\leq UNM^{1/2}\|\bfalpha\|_2
  \end{equation}
  and 
  \begin{equation}\label{L2norm}
    \sum_{r,s}|\nu(r,s)|^2 \ll q^{\eps}
    \|\alpha\|_2^2UN\Bigl(1+\frac{UN}q\Bigr)\Bigl(1+\frac{M}q\Bigr)
  \end{equation}
  for any $\eps>0$, where the implied constant depends on $\eps$
  and~$b$.
\end{lemma}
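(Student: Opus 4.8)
The plan is to unfold the definition of $\nu(r,s)$ and reduce both bounds to elementary lattice-point counts. A preliminary observation: the hypotheses in \eqref{assumptypeI} force $N\le q/10$, since $V\le N/10$ gives $N^2/V\ge 10N$, which is $\le q$ by assumption. Hence every admissible $n<2N<q$ and $u<2U=N/(5V)<q$ is automatically invertible modulo $q$, and among the triples $(m,n,u)$ the only ones not contributing to $\nu(r,s)$ with $s\in\Fqt$ are those with $q\mid m$ (for which $u^cm^b\equiv 0$).

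The bound \eqref{L1norm} is then immediate: since $\nu(r,s)\ge 0$, summing over $(r,s)$ just removes the two congruence conditions, so $\sum_{r,s}|\nu(r,s)|\le\sum_{m\sim M}\sum_{n\sim N}\sum_{u\sim U}|\alpha_m|\ll UN\|\bfalpha\|_1$, and $\|\bfalpha\|_1\ll M^{1/2}\|\bfalpha\|_2$ by Cauchy--Schwarz together with $\#\{m\sim M\}\ll M$.

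For \eqref{L2norm} I would expand the square and interchange the order of summation. The condition that two admissible triples $(m_1,n_1,u_1)$, $(m_2,n_2,u_2)$ give the same pair $(r,s)$ reads $u_1^cm_1^b\equiv u_2^cm_2^b$ and $\ov{u_1}n_1\equiv\ov{u_2}n_2$ modulo $q$; multiplying the latter by $u_1u_2$ it becomes $u_2n_1\equiv u_1n_2\pmod q$. Thus $\sum_{r,s}\nu(r,s)^2$ is a count of sextuples in the prescribed boxes satisfying these two congruences, weighted by $|\alpha_{m_1}||\alpha_{m_2}|$. Using $2|\alpha_{m_1}||\alpha_{m_2}|\le|\alpha_{m_1}|^2+|\alpha_{m_2}|^2$ and the symmetry of the constraint set under exchanging the two triples, it suffices to bound, uniformly in $m_1$, the number $Q(m_1)$ of quintuples $(m_2,n_1,n_2,u_1,u_2)$ satisfying both congruences, and then to multiply by $\|\bfalpha\|_2^2$.

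The crucial step --- and essentially the only place the size hypotheses enter --- is the remark that since $u_2n_1$ and $u_1n_2$ are positive integers less than $4UN=\tfrac{2}{5}\,N^2/V\le q$, the congruence $u_2n_1\equiv u_1n_2\pmod q$ is equivalent to the \emph{exact} equality $u_1n_2=u_2n_1$ in $\Zz$. The number of $(u_1,u_2,n_1,n_2)$ in the boxes with $u_1n_2=u_2n_1$ is $\ll_{\eps}q^{\eps}UN$: writing $k=u_1n_2=u_2n_1$, one has $1\le k<q$, so the number of factorizations $k=u_1n_2$ is at most the divisor function $d(k)\ll_{\eps}q^{\eps}$, while summing the number of factorizations $k=u_2n_1$ over all $k$ gives $\#\{(u_2,n_1)\}\ll UN$. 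For each such quadruple the remaining congruence $m_2^b\equiv\ov{u_2}^{\,c}u_1^cm_1^b\pmod q$ has a unit on the right (as $q\nmid m_1$), hence at most $\gcd(b,q-1)\le|b|$ roots modulo $q$, and so at most $|b|(1+M/q)$ solutions $m_2\sim M$. Combining these, $Q(m_1)\ll_{\eps}|b|\,q^{\eps}\,UN(1+M/q)$, whence $\sum_{r,s}\nu(r,s)^2\ll_{\eps}|b|\,q^{\eps}\,UN(1+M/q)\,\|\bfalpha\|_2^2$, which gives \eqref{L2norm} since $1\le 1+UN/q$ and $|b|$ is absorbed into the implied constant. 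I do not anticipate a real obstacle here: the only things needing care are the bookkeeping of non-invertible residues and the use of $\#\{n\sim N\}\asymp N$, $\#\{u\sim U\}\asymp U$ for real parameters, all controlled by the bound $N\le q/10$ noted at the outset.
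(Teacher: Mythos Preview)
Your proof is correct and follows essentially the same approach as the paper's: expand the square, symmetrize via $|\alpha_{m_1}\alpha_{m_2}|\le\tfrac12(|\alpha_{m_1}|^2+|\alpha_{m_2}|^2)$, and control the resulting lattice-point count by the divisor bound on the $(u,n)$-congruence together with the bound $\ll_b 1+M/q$ on solutions of $m_2^b\equiv c\pmod q$. The only difference is that you exploit the hypothesis $N^2/V\le q$ to force $u_1n_2=u_2n_1$ exactly, whereas the paper writes $u_1n_2=u_2n_1+qk$ with $|k|\ll 1+UN/q$ and sums over $k$; this is precisely the origin of the factor $(1+UN/q)$ in \eqref{L2norm}, which in your argument is simply absorbed as $1\le 1+UN/q$.
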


\begin{proof}
  The bound \eqref{L1norm} follows immediately from \eqref{mnuvrange}.

  For \eqref{L2norm}, we write
  \[
    \sum_{r,s}|\nu(r,s)|^2=\sumsum_\stacksum{u_1,u_2,n_1,n_2}{u_2n_1\equiv
      u_1n_2}\sumsum_\stacksum{m_1,m_2}{u_1^cm_1^a\equiv
      u_2^cm_2^a}\alpha_{m_1}\ov{\alpha_{m_2}}\ll
    \sumsum_\stacksum{u_1,u_2,n_1,n_2}{u_2n_1\equiv
      u_1n_2}\sumsum_\stacksum{m_1,m_2}{u_1^cm_1^a\equiv u_2^cm_2^a}
    |\alpha_{m_1}|^2
  \]
  since
  $|\alpha_{m_1}\ov{\alpha_{m_2}}|\leq
  |\alpha_{m_1}|^2+|\alpha_{m_2}|^2$.
  
  We express the congruence $u_2n_1\equiv u_1n_2\mods q$ as an equality
  $u_1n_2=u_2n_1+qk$ for some integer~$k$ such that $|k|\ll (UN/q+1)$
  according to the size conditions above. Fixing $u_2$, $n_1$ and $k$,
  there are at most $(UN)^{\eps}\ll q^{\eps}$ solutions $(u_1,n_2)$ of
  these equations for any $\eps>0$.  Moreover, for any given
  $(u_1,u_2,m_1)$, the number of $m_2$ satisfying the congruence
  \[
    u_1^cm_1^b\equiv u_2^cm_2^b\mods q
  \]
  is $\ll M/q+1\ll 1$, where the implied constant depends on~$b$.
  Combining these bounds concludes the proof of the lemma.
\end{proof}

Applying Lemma~\ref{VboundtypeI} and the bound $|\eps(v)|\leq 1$ for
all~$v$, we deduce the estimate
\[
  B(\bfalpha,N)\ll
  q^{\eps}\frac{\|\bfalpha\|_2^{1-1/l}M^{1/2}\|\bfalpha\|_2^{1/l}N}
  {V(UMN)^{1/(2l)}} \Bigl(\sum_{\bfv\sim [V,2V]^{2l}}|\Sigma_I(\bfv;q)|
  \Bigr)^{1/(2l)}
\]
for any $\eps>0$, where the implied constant depends on~$\eps$ and~$b$.

We now prove the Type II bound, and we will be brief. We denote again
$U=N/(10V)$.

Applying the Cauchy--Schwarz inequality and following the argument
of~\cite{Pisa}*{\S 4.1}, we deduce the estimate
\[
  B(\bfalpha,\bfbeta)\ll
  \|\bfbeta\|_2(\|\bfalpha\|^2_2N+S^{\not\equiv})^{1/2}
\]
where the implied constant is absolute and
\begin{multline*}
  S^{\not\equiv}=\sumsum_{m_1^{bd}\not\equiv
    m_2^{bd}}\alpha_{m_1}\ov{\alpha_{m_2}}
  \sum_{n\sim N}K(m_1^bn^c)\ov K(m_2^bn^c)\\
  \ll\frac{\log q}{UV}\sumsum_\stacksum{r,s_1,s_2}{s_1^d\neq
    s_2^d}\nu(r,s_1,s_2)\Bigl |\sum_{v\sim V}\eps(v)K(s_1(r+v)^c)\ov
  K(s_1(r+v)^c)\Bigr|
\end{multline*}
where $\eps(v)=e(t_0v)$ for some $t_0\in \Rr$ and we define
\[
  \nu(r,s_1,s_2)=\sumsum_\stacksum{u,n, \ov un\equiv r}{ u^cm^b_i\equiv
    s_i,\ i=1,2}|\alpha_{m_1}{\alpha_{m_2}}|,
\]
for $(r,s_1,s_2)\in(\Fqt)^3$, the variables being constrained by the
size conditions
\[
  u\sim U,\quad m_1,m_2\sim M,\quad n\sim N.
\]

\begin{lemma}\label{VboundtypeII}
  Assume that $M\leq q$ and $UN\leq q$. The estimates
  \begin{equation*}
    \sum_{r,s_1,s_2}\nu(r,s_1,s_2)\ll UMN\|\bfalpha\|^2_2,
  \end{equation*}
  with an absolute implied constant, and
  \begin{equation*}
    \sum_{r,s_1,s_2}\nu(r,s_1,s_2)^2 \ll q^{\eps}\|\bfalpha\|_2^4UN,
  \end{equation*}
  for any $\eps>0$, hold, where the implied constants depend only on $b$
  and $c$ and $\eps$.
\end{lemma}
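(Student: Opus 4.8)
The plan is to argue exactly as in the proof of Lemma~\ref{VboundtypeI}, now with three ``output'' variables $(r,s_1,s_2)$ and two independent $m$-variables. The first estimate is essentially content-free: under the size conditions $u\sim U$, $m_1,m_2\sim M$, $n\sim N$, each admissible quadruple $(u,n,m_1,m_2)$ contributes $|\alpha_{m_1}\alpha_{m_2}|$ to exactly one value $\nu(r,s_1,s_2)$, namely the one with $r\equiv\ov un$ and $s_i\equiv u^cm_i^b\mods q$, so that
\[
  \sum_{r,s_1,s_2}\nu(r,s_1,s_2)=\Bigl(\sum_{u\sim U}1\Bigr)\Bigl(\sum_{n\sim N}1\Bigr)\|\bfalpha\|_1^2\ll UN\|\bfalpha\|_1^2\leq UMN\|\bfalpha\|_2^2
\]
by Cauchy--Schwarz.

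For the second estimate I would expand the square. The quantity $\sum_{r,s_1,s_2}\nu(r,s_1,s_2)^2$ is a weighted count of $8$-tuples $(u,n,m_1,m_2,u',n',m_1',m_2')$, all variables in their prescribed ranges, subject to the three congruences
\[
  u'n\equiv un',\qquad u^cm_1^b\equiv(u')^c(m_1')^b,\qquad u^cm_2^b\equiv(u')^c(m_2')^b\pmod q,
\]
with weight $|\alpha_{m_1}\alpha_{m_2}\alpha_{m_1'}\alpha_{m_2'}|$. Applying $|\alpha_{m_1}\alpha_{m_1'}|\leq\tfrac12(|\alpha_{m_1}|^2+|\alpha_{m_1'}|^2)$ and the analogous inequality for the pair $(m_2,m_2')$ splits this weight into four terms of the shape $|\alpha_a|^2|\alpha_b|^2$ with $a\in\{m_1,m_1'\}$ and $b\in\{m_2,m_2'\}$, each of which I would treat in the same way.

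I would then fix the two indices $a$, $b$ carrying the weight and count the remaining six variables. Following the proof of Lemma~\ref{VboundtypeI}, I would rewrite the first congruence as an honest equality $u'n-un'=qk$; the hypothesis $UN\leq q$ forces $|k|\ll 1$, and once $u'$, $n$ and $k$ are fixed the product $un'$ is determined, so the divisor bound yields $\ll q^{\eps}$ pairs $(u,n')$, hence $\ll q^{\eps}UN$ quadruples $(u,n,u',n')$. For each of these, the second congruence pins down the $b$-th power modulo~$q$ of whichever of $m_1,m_1'$ is \emph{not} equal to $a$ (using that $u$ and $u'$ are invertible, which is automatic since $(r,s_1,s_2)\in(\Fqt)^3$), and since $M\leq q$ there are $\ll 1$ integers in $[M,2M)$ realising it (at most $\gcd(b,q-1)\leq|b|$ admissible residue classes, each meeting the interval at most once); the third congruence similarly pins down the free one of $m_2,m_2'$. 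Thus the count is $\ll q^{\eps}UN$, and summing $\ll q^{\eps}UN\,|\alpha_a|^2|\alpha_b|^2$ over $a$ and $b$ gives the bound $\ll q^{\eps}\|\bfalpha\|_2^4UN$.

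I expect the only mildly delicate point to be the control of the carry $k$ and the clean use of the divisor bound for the first congruence, precisely as in the Type~I case; everything else is routine bookkeeping, and the exponents $b$, $c$ enter only through $|b|$ (the number of $b$-th roots in $\Ff_q^\times$) and through the invertibility of the relevant variables, which is already guaranteed by working with $(r,s_1,s_2)\in(\Fqt)^3$.
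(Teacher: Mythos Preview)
Your argument is correct and matches the paper's proof essentially line for line: the same elementary $\ell^1$ bound for the first estimate, and for the second the same expansion of the square, the same rewriting of the $r$-congruence as $u'n-un'=qk$ with bounded carry $k$ followed by the divisor bound, the same AM--GM splitting of the $\alpha$-weights, and the same use of $M\leq q$ to pin down the remaining $m$-variables from the $s_i$-congruences up to $O_b(1)$ choices. Your write-up is in fact cleaner than the paper's, which applies AM--GM to the pair $(m_i,\mu_i)$ (in its notation) but records this somewhat tersely.
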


\begin{proof}
  The first bound follows from the elementary estimates
  \[
    \sum_{r,s_1,s_2}\nu(r,s_1,s_2)\ll
    \sumsum_\stacksum{u,n}{m_1,m_2}|\alpha_{m_1}\alpha_{m_2}|\ll
    UN\|\bfalpha\|_1^2\ll UMN\|\bfalpha\|_2^2,
  \]
  where the implied constant is absolute.

  To prove the second bound, we first note that
  \[
    \sumsum_{r,s_1,s_2}\nu(r,s_1,s_2)^2\ll
    \sumsum_{u_1,n_1,m_1,m_2}|\alpha_{m_1}{\alpha_{m_2}}|
    \sumsum_{\substack{u_2,n_2,\mu_1,\mu_2\\
        \ov u_2n_2\equiv \ov u_1 n_1\\
        u_2^c\mu_i^b\equiv u_1^c\mu_i^b,\ i=1,2}}
    |\alpha_{\mu_1}{\alpha_{\mu_2}}|.
  \]

  We express the congruence $\ov u_2n_2\equiv \ov u_1 n_1$ as an
  equation $u_2n_1=u_1n_2+qk$ for some integer $k\ll UN/q$ (by the size
  conditions). Once $u$, $n_2$ and $k$ are fixed, there are
  $\ll (UN)^{\eps}$ solutions $(u_2,n)$ of the equations, for any
  $\eps>0$, where the implied constant depends only on~$\eps$. Writing
  \[
    |\alpha_{m_1}{\alpha_{m_2}}|\leq |\alpha_{m_1}|^2+|{\alpha_{m_2}}|^2
  \]
  and using the estimate
  \[
    \sumsum_{ u_2^c\mu_i^b\equiv u_1^cm_i^b}|\alpha_{m_i}|^2\ll
    \|\bfalpha\|_2^2\Bigl(\frac{M}{q}+1\Bigr)
  \]
  for $i=1$ and $2$, where the implied constant depends on~$b$, we see
  that
  \[
    \sumsum_{r,s_1,s_2}\nu(r,s_1,s_2)^2\ll
    q^{\eps}\|\bfalpha\|_2^4\Bigl(\frac{M}{q}+1\Bigr)^2UN
    \Bigl(1+\frac{UN}q\Bigr)\ll
    q^{\eps}\|\bfalpha\|_2^4UN
   ,
  \]
  which concludes the proof.
\end{proof}

Applying Hölder's inequality twice, as in \cite[\S\,4.1]{Pisa}, we
deduce the bound
\begin{align*}
  \sumsum_{r,s_1,s_2}\nu^{1-\frac1l+\frac1l} \Bigl|\sum_{v\sim
    V}\cdots\Bigr| &\leq \Bigl(\sumsum_{r,s_1,s_2}\nu\Bigr)^{1-\frac1l}
  \Bigl(\sumsum_{r,s_1,s_2}\nu\Bigl|\sum_{v\sim
    V}\cdots\Bigr|^l\Bigr)^{\frac1{l}}
  \\
  &\leq \Bigl(\sumsum_{r,s_1,s_2}\nu\Bigr)^{1-\frac1l}
  \Bigl(\sumsum_{r,s_1,s_2}\nu^2\Bigr)^{\frac{1}{2l}}
  \Bigl(\sumsum_{r,s_1,s_2}\Bigl|\sum_{v\sim
    V}\cdots\Bigr|^{2l}\Bigr)^{\frac1{2l}}
\end{align*}
and using Lemma \ref{VboundtypeII} (under the assumption $M,UN\leq q$),
we conclude that
\begin{align*}
  S^{\not\equiv }&\ll \frac{q^{\eps}}{UV}
  \|\bfalpha\|_2^2(UMN)^{1-1/l}(UN)^{1/(2l)}
  \Bigl(\sum_{\bfv\in[V,2V]^{2l}}|\Sigma_{II}(K;\bfv)|\Bigr)^{1/(2l)}
  \\
  &\ll
  \frac{q^{\eps}}{N}\|\bfalpha\|_2^2(MN^2/V)^{1-1/l}(N^2/V)^{1/(2l)}
  \Bigl(\sum_{\bfv\in[V,2V]^{2l}}|\Sigma_{II}(K;\bfv)|\Bigr)^{1/(2l)}
  \\
  &\ll q^{\eps}\|\bfalpha\|_2^2MN\Bigl(\frac{1}{M^2N^2V^{2l-1}}
  \sum_{\bfv\in[V,2V]^{2l}}|\Sigma_{II}(K;\bfv)|\Bigr)^{1/(2l)},
\end{align*}
for any $\eps>0$.

This finishes the proof of Proposition~\ref{propreduct}.

\subsection{Reduction to a stratification statement}

We now indicate the precise step which we will use to implement
Proposition~\ref{propreduct} in the case where $K$ is a trace
function. However, we do not yet need to make this assumption.

We use the notation from the previous section. Recall our convention for
the degree of an algebraic variety which is not necessarily irreducible.

\begin{proposition}\label{pr-strat}
  Let~$q$ be a prime number, let $K\colon \Ff_q\to\Cc$ be a function,
  and let $l\geq 2$ be an integer. Let~$C\geq 1$ be a real number.

  \begin{enumth}
  \item Suppose that there exist algebraic varieties $\mcV_1$ and
    $\mcV_1^{\Delta}$ over $\Ff_q$ such that
    \begin{gather*}
      \Aa^{2l}_{\Fq}\supset \mcV_1\supset \mcV_1^\Delta
      \\
      \dim (\mcV_1) \leq l+\Bigl\lceil\frac{l}{2}\Bigr\rceil,\quad \dim
      (\mcV^\Delta_1) \leq l,
      \\
      \max(\deg(\mcV_1),\deg(\mcV_1^{\Delta}))\leq C
    \end{gather*}
    and such that the estimates
    \begin{equation}
      \label{stratboundsI}
      |\Sigma_{I}(\bfv)|\leq
      \begin{cases}
        Cq&\text{ for }\bfv\not\in \mcV_1(\Fq),
        \\
        Cq^{3/2}&\text{ for }\bfv\not\in \mcV^\Delta_1(\Fq),
        \\
        Cq^2&\text{ for } \bfv\in \mcV_1^{\Delta}(\Ff_q),
      \end{cases}
    \end{equation}
    hold. We then have
    \begin{equation}
    	\label{sumsigmaI}
    	\sum_{\bfv\in[V,2V]^{2l}}|\Sigma_{I}(\bfv)|\ll q^{3+1/(2l)},
    \end{equation}
    for $V=q^{1/l}$, where the implied constant depends only on~$C$.

  \item Suppose that there exist algebraic varieties $\mcV_2$ and
    $\mcV_2^{\Delta}$ over $\Ff_q$ such that
    \begin{gather*}
      \Aa^{2l}_{\Fq}\supset \mcV_2\supset \mcV_2^\Delta
      \\
      \dim (\mcV_{2}) \leq l + 2 \Bigl\lceil
      \frac{l}{3}\Bigr\rceil,\quad \dim (\mcV^\Delta_2) \leq l,
      \\
      \max(\deg(\mcV_2),\deg(\mcV_2^{\Delta}))\leq C,
    \end{gather*}
    and such that the estimates
    \begin{equation}
      \label{stratboundsII}
      |\Sigma_{II}^{(d)}(\bfv)|\leq
      \begin{cases}
        Cq^{3/2}&\text{ for }\bfv\not\in \mcV_2(\Fq),
        \\
        Cq^{2}&\text{ for }\bfv\not\in \mcV^\Delta_2(\Fq),
        \\
        Cq^3&\text{ for  }\bfv\in\mcV_2^{\Delta}(\Ff_q),
      \end{cases}
    \end{equation}
    hold. We then have
   \begin{equation}
    	\label{sumsigmaII}
      \sum_{\bfv\in[V,2V]^{2l}}|\Sigma_{II}^{(d)}(\bfv)|\ll
      q^{9/2+2/l},
    \end{equation}
    for $V=q^{3/(2l)}$, where the implied constant depends only on~$C$.
  \end{enumth}
\end{proposition}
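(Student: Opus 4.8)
The plan is to derive both estimates from the stratification bounds by a dyadic-box counting argument. First I would fix $V$ and set $S=\{\bfv\in\Zz^{2l} : V\leq v_i\leq 2V,\ 1\leq i\leq 2l\}$, so that $|S|\ll V^{2l}$ with implied constant depending only on $l$. The integer vectors in $S$, reduced modulo $q$, land in $\Aa^{2l}(\Ff_q)$, so one can split $S$ into the three pieces $S\setminus\mcV_1(\Ff_q)$, $(S\cap\mcV_1(\Ff_q))\setminus\mcV_1^\Delta(\Ff_q)$, $S\cap\mcV_1^\Delta(\Ff_q)$ in case~(1) (and the analogous pieces built from $\mcV_2\supset\mcV_2^\Delta$ in case~(2)) and bound $|\Sigma_I(\bfv)|$ on each piece by the corresponding line of \eqref{stratboundsI} (resp. $|\Sigma_{II}^{(d)}(\bfv)|$ by \eqref{stratboundsII}). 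To count points I would invoke Lemma~\ref{lm-sz}: since each exceptional variety has degree $\leq C$, it is cut out by $\leq A$ polynomials of degree $\leq A$ for some $A$ depending only on $l$ and $C$, so the lemma (applied with box parameter $\lceil V\rceil$, which is $<q/2$ for all but boundedly many $q$ since $V\leq q^{3/4}$ in both cases, the remaining primes being disposed of trivially) gives $|\{\bfv\in S:\bfv\bmod q\in\mcV(\Ff_q)\}|\ll V^{\dim\mcV}$, with implied constant depending only on $l$ and $C$.

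For part~(1) this produces, after taking $V=q^{1/l}$,
\[
  \sum_{\bfv\in[V,2V]^{2l}}|\Sigma_I(\bfv)|\ll qV^{2l}+q^{3/2}V^{l+\lceil l/2\rceil}+q^2V^l = q^3+q^{3/2}\,q^{1+\lceil l/2\rceil/l}+q^3,
\]
and I would then use the elementary inequality $\lceil l/2\rceil\leq(l+1)/2$ to see that the middle term is $\leq q^{3/2+1+1/2+1/(2l)}=q^{3+1/(2l)}$, which dominates the other two; this is exactly \eqref{sumsigmaI}.

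For part~(2) the same scheme with $V=q^{3/(2l)}$ gives
\[
  \sum_{\bfv\in[V,2V]^{2l}}|\Sigma_{II}^{(d)}(\bfv)|\ll q^{3/2}V^{2l}+q^2V^{l+2\lceil l/3\rceil}+q^3V^l = q^{9/2}+q^2\,q^{3/2+3\lceil l/3\rceil/l}+q^{9/2},
\]
and here the inequality $\lceil l/3\rceil\leq(l+2)/3$ shows that the middle term is $\leq q^{2+3/2+1+2/l}=q^{9/2+2/l}$, which dominates $q^{9/2}$, giving \eqref{sumsigmaII}.

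There is no genuine obstacle here: the argument is pure bookkeeping once Lemma~\ref{lm-sz} is available. The only two points that need any care are the reduction to the case $\lceil V\rceil<q/2$ (needed to apply Lemma~\ref{lm-sz}, and harmless since for the boundedly many remaining primes the asserted bounds are trivial, with the resulting constant still depending only on $l$ and $C$), and the two ceiling inequalities $\lceil l/2\rceil\leq(l+1)/2$ and $\lceil l/3\rceil\leq(l+2)/3$, which are precisely what convert the dimension hypotheses $\dim\mcV_1\leq l+\lceil l/2\rceil$ and $\dim\mcV_2\leq l+2\lceil l/3\rceil$ into the claimed savings $q^{3+1/(2l)}$ and $q^{9/2+2/l}$. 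All implied constants depend only on $l$ and $C$.
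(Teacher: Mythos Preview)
Your proposal is correct and follows essentially the same approach as the paper: split the box $[V,2V]^{2l}$ according to the stratification, use Lemma~\ref{lm-sz} to bound the number of lattice points in each stratum by $V^{\dim}$, and then plug in $V=q^{1/l}$ (resp.\ $V=q^{3/(2l)}$) together with the ceiling inequalities $\lceil l/2\rceil\leq(l+1)/2$ and $\lceil l/3\rceil\leq(l+2)/3$. The paper's proof is in fact terser than yours; your remarks on applying Lemma~\ref{lm-sz} (degree bound $\Rightarrow$ bounded number of defining equations of bounded degree, and the condition $V<q/2$) are legitimate side-checks that the paper leaves implicit.
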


\begin{proof}
  (1) Applying Lemma~\ref{lm-sz}, the assumption implies that
  \begin{align*}
    \sum_{\bfv\in[V,2V]^{2l}}|\Sigma_{I}(\bfv)|&\ll
    V^{l}q^2+V^{\dim(\mcV_1)}q^{3/2}+V^{2l}q\\
    & \ll V^{l}q^2+V^{l+\lceil\frac{l}{2}\rceil}q^{3/2}+V^{2l}q \leq
    V^{l}q^2+V^{{3l}/{2}+1/2}q^{3/2}+V^{2l}q,
  \end{align*}
  where the implied constant depends on~$C$. Picking $V=q^{1/l}$, we
  obtain the conclusion.
  
  (2) Similarly, we get
  \[
    \sum_{\bfv\in[V,2V]^{2l}}|\Sigma_{II}^{(d)}(\bfv)|\ll V^{l}q^3+V^{l
      + 2 \lceil \frac{l}{3}\rceil}q^{2}+V^{2l}q^{3/2}\leq 
      V^{l}q^3+V^{
      5{l}/{3}+4/3}q^{2}+V^{2l}q^{3/2},
  \]
  and conclude after taking $V=q^{3/(2l)}$.
\end{proof}

\begin{remark}If $l$ is divisible by $2$ (resp. by $3$) the exponent
  $1/(2l)$ in \eqref{sumsigmaI} (resp. $2/l$ in \eqref{sumsigmaII})
  can be replaced by $0$.
\end{remark}

\begin{remark}
  Note a crucial difference with the similar argument
  in~\cite[Th.\,4.5]{Pisa}: we \emph{do not require} that the
  subvarieties $\mcV_i$ and $\mcV_i^{\Delta}$ be defined
  over~$\Zz$. However, this means that we require a bound on the degree
  of these varieties. In our application, this will be provided by an
  application of Quantitative Sheaf Theory.
\end{remark}

\section{Moment estimates for \good sheaves}\label{secstrat}

This and the next section contains the core of the proof of
Theorem~\ref{thmType12intro}. We will establish that the assumptions of
Proposition~\ref{pr-strat} are satisfied if~$K$ is the trace function of
a \good sheaf~$\mcF$ modulo~$q$, with the constant~$C$ depending only on
the complexity of~$\mcF$. This will be deduced from Xu's idea
(Theorem~\ref{XuStep}), which we implement in the next section.


Let $q$ be a prime number and $\mcF$ an $\ell$-adic sheaf
on~$\Aa^1_{\Ff_q}$ for some prime $\ell\not=q$. We assume in this
section that $\mcF$ is \lgood and is a middle-extension sheaf. We
suppose also given non-zero integers~$b$ and~$c$ coprime to~$q$, an
integer~$l\geq 1$, and an integer~$d\geq 1$ coprime to~$q$.

We fix an algebraic closure~$\bFq$ of~$\Ff_q$.  For any finite
extension~$k/\Ff_q$ contained in~$\bFq$ and~$x\in k$, we denote
\[
  K(x;k)=t_{\mcF}(x;k),
\]
the values of the trace function of~$\mcF$ over~$k$.
For $(r,s)\in k\times k^{\times}$ and $\uple{v}\in k^{2l}$, we define
\[
  \bfK_c(r,s,\bfv;k)=\prod_{i=1}^lK(s(r+v_i)^c;k)
  \ov{K(s(r+v_{i+l})^c;k)}
\]
and
\begin{gather*}
  \Sigma_I(\uple{v};k)=\sumsum_{(r,s)\in k\times
    k^{\times}}\bfK_c(r,s,\bfv;k),
  \\
  \Sigma_{II}^{(d)}(\bfv;k)=\sum_{r\in k}
  \sumsum_{\substack{s_1,s_2\in k^{\times}\\
      s^d_1\not =s^d_2}} \bfK_c(r,s_1,\bfv;k)\ov{\bfK_c(r,s_2,\bfv;k)}.
\end{gather*}

Thus, for $k=\Ff_q$, these sums are equal to the sums
$\Sigma_I(\uple{v})$ and $\Sigma_{II}^{(d)}(\uple{v})$ of the previous
section.

\begin{proposition}\label{pr-moment1}
  Assume that~$q$ is large enough so that
  Proposition~\ref{pr-good-gkrlemma} applies to~$\mcF$ over~$\Ff_q$, and
  let $d$ be the order of the finite cyclic group~$T$ of \loccit
  for~$\mcF$.  Let~$m\geq 0$ be an integer. With notation as above, the
  estimates
  \begin{align}
    \label{momentsumI}
    \sum_{ \bfv \in k^{2l}} |\Sigma_{I}(\bfv;k) |^{2m}& \ll
    \abs{k}^{2m+2l } + \abs{k}^{4m+l} ,
    \\
    \label{momentsumII}
    \sum_{ \bfv \in k^{2l} } |\Sigma_{II}^{(d)}(\bfv;k) |^{2m} &\ll
    \abs{k}^{3m+2l} + \abs{k}^{6m+l}
  \end{align}
  hold for all finite extensions $k/\Ff_q$.
\end{proposition}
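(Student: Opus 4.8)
The plan is to reduce each moment to the ``diagonal'' counting estimates of Section~\ref{secstrat} by expanding the $2m$-th power, moving the sum over~$\bfv$ inside, and exploiting the fact that this inner sum \emph{factors} over the $2l$ coordinates of~$\bfv$.

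First, for~\eqref{momentsumI} I would write $|\Sigma_I(\bfv;k)|^{2m}=\Sigma_I(\bfv;k)^m\overline{\Sigma_I(\bfv;k)}^m$ and expand it as a sum over $2m$ tuples $(r_j,s_j)\in k\times k^\times$, $1\leq j\leq 2m$, the last~$m$ of which carry a complex conjugation. After interchanging these finite sums with the sum over~$\bfv=(v_1,\dots,v_{2l})$, the $\bfv$-sum becomes a product of $2l$ one-variable sums, one for each coordinate; using the definition $\bfK_c(r,s,\bfv;k)=\prod_{i}K(s(r+v_i)^c;k)\overline{K(s(r+v_{i+l})^c;k)}$, each of the first~$l$ coordinate-sums equals
\[
  A(\uple{r},\uple{s})=\sum_{v\in k}\prod_{j=1}^{m}K(s_j(r_j+v)^c;k)\prod_{j=m+1}^{2m}\overline{K(s_j(r_j+v)^c;k)},
\]
and each of the last~$l$ equals~$\overline{A(\uple{r},\uple{s})}$ (here I use that, since~$\mcF$ is a middle-extension sheaf pure of weight~$0$, $\overline{K}$ is the trace function of~$\mcF^{\vee}$). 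Hence the $\bfv$-sum collapses to $|A(\uple{r},\uple{s})|^{2l}$ and
\[
  \sum_{\bfv\in k^{2l}}|\Sigma_I(\bfv;k)|^{2m}=\sum_{(\uple{r},\uple{s})\in k^{2m}\times (k^\times)^{2m}}|A(\uple{r},\uple{s})|^{2l}.
\]
Now $A(\uple{r},\uple{s})$ is precisely the sum over~$k$ of the trace function of the sheaf $\mcF_{\uple{r},\uple{s},c}$ of Proposition~\ref{pr-diagonal1}, which is mixed of weights~$\leq 0$ and whose complexity is bounded in terms of~$m$, $c$ and~$c(\mcF)$, uniformly in $(\uple{r},\uple{s})$. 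I would then split the outer sum according to whether $(\uple{r},\uple{s})\in\Delta_{m,c}(\mcF;k)$: off this set Theorem~\ref{th-rh} gives $A(\uple{r},\uple{s})\ll|k|^{1/2}$, contributing $\ll|k|^{4m}\cdot|k|^{l}$; on it the trivial bound $A(\uple{r},\uple{s})\ll|k|$ combined with $|\Delta_{m,c}(\mcF;k)|\ll|k|^{2m}$ (Proposition~\ref{pr-diagonal1}) contributes $\ll|k|^{2m}\cdot|k|^{2l}$. Adding gives~\eqref{momentsumI}.

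For~\eqref{momentsumII} the same manipulation, now tracking the pairs $(s_{1,j},s_{2,j})$, produces
\[
  \sum_{\bfv\in k^{2l}}|\Sigma_{II}^{(d)}(\bfv;k)|^{2m}=\sum_{(\uple{r},\uple{s}_1,\uple{s}_2)\in k^{2m}\times Y_d(k)^{2m}}|A'(\uple{r},\uple{s}_1,\uple{s}_2)|^{2l},
\]
where $A'$ is the sum over~$k$ of the trace function of the sheaf $\mcF_{\uple{r},\uple{s}_1,c}\otimes\mcF_{\uple{r},\uple{s}_2,c}^{\vee}$ of Proposition~\ref{pr-diagonal2}, and the constraint $s_{1,j}^d\neq s_{2,j}^d$ built into $\Sigma_{II}^{(d)}$ is exactly the condition $(\uple{s}_1,\uple{s}_2)\in Y_d(k)^{2m}$ (this is where the specific value of~$d$, the order of the group~$T$ of Proposition~\ref{pr-good-gkrlemma}, enters). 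Splitting according to membership in $\Delta'_{m,c}(\mcF;k)$ and using Theorem~\ref{th-rh} off the diagonal, the bound $|\Delta'_{m,c}(\mcF;k)|\ll|k|^{3m}$ on it, and the trivial count $|k|^{6m}$ for all triples, gives $\ll|k|^{6m}\cdot|k|^{l}+|k|^{3m}\cdot|k|^{2l}$, i.e.~\eqref{momentsumII}.

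The only points that need care — none of them a real difficulty — are: keeping track of which copies of~$K$ are conjugated so that the coordinate-sums genuinely coincide (respectively with their conjugates); justifying that $\bfK_c$ is the trace function of the stated tensor-product sheaf, which relies on $\overline{K}=t_{\mcF^{\vee}}$ and hence on the middle-extension hypothesis; and observing that Propositions~\ref{pr-diagonal1}, \ref{pr-diagonal2} and Theorem~\ref{th-rh} apply verbatim over every finite extension $k/\Ff_q$, since the subgroup~$T$ of Proposition~\ref{pr-good-gkrlemma} and all the relevant complexity and degree bounds are unchanged by extension of the base field and the characteristic~$q$ is fixed. The substance of the argument lies entirely in those already-established diagonal estimates; the present proposition is just the correct packaging of a moment expansion, so I would not expect any serious obstacle.
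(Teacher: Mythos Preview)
Your proposal is correct and follows essentially the same approach as the paper: the paper packages the expansion-and-exchange step as a small abstract lemma (Lemma~\ref{lm-exch}) yielding the identities~\eqref{SImoment} and~\eqref{SIImoment}, then invokes Propositions~\ref{pr-diagonal1} and~\ref{pr-diagonal2} together with the Riemann Hypothesis exactly as you do. Your $A(\uple{r},\uple{s})$ is precisely the paper's $\sum_{v\in k}\bfK'_c(\bfr,\bfs,v;k)$, and your careful remarks about the middle-extension hypothesis and uniformity over extensions are on target.
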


The first step of the proof is to express these moments in a different
way in terms of one variable sums. We use for this purpose a simple
lemma.

\begin{lemma}\label{lm-exch}
  Let~$X$ and~$Y$ be finite sets and $\alpha\colon X\times Y\to\Cc$ a
  function on~$X\times Y$.  For integers~$l\geq 1$ and~$m\geq 1$, the
  equality
  \[
    \sum_{\bfx\in X^{2l}}\Bigl|\sum_{y\in
      Y}\beta(\bfx,y)\Bigr|^{2m}= \sum_{\bfy\in
      Y^{2m}}\Bigl|\sum_{x\in X}\gamma(x,\bfy)\Bigr|^{2l}
  \]
  holds, where
  \begin{gather*}
    \beta(\bfx,y)=\prod_{i=1}^{l}\alpha(x_i,y)\overline{\alpha(x_{i+l},y)},
    \\
    \gamma(x,\bfy)=\prod_{j=1}^{m}\alpha(x,y_j)\overline{\alpha(x,y_{j+l})}.
  \end{gather*}
\end{lemma}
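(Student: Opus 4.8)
The plan is to prove the identity by expanding both sides into one and the same multiple sum indexed by $X^{2l}\times Y^{2m}$, so that the equality becomes a termwise tautology after regrouping a product. Concretely, first I would expand the $2m$-th power on the left using $|z|^{2m}=z^{m}\overline{z}^{m}$:
\[
  \Bigl|\sum_{y\in Y}\beta(\bfx,y)\Bigr|^{2m}
  =\sum_{\bfy\in Y^{2m}}\ \prod_{j=1}^{m}\beta(\bfx,y_j)\,\overline{\beta(\bfx,y_{j+m})},
\]
and then, since all the sets involved are finite, interchange the sums over $\bfx\in X^{2l}$ and $\bfy\in Y^{2m}$ to get
\[
  \sum_{\bfx\in X^{2l}}\Bigl|\sum_{y\in Y}\beta(\bfx,y)\Bigr|^{2m}
  =\sum_{\bfy\in Y^{2m}}\ \sum_{\bfx\in X^{2l}}\ \prod_{j=1}^{m}\beta(\bfx,y_j)\,\overline{\beta(\bfx,y_{j+m})}.
\]

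The second step is the only one requiring any care: I would substitute $\beta(\bfx,y)=\prod_{i=1}^{l}\alpha(x_i,y)\overline{\alpha(x_{i+l},y)}$ into the inner product and regroup the resulting product over the index set $\{1,\dots,l\}\times\{1,\dots,m\}$ according to the variables $x_1,\dots,x_{2l}$ rather than $y_1,\dots,y_{2m}$. Reading $\gamma(x,\bfy)=\prod_{j=1}^{m}\alpha(x,y_j)\overline{\alpha(x,y_{j+m})}$ (the natural reading, consistent with $\bfy\in Y^{2m}$), the factor of the big product attached to $x_i$ for $i\leq l$ is exactly $\gamma(x_i,\bfy)$, while the factor attached to $x_{i+l}$ is $\overline{\gamma(x_{i+l},\bfy)}$; hence
\[
  \prod_{j=1}^{m}\beta(\bfx,y_j)\,\overline{\beta(\bfx,y_{j+m})}
  =\prod_{i=1}^{l}\gamma(x_i,\bfy)\,\overline{\gamma(x_{i+l},\bfy)}.
\]
Finally, the sum over $\bfx\in X^{2l}$ of the right-hand side factors as a product of $2l$ independent one-variable sums, giving $\bigl(\sum_{x\in X}\gamma(x,\bfy)\bigr)^{l}\,\overline{\bigl(\sum_{x\in X}\gamma(x,\bfy)\bigr)^{l}}=\bigl|\sum_{x\in X}\gamma(x,\bfy)\bigr|^{2l}$, and summing over $\bfy\in Y^{2m}$ produces exactly the right-hand side of the asserted identity.

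There is essentially no genuine obstacle beyond careful bookkeeping of the double product in the regrouping step; a useful sanity check is that the claimed identity is manifestly symmetric under interchanging $(X,l,\bfx,\beta)$ with $(Y,m,\bfy,\gamma)$, which is precisely the symmetry being exploited. No analytic input is needed, since the finiteness of $X$ and $Y$ makes all rearrangements of sums and products legitimate.
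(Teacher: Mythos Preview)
Your proof is correct and follows the same approach as the paper: the paper's proof is the single sentence ``Opening out fully either side of the equality, we see that the resulting sums over~$\bfx\in X^{2l}$ and~$\bfy\in Y^{2m}$ have exactly the same terms,'' and you have simply written out this expansion and regrouping in detail. You also correctly noticed (and silently fixed) the typo in the statement, where $y_{j+l}$ should read $y_{j+m}$.
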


\begin{proof}
  Opening out fully either side of the equality, we see that the
  resulting sums over~$\bfx\in X^{2l}$ and~$\bfy\in Y^{2m}$ have exactly
  the same terms.
\end{proof}

Applied with suitable choices, this lemma implies the formula
\begin{equation}\label{SImoment}
  \sum_{ \bfv \in k^{2l} } \abs{\Sigma_{I}(\bfv;k) }^{2m}=
  \sum_{(\bfr, \bfs) \in (k\times \kt)^{2m} }  \Bigl| \sum_{v \in
    k}\bfK'_c(\bfr,\bfs,v;k)\Bigr|^{2l}
\end{equation}
where
\[
  \bfK'_c(\bfr,\bfs,v;k)=\prod_{j=1}^{m} K ( s_j (v+r_j)^c; k)\overline{
    K(s_{j+m}(v+r_{j+m})^c;k)} .
\]

  



Similarly, defining
\begin{equation*}
  Y_d =\{(s_1,s_2)\in\Gm\times \Gm\,\mid\, s^d_1\not=s^d_2\},
\end{equation*}
we obtain the identity
\begin{equation}\label{SIImoment}
  \sum_{ \bfv \in k^{2l} }  |\Sigma_{II}^{(d)}(\bfv;k) |^{2m}
  = \sum_{ \bfr \in k^{2m}} \sum_{ (\bfs_1,\bfs_2) \in Y_d(k)^{2m} }
  \Bigl| \sum_{v
    \in k} \bfK'_c(\bfr,\bfs_1,v;k)\ov{ \bfK'_c(\bfr,\bfs_2,v;k)}
  \Bigr|^{2l}.
\end{equation}

(Note that in this formula, the fact that
$(\bfs_1,\bfs_2) \in Y_d(k)^{2m}$ means that
\[
  \bfs_1=(s_{1,j})_{1\leq j\leq 2m},\quad\quad
  \bfs_2=(s_{2,j})_{1\leq j\leq 2m},
\]
with $s_{1,j}^d\not=s_{2,j}^d$ for all~$j$.)

Thus we are reduced to estimating \emph{one variable} sums, namely
\[
  \sum_{v \in k}\bfK'_c(\bfs,\bfr,v;k)\quad\text{ and }\quad \sum_{v \in
    k} \bfK'_c(\bfs_1,\bfr,v;k)\ov{ \bfK'_c(\bfs_2,\bfr,v;k)}.
\]

These are expressions of ``sums of products'' type, hence we can use the
Goursat--Kolchin--Ribet machinery, and in particular the new results of
Section~\ref{sec:goursat}, to approach them.

\begin{proposition}\label{pr-one-variable}
  With notation and assumptions as above, the following hold:
  \begin{enumth}
  \item For all finite extensions $k$ of~$\Ff_q$, the bound
    \begin{equation}\label{K'typeIbound}
      \sum_{v \in k}\bfK'_c(\bfs,\bfr,v;k)\ll \abs{k}^{1/2}
    \end{equation}
    holds for all but $O ( \abs{k}^{2m})$ values of $(\bfr,\bfs)\in
    k^{2m}\times (k^{\times})^{2m}$.
  \item For all finite extensions $k$ of~$\Ff_q$, the bound
    \begin{equation}\label{K'typeIIbound}
      \sum_{v \in k} \bfK'_c(\bfs_1,\bfr,v;k)
      \ov{ \bfK'_c(\bfs_2,\bfr,v;k)}\ll
      \abs{k}^{1/2}
    \end{equation}	
    holds for all but $O(\abs{k}^{3m})$ values of
    $(\bfr, \bfs_1,\bfs_2)\in k^{2m}\times Y_d(k)^{2m}$.
  \end{enumth}

  In both cases, the implied constants depend on $m$, $c$ and the
  complexity of~$\mcF$, and depends also on~$d$ in the second estimate.
\end{proposition}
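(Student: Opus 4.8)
The plan is to recognise each of the two one-variable sums as a sum of trace functions of one of the sheaves introduced in \eqref{eq-sheaf-mcf-rsc}, to control the number of parameters for which the relevant top cohomology group fails to vanish by the diagonal estimates of this section, and then to invoke the Riemann Hypothesis on the complement. Since in this section $\mcF$ is $\lgood$ and a middle-extension sheaf, the sheaf $\mcF^{\vee}$ is again of the same type, and by Gabber's theorem recalled in Section~\ref{secladicgeneral} one has $t_{\mcF^{\vee}}(x;k)=\overline{K(x;k)}$ for every finite extension $k/\Ff_q$ and every $x\in k$. Pulling back by the finite morphism $\gamma_{r,s,c}\colon v\mapsto s(v+r)^c$ preserves the property of being mixed of weights~$\le 0$, and so does the tensor product; moreover $\mcF_{r,s,c}^{\vee}=\gamma_{r,s,c}^{*}(\mcF^{\vee})$ has trace function $v\mapsto\overline{K(s(v+r)^c;k)}$ over~$k$. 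Consequently, for $(\bfr,\bfs)\in k^{2m}\times(k^{\times})^{2m}$, the sheaf $\mcF_{\bfr,\bfs,c}=\bigotimes_{1\le j\le m}\mcF_{r_j,s_j,c}\otimes\mcF_{r_{j+m},s_{j+m},c}^{\vee}$ of \eqref{eq-sheaf-mcf-rsc} is mixed of weights~$\le 0$ and its trace function over~$k$ is exactly $v\mapsto\bfK'_c(\bfr,\bfs,v;k)$; similarly, for $(\bfr,\bfs_1,\bfs_2)\in k^{2m}\times Y_d(k)^{2m}$, the sheaf $\mcF_{\bfr,\bfs_1,c}\otimes\mcF_{\bfr,\bfs_2,c}^{\vee}$ is mixed of weights~$\le 0$ with trace function $v\mapsto\bfK'_c(\bfr,\bfs_1,v;k)\overline{\bfK'_c(\bfr,\bfs_2,v;k)}$.

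For part~(1), I would first note that by Quantitative Sheaf Theory (the complexity bounds for pullbacks, tensor products and duals, e.g.\ \cite[Th.\,6.8,\,Prop.\,6.14]{qst}, together with the fact that each morphism $\gamma_{r,s,c}$ has complexity bounded only in terms of~$c$) the complexity of $\mcF_{\bfr,\bfs,c}$ over any finite extension $k/\Ff_q$ is bounded solely in terms of $m$, $c$ and the complexity of~$\mcF$, uniformly in $(\bfr,\bfs)$. Whenever $H^2_c(\Aa^1_{\bar{k}},\mcF_{\bfr,\bfs,c})=0$, Theorem~\ref{th-rh} then gives
\[
  \sum_{v\in k}\bfK'_c(\bfr,\bfs,v;k)=\sum_{v\in k}t_{\mcF_{\bfr,\bfs,c}}(v;k)\ll|k|^{1/2},
\]
which is \eqref{K'typeIbound}, with an implied constant depending only on $m$, $c$ and the complexity of~$\mcF$. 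It then remains to bound the number of $(\bfr,\bfs)$ for which this cohomology group is nonzero; but this set is precisely $\Delta_{m,c}(\mcF;k)$ of Proposition~\ref{pr-diagonal1}, applied to $\mcF$ base-changed to~$k$. The base change leaves unchanged the geometric monodromy group, the core subgroup~$N$, the auxiliary cyclic group~$T$ from Proposition~\ref{pr-good-gkrlemma} and (since the characteristic is still~$q$ and the complexity does not increase) the hypothesis that $q$ be large enough; hence $|\Delta_{m,c}(\mcF;k)|\ll|k|^{2m}$ with the constant depending only on $m$, $c$ and the complexity of~$\mcF$, which is the required count.

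Part~(2) is handled in exactly the same way, with $\mcF_{\bfr,\bfs,c}$ replaced by $\mcF_{\bfr,\bfs_1,c}\otimes\mcF_{\bfr,\bfs_2,c}^{\vee}$ (whose complexity is now bounded in terms of $m$, $c$, $d$ and the complexity of~$\mcF$): Theorem~\ref{th-rh} yields \eqref{K'typeIIbound} whenever $H^2_c(\Aa^1_{\bar{k}},\mcF_{\bfr,\bfs_1,c}\otimes\mcF_{\bfr,\bfs_2,c}^{\vee})=0$, and the set of exceptional parameters $(\bfr,\bfs_1,\bfs_2)\in k^{2m}\times Y_d(k)^{2m}$ is exactly $\Delta'_{m,c}(\mcF;k)$ of Proposition~\ref{pr-diagonal2}, of size $\ll|k|^{3m}$. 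Thus the whole argument is just a packaging of the diagonal counts of this section — themselves the output of the Goursat--Kolchin--Ribet analysis of Section~\ref{sec:goursat} — with Deligne's Riemann Hypothesis, so I do not expect a genuine obstacle here; the point that requires care is bookkeeping, namely that all complexity bounds, hence the implied constants and the value $|T|=d$, are uniform over the varying extensions~$k$, which follows from Quantitative Sheaf Theory together with the fact that the monodromy data is a geometric invariant. (If the middle-extension hypothesis were relaxed one would first replace $\mcF$ by the extension by zero of its restriction to its lisse locus, which changes each sum by $O(1)$ and reduces to the case treated here, exactly as in the proof of Corollary~\ref{cor-sop}.)
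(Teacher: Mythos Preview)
Your proposal is correct and follows essentially the same route as the paper: identify each one-variable sum as the trace function of the sheaf $\mcF_{\bfr,\bfs,c}$ (resp.\ $\mcF_{\bfr,\bfs_1,c}\otimes\mcF_{\bfr,\bfs_2,c}^{\vee}$), invoke Theorem~\ref{th-rh} when the top compactly supported cohomology vanishes, and bound the exceptional set via Proposition~\ref{pr-diagonal1} (resp.\ Proposition~\ref{pr-diagonal2}). One small inaccuracy: the complexity of $\mcF_{\bfr,\bfs_1,c}\otimes\mcF_{\bfr,\bfs_2,c}^{\vee}$ does not actually depend on~$d$; the dependence on~$d$ in the implied constant of part~(2) enters only through the diagonal count in Proposition~\ref{pr-diagonal2}.
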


\begin{proof}
  (1) The case of the estimate~(\ref{K'typeIbound}) follows immediately
  from Proposition~\ref{pr-diagonal1} in view of the
  Grothendieck--Lefschetz trace formula and the Riemann Hypothesis
  (Theorem~\ref{th-rh}), since the function
  $v\mapsto K'_c(\uple{r},\uple{s},v;k)$ is the trace function over~$k$
  of the sheaf $\mcF_{\uple{r},\uple{s},c}$ of~(\ref{eq-sheaf-mcf-rsc});
  note that we use here our assumption that $\mcF$ is a middle-extension
  sheaf to ensure that the trace function of~$\mcF^{\vee}$ is the
  complex conjugate of that of~$\mcF$.

  (2) Similarly, the sum
  \[
    \sum_{v \in k} \bfK'_c(\bfs_1,\bfr,v;k)\ov{ \bfK'_c(\bfs_2,\bfr,v;k)}.
  \]
  is the sum of the trace function of the sheaf
  \[
    \mcF_{\bfs_1,\bfr,c}\otimes \mcF_{\bfs_2,\bfr,c}^{\vee},
  \]
  and hence the estimate~(\ref{K'typeIIbound}) follows similarly from
  Proposition~\ref{pr-diagonal2} using the Grothendieck--Lefschetz trace
  formula and the Riemann Hypothesis (Theorem~\ref{th-rh}).
\end{proof}

We can now complete the proof of Proposition~\ref{pr-moment1}.  Recall
that the trivial bound
\[
  \bfK'_c(\bfs,\bfr,v;k)\ll 1
\]
also holds, where the implied constant depends on~$m$ and the complexity
of~$\mcF$ (simply because the trace function of~$\mcF$, which is mixed
of weights~$\leq 0$, is bounded in terms of the complexity of~$\mcF$
only). Thus the one variable sums in Proposition~\ref{pr-one-variable}
are always $\ll |k|$.

For~(\ref{momentsumI}), using~(\ref{SImoment}), we obtain
\begin{align*}
  \sum_{ \bfv \in k^{2l}} |\Sigma_{I}(\bfv;k) |^{2m}& = \sum_{(\bfr,
    \bfs) \in (k\times \kt)^{2m} } \Bigl| \sum_{v \in
    k}\bfK'_c(\bfs,\bfr,v;k)\Bigr|^{2l}
  \\
  &\ll |k|^{2m+2l}+|k|^{4m+l},
\end{align*}
(the first term accounting for the ``diagonal cases'' and the second for
the generic bound).  Similarly for~(\ref{momentsumII}),
using~(\ref{SIImoment}), we obtain
\begin{align*}
  \sum_{ \bfv \in k^{2l} } |\Sigma_{II}^{(d)}(\bfv;k) |^{2m} &= \sum_{
    \bfr \in k^{2m}} \sum_{ (\bfs_1,\bfs_2) \in Y_d(k)^{2m}} \Bigl|
  \sum_{v \in k} \bfK'_c(\bfs_1,\bfr,v;k)\ov{ \bfK'_c(\bfs_2,\bfr,v;k)}
  \Bigr|^{2l}
  \\
  &\ll |k|^{3m+2l}+|k|^{6m+l},
\end{align*}
as claimed, with implied constants depending only on $m$ and the
complexity of~$\mcF$.

\section{Stratification for \good sheaves and conclusion of the proof}
\label{secproofmainthm}

We will now complete the proof of Theorem~\ref{thmType12intro}. We first
remark that it is enough to prove it when~$K$ is the trace function of a
sheaf~$\mcF$ which is a \lgood middle-extension sheaf
on~$\Aa^1_{\Ff_q}$. This follows from the simple lemma below applied to
$\mcF$ and $\mcF^*=j_*j^*\mcF$ for some open immersion
$j\colon U\to \Aa^1$ of a dense open subset on which~$\mcF$ is lisse.

\begin{lemma}
  Let~$\mcF_1$ and~$\mcF_2$ be \lgood sheaves on~$\Aa^1_{\Ff_q}$ whose
  trace functions $K_1$ and~$K_2$ coincide outside of a
  set~$S\subset \Ff_q$. Let~$b$ and~$c$ be positive integers coprime
  to~$q$. We then have
  \[
    \sum_{m}\sum_n\alpha_m\beta_nK_1(m^bn^c)-
    \sum_{m}\sum_n\alpha_m\beta_nK_2(m^bn^c) \ll
    \|\alpha\|_2\,\|\beta\|_2,
  \]
  where the implied constant depends only on~$b$, $c$, the complexities
  of~$\mcF_1$ and~$\mcF_2$, and the size of~$S$.
\end{lemma}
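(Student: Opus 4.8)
The plan is to estimate the difference directly, as a bilinear form whose kernel is supported on $S$. Set $g=K_1-K_2$, a $q$-periodic function on $\Zz$ vanishing off $S$. Since $\mcF_1$ and $\mcF_2$ are mixed of weights $\le 0$, the value of either trace function at a point is a sum of Frobenius eigenvalues of absolute value $\le 1$, in number at most the stalk dimension, so $|K_i(x)|$ is bounded in terms of the complexity of $\mcF_i$ (a basic property of the complexity, see~\cite[Th.\,6.8]{qst}); hence $\|g\|_\infty\ll 1$, the implied constant depending only on $c(\mcF_1)$ and $c(\mcF_2)$. The difference in the statement equals
\[
\Delta=\sum_{m\sim M}\sum_{n\sim N}\alpha_m\beta_n\,g(m^bn^c)
=\sum_{s\in S}g(s)\sum_{\substack{m\sim M,\ n\sim N\\ m^bn^c\equiv s\mods q}}\alpha_m\beta_n ,
\]
so it is enough to show that
\[
T_s:=\sum_{\substack{m\sim M,\ n\sim N\\ m^bn^c\equiv s\mods q}}|\alpha_m|\,|\beta_n|
\ \ll\ \|\bfalpha\|_2\,\|\bfbeta\|_2
\]
for each fixed $s\in\Ff_q$, with a constant depending only on $b$ and $c$; summing over the $|S|$ values of $s$ then gives $|\Delta|\ll|S|\,\|g\|_\infty\,\|\bfalpha\|_2\|\bfbeta\|_2$, which is the claim.

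To bound $T_s$ I would apply the Cauchy--Schwarz inequality in the two variables separately. Put $\rho_s(m)=\#\{n\sim N:m^bn^c\equiv s\mods q\}$ and $\sigma_s(n)=\#\{m\sim M:m^bn^c\equiv s\mods q\}$, so that
\[
T_s\le\Bigl(\sum_{m\sim M}|\alpha_m|^2\rho_s(m)\Bigr)^{1/2}
\Bigl(\sum_{n\sim N}|\beta_n|^2\sigma_s(n)\Bigr)^{1/2}.
\]
We may assume throughout that $q\nmid mn$ for all $m\sim M$ and $n\sim N$ (this is automatic when $M,N<q$, which covers the ranges of interest; the at most one value of $m$, resp.\ of $n$, that is a multiple of $q$ only occurs in boundary cases and is harmless). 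For such $m$ and $s\neq0$ the congruence $n^c\equiv sm^{-b}\mods q$ has at most $\gcd(c,q-1)\le c$ solutions modulo $q$, and since $N\le q$ each residue class contains at most one $n\sim N$, so $\rho_s(m)\le c$; and for $s=0$ there is no solution with $q\nmid mn$. Symmetrically $\sigma_s(n)\le b$. Hence $T_s\le(bc)^{1/2}\|\bfalpha\|_2\|\bfbeta\|_2$, as required.

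There is no real obstacle here: the argument is elementary. The two inputs are the uniform bound $|K_i(x)|\ll1$, a standard fact from quantitative sheaf theory, and the fact that the fibres of $(m,n)\mapsto m^bn^c\bmod q$ are controlled in each variable separately by the number of $b$-th (resp.\ $c$-th) roots in $\Ff_q^\times$. The final constant depends linearly on $|S|$ (from summing over the support of $g$), on the complexities of $\mcF_1$ and $\mcF_2$ (through $\|g\|_\infty$), and on $b$ and $c$ only through the $\gcd$ bounds.
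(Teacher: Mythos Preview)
Your proof is correct and follows essentially the same approach as the paper's: bound the difference by $\|g\|_\infty\sum_{s\in S}T_s$, then control each $T_s$ by Cauchy--Schwarz using that the fibres of $(m,n)\mapsto m^bn^c$ have size bounded in each variable. The only cosmetic difference is that the paper parametrises $n$ explicitly as $\zeta\sqrt[c]{s\overline{m}^b}$ over $c$-th roots of unity $\zeta$ and applies Cauchy--Schwarz in $m$, whereas you apply Cauchy--Schwarz directly over the pairs $(m,n)$ on the graph of the relation and bound both $\rho_s(m)$ and $\sigma_s(n)$; the two arguments are equivalent, and both implicitly use $M,N\le q$ so that each residue class contains at most one integer in the relevant interval.
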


\begin{proof}
  The difference is bounded by
  \[
    (\|K_1\|_{\infty}+\|K_2\|_{\infty}) \sum_{s\in
      S}\sum_{\substack{m\sim M,n\sim N\\m^bn^c=s}}
    |\alpha_m|\,|\beta_n|.
  \]

  We consider the sum over~$m$ and~$n$ for each~$s\in S$ separately.
  For each element~$x$ of~$\Ff_q$ which is a $c$-th power, we fix
  arbitrarily a $c$-th root~$\sqrt[c]{x}$. Then this sum
  is equal to
  \[
    \sum_{\zeta^c=1} \sum_{\substack{ m\sim M\\ s\overline{m}^b\in
        (\Fqt)^{c}}} |\alpha_m|\,|\beta_{\zeta
      \sqrt[c]{s\overline{m}^b}}| \leq \sum_{\zeta^c=1}
    \Bigl(\sum_{\substack{ m\sim M\\ s\overline{m}^b\in (\Fqt)^{c}}}
    |\alpha_m|^2\Bigr)^{1/2}
    \Bigl(\sum_{\substack{ m\sim M\\
        s\overline{m}^b\in (\Fqt)^{c}}}
    |\beta_{\zeta\sqrt[c]{s\overline{m}^b}}|^2\Bigr)^{1/2},
  \]
  where we extend~$\beta$ by zero outside of the values $n\sim N$.
  
  By positivity, the first sum over~$m$ is $\leq \|\alpha\|_2^2$. For
  the second sum, we note that for each~$n\sim N$, there are at most~$b$
  values of~$m$ such that $\zeta\sqrt[c]{s\overline{m}^b}=n$, hence this
  sum is $\leq b\|\beta\|_2^2$. The result follows.
\end{proof}

We now check that the stratification statement of
Proposition~\ref{pr-strat} holds for the trace function of a \lgood
middle-extension sheaf.

\begin{proposition}\label{prop-Sigmagallant}
  Let~$q$ be a prime number and let~$\mcF$ be a \lgood middle-extension
  sheaf on~$\Aa^1_{\Ff_q}$. Let~$K\colon \Ff_q\to \Cc$ be the trace
  function of~$\mcF$. Let $d\geq 1$ be the order of the finite group~$T$
  of Proposition~\textup{\ref{pr-good-gkrlemma}} applied to~$\mcF$.

  Let~$l\geq 1$ be an integer, and define $\Sigma_I(\uple{v})$ and
  $\Sigma_{II}^{(d)}(\uple{v})$ for $\uple{v}\in \Ff_q^{2l}$ as in
  Section~\textup{\ref{section8}}.

  There exists an integer~$C\geq 1$, depending only on~$l$, $c$ and the
  complexity of~$\mcF$ such that the following properties hold if $q$ is
  large enough:
  \begin{enumth}
  \item There exist algebraic varieties $\mcV_1$ and $\mcV_1^{\Delta}$
    over $\Ff_q$ such that
    \begin{gather*}
      \Aa^{2l}_{\Fq}\supset \mcV_1\supset \mcV_1^\Delta
      \\
      \dim (\mcV_1) \leq l+\Bigl\lceil\frac{l}{2}\Bigr\rceil,\quad \dim
      (\mcV^\Delta_1) \leq l,
      \\
      \max(\deg(\mcV_1),\deg(\mcV_1^{\Delta}))\leq C
    \end{gather*}
    and 
    \[
      |\Sigma_{I}(\bfv)|\leq
      \begin{cases}
        Cq&\text{ for }\bfv\not\in \mcV_1(\Fq),
        \\
        Cq^{3/2}&\text{ for }\bfv\not\in \mcV^\Delta_1(\Fq),
        \\
        Cq^2&\text{ for } \bfv\in \mcV_1^{\Delta}(\Ff_q).
      \end{cases}
    \]

  \item There exist algebraic varieties $\mcV_2$ and $\mcV_2^{\Delta}$
    over $\Ff_q$ such that
    \begin{gather*}
      \Aa^{2l}_{\Fq}\supset \mcV_2\supset \mcV_2^\Delta
      \\
      \dim (\mcV_{2}) \leq l + 2 \Bigl\lceil
      \frac{l}{3}\Bigr\rceil,\quad \dim (\mcV^\Delta_2) \leq l,
      \\
      \max(\deg(\mcV_2),\deg(\mcV_2^{\Delta}))\leq C,
    \end{gather*}
    and
    \[
      |\Sigma_{II}^{(d)}(\bfv)|\leq
      \begin{cases}
        Cq^{3/2}&\text{ for }\bfv\not\in \mcV_2(\Fq),
        \\
        Cq^{2}&\text{ for }\bfv\not\in \mcV^\Delta_2(\Fq),
        \\
        Cq^3&\text{ for  }\bfv\in\mcV_2^{\Delta}(\Ff_q).
      \end{cases}
    \]
  \end{enumth}
\end{proposition}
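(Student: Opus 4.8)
The plan is to produce the ``outer'' varieties $\mcV_1,\mcV_2$ by applying Xu's stratification theorem (Theorem~\ref{XuStep}) to two auxiliary $\ell$-adic complexes on $\Aa^{2l}_{\Ff_q}$ whose trace functions are $\Sigma_I$ and $\Sigma_{II}^{(d)}$, feeding it the moment bounds of Proposition~\ref{pr-moment1}. For $\Sigma_I$ a single application of Xu's theorem will give \emph{both} $\mcV_1$ and $\mcV_1^\Delta$; for $\Sigma_{II}^{(d)}$, the ``inner'' variety $\mcV_2^\Delta$, together with the bound $q^2$ outside it, will instead come from Proposition~\ref{pr-diagonal3}, which controls --- uniformly in $\bfv$ lying outside a variety of dimension $\le l$ --- the number of $r$ for which the one-variable sum $\sum_s\bfK_c(r,s,\bfv;k)$ fails to exhibit square-root cancellation. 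Throughout, $q$ is taken large enough for Propositions~\ref{pr-good-gkrlemma} and~\ref{pr-diagonal3} to apply.

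For part (1): I would let $\mcK=\bigotimes_{i=1}^{l}[(r,s,\bfv)\mapsto s(r+v_i)^c]^*\mcF\otimes\bigotimes_{i=1}^{l}[(r,s,\bfv)\mapsto s(r+v_{i+l})^c]^*\mcF^\vee$ on $\Aa^1\times\Gm\times\Aa^{2l}$ (its trace function is $\bfK_c(r,s,\bfv;k)$, using that $\mcF$ is a middle-extension sheaf so $t_{\mcF^\vee}=\ov{t_{\mcF}}$), and set $M=R\pi_!\mcK$ with $\pi$ the projection onto $\Aa^{2l}$, so that $t_M(\bfv;k_n)=\Sigma_I(\bfv;k_n)$ for every finite $k_n/\Ff_q$ by the Grothendieck--Lefschetz trace formula. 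Then $M$ is mixed of integral weights, and Quantitative Sheaf Theory (bounds for the complexity under pullback by bounded-degree morphisms, tensor product and $R\pi_!$, see \cite{qst}) shows $c(M)$ is bounded in terms of $l$, $c$ and the complexity of $\mcF$. Applying Theorem~\ref{XuStep} with $m=\lceil l/2\rceil$ and $A=4m+l$ --- legitimate by~\eqref{momentsumI}, since $2m\ge l$ forces $4m+l\ge 2m+2l$ --- produces closed subschemes $X^{(w)}\subset\Aa^{2l}_{\Ff_q}$, each a union of subvarieties of bounded degree, with $\dim X^{(w)}\le 4m+l-mw$ and $|\Sigma_I(\bfv)|\ll|k|^{w/2}$ for $\bfv\notin X^{(w+1)}(k)$. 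I would then take $\mcV_1=X^{(3)}$ and $\mcV_1^\Delta=X^{(4)}$: one has $\mcV_1\supset\mcV_1^\Delta$, $\dim\mcV_1\le m+l=l+\lceil l/2\rceil$, $\dim\mcV_1^\Delta\le l$, the bound $q$ outside $\mcV_1$, the bound $q^{3/2}$ outside $\mcV_1^\Delta$, and the trivial bound $|\Sigma_I(\bfv)|\le q(q-1)\|K\|_\infty^{2l}\ll q^2$ everywhere, so taking $C$ to be the largest constant or degree bound that occurs gives~(1).

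For part (2) I would argue in parallel. For the outer variety, set $M'=R\pi'_!\mcK'$, where $\mcK'$ has trace function $\bfK_c(r,s_1,\bfv;k)\ov{\bfK_c(r,s_2,\bfv;k)}$ on the open set $\{s_1^d\ne s_2^d\}$ of $\Aa^1\times\Gm\times\Gm\times\Aa^{2l}$, extended by zero, and $\pi'$ is the projection onto $\Aa^{2l}$; then $t_{M'}(\bfv;k_n)=\Sigma_{II}^{(d)}(\bfv;k_n)$, $M'$ is mixed of integral weights, and $c(M')$ is bounded in terms of $l,c,d,c(\mcF)$, hence in terms of $l,c,c(\mcF)$ since $d$ is bounded in terms of $c,c(\mcF)$ by Proposition~\ref{pr-good-gkrlemma}. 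Theorem~\ref{XuStep} with $m=\lceil l/3\rceil$, $A=6m+l$ (legitimate by~\eqref{momentsumII}, as $3m\ge l$) and $w=3$ then gives $X^{(4)}$ of dimension $\le 2m+l=l+2\lceil l/3\rceil$ and bounded degree, outside which $|\Sigma_{II}^{(d)}(\bfv)|\ll q^{3/2}$. For the inner variety, I would apply Proposition~\ref{pr-diagonal3} to $\mcF$ (with the given $c$ and $l$): it furnishes $\mcV\subset\Aa^{2l}$ of dimension $\le l$, bounded degree, and a constant $C_0$ such that for $\bfv\notin\mcV(k)$ at most $C_0$ values of $r\in k$, all non-zero, satisfy $H^2_c(\Aa^1_{\bar k},\bigotimes_i[s\mapsto(r+v_i)^cs]^*\mcF\otimes\bigotimes_i[s\mapsto(r+v_{i+l})^cs]^*\mcF^\vee)\ne 0$. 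Now write $\Sigma_{II}^{(d)}(\bfv)=\sum_{r\in k}T(r,\bfv)$ with $T(r,\bfv)=\sum_{s_1^d\ne s_2^d}\bfK_c(r,s_1,\bfv)\ov{\bfK_c(r,s_2,\bfv)}$; then $|T(r,\bfv)|\le|\sum_{s\in k^\times}\bfK_c(r,s,\bfv)|^2+O(q)$, the $O(q)$ being the ``$s_1^d=s_2^d$'' contribution. For $\bfv\notin\mcV(k)$, the at most $l$ values of $r$ with some $r+v_i=0$ and the at most $C_0$ exceptional values each contribute $\ll q^2$, while for every other $r$ the Riemann Hypothesis (Theorem~\ref{th-rh}) gives $\sum_{s\in k^\times}\bfK_c(r,s,\bfv)\ll q^{1/2}$ and hence $|T(r,\bfv)|\ll q$; summing yields $|\Sigma_{II}^{(d)}(\bfv)|\ll q^2$. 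Taking $\mcV_2=X^{(4)}\cup\mcV$, $\mcV_2^\Delta=\mcV$, and the trivial bound $|\Sigma_{II}^{(d)}(\bfv)|\le q(q-1)^2\|K\|_\infty^{4l}\ll q^3$ on $\mcV_2^\Delta$, finishes~(2).

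All of the essential input --- the moment bounds of Proposition~\ref{pr-moment1} (which rest on the one-variable estimates and the robust Goursat--Kolchin--Ribet results of Section~\ref{sec:goursat}), the diagonal count of Proposition~\ref{pr-diagonal3}, and Xu's theorem itself --- is already available, so the remaining work is essentially organizational: choosing the moments $m=\lceil l/2\rceil$ and $m=\lceil l/3\rceil$ so that the bookkeeping quantities $\lfloor A\rfloor-mw$ land exactly on $l$, $l+\lceil l/2\rceil$ and $l+2\lceil l/3\rceil$, verifying via Quantitative Sheaf Theory that $c(M)$ and $c(M')$ are uniformly bounded, and disposing of the degenerate fibres $r=0$ and $r+v_i=0$. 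I expect this last point --- the interplay with Proposition~\ref{pr-diagonal3}, where the harmlessness of the $r=0$ fibre relies on the exceptional values of $r$ there being non-zero --- to be the only genuinely delicate step; the rest is routine.
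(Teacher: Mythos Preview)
Your proposal is correct and follows essentially the same approach as the paper: construct the complexes $M$ and $M'$ with trace functions $\Sigma_I$ and $\Sigma_{II}^{(d)}$, apply Theorem~\ref{XuStep} with $m=\lceil l/2\rceil$ (resp.\ $m=\lceil l/3\rceil$) to obtain $\mcV_1=X^{(3)}$, $\mcV_1^\Delta=X^{(4)}$ (resp.\ $\mcV_2$ from $X^{(4)}$), and in part~(2) invoke Proposition~\ref{pr-diagonal3} together with the decomposition $T(r,\bfv)=|\sum_s\bfK_c|^2+O(q)$ to get the $q^2$ bound outside a variety of dimension~$\le l$. Two cosmetic remarks: your ``at most $l$ values of $r$ with some $r+v_i=0$'' should read $2l$, and your union $\mcV_2=X^{(4)}\cup\mcV$ is a welcome explicit fix (the paper sets $\mcV_2=X^{(4)}$ without addressing the inclusion $\mcV_2\supset\mcV_2^\Delta$, which is harmless but tacit).
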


Once this Proposition is proven we can conclude the proof of
Theorem~\ref{thmType12intro}. Indeed, we already reduced to the case of
a middle-extension sheaf. Proposition \ref{prop-Sigmagallant} shows that
the trace function of a \lgood middle-extension sheaf~$\mcF$ satisfies
the conditions of Proposition~\ref{pr-strat}. In the case of Type II
sums, we then combine the resulting estimate
\[
  \sum_{\bfv\in[V,2V]^{2l}}|\Sigma_{II}^{(d)}(\bfv)|\ll
  q^{9/2+2/l}
\]
for $V=q^{3/(2l)}$ (\eqref{assumptypeI} is satisfied by
\eqref{assumptypeIIbasic}) with the bound~(\ref{Balphabetabound}) from
Proposition~\ref{propreduct}, applied with $d$ the order of the subgroup
$T$, to deduce
\begin{align*}
  \sum_{m\sim M}\sum_{n\sim N}\alpha_n \beta_nK(m^bn^c)
  &\ll q^{\eps}
  \|\bfalpha\|_2\|\bfbeta\|_2 (MN)^{\demi}\Bigl(\frac{1}{M} +\Bigl(
  \frac{1}{M^2N^2V^{2l-1}}
  \sum_{\bfv\in[V,2V]^{2l}}|\Sigma_{II}^{(d)}(\bfv)|\Bigr)^{\tfrac{1}{2l}}
  \Bigr)^{\demi}
  \\
  &\ll q^{\eps}
  \|\bfalpha\|_2\|\bfbeta\|_2 (MN)^{\demi}\Bigl(\frac{1}{M} +
  \Bigl(\frac{1}{MN}\Bigr)^{\tfrac{1}{l}}
    \Bigl(q^{\tfrac{9}{2}+\tfrac{2}{l}-\tfrac{3(2l-1)}{2l}}\Bigr)^{\tfrac{1}{2l}}
    \Bigr)^{\demi},
\end{align*}
which is~(\ref{BtypeII}). A similar
argument deduces~(\ref{BtypeI}) from~(\ref{BalphaNbound}).

\begin{proof}
  (1) Let
  $p_{12}\colon (\Aa^{1}\times \Gg_m\times \Aa^{2l})_{\Ff_q}\to
  \Aa^{2l}_{\Ff_q}$ be the projection $(r,s,\uple{v})\mapsto
  \uple{v}$. For $1\leq i\leq 2l$, let
  \[
    g_i\colon (\Aa^{1}\times \Gg_m\times \Aa^{2l})_{\Ff_q}\to \Aa^1,
  \]
  be the morphism defined by
  \[
    g_i(r,s,\uple{v})=s(r+v_i)^c.
  \]
  
  Let $M$ be the mixed complex on~$\Aa^{2l}_{\Ff_q}$ defined by
  \[
    M=Rp_{12!}\Bigl( \bigotimes_{i=1}^l g_i^*\mcF\otimes
    g_{i+l}^*\mcF^{\vee} \Bigr),
  \]
  (where~$\mcF$ and~$\mcF^{\vee}$ are viewed as complexes in
  degree~$0$).  By the Grothendieck--Lefschetz trace formula, the
  complex~$M$ satisfies
  \[
    t_M(\uple{v};k)=\Sigma_{I}(\bfv;k)
  \]
  for any finite extension~$k$ of~$\Ff_q$ and any~$\uple{v}\in
  k^{2l}$. By the Riemann Hypothesis (in the form
  of~\cite[Th.\,I]{WeilII}), the complex~$M$ is mixed of \emph{integral}
  weights~$\leq 0$ since we assumed that~$\mcF$ itself is mixed of
  integral weights~$\leq 0$.

  We apply Theorem~\ref{XuStep} to $X = \Aa^{2l}$ and to the
  complex~$M$.  We let $m=\lceil \frac{l}{2} \rceil$ and put
  $A=4\lceil \frac{l}{2} \rceil+l$. Proposition~\ref{pr-moment1} gives
  \[
    \sum_{ \bfv \in k^{2l}} \abs{\Sigma_{I}(\bfv;k) }^{2m} \ll
    \abs{k}^{2\lceil \frac{l}{2} \rceil+2l } + \abs{k}^{4\lceil
      \frac{l}{2} \rceil+l}\ll 2\abs{k}^{A}
  \]
  for any finite extension~$k$ of~$\Ff_q$. Theorem~\ref{XuStep} implies
  that there exist closed subschemes $X^{(3)}\supset X^{(4)}$ with
  \[
    \dim(X^{(3)})\leq A - 3m= \Bigl\lceil \frac{l}{2} \Bigr\rceil
    +l,\quad\quad \dim(X^{(4)})\leq l,
  \]
  such that
  \begin{align*}
    \Sigma_{I}(\bfv;k)&\ll \abs{k}^{\frac{3-1}{2}}=|k|\text{ for
    }\bfv\not \in X^{(3)}(k),
    \\
    \Sigma_{I}(\bfv;k)&\ll \abs{k}^{3/2}\text{ for }\bfv\not \in
    X^{(4)}(k).
  \end{align*}

  This gives the desired stratification with $\mcV_1^\Delta=X^{(4)}$ and
  $\mcV_1 =X^{(3)}$ .

  (2) We recall that $Y_d$ is the subvariety of~$\Gg_m\times \Gg_m$
  defined by the equation $s_1^d\not=s_2^d$.
  Let~$p_{123}\colon (\Aa^1\times Y_d\times \Aa^{2l})_{\Ff_q}\to
  \Aa^{2l}_{\Ff_q}$ be the projection
  $(r,s_1,s_2,\uple{v})\to \uple{v}$.  For $1\leq i\leq 2l$, let $g_i$
  and $h_i$ be the morphisms
  \[
    (\Aa^{1}\times Y_d\times \Aa^{2l})_{\Ff_q}\to \Aa^1,
  \]
  defined by
  \[
    g_i(r,s_1,s_2,\uple{v})=s_1(r+v_i)^c,\quad\quad
    h_i(r,s_1,s_2,\uple{v})=s_2(r+v_i)^c.
  \]

  Let $M$ be the mixed complex on~$\Aa^{2l}_{\Ff_q}$ defined by
  \[
    M=Rp_{123!}\Bigl(\Bigl( \bigotimes_{i=1}^l g_i^*\mcF\otimes
    g_{i+l}^*\mcF^{\vee} \Bigr)\otimes \Bigl(\bigotimes_{i=1}^l
    h_i^*\mcF^{\vee}\otimes h_{i+l}^*\mcF \Bigr)\Bigr).
  \]

  By the Lefschetz trace formula, the complex~$M$ satisfies
  $t_M(\uple{v};k)=\Sigma_{II}(\bfv;k)$ for all~$\uple{v}\in k^{2l}$,
  and by the Riemann Hypothesis~\cite[Th.\,I]{WeilII}, it is mixed of
  integral weights~$\leq 0$.

  We apply Theorem~\ref{XuStep} to $X = \Aa^{2l}$ and to $M$. For
  $m = \lceil \frac{l}{3} \rceil $ and
  $A = 6 \lceil \frac{l}{3} \rceil + l$, we have
  \[
    \sum_{ \bfv \in \Aa^{2l} (k) }  \abs{\Sigma_{II}(K,\bfv;k) }^{2m}
    \ll\abs{k}^{3m+2l} + \abs{k}^{6m+l} \ll \abs{k}^{A}
  \]
  for any finite extension $k/\Ff_q$ by Proposition~\ref{pr-moment1}.
  Hence we obtain closed subschemes
  \[
    X^{(3)}\supset X^{(4)}\supset X^{(5)}\supset X^{(6)}
  \]
  such that
  \begin{align*}
    \dim(X^{(w)})&\leq A - m w = (6-w) \Bigl\lceil \frac{l}{3}
    \Bigr\rceil+ l,
    \\
    \Sigma_{II}( \bfv;k)& \ll \abs{k}^{ \frac{w-1}{2}} \quad \text{ for
    } \bfv\not\in X^{(w)}(k).
  \end{align*}

  We define $\mcV_{2} = X^{(4)}$, so that
  $\dim(\mcV_2)\leq l + 2 \lceil \frac{l}{3}\rceil$ and
  \[
    \Sigma_{II}( \bfv;k) \ll \abs{k}^{3/2} \quad \text{ for }
    \bfv\not\in \mcV_2(k),
  \]
  which corresponds to the first part of our claim. 

  However, neither~$X^{(5)}$ nor $X^{(6)}$ is suitable as a choice
  of~$\mcV_2^{\Delta}$ (the former's codimension is too large, and the
  bound given outside of the latter is of size $|k|^{5/2}$ instead of
  our goal of $|k|^2$). To define $\mcV_2^{\Delta}$, we use instead a
  different trick.
  
  Let~$k$ be a finite extension of~$\Ff_q$. We observe that
  \begin{align*}
    \Sigma_{II}(\bfv;k)&=\sum_{r\in k}\Bigl|\sum_{ s\in
      k^\times}\bfK(r,s,\bfv;k)\Bigr|^2-
    \sum_{r\in k}\sum_{\substack{s_1,s_2\in k^{\times}\\
        s_1^d=s_2^d}}
    \bfK(r,s_1,\bfv;k)\overline{\bfK(r,s_2,\bfv;k)}\\
    &= \sum_{r\in k}\Bigl|\sum_{ s\in k^\times}\bfK(r,s,\bfv;k)\Bigr|^2
    -\sum_{\substack{\xi\in k^{\times}\\\xi^d=1}} \sum_{r\in
      k}\sum_{s\in k^{\times}} \bfK(r,s,\bfv;k)\overline{\bfK(r,\xi
      s,\bfv;k)}.
  \end{align*}

  Since~$\mcF$ is mixed of weights~$\leq 0$, we have
  \[
    \sum_{\xi^d=1} \sum_{r\in k}\sum_{s\in k^{\times}}
    \bfK(r,s,\bfv;k)\overline{\bfK(r,\xi s,\bfv;k)}\ll |k|^2
  \]
  for all~$\uple{v}\in k^{2l}$, and it is therefore enough to prove the
  existence of a subvariety $\mcV_2^{\Delta}$ over $\Ff_q$ of
  dimension~$\leq l$ and bounded degree such that
  \[
    \sum_{r\in k}\Bigl|\sum_{ s\in k^\times}\bfK(r,s,\bfv;k)\Bigr|^2
    \ll |k|^{2}
  \]
  for $\uple{v}\notin \mcV_2^{\Delta}(k)$.  This follows from
  Proposition~\ref{pr-diagonal3}. Indeed, let $\mcV^{\Delta}_2$ be the
  algebraic variety provided by this proposition applied to~$\mcF$.
  Let~$\uple{v}\notin \mcV^{\Delta}_2(k)$. For all but a bounded number
  of~$r\in k$, we have
  \[
    H^2_c\Bigl(\Aa^1_{\bar{k}}, \bigotimes_{i=1}^l [s\mapsto
    (r+v_i)^cs]^*\mcF\otimes \bigotimes_{i=1}^l [s\mapsto
    (r+v_{i+l})^cs]^*\mcF^{\vee} \Bigr)=0
  \]
  by the proposition, hence
  \[
    \sum_{s\in k^{\times}} \bfK(r,s,\bfv;k)\ll |k|^{1/2}
  \]
  by the Riemann Hypothesis (Theorem~\ref{th-rh}). For the possible
  exceptional values of~$r$, we have
  \[
    \sum_{s\in k^{\times}} \bfK(r,s,\bfv;k)\ll |k|
  \]
  by the trivial bound. Hence, we deduce that
  \[
    \sum_{r\in k}\Bigl|\sum_{ s\in k^\times}\bfK(r,s,\bfv;k)\Bigr|^2
    \ll |k|^{2}
  \]
  for all $\uple{v}\notin\mcV^{\Delta}_2(k)$, as desired.
\end{proof}


\section{Bounds for trilinear sums with monomial
  arguments}\label{sec-trilinear}

In this section, we prove Theorem \ref{thmtriplesum}, and we use the
notation from that statement.  As in the proof of
Theorem~\ref{thmType12intro} in the previous section, we check first
that we can assume that~$\mcF$ is also a middle-extension sheaf. Viewing
the integers $a$, $b$ and~$c$, as well as the trace function~$K$ of the
\good sheaf~$\mcF$ as fixed, we denote simply
\[
  T(\bfalpha,\bfbeta,\bfgamma)= \sumsumsum_{j\sim J,m\sim M,n\sim
    N}\alpha_j\beta_m\gamma_n K(j^am^bn^c)
\]
for families $\bfalpha=(\alpha_j)_{j\sim J}$,
$\bfbeta=(\beta_m)_{m\sim M}$ and $\bfgamma=(\gamma_n)_{n\sim N}$ of
complex numbers. We assume that
\[
  |\alpha_j|\leq 1,\quad |\beta_m|\leq 1,\quad |\gamma_n|\leq 1
\]
for all~$j$, $m$ and~$n$.

We write
\[
  T(\bfalpha,\bfbeta,\bfgamma)=\sumsum_{u,v\in \Fqt}\xi_u\zeta_v
  K(uv)
\]
where
\[
  \xi_u=\sum_{\substack{j\sim J\\j^a\equiv u\mods
      q}}\alpha_j,\quad\quad \zeta_v=\sumsum_\stacksum{m\sim M,n\sim
    N}{m^bn^c\equiv v\mods q}\beta_m\gamma_n.
\]

We have the following simple estimates for the size of these
coefficients.

\begin{lemma}\label{L2pierce}
  We have
  \begin{gather*}
    \|\bfxi\|_1\ll J,\quad\quad \|\bfxi\|^2_2\ll J\Bigl(\frac{J}{q}+1\Bigr),
    \\
    \|\bfzeta\|_1\ll MN,\quad\quad
    \|\bfzeta\|_2^2\ll \frac{(MN)^2}{q}+MN(\log q)^2,
  \end{gather*}
  where the implied constants depend on $a$, $b$ and~$c$.
\end{lemma}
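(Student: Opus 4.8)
The plan: split the four inequalities into the two trivial $L^{1}$ bounds and the two $L^{2}$ bounds.

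For the $L^{1}$ bounds, since each $j$ with $j^{a}\equiv u\mods q$ contributes $\alpha_{j}$ to exactly one coefficient $\xi_{u}$, the triangle inequality gives $\|\bfxi\|_{1}=\sum_{u}|\xi_{u}|\le\sum_{j\sim J}|\alpha_{j}|\le J$, and likewise $\|\bfzeta\|_{1}\le\sum_{m\sim M}\sum_{n\sim N}|\beta_{m}\gamma_{n}|\le MN$. For $\|\bfxi\|_{2}^{2}$ I would expand the square; using $|\alpha_{j}|\le 1$,
\[
  \|\bfxi\|_{2}^{2}=\sum_{\substack{j_{1},j_{2}\sim J\\ j_{1}^{a}\equiv j_{2}^{a}\mods q}}\alpha_{j_{1}}\overline{\alpha_{j_{2}}}\le \#\bigl\{(j_{1},j_{2}): j_{1},j_{2}\sim J,\ j_{1}^{a}\equiv j_{2}^{a}\mods q\bigr\}.
\]
For each $j_{1}$ the congruence $x^{a}\equiv j_{1}^{a}\mods q$ has at most $\gcd(|a|,q-1)\le|a|$ solutions $x\in\Fqt$, each lying in a single residue class that meets $[J,2J)$ in at most $J/q+1$ integers; summing over $j_{1}$ gives the bound $\ll_{a}J(J/q+1)$.

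The bound for $\|\bfzeta\|_{2}^{2}$ is the real content. Again expanding and using $|\beta_{m}|,|\gamma_{n}|\le 1$,
\[
  \|\bfzeta\|_{2}^{2}\le\mathcal{N}:=\#\bigl\{(m_{1},n_{1},m_{2},n_{2}): m_{1},m_{2}\sim M,\ n_{1},n_{2}\sim N,\ m_{1}^{b}n_{1}^{c}\equiv m_{2}^{b}n_{2}^{c}\mods q\bigr\}.
\]
The plan is to reorganise $\mathcal{N}$ by the multiplicative structure: writing $s=m_{1}\overline{m_{2}}$ and $t=n_{1}\overline{n_{2}}$ in $\Fqt$, the congruence becomes $s^{b}t^{c}\equiv 1$, so
\[
  \mathcal{N}=\sum_{\substack{(s,t)\in(\Fqt)^{2}\\ s^{b}t^{c}\equiv 1\mods q}}C_{M}(s)\,C_{N}(t),\qquad C_{M}(s)=\#\{(m_{1},m_{2}):m_{1},m_{2}\sim M,\ m_{1}\equiv sm_{2}\mods q\},
\]
and $C_{N}$ analogously. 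The key structural point is that for fixed $s$ the equation $t^{c}\equiv s^{-b}$ has at most $\gcd(|c|,q-1)\le|c|$ solutions $t$, while the diagonal term $s=t=1$ contributes $MN$. To control the rest I would use the second moments of $C_{M}$ and $C_{N}$: each is itself a count of $b=c=1$ type, e.g.
\[
  \sum_{s}C_{M}(s)^{2}=\#\{(m_{1},m_{2},m_{3},m_{4}):m_{i}\sim M,\ m_{1}m_{4}\equiv m_{2}m_{3}\mods q\},
\]
and since $M\le 4q$ the products $m_{1}m_{4},m_{2}m_{3}$ are $O(q^{2})$, so this congruence is equivalent to finitely many honest equations $m_{1}m_{4}=m_{2}m_{3}+hq$ with $|h|\ll M^{2}/q$; the term $h=0$ is $\le\sum_{m_{1},m_{4}\sim M}\tau(m_{1}m_{4})\le\bigl(\sum_{m\sim M}\tau(m)\bigr)^{2}\ll M^{2}(\log q)^{2}$, and the remaining $h$ by similar divisor estimates, yielding $\sum_{s}C_{M}(s)^{2}\ll M^{4}/q+M^{2}(\log q)^{2}$ and likewise for $N$. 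Feeding these back into the formula for $\mathcal{N}$, together with the pointwise bound $C_{M}(s)\le M(M/q+1)$ and the device of carrying out the shorter of the $m$- and $n$-sums first when $M$ or $N$ is comparable to $q$, should give $\mathcal{N}\ll(MN)^{2}/q+MN(\log q)^{2}$.

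The main obstacle is this last assembly step: extracting $\mathcal{N}\ll(MN)^{2}/q+MN(\log q)^{2}$ from the second-moment estimates for $C_{M}$ and $C_{N}$ without letting spurious terms of size $M^{2}N$ or $MN^{2}$ survive, which forces a case split according to the relative sizes of $M$, $N$ and $q$ and a careful treatment of the $h\neq 0$ divisor sums. Everything else is routine bookkeeping with the divisors $\gcd(|a|,q-1)$, $\gcd(|b|,q-1)$, $\gcd(|c|,q-1)$ and the classical estimate $\sum_{n\le X}\tau(n)\ll X\log X$.
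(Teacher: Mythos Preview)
Your treatment of $\|\bfxi\|_1$, $\|\bfxi\|_2^2$ and $\|\bfzeta\|_1$ is correct and matches the paper. The difference, and the gap, is in the bound for $\|\bfzeta\|_2^2$.

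You correctly identify the key arithmetic input: the count $\#\{m_1m_4\equiv m_2m_3\mods q,\ m_i\sim M\}\ll M^4/q+M^2(\log q)^2$, which is exactly the fourth moment estimate of Ayyad--Cochrane--Zheng. But your assembly step does not close. Applying Cauchy--Schwarz over the set $\{(s,t):s^bt^c\equiv 1\}$ to $\sum C_M(s)C_N(t)$ yields a bound of the shape
\[
  \Bigl(\frac{M^2}{\sqrt q}+M\log q\Bigr)\Bigl(\frac{N^2}{\sqrt q}+N\log q\Bigr),
\]
and the cross term $M^2N(\log q)/\sqrt q$ is \emph{not} dominated by $(MN)^2/q+MN(\log q)^2$ in the whole range $MN\le 4q$; for instance with $M=q^{3/4}$, $N=q^{1/4}$ it is of size $q^{5/4}\log q$, far larger than the target $q(\log q)^2$. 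The ``carry out the shorter sum first'' device you mention amounts to the pointwise bound $C_N(t)\ll N$ together with $\sum_s C_M(s)=M^2$, which only gives $\mathcal N\ll M^2N$; this is again too weak. So the case split you allude to does not obviously produce the claimed bound, and you have not supplied an argument that does.

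The paper avoids this difficulty by passing to multiplicative characters \emph{before} applying Cauchy--Schwarz. Orthogonality gives
\[
  \mathcal N=\frac{1}{q-1}\sum_{\chi\mods q}\Bigl|\sum_{m\sim M}\chi^b(m)\Bigr|^2\Bigl|\sum_{n\sim N}\chi^c(n)\Bigr|^2,
\]
and the $O_{b,c}(1)$ characters with $\chi^b=1$ or $\chi^c=1$ (including the principal one) contribute $\ll_{b,c}(MN)^2/q$. For the remaining $\chi$ one applies Cauchy--Schwarz over~$\chi$ and then cites the fourth-moment bound $\frac{1}{q-1}\sum_{\chi\neq 1}\bigl|\sum_{m\sim M}\chi(m)\bigr|^4\ll M^2(\log q)^2$ of Ayyad--Cochrane--Zheng. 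The point is that removing the principal character \emph{before} Cauchy--Schwarz is what prevents the lossy cross terms; in your physical-space formulation this corresponds to subtracting the mean $M^2/(q-1)$ from $C_M(s)$ first, after which the cross terms become sums of $\tilde C_N(t)$ over cosets of subgroups of $\Fqt$ and one is essentially forced back to characters to show they are small.
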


\begin{proof}
  The bounds for $\|\bfxi\|_1$ and~$\|\bfzeta\|_1$ are clear.

  Next, using $|\alpha_j|\leq 1$, we have
  \[
    \|\bfxi\|_2^2= \sum_{u\in\Fqt}|\xi_u|^2\leq \sum_\stacksum{j_1\sim
      J,\ j_2\sim J} {j_1^a=j_2^a\mods q}1\ll L(L/q+1).
  \]

  Finally, we use multiplicative characters for the last estimate. We have
  \begin{align*}
    \|\bfzeta\|^2_2=\sum_{v}|\zeta_v|^2
    &\leq \sum_{\substack{m_1,m_2,n_1,n_2\\m_i\sim M,\, n_i\sim N\\
    m_1^bn_1^c\equiv m_2^bn_2^c\mods q}}1\\
    &=\frac{1}{q-1}\sum_{\chi\mods q}\Bigl|
      \sum_{m\sim M,n\sim N}\chi^b(m)\chi^c(n)
      \Bigr|^2\\
    &\ll \frac{(MN)^2}q+\frac{1}{q}\sum_{\substack{\chi\\\chi^b,\chi^c\not=1}}
      \Bigl|\sum_{m,n}\chi^b(m)\chi^c(n)\Bigr|^2,	
  \end{align*}
  where the sums over~$\chi$ are over characters of~$\Ff_q^{\times}$.
  
  By the Cauchy-Schwarz inequality, we deduce
  \[
    \|\bfzeta\|^2_2\ll
    \frac{(MN)^2}q+\Bigl(\frac{1}{q-1}\sum_{\chi^b\not=1}|\sum_{m\sim
      M}\chi^b(m)|^4\Bigr)^{1/2}
    \Bigl(\frac{1}{q-1}\sum_{\chi^c\not=1}|\sum_{n\sim
      N}\chi^c(n)|^4\Bigr)^{1/2}.
  \]

  By positivity, note that
  \[
    \frac{1}{q-1}\sum_{\chi^b\not=1}\Bigl|\sum_{m\sim
      M}\chi^b(m)\Bigr|^4\leq \frac{1}{q-1}\sum_{\chi\not=1}\Bigl|
    \sum_{m\sim M}\chi(m)\Bigr|^4\ll M^2(\log q)^2,
  \]
  where the last inequality is a result of Ayyad, Cochrane and
  Zheng~\cite{Ayyad}*{Thm. 2}. Arguing similarly for the sum over
  characters with~$\chi^c\not=1$, we conclude that
  \[
    \|\bfzeta\|^2_2\ll\frac{(MN)^2}{q}+MN(\log q)^2,
  \]
  which finishes the proof.
\end{proof}

Let~$l\geq 2$ be an integer. We write
\[
  |\zeta_v|=|\zeta_v|^{(l-1)/l}|\zeta_v|^{1/l}
\]
and apply Hölder's inequality followed by the Cauchy-Schwarz inequality,
leading to the inequality
\begin{align}\nonumber
  \Bigl|\sumsum_{u,v\in \Fqt}\xi_u\zeta_v
  K(uv)\Bigr|
  &\leq \|\bfzeta\|_1^{(l-1)/l}
    \Bigl(\sum_{v\in\Fqt}|\zeta_v|\, \Bigl|\sum_{u\in\Fqt}\xi_u K(uv)\Bigr|^{l}
    \Bigr)^{1/l}\\
  &\leq \|\bfzeta\|_1^{1-1/l}\|\bfzeta\|_2^{1/l}
    \Bigl(\sum_{v\in\Fqt}
    \Bigl|\sum_{u\in\Fqt}\xi_u K(uv)
    \Bigl|^{2l} \Bigr)^{1/(2l)}.\label{Sxzbound}
\end{align}

Combined with Lemma~\ref{L2pierce}, this implies
\[
  \Bigl|\sumsum_{u,v\in \Fqt}\xi_u\zeta_v K(uv)\Bigr|\ll (\log
  q)^{2/l}(MN)^{1-1/(2l)}\Bigl(\sum_{v\in\Fqt}\Bigl|
  \sum_{u\in\Fqt}\xi_u K(uv)\Bigr|^{2l} \Bigr)^{1/(2l)}.
\]

We then expand the $(2l)$-th power to write
\begin{equation*}
  \sum_{v\in\Fqt}\Bigl|\sum_{u\in\Fqt}\xi_u K(uv)\Bigr|^{2l}
  =\sum_{\bfu\in
    (\Fqt)^{2l}}\xi_{\bfu}\sum_{v\in\Fqt}\prod_{i=1}^lK(u_iv)
  \ov{ K(u_{i+l}v)}
\end{equation*}
where we denote
\[
  \bfu=(u_1,\cdots,u_{2l}),\quad\quad
  \xi_\bfu=\prod_{i=1}^l\xi_{u_i}\ov{\xi_{u_{i+l}}}
\]
for~$\bfu\in (\Fqt)^{2l}$.

Since $\mcF$ is mixed of weights~$\leq 0$, we have the bound
\[
  \sum_{v\in\Fqt}\prod_{i=1}^lK(u_iv) \ov{ K(u_{i+l}v)}\ll q
\]
for all~$\bfu\in (\Ff_q^{\times})^{2l}$. Moreover, since $\mcF$ is
\lgoodp, and a middle-extension, it follows from Corollary~\ref{cor-sop}
that there exists an algebraic subvariety $\mcV$ of $\Aa^{2k}_{\Ff_q}$
of dimension $\leq l$ and degree bounded in terms of $l$ and the
complexity of~$\mcF$ such that the estimate
\[
  \sum_{v\in\Fqt}\prod_{i=1}^lK(u_iv)
  \ov{ K(u_{i+l}v)}\ll q^{1/2}
\]
holds for $\bfu\in \Ff_q^{2l}\setminus \mcV(\Ff_q)$, where the implied
constant depends on~$l$ and the complexity of~$\mcF$.

We next observe that
\[
  \sum_{\bfu\in \mcV(\Fq)}|\xi_\bfu|= \sum_{\bfu\in \mcV(\Ff_q)}
  \Bigl|\prod_{i=1}^l\xi_{u_i}\ov{\xi_{u_{i+l}}}\Bigr| \leq
  \sum_{\bfu\in \mcV(\Ff_q)} \prod_{i=1}^{2l}\sum_{\substack{j\sim
      J\\j^a=u_i}}1 \leq a^{2l} \sum_{\substack{\uple{j}\in
      [J,2J]^{2l}\\f(\uple{j})\in\mcV(\Ff_q)}} 1,
\]
where we define the morphism $f\colon \Gg_m^{2l}\to \Gg_m^{2l}$ by
\[
  f(x_1,\ldots,x_{2l})=(x_1^a,\ldots,x_{2l}^a).
\]

Applying Lemma~\ref{lm-sz} to $f^{-1}(\mcV)$, which also has
dimension~$\leq l$, we deduce that
\[
  \sum_{\bfu\in \mcV(\Fq)}|\xi_\bfu|\ll J^{l},
\]
and hence
\begin{align*}
  \sum_{\bfu\in
    (\Fqt)^{2l}}\xi_{\bfu}\sum_{v\in\Fqt}\prod_{i=1}^lK(u_iv)\ov{K(u_{i+l}v)}
  & \ll q\sum_{\bfu\in \mcV(\Fq)} |\xi_{\bfu}|+\sum_{\bfu\not\in
    \mcV(\Fq)} \Bigl| \sum_{v\in\Fqt}\prod_{i=1}^lK(u_iv) \ov{
    K(u_{i+l}v)}\Bigr|
  \\
  &\ll J^lq+J^{2l}q^{1/2}.
\end{align*}

Combining the previous estimates, we obtain the inequality
\[
  T(\bfalpha,\bfbeta,\bfgamma)\ll (\log
  q)^{1/l}(MN)^{1-1/(2l)}(J^{2l}q^{1/2}+J^lq)^{1/(2l)}\ll
  q^{\eps}JMN\Bigl(\frac{q^{1/2}}{MN}+\frac{q}{J^lMN}\Bigr)^{1/(2l)},
\]
for any $\eps>0$, which concludes the proof of
Theorem~\ref{thmtriplesum}.
 
\section{The oxozonic case}\label{sec-oxozonic}

In this short section we explain how to adapt the previous proofs to
establish Theorem~\ref{thmO4}. Thus let $a$, $b$, $c$ be non-zero
integers, let~$q$ be a prime and let $\mcF$ be an oxozonic sheaf, mixed
of weights~$\leq 0$, on $\Aa^1_{\Ff_q}$.

First, assuming that~$c$ is odd (which implies that the geometric
monodromy of $[x\mapsto x^c]^*\mcF$ is still~$\Ort_4$), we can follow
the proof of Theorem~\ref{thmType12intro} using simply
Proposition~\ref{good-gkrlemmaO4} (see also Remark~\ref{rm-O4}) in place
of Proposition~\ref{pr-good-gkrlemma} to establish the analogue of
Proposition~\ref{pr-moment1}.
Similarlly, the proof of Theorem~\ref{thmtriplesum} also applies
verbatim, in all cases here since only the sheaf $\mcF$ appears in
this argument. 
 

\section{Ubiquity of \good sheaves}\label{sec-gallant}

This section provides a large sample of examples of \good sheaves; this
will illustrate that this class is much wider, and much more flexible
than the restricted types of sheaves allowed in the previous papers such
as \cite{KMSAnn} and~\cite{Pisa}. In particular,
Theorem~\ref{thmType12intro} should have many applications in the
future.\footnote{\ We will not discuss systematically the weights of the
  sheaves which appear; checking that they satisfy the additional weight
  conditions is usually much more straightforward, using the Riemann
  Hypothesis.}

There are two basic principles involved. The first one is that Katz
has computed the geometric monodromy group of a very wide variety of
families of exponential sums. These give many examples of trace
functions, and an ``experimental'' fact is that in many cases the
(connected component of the identity of this) group is a simple
algebraic group, which implies by definition that the corresponding
sheaf is \goodp.  The second general principle is that the connected
component of the geometric monodromy group of a sheaf is a ``robust''
invariant, in the sense that if a sheaf is transformed in certain ways,
then this group remains the same.  Thus, any such operation will
transform a \good sheaf into another one.

We begin by describing some of these transformations.  Throughout, we
denote by~$k$ a finite field and by~$\mcF$ a constructible $\ell$-adic
sheaf on~$\Aa^1$ over~$k$, for some prime~$\ell$ invertible in~$k$. We
denote by~$G$ the geometric monodromy group of~$\mcF$, and by~$G^0$
its connected component of the identity.



\subsection{Bountiful sheaves}\label{subsecbountiful}

In the paper~\cite{sumproducts}, Fouvry, Kowalski and Michel presented
in a concrete way the application of the Goursat--Kolchin--Ribet
criterion of Katz in the context of estimating sums of products of
trace functions of the form
\[
  \sum_{x\in \Ff_q} K(\gamma_1\cdot x)\cdots K(\gamma_r \cdot
  x)e\Bigl(\frac{hx}{q}\Bigr)
\]
for some trace function~$K$ and $\gamma_i\in \GL_2(\Ff_q)$ acting as
fractional linear transformations. To encapsulate this method, they
defined \emph{bountiful sheaves} (see~\cite[Def.\,1.2]{sumproducts}). In
particular, these sheaves have geometric monodromy group equal to
either~$\SL_r$ or~$\Sp_r$ for some integer~$r\geq 2$; since these are
simple algebraic groups, and the definition also requires that~$\mcF$ is
mixed of weights~$\leq 0$ and pure of weight~$0$ on a dense open set, it
follows immediately that \emph{any bountiful sheaf is \lgoodp}.


\subsection{Birationality}

Since the definition of the geometric monodromy group of a sheaf only
depends on the restriction of this sheaf to an arbitrary open dense
subset of~$\Aa^1_k$, it follows that for any open immersion
$j\colon U\to \Aa^1_k$ with~$U$ not empty, the sheaves $j_!j^*\mcF$ and
$j_*j^*\mcF$ are \good if and only if~$\mcF$ is \goodp.

\subsection{Twists}

If~$\mcF$ is \goodp, and if $\mcL$ is a constructible $\ell$-adic sheaf
of generic rank~$1$ on~$\Aa^1$ over~$k$, lisse on an open dense
subset~$U$, then the twisted sheaf $\mcL\otimes\mcF$, which has trace
function
\[
  x\mapsto t_{\mcL}(x;k)t_{\mcF}(x;k)
\]
is also \goodp.

This fact is immediate from the definition when $G^0$ is non-trivial,
because the geometric monodromy group of~$\mcL\otimes\mcF$ is the same
as that of~$\mcF$. If~$G^0$ is trivial, we can argue as follows. Let $N$
be the core subgroup of~$G$, and~$H$ the preimage of~$N$ in the Galois
group of~$k(T)$ for the homomorphism corresponding to~$\mcF$. We then
have, by restriction, a surjective homomorphism $\rho\colon H\to N$
corresponding to~$\mcF$ and a character $\chi\colon H\to \bQl^{\times}$
corresponding to~$\mcL$. The subgroup~$\rho(\ker(\chi))$ is a normal
subgroup of~$N$ since~$\rho$ is surjective, so it is either central or
equal to~$N$, since~$N$ is quasisimple
(see~\cite[Lemma\,9.2]{isaacs}). If it were central, we would obtain a
surjective homomorphism
\[
  H/\ker(\chi)\to H/\rho^{-1}(Z(N))\fleche{\rho} N/Z(N),
\]
which is impossible since $H/\ker(\chi)$ is abelian and $N/Z(N)$ is a
non-abelian simple group.

So we deduce that~$\rho(\ker(\chi))=N$, and this implies that the image
of the Galois representation associated to~$\mcL\otimes\mcF$
contains~$N$, so that this sheaf is \goodp.


This means, for instance, that if we can apply our results to a
function~$K$ on~$\Ff_q$, then they are also applicable to functions like
\[
  K(x)\chi(g(x))e(f(x)/q)
\]
for any multiplicative character~$\chi$ modulo~$q$, and any rational
functions~$f$ and~$g$ modulo~$q$ which can be written as ratios of
polynomials with degree uniformly bounded as $q$ varies (the last
requirement ensuring that the complexity of the corresponding twisted
sheaf remains bounded).

\subsection{Forms of exponential sums}

By definition, whether $\mcF$ is \good or not only depends on its
geometric monodromy group, and hence only depends on the base change
of~$\mcF$ to an algebraic closure~$\bar{k}$ of the base field~$k$. Using
standard terminology from algebraic geometry, two objects defined
over~$k$ which become isomorphic over~$\bar{k}$ are called \emph{forms}
of each other, and in the context of trace functions, one could speak of
\emph{forms} of exponential sums.

Here is an example to illustrate the fact that this notion can be far
from trivial. Let $r\geq 1$ be an integer and let
$\uple{a}=(a_1,\ldots, a_r)$ be a family of positive integers. For any
prime number~$q$ and any $v\in\Ff_q^{\times}$, we may define the
exponential sums
\begin{equation}\label{eq-ta}
  \widetilde{H}_{\uple{a}}(v;q)=\frac{1}{q^{(r-1)/2}}
  \sum_{\substack{x\in(\Ff_q^{\times})^r\\x_1^{a_1}\cdots x_r^{a_r}=v}}
  e\Bigl(\frac{x_1+\cdots+x_r}{q}\Bigr).
\end{equation}

We claim that this is a ``form'' of the hyper-Kloosterman sums with
characters defined as follows: we let
\[
  a=a_1+\cdots+a_r,
\]
and we let
\[
  \uple{\chi}=(\chi_1,\ldots,\chi_a)
\]
be an arbitrary ordering of the $\ell$-adic characters $\chi$ of $\Fqt$
such that $\chi^{a_i}=1$ for some~$i$, repeated with multiplicity (so
that there are indeed $a$ such characters). Then the corresponding
hyper-Kloosterman sums are defined by
\begin{equation}\label{eq-kla}
  \Kl_a(u,\uple{\chi};q)= \frac{1}{q^{(a-1)/2}}
  \sum_{\substack{y_1,\ldots,y_a\in\Ff_q^{\times}\\y_1\cdots y_a=u}}
  \prod_{j=1}^a\chi_j(y_j)e\Bigl(\frac{y_1+\cdots+y_a}{q}\Bigr)
\end{equation}
for $u\in\Fqt$.

Precisely, the claim is that the natural hypergeometric complexes
over~$\Fq$ (in the sense of Katz) with trace functions~(\ref{eq-ta})
and~(\ref{eq-kla})\footnote{\ Actually, a multiplicative translate of
  the latter.} are geometrically isomorphic. This is a non-trivial fact,
which ultimately depends on a version of the Hasse--Davenport relations
(as in~\cite[p.\,84]{GKM}). One could then deduce a version of
Theorem~\ref{thmType12intro} for the sums $\widetilde{H}_{\uple{a}}$
from the result of~\cite{Pisa}, which apply to hyper-Kloosterman sums.
However, we will not elaborate on this, since we will see in
Section~\ref{sec-hypergeometric} that ``most'' hypergeometric sheaves
are \goodp.

\subsection{Change of variable}\label{sec-pullback}

Let~$j\colon \Aa^1_k\to\Pp^1_k$ denote the open immersion.  It is
essentially obvious that for any
automorphism~$\varphi$ of~$\Pp^1_k$, the pullback sheaves
$\varphi^*(j_*\mcF$) and $\varphi^*(j_!\mcF)$ are \good if and only
if~$\mcF$ itself is \goodp.

Concretely, this means that if Theorem~\ref{thmType2simple} applies to
a trace function~$K$ modulo a prime~$q$, then it also applies to the
function
\[
  x\mapsto K\Bigl(\frac{ax+b}{cx+d}\Bigr),
\]
for any $\begin{pmatrix}a&b\\c&d
\end{pmatrix}$ in~$\GL_2(\Ff_q)$, with the convention that
$K(\infty)=0$. This is already quite significant: even the case of a
translation applied to a Kloosterman sheaf, i.e., the case of the
trace function
\[
  a\mapsto \frac{1}{p^{(r-1)/2}}
  \sum_{\substack{x_1,\ldots,x_r\in\Ff_q\\x_1\cdots x_r=a+h}}
  e\Bigl(\frac{x_1+\cdots+x_r}{q}\Bigr),\quad\quad h\in\Ff_q,
\]
was not previously known for (fixed) $h\not=0$.

More generally, if the geometric monodromy group~$G$ of~$\mcF$ is
\emph{infinite} and~$\mcF$ is \goodp, then for any non-constant morphism
$f\colon \Pp^1_{k}\to \Pp^1_k$, the sheaves~$j^*f^*j_*\mcF$ or
$j^*f^*j_!\mcF$ are \goodp.  Indeed, it is known in general that the
connected component of the geometric monodromy group of these will be
finite-index subgroups of~$G^0$, so must coincide with it since the
latter is simple by definition.

This corresponds concretely to a change of variable of the form
\[
  x\mapsto K(f(x)),\quad\quad K(\infty)=0,
\]
for an arbitrary non-constant $f\in \Ff_q(X)$.

If~$\mcF$ is \good but has finite geometric monodromy group (the
second case of Definition~\ref{defgoodsheafintro}), then a restricted
version of this principle applies: if $f\colon \Pp^1_k\to \Pp^1_k$ is
a \emph{cyclic} cover, then $j^*f^*j_*\mcF$ and~$j^*f^*j_!\mcF$ are
also \goodp, because their geometric monodromy group 
will still contain (as a normal subgroup) a copy of the perfect
subgroup~$N$ of the definition.

In particular, if $\mcF$ is \good then in any case, the sheaf
$[x\mapsto x^c]^*\mcF$ is \good if~$c$ is a non-zero integer. For
trace functions, this corresponds to $x\mapsto K(x^c)$.


\subsection{Tannakian operations}

Let~$j\colon U\to \Aa^1_k$ be the open immersion of some open dense
subset of~$\Aa^1_k$ on which~$\mcF$ is lisse.  Let~$G^a$ denote the
\emph{arithmetic} fundamental group of~$j^*\mcF$. Let
\[
  \rho\colon G^a\to \GL(V)
\]
be a non-trivial finite-dimensional irreducible (continuous)
representation of~$G^a$ on a $\bQl$-vector space~$V$. We can then form
the sheaf~$\rho(j^*\mcF)$ corresponding to the representation obtained
by composing with~$\rho$ the representation associated to~$j^*\mcF$.

Assume that~$\mcF$ is \goodp. If~$G^0$ is infinite and the restriction
of~$\rho$ to the subgroup~$G^0$ is irreducible, or if~$G$ is finite
but the restriction to the normal subgroup~$N$ of
Definition~\ref{defgoodsheafintro} is irreducible, then the sheaves
$j_*\rho(j^*\mcF)$ and $j_!\rho(j^*\mcF)$ are \goodp. Indeed, their
geometric monodromy groups coincide with the image of~$G^0$
(resp. of~$N$) under~$\rho$, but the restriction of~$\rho$ to~$G^0$ is
faithful in the infinite case, and otherwise the non-trivial image of
the quasisimple group~$N$ is quasisimple (see,
e.g.,~\cite[Lemma\,9.2]{isaacs}).


A concrete example here is the following. Consider the Kloosterman
sheaf~$\mcF$ modulo~$q$ with trace function
\[
  a\mapsto
  \Kl_2(a;q)=
  \frac{1}{\sqrt{q}}\sum_{x\in\Ff_q^{\times}}e\Bigl(\frac{ax+\bar{x}}{q}\Bigr),
\]
for $a\in\Fqt$. Write $\Kl_2(a;q)=2\cos(\theta(a;q))$ for some unique
$\theta(a;q)\in [0,\pi]$. Then, because the group~$G$ coincides
with~$G^0$ in that case and is the simple group~$\SL_2$, we can apply
any non-trivial irreducible representation of~$\SL_2$. These
representations are the symmetric powers of the standard representation,
and this means that our results apply, for any integer~$d\geq 1$, to the
function~$K^{(d)}$ defined by~$K^{(d)}(0)=0$ and
\[
  K^{(d)}(a)=\frac{\sin((d+1)\theta(a;q))}{\sin(\theta(a;q))}
\]
for~$a\in\Ff_q^{\times}$.  See 
\cite{MiInv,FMAnnals,XiInv,XiIMRN} for situations where the problem of bounding such kind of bilinear sums might be useful.



\subsection{Hypergeometric sheaves}\label{sec-hypergeometric}

In this section and the next, we discuss one of the most important class
of examples of trace functions in the context of applications to
analytic number theory, especially automorphic forms and
$L$-functions. These are hypergeometric sums and their associated
sheaves, as defined by Katz (see~\cite{GKM} and~\cite{ESDE}). It turns
out, as we will explain, that many such sheaves are \goodp.  This can be
derived from the basic facts established in~\cite{ESDE}, but more recent
work of Guralnick, Katz, Rojas--Léon and Tiep give more definitive
statements, and we will quote from them.

Let~$k$ be a finite field and $\ell$ a prime invertible
in~$k$. Let~$\psi$ be a non-trivial additive $\ell$-adic character
of~$k$.  Let~$r\geq 0$ and $t\geq 0$ be integers which are not both
$0$. The basic data is given by two multisets
\[
  \bfchi=\{\chi_1,\ldots,\chi_r\},\quad\quad
  \bfrho=\{\rho_1,\ldots,\rho_t\}
\]
of $\ell$-adic characters of $\kt$, which are assumed to be
\emph{disjoint} (no character has multiplicity $\geq 1$ in both of
them).

\begin{remark}
  In other words, taking $\bfchi$ for example, this is the data for each
  character~$\eta$ of $\kt$ of a non-negative integers
  $n_{\eta}=n_{\eta}(\uple{\chi})$, representing its multiplicity
  in~$\uple{\chi}$, distinct elements of the tuple being exactly those
  characters with $n_{\eta}\geq 1$, with the condition that
  \[
    \sum_{\chi}n_\chi=n,
  \]
  with obvious notation. Another interpretation would be an
  integer-valued non-negative measure on the set of $\ell$-adic
  characters of $\kt$.
\end{remark}

The hypergeometric sums associated to the pair $(\bfchi,\bfrho)$ are
defined for $u\in\kt$ by
\[
  \Hyp(u;\bfchi,\bfrho;\psi)=\frac{1}{|k|^{(r+t-1)/2}}
  \sumdsum_{\substack{x_1,\ldots,x_r,y_1,\ldots, y_t\in k\\
      \frac{x_1\cdots x_r}{y_1\cdots y_t}=u}}
  \psi\Bigl(\sum_{i=1}^r{x_i}-\sum_{j=1}^ty_j\Bigr)
  \prod_{i=1}^r\chi_i(x_i)\prod_{j=1}^t\ov{\rho_j(y_j)}.
\]

Katz constructed in~\cite[Th\,8.4.2]{ESDE} a middle-extension sheaf
$\HYP(\bfchi,\bfrho;\psi)$ on~$\Gg_{m,k}$ (which is denoted
$\HYP_1(!,\psi;\bfchi,\bfrho)$ in loc. cit.) with trace function given
by these hypergeometric sums.  These occur frequently in analytic number
theory, and we are interested in determining which of these are \goodp.

\begin{remark}
  If $t=0$, then $\bfrho=\emptyset$ and $\HYP(\bfchi,\emptyset;\psi)$ is
  a Kloosterman sheaf, also denoted $\KL(\bfchi;\psi)$. The trace
  function is then the generalized Kloosterman sums
  \[
    \frac{1}{|k|^{(r-1)/2}} \sumdsum_{\substack{x_1,\ldots,x_r\in k
        \\x_1\cdots x_r=u}} \psi(x_1+\cdots+x_r)
    \prod_{i=1}^r\chi_i(x_i).
  \]
\end{remark}
  
Since exchanging $\bfchi$ and $\bfrho$ amounts to applying the inversion
map $x\mapsto 1/x$ and applying complex conjugation to the multisets
(see~\cite[8.2.4]{ESDE}), we may also assume that $r\geq t$ (this relies
on the fact that being \good is invariant under such a transformation,
as explained in Section~\ref{sec-pullback}). We then denote
\[
  n=r=\max(r,t)\geq 1,
\]
which is the rank of the hypergeometric sheaf.

We recall the following definitions:
\begin{itemize}
\item An $n$-element multiset\footnote{\ Meaning that the sum of the
    multiplicities of all characters is~$n$.}  $\uple{\chi}$ is called
  \emph{Kummer-induced} if there exists a divisor $d>1$ of $n$ such that
  for some (or equivalently for any) character $\eta$ of order $d$
  of~$\kt$, the equality $\eta\cdot\bfchi=\bfchi$ holds, with obvious
  notation for multiplication by a fixed character. We then also say
  that $\uple{\chi}$ is $d$-Kummer-induced. By convention, $\bfchi=\emptyset$ is $d$-Kummer-induced for every $d>1$.

\item The pair $(\bfchi,\bfrho)$ is \emph{Kummer-induced}
  (see~\cite[8.9.3]{ESDE}) if there exists a common divisor $d>1$ of $r$
  and $t$ such that $\bfchi$ and $\bfrho$ are both $d$-Kummer induced in
  the sense of the previous item.

\item The pair $(\bfchi,\bfrho)$ is \emph{Belyi-induced}
  (see~\cite[\S\,8.10.1]{ESDE}) if $r=t$ and if there exists a partition
  $n=d+e$ of~$n$ in positive integers, and characters $\alpha$, $\beta$
  of $\kt$ such that
  \begin{itemize}
  \item $\beta\not=1$;
  \item $\bfchi$ is the union of the multiset of characters $\eta$ such
    that $\eta^d=\alpha$ and the multiset of characters $\eta$ such that
    $\eta^e=\beta$ (with multiplicity);
  \item $\bfrho$ is the multiset of all characters $\eta$ such that
    $\eta^r=\alpha\beta$.
  \end{itemize}

  Note that if $t=0$, then $(\bfchi,\bfrho)$ cannot be Belyi-induced.

\item The pair $(\bfchi,\bfrho)$ is \emph{primitive} if it is neither
  Kummer-induced nor Belyi-induced. 
\end{itemize}

We first consider the case when $r\not=t$.

\begin{theorem}\label{gallanthypergeneric}
  Suppose that $r>t$ and $r\geq 2$. Let $j\colon \Gg_m\to \Aa^1$ denote
  the open immersion. Let
  \[
    \mcF=j_*\HYP(\bfchi,\bfrho;\psi)\quad\text{ or }\quad
    \mcF=j_!\HYP(\bfchi,\bfrho;\psi).
  \]

  Suppose that~$(\bfchi,\bfrho)$ is primitive.  Suppose that
  $q>2(r-t)+1$ and that $r\notin\{4,8,9\}$.

  Then the geometric monodromy group of~$\mcF$, which is the same as
  that of $\HYP(\bfchi,\bfrho;\psi)$, is infinite and $\mcF$ is \goodp.
\end{theorem}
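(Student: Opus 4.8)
The plan is to deduce the statement from the known classification of geometric monodromy groups of hypergeometric sheaves, due to Katz~\cite{ESDE} and, in sharp form, to the work of Guralnick, Katz, Rojas-Le\'on and Tiep. First I would record the basic structural facts from~\cite[Ch.\,8]{ESDE}: when $r>t$, the object $\HYP(\bfchi,\bfrho;\psi)$ is a middle-extension sheaf on $\Gm$, lisse of rank~$r$ on $\Gm$, geometrically irreducible, and, with the normalization built into its definition, pure of weight~$0$. Consequently its geometric monodromy group~$G$, viewed inside $\GL_r(\bQl)$, acts irreducibly on $\bQl^r$. Moreover, since $j\colon\Gm\to\Aa^1$ is an open immersion and $j^*\mcF\simeq\HYP(\bfchi,\bfrho;\psi)$ for either extension ($j_!$ or $j_*$), and the geometric monodromy group depends only on the restriction of a sheaf to a dense open subset, $G$ coincides with the geometric monodromy group of $\HYP(\bfchi,\bfrho;\psi)$; this proves the parenthetical assertion of the theorem and reduces everything to analyzing $G$.

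Next I would invoke the classification of $G$ for \emph{primitive} hypergeometric sheaves of type $(r,t)$ with $r>t\geq 0$. The content needed is: once $(\bfchi,\bfrho)$ is primitive (so none of the Kummer-induced or Belyi-induced situations of Sections~8.9--8.10 of~\cite{ESDE} occur, and in particular $G$ is a primitive linear group), once $q>2(r-t)+1$ (the characteristic bound under which Katz's determination of the local monodromy at~$0$ and~$\infty$ and of the global monodromy is valid), and once $r\notin\{4,8,9\}$, the connected component $G^0$ is a connected \emph{simple} algebraic group --- one of $\SL_r$, $\Sp_r$ ($r$ even), $\SO_r$ ($r\neq 4$), or $\Gd$ (only for $r=7$) --- while $G$ itself may be larger than $G^0$ (for instance $\Ort_r$ in the orthogonal case), but $G^0$ is simple in every case. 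The three excluded ranks are precisely where Aschbacher ``class $\mathcal{C}_6$'' monodromy (normalizers of extraspecial-type subgroups) can occur, namely $4=2^2$, $8=2^3$, $9=3^2$, together with the coincidence that $\SO_4$ is \emph{not} a simple algebraic group, being isomorphic to $(\SL_2\times\SL_2)/\mmu_2$ (diagonally embedded $\mmu_2$) --- exactly the oxozonic/sulfatic exception handled separately in the paper.

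Combining the two points: $G$ acts irreducibly on $\bQl^r$ and $G^0$ is a simple algebraic group, so $G$ is \good in the sense of Definition~\ref{defgoodsheafintro}(1); hence $\mcF=j_!\HYP(\bfchi,\bfrho;\psi)$ or $\mcF=j_*\HYP(\bfchi,\bfrho;\psi)$ is a \good sheaf. And $G$ is infinite, since a simple algebraic group over a field of characteristic~$0$ has positive dimension (e.g.\ $\dim\SL_r=r^2-1\geq 3$).

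I expect the main obstacle to be the bookkeeping in the second paragraph: verifying that the three hypotheses ``$(\bfchi,\bfrho)$ primitive'', ``$q>2(r-t)+1$'' and ``$r\notin\{4,8,9\}$'' are together exactly what forces the conclusion into the ``large classical group or $\Gd$'' branch of the classification rather than one of the finitely many exceptional finite or non-simple monodromy groups. Concretely this requires tracking through~\cite[Ch.\,8]{ESDE}, where Katz treats the $\SL$, $\Sp$, $\SO$ and $\Gd$ cases and isolates the potential finite-monodromy exceptions (partly via the Beukers--Heckman tables~\cite{BH}), and then applying the refined results of Guralnick, Katz, Rojas-Le\'on and Tiep to eliminate the remaining exceptional possibilities in ranks outside $\{4,8,9\}$. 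Everything else --- geometric irreducibility, purity, and the birational invariance of the monodromy group --- is standard and already contained in the cited references.
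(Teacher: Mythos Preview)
Your proposal is correct and follows essentially the same route as the paper: both deduce the result from the classification of Katz and Katz--Tiep, with the paper citing the precise statements (\cite[Th.~2.4.4]{katz-tiep} for infiniteness from primitivity plus $q>2(r-t)+1$, then \cite[Th.~5.2.9 and Lemma~1.1.3(1)]{katz-tiep} for the condition~\textbf{(S+)} yielding simplicity of~$G^0$). One minor inaccuracy in your heuristic, which does not affect the argument: the exceptional monodromy in ranks~$8$ and~$9$ is not of Aschbacher class~$\mathcal{C}_6$ but rather of tensor-product type --- as the paper's follow-up theorem records, one has $G^0=\SL_2^{\times 3}$ on $\mathrm{Std}_2^{\otimes 3}$ for $(r,t)=(8,2)$ and $G^0=\SL_3^{\times 2}$ on $\mathrm{Std}_3^{\otimes 2}$ for $(r,t)=(9,3)$.
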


\begin{proof}
  This follows from the recent work of Katz and
  Tiep~\cite{katz-tiep}. More precisely, let~$G$ denote the geometric
  monodromy group of $\HYP(\bfchi,\bfrho;\psi)$.

  First, the primitivity assumption on $(\bfchi,\bfrho)$ implies that
  the representation of~$G$ corresponding to $\HYP(\bfchi,\bfrho;\psi)$
  is primitive: this follows from~\cite[Lem.\,11\&12]{KatzDMJ} if $t=0$,
  and from~\cite[Prop.\,1.2]{KRLT} in general.
  
  Combined with the condition~$q>2(r-t)+1$, this
  implies by~\cite[Th.\,2.4.4]{katz-tiep} that~$G$ is infinite. Then
  by~\cite[Th.\,5.2.9]{katz-tiep}, the group $G$ satisfies condition
  \textbf{(S+)}; by~\cite[Lemma\,1.1.3,\,(1)]{katz-tiep} (which goes
  back to work of Guralnick and Tiep), this implies that the connected
  component of~$G$ is a simple linear algebraic group acting
  irreducibly.
\end{proof}

The results of Katz and Tiep are in fact much more precise, both
determining exactly the geometric monodromy groups in many cases, and
handling small characteristic situations. However, we are essentially
interested in applications where $q$ will be large (tending to
infinity), so we did not attempt to summarize all known results.

Nevertheless, we discuss briefly the exceptional cases $r\in\{4,8,9\}$
since these may occur naturally. From the combination of several
results of Katz~\cite{ESDE}, namely Theorem 8.8.1, Theorem 8.8.2,
Theorem 8.11.2 and Corollary 8.11.2.1, Theorem 8.11.3, Lemma 8.11.6, Theorem 10.8.1 and Theorem 10.9.1 from
loc. cit., one obtains the following:

\begin{theorem}
  Suppose that $r\in\{4,8,9\}$, that $r>t$ and that $(\bfchi,\bfrho)$ is
  primitive. Let $\mcF$ be as in Theorem \ref{gallanthypergeneric}.

  If $q$ is large enough compared with $r$, then the geometric monodromy
  group of $\mcF$ is infinite and is \good unless $r\equiv t\mods 2$ and
  one of the following holds:
  \begin{itemize}
  \item $r=4$, and there exists a character $\eta$ such that the
    multisets $\eta\cdot \bfchi$ and $\eta\cdot \bfrho$ are both
    invariant under the inversion map $\xi\mapsto \xi^{-1}$ and
    \[
      \eta^{r-t}\prod_{i=1}^r\chi_i\times \prod_{j=1}^t\rho_j^{-1}=\chi_{1/2}
    \]
    where $\chi_{1/2}$ is the unique character of exact order $2$. In
    that case we have $G^0=\SO_4$ acting by the standard representation
    $\mathrm{St}_4$. The geometric monodromy group $G_\eta$ of the
    twisted sheaf $\mcF\otimes \mcL_\eta$ is given by the formula
    \[
      G_\eta=\begin{cases}
    	\SO_4& \hbox{ if }\prod_{i=1}^4\eta\cdot\chi_i=1\\
    	\Ort_4& \hbox{ if }\prod_{i=1}^4\eta\cdot\chi_i=\chi_{1/2},
      \end{cases}
    \]
    where $\chi_{1/2}$ is the unique character of order~$2$.
    
  \item $(r,t)=(8,2)$ and one of the two holds
    \begin{enumerate}
    \item $|k|\equiv 1\mods 4$ and there exists characters of
      $\kt$, $\eta$, $\chi$ with $\chi$  not of exact order $4$, such that
      \begin{equation}
        \label{SL2SL2SL2-1}
        \eta\cdot\bfchi=\{\chi,\ov\chi\}\cup\{\hbox{$3$-rd roots
          of }\chi,\ov\chi\}\,,\
        \eta\cdot\bfrho=\{\chi_{1/4},\chi_{3/4}\},
      \end{equation}
      where $\chi_{1/4},\chi_{3/4}$ denote the two characters of $\kt$ of
      exact order $4$.
      
    \item $|k|\equiv 1\mods 3$ and there exists characters of
      $\kt$, $\eta$, $\chi$ with $\chi$  not of exact order $4$, such that
      \begin{equation}
        \label{SL2SL2SL2-2}
        \eta\cdot\bfchi=\{\chi,\ov\chi\}\cup\{\hbox{$3$-rd roots
          of }\chi,\ov\chi\}\,,\
        \eta\cdot\bfrho=\{\chi_{1/3},\chi_{2/3}\},
      \end{equation}
      where $\chi_{1/3},\chi_{1/3}$ denote the two characters of $\kt$ of
      exact order $3$.
      
    \end{enumerate}
    
    In these cases one has $G^0=\SL_2\times\SL_2\times\SL_2$ acting on
    ${\mathrm{Std}_2}^{\otimes 3}$.
    
  \item $(r,t)=(9,3)$, $|k|\equiv 1\mods 3$ and there exists a character
    $\eta$, three characters $\chi,\rho,\xi$ (not of order dividing 
    $3$) satisfying $\chi.\rho.\xi=1$, such that
    \begin{equation}
      \eta\cdot\bfchi=\{\chi,\rho,\xi\}\cup\{\hbox{$2$-nd roots
        of }\ov\chi,\ov\rho,\ov\xi\},\
      \eta\cdot\bfrho=\{1,\chi_{1/3},\chi_{2/3}\}
      \label{SL3SL3}
    \end{equation}
    where $\chi_{1/3},\chi_{2/3}$ denote the two characters of $\kt$ of
    exact order $3$.
    
    In that case one has $G^0=\SL_3\times\SL_3$ acting on
    ${\mathrm{Std}_3}^{\otimes 2}$.
  \end{itemize}
\end{theorem}

\begin{proof}
  Suppose $r=4>t$. By \cite{ESDE}*{Thm 8.11.3 (1) (2)}, we have
  $G^0=G^{0,der}$, and the sheaf is \good unless $r-t$ is even.

  Suppose $r-t$ is even. Then, by \cite{ESDE}*{Thm 8.11.3 (3)}, we have
  $G^0=\SL_4,\ \SO_4$ or $\Sp_4$, depending on whether or not some
  $\mcF$ twisted by some $\mcL_\eta$ is self-dual or not, and on the
  type (symmetric or alternating) of the duality.

  To be precise, by~\cite{ESDE}*{Thm 8.8.1}, a twist
  $\mcL_\eta\otimes\mcF$ is self-dual if and only if $\eta\cdot \bfchi$
  and $\eta\cdot \bfrho$ are both invariant under the inversion map
  $\xi\mapsto \xi^{-1}$. If this is not the case, then $G^0=\SL_4$.
  Otherwise, we have 
  \[
    \eta^{r-t}\prod_{i=1}^r\chi_i\times
    \prod_{j=1}^t\rho_j^{-1}=1\text{ or }\chi_{1/2},
  \]
  and by \cite{ESDE}*{Thm 8.8.2}, the duality is alternating in the
  first case (and then $G^0=\Sp_4$) and symmetric in the second case
  (and then $G^0=\SO_4$).
    
  Suppose we are in the symmetric case. The determinant of
  $\mcF\otimes \mcL_{\eta}$ is given by \cite{ESDE}*{Lemma 8.11.6};
  since $r-t\geq 2$, it is equal to $\mcL_{\Lambda_\eta}$ where
  \[
    \Lambda_\eta=\eta^4\prod_{i=1}^4\chi_i.
  \]

  This character is either $1$ or $\chi_{1/2}$, and the geometric
  monodromy group of $\mcF\otimes \mcL_{\eta}$ is correspondingly either
  $\SO_4$ or $\Ort_4$.
  
  If $r=8$ or $9$, then by \cite{ESDE}*{Thm 8.11.3 (1) (2) (3)}
  $G^0=G^{0,der}$ and the sheaf is gallant unless $r-t=6$.
    
  If $(r,t)=(8,2)$, then by \cite{ESDE}*{Thm 8.11.3 (1) (2) (3)} and
  \cite{ESDE}*{Theorem 10.8.1} the sheaf is gallant unless
  $(\eta.\bfchi,\eta.\bfrho)$ is of the shape \eqref{SL2SL2SL2-1} or
  \eqref{SL2SL2SL2-2}.
    
  If $(r,t)=(9,3)$, then by \cite{ESDE}*{Thm 8.11.3 (1) (2) (3)} and
  \cite{ESDE}*{Theorem 10.9.1} the sheaf is gallant unless
  $(\eta.\bfchi,\eta.\bfrho)$ is of the shape \eqref{SL3SL3}.
\end{proof}    

\begin{remark}
  The first case above provides examples of sulfatic and oxozonic
  sheaves in the sense of Section~\ref{sec-oxo}.
\end{remark}

In the $r=t$ case, the monodromy group could be finite. Under the
assumption that it is infinite, the work of Katz provides the following:

\begin{theorem}
  Suppose that $r=t\geq 2$ and $q>r$.  Assume also that
  $(\bfchi,\bfrho)$ is primitive.

  Let $j\colon \Gg_m\to \Aa^1$ denote
  the open immersion. Let
  \[
    \mcF=j_*\HYP(\bfchi,\bfrho;\psi)\quad\text{ or }\quad
    \mcF=j_!\HYP(\bfchi,\bfrho;\psi).
  \]

  If $q$ is large enough compared with~$r$ and $G$ is infinite, then
  $\mcF$ is \good unless $r=4$ and there exists a character $\eta$ such
  that the multisets $\eta\cdot \bfchi$ and $\eta\cdot \bfrho$ are both
  invariant under the inversion map $\xi\mapsto \xi^{-1}$ and
  \[
    \prod_{i=1}^4\chi_i\times \prod_{j=1}^4\rho_j^{-1}=\chi_{1/2}
  \]
  where $\chi_{1/2}$ is the unique character of exact order $2$. In that
  case we have $G^0=\SO_4$ acting on $\mathrm{Std}_4$ and the geometric
  monodromy group $G_\eta$ of the twisted sheaf $\mcF\otimes \mcL_\eta$
  satisfies
  \[
    G_\eta=\begin{cases}\SO_4&\hbox{ if }\prod_{i=1}^4\eta.\chi_i=1,\\
      \Ort_4&\hbox{ if }\prod_{i=1}^4\eta.\chi_i=\chi_{1/2}.
    \end{cases}
  \]
\end{theorem}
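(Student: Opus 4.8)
The plan is to reduce the statement to Katz's classification of the geometric monodromy groups of primitive hypergeometric sheaves of type $(n,n)$ --- here $n=r=t$ --- from Chapter~8 of~\cite{ESDE}, together with his autoduality and determinant criteria. First I would record the reductions. Since $\bfchi$ and $\bfrho$ are disjoint, $\HYP(\bfchi,\bfrho;\psi)$ is a geometrically irreducible middle-extension sheaf on $\Gm$, lisse away from the point $1$, with a pseudoreflection for its local monodromy at $1$ (\cite[Th.\,8.4.2]{ESDE}). Its geometric monodromy group $G$ coincides with that of $\mcF=j_*\HYP(\bfchi,\bfrho;\psi)$ or $j_!\HYP(\bfchi,\bfrho;\psi)$, because the geometric monodromy group of a sheaf only depends on its restriction to a dense open subset of $\Aa^1$; consequently $\mcF$ is \goodp if and only if $G^0$ is a simple algebraic group, the irreducibility of the $G$-action being automatic. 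Using the hypotheses $q>r$ and $q$ large, and the primitivity of $(\bfchi,\bfrho)$, the representation of $G$ is primitive in the representation-theoretic sense (by~\cite[Prop.\,1.2]{KRLT}, which applies since $t\geq 1$, or by~\cite[Lem.\,11\&12]{KatzDMJ} if $t=0$), and the local monodromies at $0$, $1$, $\infty$ are tame and as generic as the characters allow.

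Next I would invoke Katz's determination of $G$ for such a sheaf when $G$ is infinite (\cite[\S\,8.11]{ESDE}, together with the rank-specific results of Chapter~8, available to us since $q$ may be taken as large as needed). In every case the identity component $G^0$ is a simple algebraic group acting in its standard representation --- namely $\SL_n$, $\Sp_n$ for $n$ even, or $\SO_n$ for $n\geq 3$ --- with the single exception $n=4$ and $G^0=\SO_4$, which is not simple since $\SO_4$ is the quotient of $\SL_2\times\SL_2$ by a central $\mmu_2$ (as recalled in Section~\ref{sec-oxo}). Thus $\mcF$ is \goodp away from this rank-$4$ orthogonal case, and it remains to match that case with the explicit conditions in the statement and to identify the monodromy group of the Kummer twist.

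For the last step I would use Katz's autoduality and determinant criteria for hypergeometric sheaves. A hypergeometric sheaf of type $(n,n)$ is geometrically self-dual exactly when each of its two character multisets is stable under inversion $\xi\mapsto\xi^{-1}$; the occurrence of $G^0=\SO_4$ among primitive rank-$4$ sheaves with infinite monodromy is precisely the orthogonally self-dual-up-to-twist case, i.e.\ the case where, after twisting by a Kummer sheaf $\mcL_\eta$ so that $\eta\cdot\bfchi$ and $\eta\cdot\bfrho$ are both inversion-stable, the local monodromy at $1$ is a genuine reflection rather than a unipotent pseudoreflection. This distinction is read off from the character $\bigl(\prod_{i=1}^r\chi_i\bigr)\bigl(\prod_{j=1}^t\rho_j^{-1}\bigr)$ --- which lies in $\{1,\chi_{1/2}\}$ whenever $\eta\cdot\bfchi$ and $\eta\cdot\bfrho$ are inversion-stable and $r=t$, and which is unchanged by the twist since $r=t$ --- the reflection case being exactly $\prod_i\chi_i\prod_j\rho_j^{-1}=\chi_{1/2}$ (the value $1$ instead gives a symplectic transvection and $G^0=\Sp_4$). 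The twisted sheaf $\mcF\otimes\mcL_\eta$ is then orthogonally self-dual of rank $4$, so $G_\eta\subseteq\Ort_4$; and $G_\eta=\SO_4$ or $\Ort_4$ according as the geometric determinant $\det(\mcF\otimes\mcL_\eta)$ is trivial or not, this determinant being the Kummer sheaf $\mcL_{\eta^r\prod_i\chi_i}$ by Katz's determinant formula for hypergeometric sheaves (the $\psi$-contribution cancelling because $r=t$). This yields the asserted dichotomy.

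The main obstacle I anticipate is bookkeeping rather than new mathematics: locating and quoting the precise statements of~\cite{ESDE} in each rank; verifying that, for type $(n,n)$ under "$q$ large, $(\bfchi,\bfrho)$ primitive, $G$ infinite", no further non-simple identity component can appear --- in particular a tensor-induced $\SL_2\times\SL_2\times\SL_2$, which occurs in the $(8,2)$ case but is ruled out here by the presence of a pseudoreflection at $1$; and pinning down the normalizations in the autoduality and determinant criteria so that the exponents in the conditions $\prod_i\chi_i\prod_j\rho_j^{-1}=\chi_{1/2}$ and $\eta^r\prod_i\chi_i\in\{1,\chi_{1/2}\}$ come out exactly as written.
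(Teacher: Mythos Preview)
Your proposal is correct and takes essentially the same approach as the paper: both reduce the statement to Katz's classification results in~\cite{ESDE} (the paper simply attributes the theorem to ``the results of Katz'' without further proof, while the analogous $r\in\{4,8,9\}$ theorem just above cites the specific ESDE theorems 8.8.1, 8.8.2, 8.11.2, 8.11.2.1, 8.11.3, 10.1.3, 10.7.1, 10.8.1, 10.9.1). Your sketch supplies more of the internal logic --- the role of the pseudoreflection at~$1$ in ruling out tensor-product monodromies, and the autoduality/determinant criteria pinning down the $\SO_4$ versus $\Ort_4$ dichotomy --- which is exactly the content one would extract from those cited theorems.
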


\subsection{Hypergeometric sheaves with finite monodromy}

To be effective, the previous result requires a criterion to determine
when $G$ is finite.  A fundamental result of Katz shows that this is the
case if and only if the corresponding hypergeometric differential
equation has finite differential Galois group; one can then apply the
classification of such equations by Beukers and Heckman to determine
which of these have finite geometric monodromy groups, and when this is
the case, one can determine which of these are still \goodp.

More precisely, let~$\gamma$ be a generator of $\kt$. Using~$\gamma$, we
can identify the group of characters of $\Fqt$ with the subgroup
$\frac{1}{|k|-1}\Zz/\Zz\subset \Qq/\Zz$, which we identify further
set-theoretically with $\Qq\cap [0,1\mathclose[$. In particular, the
data of $\bfchi$ (resp. $\bfrho$) corresponds to a multiset
$\uple{x}=\{x_1,\ldots,x_r\}$ in $\Qq/\Zz$ (resp. a multiset
$\uple{y}=\{y_1,\ldots,y_t\}$ in $\Qq/\Zz$). Viewing these as multisets in
$[0,1\mathclose[$, we then have the associated hypergeometric
differential equation $D_{\uple{x},\uple{y}}f=0$, with
\[
  D_{\uple{x},\uple{y}}=\prod_{i=1}^r(\partial
  -x_i)-x\prod_{j=1}^t(\partial-y_j),\quad\quad
  \partial=x\partial_x
\]
(see~\cite[\S\,3.1]{ESDE} and~\cite[\S\,2]{BH}).
Katz~\cite[Th.\,8.17.12,\,Cor.\,8.17.15]{ESDE} proves that the
geometric monodromy group of $\HYP(\bfchi,\bfrho;\psi)$ is finite if
and only if the differential Galois group of this differential
equation is finite, and that these two groups are then isomorphic. 

The criterion of Beukers and Heckman~\cite[Th. 4.8]{BH} is a necessary
and sufficient condition for finiteness of the monodromy; this criterion
requires that $r=t$, that all the multiplicities in $\bfx$ and $\bfy$ be
equal to $1$ and that certain multiples of $\bfx$ and $\bfy$ interlace
on the unit interval.

In addition, Beukers and Heckman~\cite[Th.\,7.1]{BH} have given the list
of all primitive hypergeometric differential equations with $r\geq 3$,
the case $r=2$ being a classical result of Schwarz (see the notes of
Matsuda~\cite{matsuda} for a detailed modern account). The corresponding
finite groups are all finite complex reflection groups, which have been
classified by Shephard and Todd. It is elementary for each of them to
determine (with the help, e.g., of \textsc{Magma}~\cite{magma}) if they
are \goodp. We obtain this way the following list of \good groups, where
$ST_k$ refers to the Shephard--Todd classification:
\begin{center}
  \begin{tabular}{c|c|c|c}
    Group & Other name & $r$ \\
    \hline $ST_1$ & $S_n$ & $n-1$, $n\geq 6$
    \\
    $ST_{23}$ & $W(H_3)$ & $3$
    \\
    $ST_{24}$ & $W(J_3(4))$ & $3$
    \\
    $ST_{27}$ & $W(J_3(5))$ & $3$
    \\
    $ST_{32}$ & $W(L_4)$ & $4$
    \\
    $ST_{33}$ & $W(K_5)$ & $5$
    \\
    $ST_{34}$ & $W(K_6)$ & $6$
    \\
    $ST_{35}$ & $W(E_6)$ & $6$
    \\
    $ST_{36}$ & $W(E_7)$ & $7$
    \\
    $ST_{37}$ & $W(E_8)$ & $8$
  \end{tabular}
\end{center}

As a concrete example, assume that $|k|\equiv 1\mods{120}$. Let
$\chi_{1/30}$ be a generator of the group of characters of order $30$
and $\eta_{1/8}$ a generator of the group of characters of order $8$.
The group $W(E_8)$ is the monodromy group of the hypergeometric sheaf
$\HYP(\bfchi,\bfrho;\psi)$ where
\begin{gather*}
  \bfchi=\{\chi_{1/30},\chi^{7}_{1/30},\chi^{11}_{1/30},
  \chi^{13}_{1/30},\chi^{17}_{1/30},\chi^{19}_{1/30},\chi^{23}_{1/30},\chi^{29}_{1/30}\}
  \\
  \bfrho=\{1,\eta_{1/8},\eta^{2}_{1/8},\eta^{3}_{1/8},\eta^{4}_{1/8},
  \eta^{5}_{1/8},\eta^{6}_{1/8},\eta^{7}_{1/8}\}
\end{gather*}









\subsection{Other  examples with finite monodromy}\label{sec-finite}

As we mentioned in the introduction, one new feature of this paper,
relying on the definition of \good sheaves, is that we can handle some
sheaves with finite monodromy group. An interesting family of examples
is given by sheaves of the form
\[
  \mcF=f_*\bQl/\bQl,
\]
where $f\in k[X]$ is a non-constant polynomial, viewed as a morphism
$f\colon \Aa^1_k\to \Aa^1_k$. This sheaf has trace function
\[
  K(x)=\sum_{\substack{y\in k\\f(y)=x}}-1.
\]

The computation of the geometric monodromy group of such sheaves is a
classical question, often phrased in the context of the Galois group of
the polynomial equation
\[
  f(Y)-X=0
\]
in $k(X)[Y]$. A particulary simple case arise when $f$ is a ``supermorse
function'' (in the terminology of~\cite[(7.10.2.2)]{ESDE}), which means
that $\deg(f)$ is strictly less than the characteristic of~$k$, that
zeros of $f'$ are simple, and that $f$ separates the zeros of~$f'$. In
this case, it is known (see,
e.g.,~\cite[proof\,of\,Lemma\,7.10.2.3]{ESDE}) that the geometric
monodromy group of~$\mcF$ is isomorphic to the symmetric group
$S_{\deg(f)-1}$ (acting by the ``standard'' irreducible representation
of dimension $\deg(f)-1$). If $\deg(f)\geq 6$, the symmetric group $S_n$
is \goodp, and hence we have here a large variety of examples where our
result applies.  To give a very concrete example, for an integer
$d\geq 6$, we can take
\[
  f=X^d-adX
\]
for some $a\in\Fqt$, where $q>d$ (see~\cite[Th.\,7.10.5]{ESDE}). The
corresponding bilinear forms (for $b=c=1$ for simplicity) are given by
the formula
\[
  \sum_{m\sim M}\sum_{n\sim N}\alpha_m\beta_n \Bigl(\sum_{\substack{y\in
      \Ff_q\\ y^d-ady=mn}}1-1\Bigr).
\]

It would be interesting to see concrete arithmetic applications of such
bilinear forms.

\subsection{Further examples}

We include one further class of \good sheaves, with infinite monodromy
group, taken again from the work of Katz~\cite[Ch.\,7]{ESDE}, and chosen
because it has no obvious relation to hypergeometric sheaves.  We
consider again $f\in k[X]$, but instead of the sheaf of the previous
section, we consider its (unitarily normalized) Fourier transform, with
respect to some non-trivial additive character~$\psi$ of~$k$. The trace
function of this Fourier transform is
\[
  K(x)=\frac{1}{\sqrt{|k|}}\sum_{y\in k}\psi(xf(y))
\]
for $x\in\kt$.

Assume that~$f$ is a supermorse polynomial, and furthermore that the
set~$C\subset \bar{k}$ of critical values of~$f$ (the set of values
$f(y)$ for $y\in\bar{k}$ a root of~$f'$) is a \emph{Sidon set}, i.e.,
the equation
\[
  s_1+s_2=s_3+s_4
\]
with $s_i\in C$ has only solutions with $s_1\in\{s_3,s_4\}$. Then Katz
proved (combine~\cite[Th.\,7.9.6]{ESDE}
with~\cite[Lemma\,7.10.2.3]{ESDE}) that, provided the characteristic
of~$k$ is $>2\deg(f)-1$, the Fourier transform sheaf has geometric
monodromy group with connected component equal to
$\SL_{\deg(f)-1}$. Thus, this sheaf is \good as soon as $\deg(f)\geq 3$.

Here also, it would be interesting to see applications of the
corresponding bilinear forms, such as
\[
  \sum_{m\sim M}\sum_{n\sim N}\alpha_m\beta_n \sum_{y\in
    \Ff_q}e\Bigl(\frac{mn(y^d-ady)}{q}\Bigr).
\]

\begin{remark}
  The long list of computations of monodromy groups
  in~\cite[Ch.\,7]{ESDE} provides a wide variety of additional examples
  of sheaves which are known to be \goodp.
\end{remark}

\subsection{The rank one case}

The very definition of a gallant sheaf implies that its rank is at least
$2$. Our general method does in fact allow us to handle trace functions
of rank $1$ sheaves, such as
\[
  K(x)=\chi(f(x))\psi(g(x)),
\]
where~$\chi$ (resp. $\psi$) is a non-trivial multiplicative
(resp. additive) character of~$\Ff_q$, and~$f$, $g$ are rational
functions.

Studying bilinear forms with kernel $K(m^bn^c)$ reduces (with our
approach) to bounding the two families of sums discussed in
Proposition~\ref{pr-one-variable}. These are one-variable sums of trace
functions of rank one sheaves, so cancellation amounts to deciding for
which value of the parameters $(\bfr,\bfs)$ or $(\bfr,\bfs_1,\bfs_2)$
the corresponding sheaf is geometrically trivial or not.

For instance, in the case above, the first of the relevant exponential
sums is
\[
  \sum_{v\in k}\chi\Bigl(\prod_{j=1}^m\frac{f(s_j(v+r_j)^c)}
  {f(s_{j+m}(v+r_{j+m})^c)};k\Bigr)
  \psi\Bigl(\sum_{j=1}^m{(g(s_j(v+r_j)^c)-g(s_{j+m}(v+r_{j+m})^c))};k\Bigr),
\]
for finite extensions~$k$ of $\Ff_q$, and what is required is to
determine for which $(\bfr,\bfs)$ or $(\bfr,\bfs_1,\bfs_2)$, the
rational fractions derived from $f(X)$ and $g(X)$ are constant or not,
and especially to show that they most often are not. (Here we denote
$\chi(x;k)=\chi(N_{k/\Ff_q}(x))$ and
$\psi(x;k)=\psi(\Tr_{k/\Ff_q}(x))$.)

An example where the argument will go through is when if $g$ has at
least one pole and $c\geq 1$, since the cancellation of the pole will only
be achieved by ``combinatorial'' restrictions on the parameters, from
which the desired estimates on the size of the diagonal sets will
follow. An example is $g=1/X$, where the condition is that
\[
  \sum_{j=1}^m{\Bigl(\frac{1}{s_j(X+r_j)^c}-
    \frac{1}{s_{j+m}(X+r_{j+m})^c}\Bigr)}
\]
should be constant, which in turns requires that the $(r_{j+m})$ be a
permutation of the $(r_{j})$, with the corresponding $s_j$ and $s_{j+m}$
also equal.

On the other hand, if~$f=1$, $c\geq 1$ and~$g$ is a polynomial, then
there will be many more diagonal configurations. For instance, with
$g=X^2$, $c=1$, the condition is whether the polynomial
\[
  \sum_{j=1}^m{(s_j^2(X+r_j)^2-s_{j+m}^2(X+r_{j+m})^2)}
\]
is constant or not, and this will hold whenever
\begin{gather*}
  \sum_{j=1}^ms_j^2=\sum_{j=m+1}^{2m}s_j^2\\
  \sum_{j=1}^mr_js_j^2=\sum_{j=m+1}^{2m}r_js_j^2,
\end{gather*}
which defines a subvariety of codimension at most~$2$, which will have
many more solutions than desired for our method to work.

We leave further discussion to readers interested in specific
applications.

\end{document}